\theoremstyle{plain}
\newtheorem{theorem}{Theorem}[section]
\newtheorem{lemma}[theorem]{Lemma}
\newtheorem{proposition}[theorem]{Proposition}
\newtheorem{corollary}[theorem]{Corollary}
\theoremstyle{definition}
\newtheorem{definition}[theorem]{Definition}
\newtheorem{remark}[theorem]{Remark}
\newtheorem{example}[theorem]{Example}
\theoremstyle{remark}
\mathchardef\emptyset="001F
\numberwithin{equation}{section}
\newcommand{\e}{\varepsilon}
\newcommand{\vphi}{\varphi}
\newcommand{\vt}{\vartheta}
\newcommand{\N}{\mathbb N}
\newcommand{\R}{\mathbb R}
\newcommand{\C}{\mathbb C}
\renewcommand{\div}{{\rm div}}
\newcommand{\dist}{{\rm dist\,}}
\newcommand{\supp}{{\rm supp\,}}
\newcommand{\dev}{{\rm Dev}}
\newcommand{\gr}{{\rm gr}}
\newcommand{\hu}{{\mathcal H}^{1}}
\newcommand{\huu}{{\mathcal H}^{0}}
\newcommand{\wto}{\rightharpoonup}
\newcommand{\setmeno}{\!\setminus\!}
\newdimen\mex
\def\niv{\mathrel{\hbox{\hglue -0.4\mex
\vrule \@height 1\mex \@width .1\mex
\vrule \@height .1\mex \@width 1\mex
\hglue -0.2\mex}}}
\newcommand{\F}{\mathcal{F}}
\newcommand{\sbv}{SBV}
\newcommand{\ombar}{\overline{\Omega}}
\newcommand{\spazio}{\mathcal{A}(\Omega)}
\newcommand{\spazioreg}{\mathcal{A}_{reg}(\Omega)}
\newcommand{\dir}{H^{1}_U(\Omega\setminus \Gamma)}
\newcommand{\flusso}{{\Phi_t}}
\newcommand{\bordo}{K\cap\partial\Omega}
\newcommand{\bordoft}{K_{\Phi_t}\cap\partial\Omega}
\newcommand{\nbordo}{\Gamma\cap\partial\Omega}
\newcommand{\nbordot}{\Gamma_t\cap\partial\Omega}
\newcommand{\kf}{K_\Phi}
\newcommand{\kft}{K_{\Phi_t}}
\newcommand{\ncampot}{X}
\newcommand{\nfield}{X_\psi}
\newcommand{\nuft}{\nu_{\Phi_t}}
\newcommand{\etaf}{\eta_\Phi}
\newcommand{\etaft}{\eta_{\Phi_t}}
\newcommand{\norm}{\nu_{\partial\Omega}}
\newcommand{\vf}{v_\vphi}
\title[Stable critical points of the Mumford-Shah functional]{Stable regular critical points of the Mumford-Shah functional are local minimizers}
\author{M.\ Bonacini}
\author{M.\ Morini}
\address[M.~Bonacini]{SISSA, Via Bonomea 265, 34136 Trieste, Italy}
\email{marco.bonacini@sissa.it}
\address[M.~Morini]{Dipartimento di Matematica, Universit\`{a} degli Studi di Parma, Parma, Italy}
\email{massimiliano.morini@unipr.it}
\subjclass[2010]{49K10 (49Q20)}
\keywords{Mumford-Shah functional, free discontinuity problems, second variation}
\thanks{Preprint SISSA 33/2013/MATE}
\begin{document}

\begin{abstract}
In this paper it is shown that any regular critical point of the Mumford-Shah functional, with positive definite second variation,
is an isolated local minimizer with respect to competitors which are sufficiently close in the $L^1$-topology.
\end{abstract}

\maketitle
%\tableofcontents

%%%%%%%%%%%%%%%%%%%%%%%%%%%%%%%%%%%%%%%%%%%%%%%%%%%%%%%%%%%%%%%%%%%%%%%%%%%%%%%%%%%%%%%%%%%%%%%%%%%%%%%%%
%%%%%%%%%%%%%%%%%%%%%%%%%%%%%%%%%%%%%%%%%%%%%%%%%%%%%%%%%%%%%%%%%%%%%%%%%%%%%%%%%%%%%%%%%%%%%%%%%%%%%%%%%
%%%%%%%%%%%%%%%%%%%%%%%%%%%%%%%%%%%%%%%%%%%%%%%%%%%%%%%%%%%%%%%%%%%%%%%%%%%%%%%%%%%%%%%%%%%%%%%%%%%%%%%%%
%%%%%%%%%%%%%%%%%%%%%%%%%%%%%%%%%%%%%%%%%%%%%%%%%%%%%%%%%%%%%%%%%%%%%%%%%%%%%%%%%%%%%%%%%%%%%%%%%%%%%%%%%
%%%%%%%%%%%%%%%%%%%%%%%%%%%%%%%%%%%%%%%%%%%%%%%%%%%%%%%%%%%%%%%%%%%%%%%%%%%%%%%%%%%%%%%%%%%%%%%%%%%%%%%%%

\begin{section}{Introduction} \label{sect:intro}

The \emph{Mumford-Shah functional} is the most typical example of a class of variational problems
called by E. De Giorgi \emph{free discontinuity problems}, characterized by the competition between volume and surface terms.
The minimization of such an energy was proposed in the seminal papers \cite{MS1,MS2} in the context of image segmentation,
and plays an important role also in variational models for fracture mechanics.
Its homogeneous version in a bounded open set $\Omega\subset\R^2$ is defined over pairs $(\Gamma,u)$, with $\Gamma$ closed subset of $\overline{\Omega}$ and $u\in H^1(\Omega\setminus\Gamma)$, as
\begin{equation} \label{intro}
F(\Gamma,u) := \int_{\Omega\setminus\Gamma} |\nabla u|^2\,dx + \mathcal{H}^{1}(\Gamma\cap\Omega).
\end{equation}
Since its introduction, several results concerning the existence and regularity of minimizers,
as well as the structure of the optimal set, have been obtained
(see, \emph{e.g.}, \cite{AFP} for a detailed account on this topic).

In this paper we continue the study of second order optimality conditions for the functional in \eqref{intro}
initiated by F. Cagnetti, M.G. Mora and the second author in \cite{CMM},
where a suitable notion of second variation was introduced
by considering one-parameter families of perturbations of the regular part of the discontinuity set.
In \cite{CMM} it was also shown that a critical point $(\Gamma,u)$ with positive definite second variation
minimizes the functional with respect to pairs of the form $(\Phi(\Gamma),v)$,
where $\Phi$ is any diffeomorphism sufficiently close to the identity in the $C^2$-norm,
with $\Phi-Id$ compactly supported in $\Omega$,
and $v\in H^1(\Omega\setminus\Phi(\Gamma))$ satisfies $v=u$ on $\partial\Omega$.

In the main theorem of this paper we strongly improve the aforementioned result,
by showing that in fact the positive definiteness of the second variation
implies strict local minimality with respect to the weakest topology which is natural for this problem,
namely the $L^1$-topology.
To be more precise, we prove that if $(\Gamma,u)$ is a critical point with positive second variation,
then there exists $\delta>0$ such that
\begin{equation*}
F(\Gamma,u) < F(K,v)
\end{equation*}
for all admissible pairs $(K,v)$, provided that $v$ attains the same boundary conditions as $u$
and $0<\|u-v\|_{L^1(\Omega)}<\delta$.
We mention that for technical reasons the boundary conditions imposed here are slightly different from those considered in \cite{CMM}, as we prescribe the Dirichlet condition only on a portion $\partial_D\Omega\subset\partial\Omega$ away from the intersection of the discontinuity set $\Gamma$ with $\partial\Omega$.

The general strategy of the proof is close in spirit to the one devised in \cite{FM} for a different free-discontinuity problem.
It consists in two fundamental steps:
first, one shows that strict stability is sufficient to guarantee local minimality with respect to perturbations of the discontinuity set which are close to the identity in the $W^{2,\infty}$-norm (see Theorem~\ref{teo:minW}).
This amounts to adapting to our slightly different context the techniques developed in \cite{CMM},
with the main new technical difficulties stemming from allowing also boundary variations of the discontinuity set.

The second step of the outline consists in showing that the above local $W^{2,\infty}$-minimality
in fact implies the claimed local $L^1$-minimality.
This is done through a penalization/regularization approach,
with an appeal to the regularity theory of quasi-minimizers of the area functional and of the Mumford-Shah functional (see \cite{AFP}).
More precisely, we start by showing that the local $W^{2,\infty}$-minimality
implies minimality with respect to small $C^{1,\alpha}$-perturbations of the discontinuity set.
This is perhaps the most technical part of the proof.
The main idea is to restrict $F$ to the class of pairs $(\Gamma,v)$
such that $\|v-u\|_{W^{1,\infty}(\Omega\setminus\Gamma)}\leq1$,
so that the Dirichlet energy behaves like a volume term,
and $F$ can be regarded as a volume perturbation of the area functional.
This allows to use the regularity theory for quasi-minimizers of the area functional to deduce the local $C^{1,\alpha}$-minimality through a suitable contradiction argument.

A contradiction argument is also finally used to establish the sought $L^1$-minimality.
To give a flavor of this type of reasoning, we sketch here the main steps of this last part of the proof.
One assumes by contradiction the existence of admissible pairs $(\Gamma_n,u_n)$
with $u_n$ converging to $u$ in $L^1(\Omega)$, such that the minimality inequality fails along the sequence:
\begin{equation} \label{eqintro}
F(\Gamma_n,u_n)\leq F(\Gamma,u)
\end{equation}
for every $n$.
By an easy truncation argument, we may also assume that $\|u_n\|_\infty\leq\|u\|_\infty$,
so that $u_n\to u$ in $L^p(\Omega)$ for every $p\geq1$.
Then we replace each $(\Gamma_n,u_n)$ by a new pair $(K_n,v_n)$ chosen as solution to a suitable penalization problem, namely
$$
\min \Bigl\{ F(K,w) + \beta \Bigl( \sqrt{(\|w-u\|_{L^2(\Omega)}^2-\e_n)^2+\e_n^2} -\e_n \Bigr)
\, : \, (K,w) \text{ admissible, } w=u \text{ on } \partial_D\Omega \Bigr\} \,,
$$
with $\e_n:=\|u_n-u\|_{L^2(\Omega)}^2\to0$, and $\beta>0$ large enough.
Note that, by \eqref{eqintro} and by minimality, we have
\begin{equation} \label{intro3}
F(K_n,v_n)\leq F(\Gamma_n,u_n)\leq F(\Gamma,u).
\end{equation}
The advantage is now that the pairs $(K_n,v_n)$ satisfy a uniform \emph{quasi-minimality} property (see Theorem~\ref{teo:parreg}).
It is easy to show that, up to subsequences, the sequence $(K_n,v_n)$ converges to a minimizer of the limiting problem
\begin{equation} \label{intro2}
\min \Bigl\{ F(K,w) + \beta \|w-u\|_{L^2(\Omega)}^2
\, : \, (K,w) \text{ admissible, } w=u \text{ on } \partial_D\Omega \Bigr\} \,.
\end{equation}
Now a calibration argument developed in \cite{Mor} implies that we may choose $\beta$ so large that
$(\Gamma,u)$ is the unique global minimizer of \eqref{intro2}.
With this choice of $\beta$ we have in particular that $v_n\to u$ in $L^1$,
and in turn, by exploiting the regularity properties of quasi-minimizers of the Mumford-Shah functional, we infer that the corresponding discontinuity sets $K_n$ are locally $C^{1,\alpha}$-graphs and converge in the $C^{1,\alpha}$-sense to $\Gamma$. Recalling \eqref{intro3}, we have reached a contradiction to the $C^{1,\alpha}$-minimality.

We remark that a similar two-steps strategy has been used also in \cite{AFM}
for a nonlocal isoperimetric problem related to the modeling of diblock copolymers,
and in \cite{CLeo}, where the appeal to the regularity of quasi-minimizers
appears for the first time in the context of isoperimetric inequalities.

We regard our result as a first step of a more general study of second order minimality conditions for free-discontinuity problems.
Besides considering more general functionals,
it would be very interesting to extend our local minimality criterion to the case of discontinuity sets with singular points,
like the so-called ``triple junction'', where three lines meet forming equal angles of $2\pi/3$, and the ``crack-tip'', where a line terminates at some point.
This will be the subject of future investigation.

\smallskip
The paper is organized as follows.
In Section~\ref{sect:prelim} we fix the notation and we review some preliminary results concerning the regularity theory for quasi-minimizers of the Mumford-Shah functional.
In Section~\ref{sect:setting} we collect the necessary definitions and state the main result.
Section~\ref{sect:var2} is devoted to the computation of the second variation,
when also boundary variations of the discontinuity set are allowed.
The proof of the main theorem starts in Section~\ref{sect:minW}
(where the local $W^{2,\infty}$-minimality is addressed)
and lasts for Sections~\ref{sect:minW1inf} and \ref{sect:minSBV}
(where the $C^{1,\alpha}$ and the desired local $L^1$-minimality, respectively, are established).
In Section~\ref{sect:example} we describe some examples and applications of our minimality criterion.
In the final Appendix (Section~\ref{sect:appendix}) we prove some auxiliary technical lemmas needed in the paper.

\end{section}

%%%%%%%%%%%%%%%%%%%%%%%%%%%%%%%%%%%%%%%%%%%%%%%%%%%%%%%%%%%%%%%%%%%%%%%%%%%%%%%%%%%%%%%%%%%%%%%%%%%%%%%%%
%%%%%%%%%%%%%%%%%%%%%%%%%%%%%%%%%%%%%%%%%%%%%%%%%%%%%%%%%%%%%%%%%%%%%%%%%%%%%%%%%%%%%%%%%%%%%%%%%%%%%%%%%
%%%%%%%%%%%%%%%%%%%%%%%%%%%%%%%%%%%%%%%%%%%%%%%%%%%%%%%%%%%%%%%%%%%%%%%%%%%%%%%%%%%%%%%%%%%%%%%%%%%%%%%%%
%%%%%%%%%%%%%%%%%%%%%%%%%%%%%%%%%%%%%%%%%%%%%%%%%%%%%%%%%%%%%%%%%%%%%%%%%%%%%%%%%%%%%%%%%%%%%%%%%%%%%%%%%
%%%%%%%%%%%%%%%%%%%%%%%%%%%%%%%%%%%%%%%%%%%%%%%%%%%%%%%%%%%%%%%%%%%%%%%%%%%%%%%%%%%%%%%%%%%%%%%%%%%%%%%%%

\begin{section}{Notation and preliminaries} \label{sect:prelim}

In this section we fix the notation and we recall some preliminary results.

\subsection{Geometric preliminaries} \label{sect:prelgeom}
Let $\Gamma$ be a smooth embedded curve in $\R^2$,
let $\nu:\mathcal{U}\to\mathbb{S}^1$ be a smooth vector field
defined in a tubular neighborhood $\mathcal{U}$ of $\Gamma$ and normal to $\Gamma$ on $\Gamma$,
and let $\tau:=\nu^\bot$ be the unit tangent vector to $\Gamma$ (where $^\bot$ stands for the clockwise rotation by $\frac\pi2$).
If $g:\mathcal{U}\to \R^d$ is a smooth function,
we denote by $D_\Gamma g(x)$ ($\nabla_\Gamma g(x)$ if $d=1$) the {\em tangential differential} of $g$ at $x\in \Gamma$,
that is, the linear operator from $\R^2$ into $\R^d$ given by $D_\Gamma g(x):=dg(x)\circ \pi_x$,
where $dg(x)$ is the usual differential of $g$ at $x$
and $\pi_x$ is the orthogonal projection on the tangent space to $\Gamma$ at $x$.
If $g$ is a vector field from $\Gamma$ to $\R^2$
we define also its {\em tangential divergence} as
$\div_{\Gamma} g := \tau{\,\cdot\,}\partial_{\tau}g$.

The following {\em divergence formula} is a particular case of \cite[7.6]{Sim}:
for every smooth vector field $g:\mathcal{U}\to \R^2$ holds
\begin{equation}\label{fdiv}
\int_{\Gamma} \div_{\Gamma} g \,d\hu =
\int_{\Gamma} H  (g{\,\cdot\,}\nu)\, d\hu + \int_{\partial \Gamma} g\cdot\eta \,d\huu.
\end{equation}
Here $\partial\Gamma$ stands for the endpoints of $\Gamma$,
$\eta$ is a unit vector tangent to $\Gamma$ and pointing out of $\Gamma$ at each point of $\partial \Gamma$
(it coincides with $\tau$, up to a sign)
and the function $H$ is defined in $\mathcal{U}$ by $H:=\div\nu$.
Notice that $H$ coincides, when restricted to $\Gamma$, with the {\em curvature} of $\Gamma$
and, since $\partial_\nu\nu=0$, we have $H = \div_{\Gamma}\nu = D\nu[\tau,\tau]$.

Let $\Phi:\overline{\mathcal{U}}\to\overline{\mathcal{U}}$ be a smooth orientation-preserving diffeomorphism
and let $\Gamma_{\Phi}:=\Phi(\Gamma)$.
A possible choice for the unit normal to $\Gamma_\Phi$ is given by the vector field
\begin{equation}\label{nuf}
\nu_\Phi
= \frac{(D\Phi)^{-T}  [\nu]}{|(D\Phi )^{-T} [\nu]|}\circ\Phi^{-1},
\end{equation}
while the vector $\eta$ appearing in \eqref{fdiv} becomes
\begin{equation} \label{etaf}
\etaf =
\frac { D\Phi[\eta] } { |D\Phi[\eta]| }
\circ\Phi^{-1}
\end{equation}
on $\partial \Gamma_\Phi$.
We denote by $H_{\Phi}$ the curvature of $\Gamma_\Phi$.
We shall use the following identity,
which is a particular case of the so-called generalized area formula
(see, \textit{e.g.}, \cite[Theorem~2.91]{AFP}):
for every $\psi\in L^1(\Gamma_\Phi)$
\begin{equation}\label{farea}
\int_{\Gamma_\Phi} \psi \,d\hu =\int_\Gamma (\psi\circ\Phi) J_\Phi \, d\hu,
\end{equation}
where $J_\Phi:=|(D\Phi)^{-T}[\nu]|\det D\Phi$ is the $1$-dimensional Jacobian of $\Phi$.

\subsection{Partial regularity for quasi-minimizers of the Mumford-Shah functional} \label{sect:prelparreg}
Given an open set $\Omega\subset\R^2$, we recall that the space $\sbv(\Omega)$ of special functions of bounded variation
is defined as the set of all functions $u:\Omega\to\R$
whose distributional derivative $Du$ is a bounded Radon measure of the form
$$
Du=\nabla u \, \mathcal{L}^2 + D^ju = \nabla u \, \mathcal{L}^2 + (u^+-u^-)\nu_u\hu\niv S_u,
$$
where $\nabla u \in L^1(\Omega;\R^2)$ is the approximate gradient of $u$,
$S_u$ is the jump set of $u$ (which is countably $(\hu,1)$-rectifiable),
$u^+$ and $u^-$ are the traces of $u$ on $S_u$
and $\nu_u$ is the approximate normal on $S_u$.
We refer to \cite{AFP} for a complete treatment of the space $\sbv$
and a precise definition of all the notions introduced above.
In the sequel we will consider the following notion of convergence in the space $\sbv$,
motivated by the compactness theorem \cite[Theorem~4.8]{AFP}.

\begin{definition} \label{def:convsbv}
We say that $u_n\to u$ in $\sbv(\Omega)$ if
$u_n\to u$ strongly in $L^1(\Omega)$,
$\nabla u_n \wto \nabla u$ weakly in $L^2(\Omega;\R^N)$,
and $D^ju_n\wto D^ju$ weakly* in the sense of measures in $\Omega$.
\end{definition}

Given $u\in\sbv(\Omega)$, we introduce the quantities
$$
D_u(x,r):=\int_{B_r(x)\cap\Omega}|\nabla u|^2\,dy, \qquad
A_u(x,r):=\min_{T\in\mathcal{A}} \int_{\overline{S}_{u}\cap B_r(x)}\dist^2(y,T)\,d\hu(y),
$$
where $\mathcal{A}$ denotes the set of affine lines in $\R^2$, and
$$
E_u(x,r):= D_u(x,r) + r^{-2}A_u(x,r).
$$
The result that we are going to recall expresses the fact that the rate of decay of $E_u$ in small balls determines the $C^{1,\alpha}$-regularity of the jump set of $u$, provided that $u$ satisfies a quasi-minimality property.
In order to state precisely the theorem, we introduce some more notation.
We set $C_{\nu,r}:=\{x\in\R^2: |\pi_{\nu}(x)|<r, \, |x\cdot\nu|<r\}$ for $\nu\in\mathbb{S}^1$ and $r>0$,
where $\pi_{\nu}(x)=x-(x\cdot\nu)\nu$.
If $g:(-r,r)\to\R$, we define the \emph{graph} of $g$ (with respect to the direction $\nu$) to be the set
$$
\gr_\nu(g) := \{ x=x'+g(x')\nu \in\R^2 : \, x'=\pi_\nu(x), \, |x'|<r \} \,.
$$

\begin{theorem} \label{teo:parreg}
Let $u\in\sbv(\Omega)$ be a quasi-minimizer of the Mumford-Shah functional,
that is, assume that there exists $\omega>0$ such that for every ball $B_\rho(x)$
\begin{equation} \label{quaminMS}
\int_{\Omega\cap B_\rho(x)}|\nabla u|^2\,dx + \hu(S_u\cap B_\rho(x))
\leq \int_{\Omega\cap B_\rho(x)}|\nabla v|^2\,dx + \hu(S_v\cap B_\rho(x)) + \omega\rho^2
\end{equation}
for every $v\in\sbv(\Omega)$ with $\{v\neq u\}\subset\subset B_\rho(x)$.
There exist constants $R_0>0$, $\e_0>0$ (depending only on $\omega$) such that if
$$
E_u(x,r)<\e_0r
$$
for some $x\in\overline{S}_u\cap\Omega$ and $r<R:=R_0\wedge\dist(x,\partial\Omega)$,
then there exist a smaller radius $r'\in(0,r)$ (depending only on $\omega$, $R$ and $r$)
and a function $f\in C^{1,\frac14}(-r',r')$ with $f(0)=f'(0)=0$ such that
$$
(\overline{S}_u-x)\cap C_{\nu,r'} = \gr_{\nu}(f),
$$
where $\nu$ denotes the normal to $S_u$ at $x$. Moreover, $\|f\|_{C^{1,\frac14}}\leq C$
for some constant $C$ depending only on $\omega$.
\end{theorem}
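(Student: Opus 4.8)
\emph{Overall strategy.}
This is an $\e$-regularity statement of De Giorgi type, and I would prove it along the classical scheme --- \emph{density estimates}, \emph{decay of the excess $E_u$}, and a concluding \emph{Campanato argument} --- carried out essentially as in the regularity theory for minimizers of the Mumford--Shah functional developed in \cite{AFP}, the only genuinely new feature being the gauge $\omega\rho^2$ in \eqref{quaminMS}, which is of lower order than the length term and therefore enters all estimates merely as a harmless error. As a preliminary I would record the standard consequences of \eqref{quaminMS}: the \emph{essential closedness} $\hu\big((\overline{S}_u\setminus S_u)\cap\Omega\big)=0$ and the \emph{density estimates}
\[
c_0\,\rho\ \leq\ \hu(S_u\cap B_\rho(y))\ \leq\ C_1\,\rho, \qquad y\in\overline{S}_u\cap\Omega,\quad \rho<R_0\wedge\dist(y,\partial\Omega),
\]
the lower bound being obtained by comparing $u$ with the competitor that fills in the jump inside $B_\rho(y)$, the upper one by a standard comparison argument; in each case the quasi-minimality defect contributes only the $\omega\rho^2$, which is absorbed.

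\emph{Excess decay lemma.}
The core is the following decay estimate: there exist a fixed $\lambda\in(0,\tfrac12)$ and $C_0>0$ (depending on $\omega$) such that, for $\e_0$ small enough, if $y\in\overline{S}_u\cap\Omega$, $r<R$ and $E_u(y,r)<\e_0 r$, then
\[
E_u(y,\lambda r)\ \leq\ \tfrac12\,\lambda\, E_u(y,r)\ +\ C_0\,\omega\, r^{2}.
\]
I would prove this by contradiction and compactness. Supposing it failed, one would produce $\omega$-quasi-minimizers $u_k$ and radii $r_k<R$ which, after translating and rescaling to $y_k=0$, $r_k=1$ (so that the rescaled functions are $(\omega r_k)$-quasi-minimizers with $\omega r_k\leq\omega R$), satisfy $E_{u_k}(0,1)=\e_k\to0$ while violating the inequality at scale $\lambda$. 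From $\e_k\to0$ one gets $D_{u_k}(0,1)\to0$ and $A_{u_k}(0,1)\to0$; the latter, together with the lower density bound, forces $\overline{S}_{u_k}\cap B_{1/2}$ to converge in Hausdorff distance, and in $\hu$-measure, to $T\cap B_{1/2}$ for some line $T$ through the origin, which we normalize to $\{x_2=0\}$, and a Lipschitz-approximation lemma represents $\overline{S}_{u_k}\cap B_{3/4}$, up to $\hu$-negligible sets, as a small-Lipschitz graph over $T$. Rescaling the corresponding height functions --- and the $u_k$ themselves --- suitably, and invoking the $\sbv$ compactness and lower semicontinuity results of \cite{AFP}, one extracts a limit pair $(g,v)$ solving the linearization of the Euler--Lagrange system of \eqref{intro}, namely a harmonic problem for $v$ with homogeneous Neumann conditions on $T$ coupled with an elliptic equation for the height $g$, together with a uniform energy bound; interior decay estimates for this linearized system then contradict the failure of the decay inequality once $\lambda$ is fixed small, the summand $C_0\omega r^2$ absorbing the (sub-critical) quasi-minimality defect.

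\emph{Propagation, iteration, graph representation.}
Once the decay lemma is available I would show that the smallness hypothesis \emph{propagates to nearby points and all smaller scales}: if $E_u(x,r)<\e_0 r$ then $E_u(y,s)<\e_0' s$ for all $y\in\overline{S}_u$ close to $x$ and all $s\lesssim r$, using the density bounds and a covering argument to control how the minimizing direction in the definition of $A_u$ varies with $(y,s)$. Iterating the decay lemma at every such pair --- which, after composing finitely many applications, we may carry out with $\lambda=\tfrac14$ --- yields a power decay
\[
E_u(y,s)\ \leq\ C\,\Big(\tfrac{s}{r}\Big)^{3/2}\big(E_u(x,r)+\omega\,r^{2}\big),
\]
so that $s^{-3}A_u(y,s)\lesssim(s/r)^{1/2}$; in particular the minimizing directions $\nu_{y,s}$ form a Cauchy family as $s\to0$, with modulus of continuity controlled by $s^{1/4}$, and, together with the density bounds, this shows that in a cylinder $C_{\nu,r'}$ adapted to $\nu$ (the limiting direction at $x$) and with $r'=r'(\omega,R,r)$ the set $\overline{S}_u-x$ coincides with the graph $\gr_\nu(f)$ of a Lipschitz function $f$ satisfying $f(0)=f'(0)=0$.

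\emph{$C^{1,1/4}$-regularity and the main obstacle.}
Finally, writing $\overline{S}_u-x=\gr_\nu(f)$ and computing $A_u$ in graph coordinates, the decay of $s^{-3}A_u$ turns into the Campanato-type estimate
\[
\frac{1}{2s}\int_{t_0-s}^{t_0+s}\big|f'(t)-(f')_{t_0,s}\big|^2\,dt\ \leq\ C\,s^{1/2}
\]
for $t_0$ near $0$ and $s$ small, $(f')_{t_0,s}$ denoting the mean of $f'$ over $(t_0-s,t_0+s)$; Campanato's integral characterization of H\"older continuity then gives $f'\in C^{0,1/4}$, and keeping track of the scalings yields the quantitative bound $\|f\|_{C^{1,1/4}}\leq C(\omega)$ together with the stated dependences of $R_0,\e_0$ and $r'$. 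I expect the genuinely delicate point to be the compactness step inside the decay lemma: proving the Hausdorff and $\hu$-convergence of the blown-up discontinuity sets to a segment --- where the density \emph{upper} and \emph{lower} bounds and the quasi-minimality are all indispensable to rule out loss of mass by concentration or escape --- together with the Lipschitz approximation of the jump set and the identification of the linearized problem satisfied by the limit; everything downstream of this is a routine, if laborious, iteration.
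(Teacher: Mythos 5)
The paper does not prove Theorem~\ref{teo:parreg} at all: immediately after the statement it simply remarks that the result ``is a consequence of \cite[Theorem~8.2 and Theorem~8.3]{AFP}: the only missing point is the uniform bound in $C^{1,\frac14}$, which is not explicitly stated but can be deduced by checking that the constants appearing in the proof depend only on $\omega$.'' So the ``proof'' in the paper is a citation plus a one-line observation about constants. What you have written is, in substance, a faithful reconstruction of the argument that \cite{AFP} uses to prove the two cited theorems: density upper and lower bounds with the gauge $\omega\rho^2$ absorbed as a lower-order term, the excess-decay lemma established by a blow-up/compactness contradiction (with the rescaled functions being $(\omega r_k)$-quasi-minimizers, the jump sets converging to a segment, and the limit pair solving the linearized Neumann problem), tilt propagation and iteration to a power decay of $E_u/r$, and a Campanato argument delivering $C^{1,1/4}$. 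The dimensional bookkeeping you do ($3/2$-decay of $E_u/r$ giving $s^{-3}A_u\lesssim (s/r)^{1/2}$ and hence H\"older exponent $1/4$) matches the exponents in \cite{AFP}. The one thing your sketch does not make explicit, but which is precisely the point the paper calls out, is that all constants along the chain ($c_0$, $C_1$, $\lambda$, $C_0$, and the final Campanato constant) depend only on $\omega$ (and the ambient dimension), which is what gives the uniform bound $\|f\|_{C^{1,1/4}}\leq C(\omega)$; you assert this at the end, and it is indeed the case, but a complete proof would have to track this dependence through the compactness argument. In short: your route is correct and is the route of the cited reference; the paper itself takes no route, it just points to the reference and tightens one constant-dependence claim.
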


The previous result (which holds also in dimension $N>2$) is a consequence of \cite[Theorem~8.2 and Theorem~8.3]{AFP}:
the only missing point is the uniform bound in $C^{1,\frac14}$,
which is not explicitly stated but can be deduced by checking that
the constants appearing in the proof depend only on $\omega$.
Notice that the theorem provides the regularity of $S_u$ in balls well contained in $\Omega$;
concerning the regularity of the discontinuity set at the intersection with the boundary of $\Omega$,
under Neumann conditions, we have the following result, which is essentially contained in the book \cite{Dav}
(see, in particular, \cite[Remark~79.42]{Dav}; see also \cite{MadSol}).

\begin{theorem}\label{teo:parreg2}
Let $\Omega\subset\R^2$ be a bounded, open set with boundary of class $C^1$,
and let $u\in\sbv(\Omega)$ satisfy the same assumption of Theorem~\ref{teo:parreg}.
Then there exist $b\in(0,1)$ and $\tau>0$ (depending only on $\omega$ and on $\Omega$)
such that, setting
$$
\Omega(\tau) := \{x\in\Omega : \dist(x,\partial\Omega)<\tau\},
$$
the intersection $\overline{S}_u\cap\Omega(\tau)$ is a finite disjoint union of curves of class $C^{1,b}$
intersecting $\partial\Omega$ orthogonally,
with $C^{1,b}$-norm uniformly bounded by a constant depending only on $\omega$ and $\Omega$.
\end{theorem}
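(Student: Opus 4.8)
The plan is to deduce the boundary statement from the interior regularity of Theorem~\ref{teo:parreg} by a reflection argument across $\partial\Omega$. First I would localize: since $\partial\Omega$ is compact and of class $C^1$, there is a finite family of coordinate charts in which, after a rigid motion, $\partial\Omega$ is the graph $\{s=\gamma(t)\}$ of a $C^1$ function with $\gamma(0)=\gamma'(0)=0$, and with a uniform modulus of continuity for the various $\gamma'$. In each chart the map $\Phi(t,s):=(t,s-\gamma(t))$ is a bi-Lipschitz $C^1$ diffeomorphism carrying $\partial\Omega$ onto $\{s=0\}$ and $\Omega$ onto $\{s>0\}$ locally, with $D\Phi$ and $D\Phi^{-1}$ uniformly close to the identity near the centre of the chart and equal to the identity at the centre itself. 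A computation shows that $\tilde u:=u\circ\Phi^{-1}$ is again a quasi-minimizer of the Mumford--Shah functional on half-balls, with a constant $\omega'$ depending only on $\omega$ and $\Omega$ (the distortion of $\Phi$ being absorbed into the constant and into the $\rho^2$ error), and $\overline{S}_{\tilde u}=\Phi(\overline{S}_u)$.

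Next I would reflect. Let $\bar u$ be the even reflection of $\tilde u$ across $\{s=0\}$; since the boundary condition is of Neumann type, $\bar u\in\sbv$ of the full ball and $\overline{S}_{\bar u}=\overline{S}_{\tilde u}\cup\sigma(\overline{S}_{\tilde u})$, where $\sigma$ denotes the reflection. I would then check that $\bar u$ is a quasi-minimizer of the Mumford--Shah functional on every ball, with constant a fixed multiple of $\omega'$: given a competitor $\bar v$ on a ball centred on $\{s=0\}$, the even reflection of whichever of the two halves obtained by restricting $\bar v$ to $\{s>0\}$ and to $\{s<0\}$ carries the smaller energy is an admissible competitor for $\tilde u$ of at most half the energy of $\bar v$ on the corresponding half-ball; a ball not centred on the axis is handled by enclosing it in a concentric-on-the-axis ball of at most doubled radius. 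Theorem~\ref{teo:parreg} then applies to $\bar u$ at any point $x_0\in\overline{S}_{\bar u}\cap\{s=0\}$, provided $E_{\bar u}(x_0,r)<\e_0 r$ holds at some small scale $r$.

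To verify this smallness uniformly, and to obtain that near $\partial\Omega$ the set $\overline{S}_u$ is a union of single arcs meeting $\partial\Omega$ orthogonally with no interior junctions, I would run a blow-up/compactness argument. Any tangent object to $\overline{S}_{\bar u}$ at a point of $\{s=0\}$ is the singular set of a planar global minimizer of the Mumford--Shah functional which is symmetric with respect to $\{s=0\}$; by the classification of such minimizers together with the fact that $\overline{S}_{\tilde u}$ actually lies in the closed half-plane $\{s\geq0\}$, one is left only with a single straight line orthogonal to $\{s=0\}$. By a standard contradiction argument over sequences of quasi-minimizers with the given $\omega$ and of boundary points, this yields a collar width $\tau>0$, depending only on $\omega$ and $\Omega$, such that the hypothesis of Theorem~\ref{teo:parreg} holds at every point of $\overline{S}_u\cap\Omega(\tau)$ and triple junctions are excluded from $\Omega(\tau)$; the number of arcs reaching $\Omega(\tau)$ is then finite by the uniform lower Ahlfors density bound for $\overline{S}_u$. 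Pulling the resulting $C^{1,\frac14}$-graph back through $\Phi^{-1}$, whose differential is the identity at the centre of the chart, preserves the orthogonality with $\partial\Omega$, and tracking the dependence of all constants through the charts, the reflection, and Theorem~\ref{teo:parreg} gives the uniform $C^{1,b}$ bound.

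The hard part will be the quantitative bookkeeping rather than the geometric picture: one must show that the reflected function is a quasi-minimizer with a constant controlled only by $\omega$ and $\Omega$ (in particular handling competitor balls straddling $\partial\Omega$), and extract from the compactness argument a uniform $\tau$ and a uniform Hölder norm. This is precisely the content of \cite[Remark~79.42]{Dav} (see also \cite{MadSol}), which I would cite for the fully quantitative conclusion, the remaining subtlety being the interplay between the mere $C^1$-regularity of $\partial\Omega$ and the $\tfrac14$-Hölder regularity of the flat model, which is reflected in the value of the exponent $b\in(0,1)$.
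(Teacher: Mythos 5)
The paper does not actually prove Theorem~\ref{teo:parreg2}: it states that the result ``is essentially contained in'' \cite[Remark~79.42]{Dav} (see also \cite{MadSol}) and stops there, so the true comparison is between your sketch and that citation, which you ultimately also invoke. Your outline captures the standard strategy behind that reference: localize, flatten $\partial\Omega$, reflect evenly across the flattened boundary, apply interior regularity to the reflected function, classify blow-ups to rule out singular configurations at the boundary, and transport back. The competitor-splitting trick you use to propagate quasi-minimality through even reflection (take whichever of the two restricted halves has smaller energy) is correct, as is the doubling-of-radius device for balls not centred on the flattened boundary.

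There is, however, a genuine gap in the step ``pulling the resulting $C^{1,\frac14}$-graph back through $\Phi^{-1}$'', and your closing remark about the exponent $b$ does not repair it. If $\partial\Omega$ is merely $C^1$, the flattening $\Phi$ is merely a $C^1$ diffeomorphism, and composing a $C^{1,\frac14}$ graph with $\Phi^{-1}$ produces in general only a $C^1$ graph, not a $C^{1,b}$ graph for any $b>0$: the derivative of the pulled-back curve acquires a factor $D\Phi^{-1}$, which is merely continuous, so the H\"older seminorm of the derivative is destroyed rather than degraded. In the same vein, the flattened function $\tilde u$ is not an $\omega'$-quasi-minimizer in the sense of Theorem~\ref{teo:parreg}: the $C^1$ change of variables distorts the Mumford--Shah energy by a multiplicative factor $1\pm\e(\rho)$ with $\e(\rho)\to0$ as $\rho\to0$, and combined with the usual linear energy upper bound on small balls this produces an additive error of order $\e(\rho)\rho$, which is $o(\rho)$ but not $O(\rho^2)$; one must therefore work in the class of almost-minimizers with a decaying gauge. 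Both points are exactly the quantitative content buried in the cited \cite[Remark~79.42]{Dav}; to obtain an actual $C^{1,b}$ conclusion one needs quantified information on the modulus of continuity of the unit normal to $\partial\Omega$ (in the applications of this paper $\partial\Omega$ is $C^3$, so this is harmless), and without that the reflection argument as you wrote it only delivers $C^1$ regularity of $\overline{S}_u$ near $\partial\Omega$.
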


We conclude this preliminary section by recalling a well known property
of quasi-minimizers of the Mumford-Shah functional,
namely a lower bound on the $\hu$-dimensional density of the jump set in balls centered at any point of its closure.
The estimate was proved in \cite{DCL} in balls entirely contained in the domain $\Omega$ (see also \cite[Theorem~7.21]{AFP});
we refer also, when a Dirichlet condition is assumed at the boundary of the domain,
to \cite{CL} for balls centered at $\partial\Omega$,
and to \cite{BG} for balls possibly intersecting $\partial\Omega$ but not necessarily centered at $\partial\Omega$,
and finally to \cite[Section~77]{Dav} in the case of balls intersecting $\partial\Omega$ when a Neumann condition is imposed.

In fact, for our purposes we will need to consider the mixed situation,
where we impose a Dirichlet condition on a part $\partial_D\Omega$ of the boundary
and a Neumann condition on the remaining part $\partial_N\Omega$.
The result is still valid in this case,
for balls centered at the intersection between the Dirichlet and the Neumann part of the boundary,
under the additional assumption that $\partial_D\Omega$ and $\partial_N\Omega$ meet orthogonally.
We are not aware of any result of this kind in the existing literature,
but the proof can be obtained by following closely the strategy of the original proof in \cite{DCL},
combined also with some new ideas contained in \cite{BG}.
We will sketch the proof in Section~\ref{appendix:lowerbound},
referring the reader to \cite{Bon} for the details.

The precise statement is the following.

\begin{theorem} \label{teo:dlb}
Let $\Omega\subset\R^2$ be a bounded, open set,
let $\partial_D\Omega\subset\partial\Omega$ be relatively open and of class $C^1$,
$\partial_N\Omega:=\partial\Omega\setminus\overline{\partial_D\Omega}$ of class $C^1$,
and assume that $\partial_D\Omega$ meets $\partial_N\Omega$ orthogonally.
Let $\Omega'\subset\R^2$ be a bounded, open set of class $C^1$ such that $\Omega\subset\Omega'$ and $\partial\Omega\cap\Omega'=\partial_D\Omega$.
Let $u\in\sbv(\Omega')$ be such that $\overline{S}_u\cap\overline{\partial_D\Omega}=\emptyset$
and $u\in W^{1,\infty}(\Omega'\setminus\overline{S}_u)$.

Let $w\in\sbv(\Omega')$, with $w=u$ in $\Omega'\setminus\Omega$, satisfy for every $x\in\ombar$ and for every $\rho>0$
$$
\int_{\Omega'\cap B_\rho(x)}|\nabla w|^2\,dx + \hu(S_w\cap B_\rho(x))
\leq \int_{\Omega'\cap B_\rho(x)}|\nabla v|^2\,dx + \hu(S_v\cap B_\rho(x)) + \omega\rho^2
$$
for every $v\in\sbv(\Omega')$ such that $v=u$ in $\Omega'\setminus\Omega$ and $\{v\neq w\}\subset\subset B_\rho(x)$.
Then there exist $\rho_0>0$ and $\theta_0>0$ (depending only on $\omega$, $u$ and $\Omega$) such that
$$
\hu(S_w\cap B_\rho(x)) \geq \theta_0\rho
$$
for every $\rho\leq\rho_0$ and $x\in\overline{S}_w$.
\end{theorem}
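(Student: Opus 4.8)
\emph{Plan.} I would argue by contradiction and blow-up, following the scheme of De~Giorgi, Carriero and Leaci~\cite{DCL}. If the statement failed for the given datum $u$ and constant $\omega$, there would be admissible $w_n\in\sbv(\Omega')$ satisfying the quasi-minimality inequality, points $x_n\in\overline{S}_{w_n}\cap\ombar$ and radii $\rho_n\to0$ with $\rho_n^{-1}\hu(S_{w_n}\cap B_{\rho_n}(x_n))\to0$. A preliminary ingredient, obtained by the usual comparison argument — filling a ball by a constant in the interior and near $\partial_N\Omega$, and by the Lipschitz datum $u$ near $\partial_D\Omega$ (admissible since $\overline{S}_u$ is at positive distance from $\overline{\partial_D\Omega}$ and $w_n=u$ on $\Omega'\setminus\Omega$) — is the energy upper bound $\int_{\Omega'\cap B_\rho(x)}|\nabla w_n|^2+\hu(S_{w_n}\cap B_\rho(x))\le C\rho$ for $\rho$ small, uniformly in $n$. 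I would then set $v_n(y):=\rho_n^{-1/2}\bigl(w_n(x_n+\rho_n y)-c_n\bigr)$ for suitable constants $c_n$, so that $v_n$ is a quasi-minimizer of the rescaled Mumford--Shah functional with vanishing constant $\omega\rho_n$ on $\Omega_n:=(\Omega-x_n)/\rho_n$, with $0\in\overline{S}_{v_n}$ and $\hu(S_{v_n}\cap B_1)\to0$.

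\emph{The blow-up limit.} Up to subsequences, $\Omega_n$ converges to the whole plane, to a half-plane, or — and here the orthogonality of $\partial_D\Omega$ and $\partial_N\Omega$ enters — to a quarter-plane with a right angle. By the $\sbv$-compactness theorem recalled in Section~\ref{sect:prelparreg} (applied using the energy upper bound together with a Poincar\'e inequality for $\sbv$ functions with small jump set to bound $\|v_n\|_{L^2(B_1)}$) and the lower semicontinuity of the Mumford--Shah energy, a subsequence of $v_n$ converges in $\sbv(B_1)$ to some $v_\infty$ with $\hu(S_{v_\infty}\cap B_1)\le\liminf_n\hu(S_{v_n}\cap B_1)=0$; thus $v_\infty$ has no jump in $B_1$ and, being a local minimizer of the Dirichlet energy (a limit of quasi-minimizers whose constant $\omega\rho_n$ vanishes), $v_\infty$ is harmonic. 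On the portion of the limiting boundary coming from $\partial_D\Omega$ the rescaled datum tends to $0$, because $u$ is Lipschitz there, so $v_\infty$ carries a homogeneous Dirichlet condition there and a natural one on the line coming from $\partial_N\Omega$; reflecting evenly across the Neumann line and oddly across the Dirichlet line — both legitimate precisely because the two lines meet at a right angle — $v_\infty$ becomes harmonic in a full neighbourhood of $0$, whence $r^{-1}\!\int_{B_r(0)}|\nabla v_\infty|^2\to0$ as $r\to0^+$.

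\emph{The contradiction.} Choose $r$ so small that $r^{-1}\!\int_{B_r(0)}|\nabla v_\infty|^2<\e_0/4$, where $\e_0$ is the threshold of Theorem~\ref{teo:parreg}. Since $v_n\to v_\infty$ strongly in $H^1_{loc}$ (standard for minimizers) and $r^{-2}A_{v_n}(0,r)\le\hu(\overline{S}_{v_n}\cap B_r(0))\to0$, we get $E_{v_n}(0,r)<\e_0 r$ for $n$ large. Theorem~\ref{teo:parreg} — or its boundary counterpart near $\partial_N\Omega$ (Theorem~\ref{teo:parreg2}), near $\partial_D\Omega$ (the Dirichlet theory of \cite{CL,BG}), or near the junction (see below) — then forces $\overline{S}_{v_n}$ to coincide near $0$ with the graph of a function vanishing together with its derivative at $0$ and with $C^{1,1/4}$-norm bounded by a constant independent of $n$, over an interval $(-r',r')$ with $r'\ge r_0>0$ uniform (the constants stabilize as $\omega\rho_n\to0$). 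Hence $\hu(S_{v_n}\cap B_{r_0/2}(0))\ge c\,r_0>0$, contradicting $\hu(S_{v_n}\cap B_1)\to0$; this yields $\theta_0,\rho_0>0$ depending only on $\omega$, $u$, $\Omega$, as required.

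\emph{The junction, and the main obstacle.} The only genuinely new case above is that of centres $x_n$ approaching the junction $\overline{\partial_D\Omega}\cap\overline{\partial_N\Omega}$ while $\dist(x_n,\partial\Omega)/\rho_n$ stays bounded. There I would first flatten, by a $C^1$ change of variables (possible since the two arcs are $C^1$ and meet orthogonally), the two boundary portions onto a pair of perpendicular half-lines; this replaces $F$ by an equivalent anisotropic functional whose coefficients are uniformly close to the Euclidean ones on the relevant small scales, hence only affects constants. Then I would reflect the competitor itself evenly across the (flattened) Neumann line: because the two lines are perpendicular, this reflection maps the (flattened) Dirichlet line onto itself, so the reflected function is a quasi-minimizer of a near-standard Mumford--Shah functional carrying a Dirichlet datum on that single line, to which the $\e$-regularity and density estimates of \cite{CL,BG} apply. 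I expect this reduction — propagating the quasi-minimality property, the energy upper bound and all the constants through the flattening and the reflection, so that $\rho_0$ and $\theta_0$ remain controlled only by $\omega$, $u$ and $\Omega$ — to be the main technical obstacle; the details are in \cite{Bon}.
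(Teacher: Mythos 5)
Your overall structure — contradiction, rescaling, compactness via a Poincar\'e inequality for $\sbv$ functions with small jump set, identification of a harmonic blow-up limit, and double reflection (even across the Neumann line, odd across the Dirichlet line) of that limit, with the orthogonality of $\partial_D\Omega$ and $\partial_N\Omega$ being what makes the reflection work — matches the ingredients the paper uses. But you route the conclusion through the $\e$-regularity Theorems~\ref{teo:parreg}/\ref{teo:parreg2} (small excess $\Rightarrow$ $C^{1,1/4}$ graph $\Rightarrow$ $\hu(S_{v_n}\cap B_{r_0/2})\ge c r_0$), whereas the paper never invokes $\e$-regularity at this stage. Instead it proves a \emph{decay lemma} (Lemma~\ref{lemma:dlb5}): if $\hu(S_v\cap B_\rho)$ is small relative to $\rho$ and the deviation from minimality is small relative to the energy, then $\F(v;B_{\tau\rho})\le C\tau^2\F(v;B_\rho)$. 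The density lower bound then follows by iterating this decay exactly as in \cite{DCL} and \cite[Theorem~3.4]{BG}. The reflection in the paper is applied only to the harmonic \emph{limit} $w_\infty$ of the blow-up, purely to obtain the $\tau^2$ decay constant; this is harmless, whereas your proposal to reflect the competitors and the quasi-minimizers \emph{themselves} across the (flattened) Neumann line, and to push $\e$-regularity through an anisotropic functional, is precisely the step you acknowledge you cannot complete.

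There is a concrete gap, and it is not peripheral. The only genuinely new case in Theorem~\ref{teo:dlb} is the junction $\overline{\partial_D\Omega}\cap\overline{\partial_N\Omega}$: the interior case is \cite[Theorem~7.21]{AFP}, the pure Neumann boundary case is in \cite[Section~77]{Dav}, and the pure Dirichlet case is in \cite{CL,BG}. Your argument needs an $\e$-regularity statement at the junction (and also near $\partial_D\Omega$ for a rescaled quasi-minimizer with varying Dirichlet datum), and no such theorem is cited or proved; you defer it to \cite{Bon} and call it the main obstacle. Beyond incompleteness, there is a risk of circularity: the $\e$-regularity theory for Mumford--Shah quasi-minimizers is built on the density lower bound (this is why Theorems~\ref{teo:parreg} and~\ref{teo:parreg2} come with a quasi-minimality hypothesis and are proved in \cite{AFP,Dav} \emph{after} the density estimates), so an $\e$-regularity theorem at the junction would require as an ingredient exactly the junction density estimate you are trying to establish. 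A related, smaller gap: you assert that $v_n\to v_\infty$ strongly in $H^1_{\rm loc}$ is ``standard for minimizers''; the $v_n$ are only quasi-minimizers on varying domains $\Omega_n$ with data on $D_n$, and the strong $L^2$ convergence of the gradients is nontrivial — the paper devotes Lemmas~\ref{lemma:dlb2}--\ref{lemma:dlb4} to it (uniform isoperimetric constant in $\Omega_n$, truncation to make the Poincar\'e inequality applicable, approximation of limit competitors by competitors vanishing on $D_n$). The decay-lemma route is the cleaner and, crucially, the self-contained one: it gets the quantitative decay directly from the reflected harmonic limit and never needs a regularity theorem for the rescaled functions.
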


\end{section}

%%%%%%%%%%%%%%%%%%%%%%%%%%%%%%%%%%%%%%%%%%%%%%%%%%%%%%%%%%%%%%%%%%%%%%%%%%%%%%%%%%%%%%%%%%%%%%%%%%%%%%%%%
%%%%%%%%%%%%%%%%%%%%%%%%%%%%%%%%%%%%%%%%%%%%%%%%%%%%%%%%%%%%%%%%%%%%%%%%%%%%%%%%%%%%%%%%%%%%%%%%%%%%%%%%%
%%%%%%%%%%%%%%%%%%%%%%%%%%%%%%%%%%%%%%%%%%%%%%%%%%%%%%%%%%%%%%%%%%%%%%%%%%%%%%%%%%%%%%%%%%%%%%%%%%%%%%%%%
%%%%%%%%%%%%%%%%%%%%%%%%%%%%%%%%%%%%%%%%%%%%%%%%%%%%%%%%%%%%%%%%%%%%%%%%%%%%%%%%%%%%%%%%%%%%%%%%%%%%%%%%%
%%%%%%%%%%%%%%%%%%%%%%%%%%%%%%%%%%%%%%%%%%%%%%%%%%%%%%%%%%%%%%%%%%%%%%%%%%%%%%%%%%%%%%%%%%%%%%%%%%%%%%%%%

\begin{section}{Setting and main result} \label{sect:setting}

Let $\Omega\subset\R^2$ be an open, bounded, connected set with boundary of class $C^3$.
We introduce the following space of \emph{admissible pairs}
$$
\spazio := \bigl\{ (K,v) : K\subset\R^2 \text{ closed},\, v\in H^{1}(\Omega\setminus K) \bigr\}
$$
and we set
$$
F(K,v) := \int_{\Omega\setminus K} |\nabla v|^2\,dx + \hu(K\cap\Omega)
\qquad\text{for } (K,v)\in\spazio.
$$
It will be useful to consider also a localized version of the functional:
for $A\subset\Omega$ open we set
$$
F((K,v);A) := \int_{A\setminus K} |\nabla v|^2\,dx + \hu(K\cap A).
$$

Given an admissible pair $(K,v)\in\spazio$ and assuming that $K$ is a regular curve connecting two points of $\partial\Omega$,
we denote by $\nu$ a smooth vector field coinciding with the unit normal to $K$ when restricted to the points of $K$,
and by $H$ the curvature of $K$ (see Section~\ref{sect:prelgeom}).
For any function $z\in H^{1}(\Omega\setminus K)$
we denote the traces of $z$ on the two sides of $K$ by $z^+$ and $z^-$:
precisely, for $\hu$-a.e. $x\in K$ we set
$$
z^\pm(x) := \lim_{r\to0^+} \frac{1}{|B_r(x)\cap V_x^\pm|} \int_{B_r(x)\cap V_x^\pm}z(y)\,dy,
$$
where $V_x^\pm:=\{y\in\R^2:\pm (y-x)\cdot\nu(x)\geq 0\}$.
With an abuse of notation, we denote by $z^+$ and $z^-$ also the restrictions of $z$ to $\Omega^+$ and $\Omega^-$ respectively,
where $\Omega^+$ and $\Omega^-$ are the two connected components of $\Omega\setminus K$,
with the normal vector field $\nu$ pointing into $\Omega^+$.
Finally we denote by $\norm$ the exterior unit normal vector to $\partial\Omega$
and by $H_{\partial\Omega}$ the curvature of $\partial\Omega$ with respect to $\norm$.

\begin{definition} \label{def:spazioreg}
We say that $(K,v)\in\spazio$ is a \emph{regular pair} if $K$ is a curve of class $C^\infty$ connecting two points of $\partial\Omega$, and there exists $\partial_D\Omega\subset\subset\partial\Omega\setminus K$ relatively open in $\partial\Omega$
such that $v$ is a solution to
\begin{equation} \label{minel}
\int_{\Omega\setminus K} \nabla v \cdot \nabla z \,dx = 0 \qquad\text{for every } z\in H^1(\Omega\setminus K)\text{ with } z=0 \text{ on } \partial_D\Omega,
\end{equation}
that is, $v$ is a weak solution to
$$
\left\{
\begin{array}{ll}
\Delta v=0 & \hbox{in }\Omega\setminus K, \\
\partial_\nu v^\pm =0 & \hbox{on }K\cap\Omega, \\
\partial_{\norm} v =0 & \hbox{on }\partial_N\Omega:=\partial\Omega\setminus\partial_D\Omega.
\end{array}
\right.
$$
We denote by $\spazioreg$ the space of all such pairs.
\end{definition}

\begin{definition} \label{def:subdomain}
Given a regular pair $(K,v)\in\spazioreg$,
we say that an open subset $U\subset\R^2$ with Lipschitz boundary
is an \emph{admissible subdomain}
if $K\subset U$ and $\overline{U}\cap\mathcal{S}=\emptyset$,
where $\mathcal{S}$ denotes the relative boundary of $\partial_D\Omega$ in $\partial\Omega$.
In this case we define the space $H^{1}_U(\Omega\setminus K)$
consisting of all functions $v\in H^1(\Omega\setminus K)$
such that $v=0$ in $(\Omega\setminus U)\cup\partial_D\Omega$
(the condition on $\partial_D\Omega$ has to be intended in the sense of traces).
Notice that equation \eqref{minel} holds for every $z\in H^{1}_U(\Omega\setminus K)$.
\end{definition}

\medskip
\begin{figure}[tb]
\begin{center}
\input{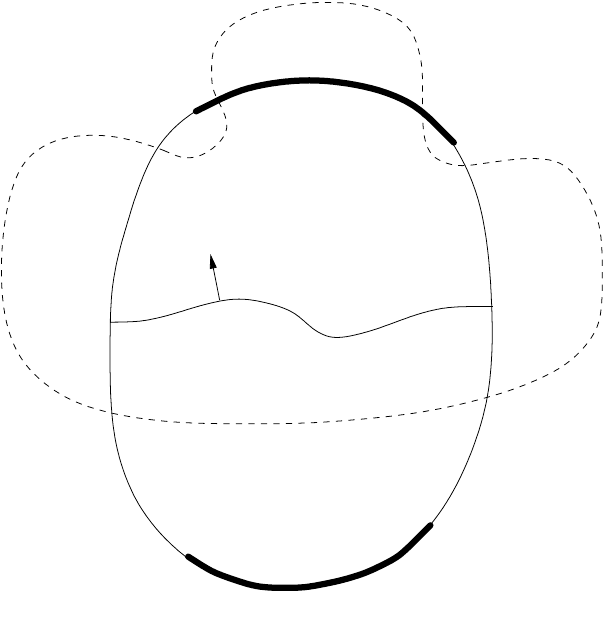_t}
\end{center}
\caption{An admissible subdomain $U$ for a regular pair $(K,v)$ (see Definition~\ref{def:subdomain}). Notice that $U$ excludes the relative boundary of $\partial_D\Omega$.}
\end{figure}
\medskip

This paper deals with regular critical pairs $(\Gamma,u)$,
according to the following definition motivated by the formula for the first variation of the functional $F$
(see \eqref{varI} and Remark~\ref{rm:varI}).

\begin{definition} \label{def:critpair}
We say that a regular pair $(\Gamma,u)\in\spazioreg$ is a \emph{regular critical pair} for $F$ if the following conditions are satisfied:
\begin{itemize}
  \item[(i)] $\Gamma$ meets $\partial\Omega$ orthogonally,
  \item[(ii)] \emph{transmission condition}:
    \begin{equation} \label{EL1}
    H= |\nabla_\Gamma u^+|^2 - |\nabla_\Gamma u^-|^2 \qquad\text{on }\Gamma\cap\Omega,
    \end{equation}
  \item[(iii)] \emph{non-vanishing jump condition}: $|u^+-u^-|\geq c >0$ on $\Gamma$.
\end{itemize}
\end{definition}

\begin{remark} \label{rm:u}
The assumption of $C^\infty$-regularity of the curve $\Gamma$ is not so restrictive as it may appear:
indeed, as a consequence of the transmission condition \eqref{EL1} and of the fact that $u$ satisfies \eqref{minel},
$\Gamma$ is automatically analytical as soon as it is of class $C^{1,\alpha}$ (see \cite{KLM}).
Moreover, by \eqref{minel} $u$ is of class $C^\infty$ up to $\Gamma\cap\Omega$
and the traces $\nabla u^+$, $\nabla u^-$ of $\nabla u$ are well defined on both sides of $\Gamma$.
\end{remark}

Besides the notion of critical pair, which amounts to the vanishing of the \emph{first} variation of the functional,
we also introduce the concept of stability,
which is defined in terms of the positivity of the \emph{second} variation.
Its explicit expression at a regular critical pair $(\Gamma,u)$, which will be computed in Theorem~\ref{teo:varII},
motivates the definition of the quadratic form $\partial^2F((\Gamma,u);U):H^1(\Gamma\cap\Omega)\to\R$ given by
\begin{align} \label{d2f}
\partial^2F((\Gamma,u);U)[\vphi] :=
&-2\int_\Omega |\nabla \vf|^2\,dx
+\int_{\Gamma\cap\Omega} |\nabla_\Gamma\vphi|^2\,d\hu
+\int_{\Gamma\cap\Omega} H^2\vphi^2\,d\hu \nonumber\\
&-\int_{\nbordo} H_{\partial\Omega}\,\vphi^2\,d\huu
\end{align}
where $\vf\in\dir$ solves
\begin{equation} \label{vf}
\int_{\Omega} \nabla\vf \cdot \nabla z\,dx
+\int_{\Gamma\cap\Omega} \bigl[ z^+\div_\Gamma(\vphi\nabla_\Gamma u^+) - z^-\div_\Gamma(\vphi\nabla_\Gamma u^-) \bigr] \,d\hu =0
\end{equation}
for every $z\in\dir$.
Notice that the last integral in \eqref{d2f} in fact reduces to the sum $H_{\partial\Omega}(x_1)\vphi^2(x_1) + H_{\partial\Omega}(x_2)\vphi^2(x_2)$, where $x_1$ and $x_2$ are the intersections of $\Gamma$ with $\partial\Omega$.
The (nonlocal) dependence on $U$ is realized through the function $\vf$.

\begin{remark} \label{rm:hunmezzo}
The second integral in equation \eqref{vf} has to be intended
in the duality sense between $H^{-\frac12}(\Gamma\cap\Omega)$ and $H^{\frac12}(\Gamma\cap\Omega)$.
Indeed, by directly estimating the Gagliardo $H^{\frac12}$-seminorm one can check that the product
$\vphi\nabla_\Gamma u^\pm$ belongs to $H^\frac12(\Gamma\cap\Omega)$ as long as $\nabla_\Gamma u^\pm\in C^{0,\alpha}(\Gamma)$
for some $\alpha>\frac12$. In turn, the latter regularity property is guaranteed by Lemma~\ref{lemma:Neumannreg}.
\end{remark}

\begin{definition} \label{def:varIIpos}
We say that a regular critical pair $(\Gamma,u)$ (see Definition~\ref{def:critpair})
is \emph{strictly stable} in an admissible subdomain $U$ if
\begin{equation} \label{hppos}
\partial^2F((\Gamma,u);U)[\vphi]>0 \qquad\text{for every }\vphi\in H^1(\Gamma\cap\Omega)\setmeno\{0\}.
\end{equation}
\end{definition}

The aim of this paper is to discuss the relation between the notion of strict stability of a regular critical pair
and the one of local minimality.
It is easily seen that the positive semidefiniteness of the quadratic form $\partial^2F((\Gamma,u);U)$
is a necessary condition for local minimality in $U$ (see \cite[Theorem~3.15]{CMM}).
In the main result of the paper we prove that its \emph{strict positivity}
is in fact a \emph{sufficient} condition for a regular critical pair to be a local minimizer in the $L^1$-sense:

\begin{theorem} \label{teo:minSBV}
Let $(\Gamma,u)$ be a strictly stable regular critical pair in an admissible subdomain $U$,
according to Definition~\ref{def:varIIpos}.
Then $u$ is an isolated local minimizer for $F$ in $U$, in the sense that
there exists $\delta>0$ such that
\begin{equation}\label{locmin2}
F(\Gamma,u)< F(K,v)
\end{equation}
for every $(K,v)\in\spazio$ such that $v=u$ in $(\Omega\setminus U)\cup\partial_D\Omega$ and $0<\|u-v\|_{L^1(\Omega)}<\delta$.
\end{theorem}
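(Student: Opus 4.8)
The plan is to follow the two-step strategy outlined in the introduction, reducing the desired $L^1$-minimality to successively stronger-topology minimality statements which have already been (or will be) established. Concretely, the proof invokes three milestone results: (a) strict stability implies local minimality with respect to $W^{2,\infty}$-perturbations of $\Gamma$ (Theorem~\ref{teo:minW}, adapting \cite{CMM}); (b) $W^{2,\infty}$-minimality implies $C^{1,\alpha}$-minimality (the technical heart, treated in Section~\ref{sect:minW1inf} via the regularity theory for quasi-minimizers of the area functional, after restricting to competitors $v$ with $\|v-u\|_{W^{1,\infty}}\le 1$ so that the Dirichlet energy is a genuine volume perturbation); and (c) the passage from $C^{1,\alpha}$-minimality to $L^1$-minimality, which is the content of the present theorem. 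So the actual work here is step (c), carried out by a contradiction/penalization argument.

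First I would argue by contradiction: suppose there exist admissible pairs $(\Gamma_n,u_n)$ with $v=u$ on $(\Omega\setminus U)\cup\partial_D\Omega$, $0<\|u-u_n\|_{L^1(\Omega)}\to 0$, and $F(\Gamma_n,u_n)\le F(\Gamma,u)$. By truncating at level $\|u\|_\infty$ (which does not increase $F$) I may assume $\|u_n\|_\infty\le\|u\|_\infty$, whence $u_n\to u$ in every $L^p$, $p\ge 1$; in particular $\e_n:=\|u_n-u\|_{L^2(\Omega)}^2\to 0$. Next I replace each $(\Gamma_n,u_n)$ by a minimizer $(K_n,v_n)$ of the penalized problem
\[
\min\Bigl\{ F(K,w) + \beta\Bigl(\sqrt{(\|w-u\|_{L^2(\Omega)}^2-\e_n)^2+\e_n^2}-\e_n\Bigr) : (K,w)\in\spazio,\ w=u \text{ on } (\Omega\setminus U)\cup\partial_D\Omega \Bigr\},
\]
with $\beta$ a fixed large constant to be chosen. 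The penalization term is designed so that the competitor $(\Gamma_n,u_n)$ itself has penalized energy $\le F(\Gamma,u)$, giving $F(K_n,v_n)\le F(\Gamma_n,u_n)\le F(\Gamma,u)$; moreover the term is $C^1$ in $\|w-u\|_{L^2}^2$ and has bounded derivative, so $(K_n,v_n)$ satisfies a uniform quasi-minimality estimate of the form appearing in Theorem~\ref{teo:parreg} and Theorem~\ref{teo:dlb}, with $\omega$ independent of $n$. Existence of minimizers of the penalized problem follows from the $SBV$ compactness and lower semicontinuity (Definition~\ref{def:convsbv}), after writing $w$ through its $SBV$ extension and using the uniform $L^\infty$ bound.

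With the uniform quasi-minimality in hand, up to a subsequence $(K_n,v_n)$ converges (in the sense of Definition~\ref{def:convsbv} together with Hausdorff convergence of the discontinuity sets, using the density lower bound of Theorem~\ref{teo:dlb}) to a minimizer $(K_\infty,v_\infty)$ of the limiting functional
\[
\min\Bigl\{ F(K,w) + \beta\|w-u\|_{L^2(\Omega)}^2 : (K,w)\in\spazio,\ w=u \text{ on } (\Omega\setminus U)\cup\partial_D\Omega \Bigr\}.
\]
Now the calibration argument of \cite{Mor} provides a threshold $\beta_0$ such that for $\beta\ge\beta_0$ the pair $(\Gamma,u)$ is the \emph{unique} global minimizer of this limiting problem; I fix such a $\beta$. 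Hence $(K_\infty,v_\infty)=(\Gamma,u)$, so in particular $v_n\to u$ in $L^1$ and, by the energy convergence, $\hu(K_n\cap\Omega)\to\hu(\Gamma\cap\Omega)$ and $\nabla v_n\to\nabla u$ in $L^2$. Then I apply Theorem~\ref{teo:parreg} and Theorem~\ref{teo:parreg2}: since $E_{v_n}(x,r)\to 0$ uniformly on the relevant scales (the energy $E$ is dominated by $D_{v_n}+r^{-2}A_{v_n}$, and $A_{v_n}\to 0$ because $K_n\to\Gamma$ in Hausdorff distance and $\Gamma$ is a smooth curve), for $n$ large the sets $\overline{S}_{v_n}=K_n$ are, near $\Gamma$, graphs of functions $f_n$ bounded in $C^{1,1/4}$, and the orthogonality at $\partial\Omega$ is handled by Theorem~\ref{teo:parreg2} on $\Omega(\tau)$; away from $\Gamma$ the density lower bound of Theorem~\ref{teo:dlb} forces $S_{v_n}$ to be empty. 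Passing to a further subsequence, $f_n\to 0$ in $C^{1,\alpha}$ for every $\alpha<1/4$, so $K_n=\Phi_n(\Gamma)$ for diffeomorphisms $\Phi_n\to Id$ in $C^{1,\alpha}$. But $F(K_n,v_n)\le F(\Gamma,u)$ together with $u_n\ne u$ for every $n$ — hence $K_n\ne\Gamma$ or $v_n\ne u$ for infinitely many $n$ — contradicts the strict $C^{1,\alpha}$-local minimality established in step (b). This contradiction proves the theorem.

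The main obstacle is the extraction of $C^{1,\alpha}$-convergence of the discontinuity sets $K_n$ to $\Gamma$ from the mere $L^1$-convergence $v_n\to u$: this requires the uniform quasi-minimality (hence the right choice of penalization making the $L^2$-term a controlled volume perturbation), the density lower bound near the mixed Dirichlet/Neumann boundary (Theorem~\ref{teo:dlb}, which is itself new), and the uniform $\e_0$-regularity of Theorems~\ref{teo:parreg}--\ref{teo:parreg2}, together with a careful verification that the smallness of $E_{v_n}$ propagates to \emph{all} points of $\overline{S}_{v_n}\cap\Omega$ — which in turn relies on the calibration-based uniqueness to pin the limit down to $(\Gamma,u)$ rather than some other minimizer of the penalized limit. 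Once the discontinuity sets are known to converge in $C^{1,\alpha}$, the contradiction with step (b) is immediate.
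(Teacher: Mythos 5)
Your proposal follows the paper's strategy faithfully right up to the very last step, where it contains a genuine logical gap. After you have constructed the penalized minimizers $(K_n,v_n)$, shown via the calibration that the limiting problem has $(\Gamma,u)$ as its unique solution, and deduced $C^{1,\alpha}$-convergence of $K_n$ to $\Gamma$, you claim a contradiction because ``$u_n\neq u$ for every $n$ --- hence $K_n\neq\Gamma$ or $v_n\neq u$ for infinitely many $n$.'' This inference is invalid: the original sequence $(\Gamma_n,u_n)$ and the penalized minimizers $(K_n,v_n)$ are different objects, and $u_n\neq u$ tells you nothing about whether $(K_n,v_n)=(\Gamma,u)$. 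In fact, Proposition~\ref{prop:minw1} will \emph{force} $(K_n,v_n)=(\Gamma,u)$ for all large $n$, so there is no contradiction at that point --- only information.

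The missing ingredient is the paper's bootstrap. From $(K_n,v_n)=(\Gamma,u)$ you learn that $u$ itself solves the penalized problem \eqref{penalizzaton}, and its penalized energy equals $F(\Gamma,u)$. Since by hypothesis $F(\Gamma_n,u_n)\le F(\Gamma,u)$ and the penalization evaluates to zero at $(\Gamma_n,u_n)$ (because $\|u_n-u\|_{L^2}^2=\e_n$), the original pair $(\Gamma_n,u_n)$ is \emph{also} a minimizer of \eqref{penalizzaton}. One then reruns the entire quasi-minimality/density-lower-bound/$C^{1,\alpha}$-regularity argument for the sequence $(\Gamma_n,u_n)$ in place of $(K_n,v_n)$, and Proposition~\ref{prop:minw1} now yields $u_n=u$ for $n$ large. \emph{This} is the contradiction, because $\|u_n-u\|_{L^1}>0$ was assumed. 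Without this second pass, your contradiction argument does not close. (Everything else --- the truncation, the choice of penalization $h_n$ giving uniform quasi-minimality, the calibration-based uniqueness, and the appeal to Theorems~\ref{teo:parreg}, \ref{teo:parreg2}, \ref{teo:dlb} --- is correctly identified and in line with the paper's proof.)
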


\begin{remark}
In order to simplify the proofs and the notations we decided to state and prove the previous result
only in the simplified situation where $\Omega$ is connected and $\Gamma$ is a regular curve joining two points of $\partial\Omega$.
It is straightforward to check that Theorem~\ref{teo:minSBV} can be generalized
to the case where $\Gamma$ is a finite, disjoint union of curves of class $C^\infty$,
each one connecting two points of $\partial\Omega$
and meeting $\partial\Omega$ orthogonally.
\end{remark}

\begin{remark}
The non-vanishing jump condition (point (iii) of Definition~\ref{def:critpair})
is not a technical assumption and cannot be dropped: indeed, it is possible to construct examples
(see the Remark after Theorem~3.1 in \cite{DMMorMor})
satisfying all the assumptions of Theorem~\ref{teo:minSBV} except for this one,
for which the conclusion of the theorem does not hold.
In our strategy, this hypothesis is needed in order to deduce, in Proposition~\ref{prop:calibrazioni}, by applying the calibration constructed in \cite{Mor}, that the unique solution of the penalization problem \eqref{penalizzato} is $u$ itself , if $\beta$ is sufficiently large.
\end{remark}

We conclude with the following consequence of Theorem~\ref{teo:minSBV},
which states that given any family of equicoercive functionals $\F_\e$
which $\Gamma$-converge to the relaxed version of $F$ with respect to the $L^1$-topology,
we can approximate each strictly stable regular critical pair for $F$
by a sequence of local minimizers of the functionals $\F_\e$.
This follows from the abstract result observed in \cite[Theorem~4.1]{KS}.
There is a vast literature concerning the approximation of the Mumford-Shah functional in the sense of $\Gamma$-convergence (see, for instance, \cite{Bra}).

\begin{theorem}[link with $\Gamma$-convergence] \label{teo:gammaconv}
Let $(\Gamma,u)$ be a strictly stable regular critical pair in an admissible subdomain $U$.
Let $\F_\e:L^1(\Omega)\to\R\cup\{+\infty\}$ be a family of equicoercive and lower semi-continuous functionals which $\Gamma$-converge as $\e\to0$
to the relaxed functional (see the beginning of Section~\ref{sect:minSBV})
$$
\F(v):=
\left\{
  \begin{array}{ll}
    \int_{\Omega}|\nabla v|^2\,dx+\hu(S_v) & \hbox{if }v\in\sbv(\Omega),\, v=u \hbox{ on } (\Omega\setminus U)\cup\partial_D\Omega,\\
    +\infty & \hbox{otherwise in } L^1(\Omega)
  \end{array}
\right.
$$
with respect to the $L^1$-topology.
Then there exists $\e_0>0$ and a family $(u_\e)_{\e<\e_0}$ of local minimizers of $\F_\e$
such that $u_\e\to u$ in $L^1(\Omega)$ as $\e\to0$.
\end{theorem}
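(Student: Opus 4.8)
\medskip

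The plan is to derive the statement from Theorem~\ref{teo:minSBV} through the abstract principle of Kohn and Sternberg \cite[Theorem~4.1]{KS}: if a family $\F_\e$ of equicoercive, lower semicontinuous functionals $\Gamma$-converges in $L^1(\Omega)$ to $\F$, and $u$ is an \emph{isolated} $L^1$-local minimizer of $\F$ (i.e.\ there is $\rho>0$ such that $\F(u)<\F(w)$ whenever $0<\|w-u\|_{L^1(\Omega)}\le\rho$), then for all sufficiently small $\e$ the functional $\F_\e$ possesses a local minimizer $u_\e$, and $u_\e\to u$ in $L^1(\Omega)$; the underlying idea being that minimizers of $\F_\e$ over the closed ball $\overline{B}^{\,L^1}_\rho(u)$ exist by equicoercivity, converge along the $\Gamma$-convergence to the unique minimizer $u$ of $\F$ over that ball, and are hence interior to it — therefore local minimizers of $\F_\e$ — for $\e$ small. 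So everything reduces to showing that the critical pair $(\Gamma,u)$ in the hypothesis makes $u$ an isolated $L^1$-local minimizer of the relaxed functional $\F$.

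First I would observe that $\F(u)=F(\Gamma,u)$: by the non-vanishing jump condition (iii) of Definition~\ref{def:critpair} the jump set of $u$ is exactly $\Gamma\cap\Omega$, so $\hu(S_u)=\hu(\Gamma\cap\Omega)$, and $u$ trivially meets the boundary constraint. Let $\delta>0$ be as in Theorem~\ref{teo:minSBV} and take any $w\in\sbv(\Omega)$ with $w=u$ on $(\Omega\setminus U)\cup\partial_D\Omega$ and $0<\|w-u\|_{L^1(\Omega)}<\delta$ (otherwise $\F(w)=+\infty$ and there is nothing to prove). Choosing a recovery sequence $(\Gamma_n,w_n)\in\spazio$ for $\F(w)$ — which satisfies the same boundary condition by the very definition of the relaxed functional — one has $w_n\to w$ in $L^1(\Omega)$ and $F(\Gamma_n,w_n)\to\F(w)$; since $w_n\neq u$ and $\|w_n-u\|_{L^1(\Omega)}<\delta$ for $n$ large, Theorem~\ref{teo:minSBV} gives $F(\Gamma_n,w_n)>F(\Gamma,u)$, whence $\F(w)\ge F(\Gamma,u)=\F(u)$ in the limit. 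This already shows that $u$ is a (not necessarily isolated) local minimizer of $\F$.

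To upgrade this to the \emph{isolated} character I would argue by contradiction: suppose $\F(w)=\F(u)$ for some $w$ with $0<\|w-u\|_{L^1(\Omega)}<\delta$. Then $w$ is also a minimizer of $\F$ over the closed ball $\overline{B}^{\,L^1}_\delta(u)$, and being interior to it, $w$ minimizes the Mumford--Shah functional under the prescribed mixed boundary conditions with respect to every perturbation localized in a sufficiently small ball; in particular $w$ is a quasi-minimizer in the sense of \eqref{quaminMS} with $\omega=0$ at small scales. By the partial regularity results recalled in Section~\ref{sect:prelparreg} — Theorem~\ref{teo:parreg}, Theorem~\ref{teo:parreg2}, and the density lower bound Theorem~\ref{teo:dlb} (which also handles balls meeting $\partial_D\Omega$) — the jump set of $w$ is essentially closed, namely $\hu\bigl((\overline{S}_w\setminus S_w)\cap\Omega\bigr)=0$ and $w\in H^1(\Omega\setminus\overline{S}_w)$. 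Then $(\overline{S}_w,w)\in\spazio$ is an admissible pair satisfying the boundary constraint, with $0<\|w-u\|_{L^1(\Omega)}<\delta$ and $F(\overline{S}_w,w)=\F(w)=F(\Gamma,u)$, contradicting Theorem~\ref{teo:minSBV}. Hence $\F(w)>\F(u)$ for every $w$ with $0<\|w-u\|_{L^1(\Omega)}<\delta$, and the conclusion follows from \cite[Theorem~4.1]{KS}.

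The hard part will be exactly this last step — transferring the minimality among admissible pairs (Theorem~\ref{teo:minSBV}) to the isolated minimality of the $L^1$-relaxation $\F$ — because it forces one to bring in the full regularity theory for quasi-minimizers of the Mumford--Shah functional in order to realize an arbitrary competing $\sbv$ function of equal energy as a genuine admissible pair.
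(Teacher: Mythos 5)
Your high-level plan --- reduce to showing that $u$ is an isolated $L^1$-local minimizer of the relaxed functional $\F$ and then invoke \cite[Theorem~4.1]{KS} --- is exactly the paper's. The route by which you try to establish this $\sbv$-level minimality, however, both contains a gap and misses a shortcut that the paper exploits. At the beginning of Section~\ref{sect:minSBV}, via Remark~\ref{rm:sbv}, the proof of Theorem~\ref{teo:minSBV} is explicitly \emph{reduced} to showing precisely that $\F(u)<\F(v)$ for every $v\in\sbv(\Omega)$ with $v=u$ on $(\Omega\setminus U)\cup\partial_D\Omega$ and $0<\|v-u\|_{L^1(\Omega)}<\delta$, and this $\sbv$-level inequality is then what the contradiction/penalization argument of Section~\ref{sect:minSBV} actually proves. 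So the needed isolated minimality of $u$ for $\F$ is available directly, and Theorem~\ref{teo:gammaconv} follows from \cite{KS} with no further work; no passage from admissible pairs back to $\sbv$ functions is required.

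The gap in your argument is in Step 1. You assert, for an arbitrary competitor $w\in\sbv(\Omega)$ satisfying the boundary constraint, the existence of a recovery sequence $(\Gamma_n,w_n)\in\spazio$ with $w_n\to w$ in $L^1$, $F(\Gamma_n,w_n)\to\F(w)$ and each $w_n$ still obeying $w_n=u$ on $(\Omega\setminus U)\cup\partial_D\Omega$, ``by the very definition of the relaxed functional''. But $\F$ is defined directly on $\sbv(\Omega)$ in the paper; the statement you invoke (energy-density, in the $L^1$-topology, of $\sbv$ functions with closed discontinuity sets, and moreover subject to a frozen trace constraint on $\partial_D\Omega$ and on $\Omega\setminus U$) is genuinely nontrivial and is neither proved nor cited anywhere in the paper. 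In particular, for a generic $w\in\sbv(\Omega)$ with $\F(w)<\infty$ one can have $\hu(\overline{S}_w)=\infty$, so one cannot simply take $\Gamma_n=\overline{S}_w$; some approximation theorem is indispensable, and without it Step 1 does not go through. Step 2 is sound in spirit --- a putative $w\ne u$ realizing $\F(w)=\F(u)$ would, by truncation and the quasi-minimizer regularity machinery of Section~\ref{sect:prelparreg}, have essentially closed jump set and thus yield a legitimate competitor $(\overline{S}_w,w)\in\spazio$ contradicting Theorem~\ref{teo:minSBV} --- but it is contingent on Step 1 having first delivered $\F(w)\geq\F(u)$, so the gap propagates.
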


\end{section}

%%%%%%%%%%%%%%%%%%%%%%%%%%%%%%%%%%%%%%%%%%%%%%%%%%%%%%%%%%%%%%%%%%%%%%%%%%%%%%%%%%%%%%%%%%%%%%%%%%%%%%%%%
%%%%%%%%%%%%%%%%%%%%%%%%%%%%%%%%%%%%%%%%%%%%%%%%%%%%%%%%%%%%%%%%%%%%%%%%%%%%%%%%%%%%%%%%%%%%%%%%%%%%%%%%%
%%%%%%%%%%%%%%%%%%%%%%%%%%%%%%%%%%%%%%%%%%%%%%%%%%%%%%%%%%%%%%%%%%%%%%%%%%%%%%%%%%%%%%%%%%%%%%%%%%%%%%%%%
%%%%%%%%%%%%%%%%%%%%%%%%%%%%%%%%%%%%%%%%%%%%%%%%%%%%%%%%%%%%%%%%%%%%%%%%%%%%%%%%%%%%%%%%%%%%%%%%%%%%%%%%%
%%%%%%%%%%%%%%%%%%%%%%%%%%%%%%%%%%%%%%%%%%%%%%%%%%%%%%%%%%%%%%%%%%%%%%%%%%%%%%%%%%%%%%%%%%%%%%%%%%%%%%%%%

\begin{section}{Computation of the second variation} \label{sect:var2}

In this section we compute the second variation of the functional $F$.
To start with, we fix some notation:
for any one-parameter family of functions $(g_s)_{s\in\R}$
we denote the partial derivative with respect to the variable $s$
of the map $(s,x)\mapsto g_s(x)$, evaluated at $(t,x)$,
by $\dot{g}_t(x)$. We usually omit the subscript when $t=0$.
In the following, we fix a regular pair $(K,v)\in\spazioreg$ and an admissible subdomain $U$.

\begin{definition} \label{def:flow}
A flow $(\Phi_t)_t$ is said to be \emph{admissible} for $(K,v)$ in $U$ if it is generated by a vector field
$X\in C^2(\R^2;\R^2)$ such that $\supp X\subset\subset U\setminus\partial_D\Omega$ and $X\cdot\norm=0$ on $\partial\Omega$,
that is, $\Phi_t$ solves the equation $\dot{\Phi}_t=X\circ\Phi_t$, $\Phi_0=Id$.
\end{definition}

\begin{remark} \label{rm:flow}
The condition $X\cdot\norm=0$ guarantees that the trajectories of points in $\partial\Omega$ remain on $\partial\Omega$:
thus $\Phi_t(\ombar)=\ombar$ for every $t$.
Observe also that, since $\supp X\subset\subset U\setminus\partial_D\Omega$,
we have that $K_{\Phi_t}\subset U\setminus\partial_D\Omega$ for every $t$, where we set $\kft:=\Phi_t(K)$.
\end{remark}

Given an orientation preserving diffeomorphism $\Phi\in C^\infty(\ombar;\ombar)$
such that $\supp(\Phi-Id)\subset\subset U\setminus\partial_D\Omega$,
we define $v_\Phi$ as the unique solution in $H^{1}(\Omega\setminus\kf)$
(up to additive constants in the connected components of $\Omega\setminus\kf$ whose boundary does not contain $\partial_D\Omega$) to
\begin{equation} \label{uphi}
\left\{
  \begin{array}{ll}
    \displaystyle\int_{\Omega\setminus\kf} \nabla v_\Phi \cdot \nabla z=0 & \hbox{for every }z\in H^{1}_U(\Omega\setminus\kf),\\
    v_\Phi=v & \hbox{in } (\Omega\setminus U)\cup\partial_D\Omega.
  \end{array}
\right.
\end{equation}

\begin{definition} \label{def:variations}
Let $(\Phi_t)_t$ be an admissible flow for $(K,v)$ in $U$.
We define the \textit{first and second variations of $F$ at $(K,v)$ in $U$ along $(\Phi_t)_t$}
to be
$$
\frac{d}{dt} F ((\kft,v_{\Phi_t});U) |_{t=0}, \qquad
\frac{d^2}{dt^2} F ((\kft,v_{\Phi_t});U) |_{t=0}
$$
respectively, where $v_{\Phi_t}$ is defined as in \eqref{uphi} with $\Phi$ replaced by $\Phi_t$.
\end{definition}

Notice that this definition makes sense since the existence of the derivatives is guaranteed by the regularity result
proved in \cite[Proposition 8.1]{CMM}, which can be adapted to the present setting.
In particular, this result implies that the map $(t,x)\mapsto v_{\Phi_t}(x)$ is differentiable with respect to the variable $t$
and that $\dot{v}_{\Phi_t}\in H^1_U(\Omega\setminus\kft)$.
We set $\dot{v}:=\dot{v}_{\Phi_0}$

In the following theorem we compute explicitly the second variation of the functional $F$.
We stress that, comparing with the analogous result obtained in \cite[Theorem 3.6]{CMM},
we allow here the admissible variations to affect also the intersection of the discontinuity set $K$ with the boundary of $\Omega$,
while in the quoted paper only variations compactly supported in $\Omega$ were considered.
As a consequence, in the present situation boundary terms arise when integration by parts are performed:
in particular this happens for the derivatives of the surface term,
while the first and second variations of the volume term remain unchanged.
We refer also to \cite{SZ}, where a similar computation for the second variation of the surface area was carried out
taking into account boundary effects, in the case of a critical set
(the novelty here is that we will be able to get an expression of the second variation at a generic regular pair, not necessarily critical).

\begin{theorem} \label{teo:varII}
Let $(K,v)\in\spazioreg$ be a regular pair for $F$, let $U$ be an admissible subdomain,
and let $(\Phi_t)_t$ be an admissible flow in $U$ associated to a vector field $X$.
Then the function $\dot{v}$ belongs to $H^{1}_U(\Omega\setminus K)$ and satisfies the equation
\begin{equation} \label{upto}
\int_\Omega \nabla\dot{v} \cdot \nabla z\,dx
+\int_{K\cap\Omega} \Bigl[ \div_K\bigl((X\cdot\nu)\nabla_Kv^+\bigr)z^+
 -\div_K\bigl((X\cdot\nu)\nabla_Kv^-\bigr)z^- \Bigr] \,d\hu =0
\end{equation}
for every $z\in H^{1}_U(\Omega\setminus K)$.
Moreover, the second variation of $F$ at $(K,v)$ in $U$ along $(\Phi_t)_t$ is given by
\begin{align} \label{varIIgen}
\frac{d^2}{dt^2} &F ((\kft,v_{\Phi_t});U) |_{t=0} =
2\int_{K\cap\Omega} (\dot{v}^+\partial_\nu\dot{v}^+ - \dot{v}^-\partial_\nu\dot{v}^-)\,d\hu
+ \int_{K\cap\Omega} |\nabla_K(X\cdot\nu)|^2 \,d\hu \nonumber\\
&+\int_{K\cap\Omega} H^2 (X\cdot\nu)^2 \,d\hu
+\int_{K\cap\Omega} f (Z\cdot\nu - 2X^\|\cdot\nabla_K(X\cdot\nu) + D\nu[X^\|,X^\|] - H(X\cdot\nu)^2) \,d\hu \nonumber\\
&+\int_{\bordo} (f-H)(X\cdot\nu)(X\cdot\eta) \,d\huu
+\int_{\bordo} Z\cdot\eta \,d\huu\,,
\end{align}
where $f:=|\nabla_Kv^-|^2 - |\nabla_Kv^+|^2 + H$, $Z:=DX[X]$,
and we split the field $X$ in its tangential and normal components to $K$:
\begin{equation} \label{decomposition}
X=X^\| + (X\cdot\nu)\nu \quad\text{on }K.
\end{equation}
\end{theorem}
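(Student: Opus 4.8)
The plan is to compute directly the first and second $t$-derivatives of the map $t\mapsto F((\kft,v_{\Phi_t});U)$, splitting the functional into its volume part $D(t):=\int_{\Omega\setminus\kft}|\nabla v_{\Phi_t}|^2\,dx$ and its surface part $S(t):=\hu(\kft\cap\Omega)$, and then evaluating at $t=0$. For the volume part, the computations of the first and second variation carried out in \cite[Theorem~3.6]{CMM} apply verbatim, since the argument there is local around $K\cap\Omega$ and the boundary terms on $\partial\Omega$ do not enter: differentiating the weak formulation \eqref{uphi} (with $\Phi$ replaced by $\Phi_t$) and using that $\supp X\subset\subset U\setminus\partial_D\Omega$, one obtains equation \eqref{upto} for $\dot v$ (the Euler--Lagrange equation for the derivative), and the identity
$$
\frac{d^2}{dt^2}D(t)\big|_{t=0}=2\int_{K\cap\Omega}(\dot v^+\partial_\nu\dot v^+ - \dot v^-\partial_\nu\dot v^-)\,d\hu,
$$
together with the expression for $\frac{d}{dt}D(t)$ needed below. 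The only point to check is that $\dot v\in H^1_U(\Omega\setminus K)$ and that the integration by parts producing the right-hand side is justified, which follows from the regularity stated after Definition~\ref{def:variations} and from Lemma~\ref{lemma:Neumannreg}.

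Next I would treat the surface term. Using the area formula \eqref{farea} we write $S(t)=\int_K J_{\Phi_t}\,d\hu$ with $J_{\Phi_t}=|(D\Phi_t)^{-T}[\nu]|\det D\Phi_t$, and differentiate the Jacobian in $t$; the first derivative is the tangential divergence $\int_K\div_K X\,d\hu$, and the second derivative, after a careful but elementary expansion of $\partial_t^2 J_{\Phi_t}|_{t=0}$ in terms of $X$, $DX$ and $Z=DX[X]$, gives an integrand which, upon applying the divergence formula \eqref{fdiv} (this time with the boundary term $\int_{\partial\Gamma}g\cdot\eta\,d\huu$ retained), produces exactly the surface contributions in \eqref{varIIgen}: the terms $\int_{K\cap\Omega}|\nabla_K(X\cdot\nu)|^2$, $\int_{K\cap\Omega}H^2(X\cdot\nu)^2$, the $H$-dependent terms collected into $f$, and the boundary integrals $\int_{\bordo}(f-H)(X\cdot\nu)(X\cdot\eta)\,d\huu+\int_{\bordo}Z\cdot\eta\,d\huu$. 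Crucially, one must combine the second variation of the surface term with the term $-2H(X\cdot\nu)(\cdot)$ coming from the interplay of the first variations when differentiating the product; it is precisely here that the quantity $f=|\nabla_Kv^-|^2-|\nabla_Kv^+|^2+H$ appears, reflecting that at a critical pair $f$ vanishes by the transmission condition \eqref{EL1}, but here $(K,v)$ is only regular.

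**The main obstacle** will be the careful bookkeeping of the boundary terms on $\bordo$. Since the vector field $X$ is only required to satisfy $X\cdot\norm=0$ on $\partial\Omega$ (so that $\Phi_t(\ombar)=\ombar$), $X$ need not be tangent to $K$ at the endpoints, and the endpoints of $\kft$ move along $\partial\Omega$; hence the formula \eqref{etaf} for the transported conormal $\etaf$ must be differentiated in $t$, and its derivative contributes to the boundary terms through $\frac{d}{dt}\hu(\kft\cap\Omega)$. One has to track how $\partial(\kft\cap\Omega)$ depends on $t$, expand $\eta_{\Phi_t}\cdot(\text{transported field})$ to second order, and reconcile the resulting expressions with the decomposition $X=X^\|+(X\cdot\nu)\nu$ on $K$. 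I expect that, as in \cite{SZ} but for a generic (non-critical) regular pair, the tangential part $X^\|$ enters only through the combination $-2X^\|\cdot\nabla_K(X\cdot\nu)+D\nu[X^\|,X^\|]$ multiplying $f$ in the interior and through $Z\cdot\eta$ on the boundary, so that the whole second variation depends on $X$ essentially via its normal component $X\cdot\nu$ on $K$, its tangential trace $X\cdot\eta$ at $\bordo$, and the acceleration $Z$. Assembling $\frac{d^2}{dt^2}D(t)|_{t=0}+\frac{d^2}{dt^2}S(t)|_{t=0}$ then yields \eqref{varIIgen}.
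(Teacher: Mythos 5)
Your overall plan—split $F$ into the Dirichlet part $D(t)$ and the surface part $S(t)$ and differentiate twice—is close in spirit to the paper, but the paper does \emph{not} differentiate $D$ and $S$ separately. It first evaluates the \emph{combined} first variation,
\[
\frac{d}{dt}F((\kft,v_{\Phi_t});U)
  =\int_{\kft\cap\Omega}f_t\,(X\cdot\nuft)\,d\hu
  +\int_{\bordoft}X\cdot\etaft\,d\huu,
  \qquad f_t=|\nabla_{\kft}v_{\Phi_t}^-|^2-|\nabla_{\kft}v_{\Phi_t}^+|^2+H_{\Phi_t},
\]
where the Dirichlet and curvature contributions have already merged into $f_t$, and only then differentiates this expression once more at $t=0$. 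This ordering matters and is not cosmetic: it is what causes $f$ to appear cleanly multiplying the various geometric quantities in \eqref{varIIgen}.

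The concrete gap in your proposal is the claimed identity
$\frac{d^2}{dt^2}D(t)\big|_{t=0}=2\int_{K\cap\Omega}(\dot v^+\partial_\nu\dot v^+-\dot v^-\partial_\nu\dot v^-)\,d\hu$.
Writing $g_t:=|\nabla_{\kft}v_{\Phi_t}^-|^2-|\nabla_{\kft}v_{\Phi_t}^+|^2$ and $g:=g_0$, one has
$D'(t)=\int_{\kft\cap\Omega}g_t(X\cdot\nuft)\,d\hu$, and differentiating via the area formula at $t=0$ produces, besides the $\int_K\dot g\,(X\cdot\nu)$ term (which indeed integrates by parts to the displayed $\dot v$-expression), the additional contributions
\[
\int_{K\cap\Omega}\bigl(\partial_\nu g\bigr)(X\cdot\nu)^2\,d\hu
+\int_{K\cap\Omega}\bigl(\nabla_K g\cdot X^\|\bigr)(X\cdot\nu)\,d\hu
+\int_{K\cap\Omega}g\,\frac{\partial}{\partial t}\bigl(\dot\Phi_t\cdot(\nuft\circ\Phi_t)\,J_{\Phi_t}\bigr)\Big|_{t=0}\,d\hu,
\]
which do not vanish for a generic regular pair (they would only disappear if $g\equiv0$, which is not assumed and is false even for critical pairs, where $g=-H$). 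Hence $D''(0)$ is not just the $\dot v$-term, and you cannot simply cite \cite[Theorem~3.6]{CMM} for it: CMM computes the second variation of the \emph{full} functional $F$, not of the Dirichlet integral alone. These extra pieces are precisely what combine with the analogous pieces of $S''(0)$ (which carry $H$ in place of $g$) to produce the integrals against $f=g+H$ in \eqref{varIIgen}. Your own remark that one must ``combine the second variation of the surface term with the term coming from the interplay of the first variations'' is the right instinct, but it contradicts the stated formula for $D''(0)$: the combination has to happen at the level of $F'(t)$ \emph{before} the second $t$-derivative is taken, not by adding cleanly separated $D''$ and $S''$ afterwards. If you want to keep the $D/S$ split, you must carry the full $D''(0)$ (including the $g$-dependent terms) and then verify the cancellations explicitly; as written, the proposal omits these terms.
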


\begin{remark} \label{rm:upto}
As in \eqref{vf}, the second integral in equation \eqref{upto} has to be intended
in the duality sense between $H^{-\frac12}(K\cap\Omega)$ and $H^{\frac12}(K\cap\Omega)$
(see Remark~\ref{rm:hunmezzo}).
Integrations by parts yields
$$
- \int_\Omega |\nabla\dot{v}|^2\,dx =
\int_{K\cap\Omega} \bigl[ \dot{v}^+\partial_\nu\dot{v}^+ - \dot{v}^-\partial_\nu\dot{v}^- \bigr] \,d\hu.
$$
\end{remark}

Before proving Theorem~\ref{teo:varII},
we collect in the following lemma some auxiliary identities which will be used
in the computation of the second variation.

\begin{lemma} \label{lemma:uguaglianze2}
The following identities hold:
\begin{enumerate}
\smallskip
    \item[(a)] $\dot{\nu} = -(D_KX)^T[\nu] - D_K\nu[X] = -\nabla_K(X\cdot\nu)$ on $K$;
\medskip
    \item[(b)] $\frac{\partial}{\partial t} (\etaft\circ\Phi_t) |_{t=0} = (D_KX)^T[\nu,\eta]\nu$ on $\bordo$;
\medskip
    \item[(c)] $(X\cdot\nu)\dot{\nu}\cdot\eta + X\cdot\frac{\partial}{\partial t} (\etaft\circ\Phi_t) |_{t=0} = -H(X\cdot\nu)(X\cdot\eta)$ on $\bordo$;
\medskip
    \item[(d)] $DX[X,\norm] + D\norm[X,X] =0$ on $\bordo$.
\smallskip
\end{enumerate}
\end{lemma}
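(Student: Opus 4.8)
The plan is to derive each identity by differentiating the defining relations for the geometric quantities along the flow, working on $K$ (for (a)) or on $\bordo$ (for (b)--(d)), and keeping careful track of tangential versus normal contributions. All four are purely pointwise computations once the flow $\Phi_t$ is fixed; no PDE is involved.

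For part (a), I would start from the formula \eqref{nuf}, which expresses $\nuft$ in terms of $D\Phi_t$ and $\nu$: writing $\nuft\circ\Phi_t = (D\Phi_t)^{-T}[\nu]/|(D\Phi_t)^{-T}[\nu]|$ and differentiating at $t=0$, using $\Phi_0 = Id$, $\dot\Phi_0 = X$, hence $\frac{d}{dt}(D\Phi_t)|_{t=0} = DX$ and $\frac{d}{dt}(D\Phi_t)^{-T}|_{t=0} = -(DX)^T$. Since $|\nu|=1$, the derivative of the normalizing denominator contributes $+(DX^T[\nu]\cdot\nu)\nu$, which cancels the normal part of $-(DX)^T[\nu]$, leaving $\dot\nu = -(DX)^T[\nu] + (DX[\nu]\cdot\nu)\nu$, i.e. the negative of the tangential part of $(DX)^T[\nu]$. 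Restricting all differentials to $K$ (which is legitimate because $\nu$ is evaluated on $K$ and only tangential derivatives of the flow act), this becomes $-(D_KX)^T[\nu] - D_K\nu[X]$ after absorbing the term coming from the $t$-dependence of the base point into $-D_K\nu[X]$; the final equality $-(D_KX)^T[\nu] - D_K\nu[X] = -\nabla_K(X\cdot\nu)$ follows from differentiating the identity $\nu\cdot\nu = 1$ tangentially (so that $D_K\nu[X]$ has no $\nu$-component) together with the Leibniz rule $\nabla_K(X\cdot\nu) = (D_KX)^T[\nu] + (D_K\nu)^T[X]$ and the fact that $D_K\nu$ is symmetric as an endomorphism of the tangent line.

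For part (b), the same mechanism applied to \eqref{etaf}, namely $\etaft\circ\Phi_t = D\Phi_t[\eta]/|D\Phi_t[\eta]|$, gives at $t=0$ the tangential-to-$\Gamma$ part of $DX[\eta]$ killed (because $\eta = \pm\tau$ is tangent and $\etaft$ stays tangent to $\Gamma_{\Phi_t}$), so only the $\nu$-component survives: $\frac{\partial}{\partial t}(\etaft\circ\Phi_t)|_{t=0} = (DX[\eta]\cdot\nu)\nu = (D_KX)^T[\nu,\eta]\nu$. Part (c) is then a short algebraic consequence: substitute (a) and (b), and use the decomposition \eqref{decomposition} of $X$ on $K$ together with the orthogonality condition (i) at the boundary (which identifies $\eta$ with $\pm\norm$ at the endpoints) and the identity $H = D\nu[\tau,\tau]$; the cross terms combine to $-H(X\cdot\nu)(X\cdot\eta)$. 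Part (d) comes from differentiating, along the flow, the constraint $X\cdot\norm = 0$ valid on all of $\partial\Omega$: since trajectories of $\partial\Omega$ stay on $\partial\Omega$ (Remark~\ref{rm:flow}), one has $\frac{d}{dt}(X\circ\Phi_t)\cdot(\norm\circ\Phi_t)|_{t=0} = 0$, which expands via the chain rule to $DX[X]\cdot\norm + X\cdot D\norm[X] = 0$, i.e. $DX[X,\norm] + D\norm[X,X] = 0$ on $\bordo$ (indeed on all of $\partial\Omega$).

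The main obstacle I anticipate is part (a): being careful about the interplay between the full differential $DX$ in $\R^2$ and its tangential restriction $D_KX$, and correctly identifying which term the $t$-dependence of the base point $\Phi_t(x)$ contributes — it is easy to double-count or drop the $D_K\nu[X]$ term, and one must use the symmetry of the shape operator and the unit-length normalization of $\nu$ at exactly the right moments to collapse everything to the clean expression $-\nabla_K(X\cdot\nu)$. Parts (b)--(d) are then comparatively routine given (a).
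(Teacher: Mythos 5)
Your proposal is essentially correct and matches the paper's proof of (b), (c), (d) step for step (differentiating $v_t/|v_t|$ with $v_t=D\Phi_t[\eta]$ for (b), substituting (a) and (b) and simplifying for (c), and differentiating $(X\circ\Phi_t)\cdot(\norm\circ\Phi_t)=0$ for (d)). For (a) the paper does not give a proof at all — it cites \cite[Lemma~3.8,(f)]{CMM} — whereas you propose a direct derivation from \eqref{nuf}; your sketch is correct, with the term you describe as ``absorbed into $-D_K\nu[X]$'' being precisely the chain-rule contribution $-D\nu[X]$ from the moving base point $\Phi_t^{-1}(x)$, which equals $-D_K\nu[X]$ because $\partial_\nu\nu=0$. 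One small inaccuracy in your discussion of (c): you invoke the orthogonality condition (i) of Definition~\ref{def:critpair} to identify $\eta$ with $\pm\norm$, but Lemma~\ref{lemma:uguaglianze2} is stated for a \emph{general} regular pair and its proof does not use criticality. What is actually used is only that $\{\nu,\eta\}$ is an orthonormal basis of $\R^2$ at a boundary point of $K$ (since $\eta=\pm\tau$), which lets you write $X=(X\cdot\nu)\nu+(X\cdot\eta)\eta$; after the $(D_KX)^T[\nu,\eta]$ contributions from (a) and (b) cancel, what remains is $-(X\cdot\nu)D_K\nu[X,\eta]$, and $D_K\nu[\nu]=0$ together with $D\nu[\tau,\tau]=H$ gives the stated identity. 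No relation between $\eta$ and $\norm$ is needed.
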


\begin{proof}
Equality (a) is proved in \cite[Lemma 3.8, (f)]{CMM}.
To prove (b), we set $v_t := D\Phi_t[\eta]$ and recalling \eqref{etaf} we have
\begin{align*}
\frac{\partial}{\partial t} (\etaft\circ\Phi_t) |_{t=0}
& = \frac{\partial}{\partial t} \biggl(\frac{v_t}{|v_t|}\biggr) \bigg|_{t=0}
  =  \dot{v} - (\dot{v}\cdot\eta)\eta \\
& = DX[\eta] - DX[\eta,\eta]\eta
  = DX[\eta,\nu]\nu,
\end{align*}
which is (b).
We obtain (c) by combining (a) and (b):
\begin{align*}
(X\cdot\nu)\dot{\nu}\cdot\eta + X\cdot\frac{\partial}{\partial t} (\etaft\circ\Phi_t) |_{t=0}
= -(X\cdot\nu)D_K\nu[X,\eta]
= - H(X\cdot\nu)(X\cdot\eta),
\end{align*}
where the last equality follows by writing $X=(X\cdot\nu)\nu+(X\cdot\eta)\eta$ and observing that $D_K\nu[\nu]=0$.
Equation (d) follows by differentiating with respect to $t$ at $t=0$ the identity
$$
(X\circ\Phi_t)\cdot(\norm\circ\Phi_t)=0,
$$
which holds on $\bordo$.
\end{proof}

\begin{proof}[Proof of Theorem~\ref{teo:varII}]
We split the proof of the theorem into three steps.

\smallskip
\noindent
{\it Step 1. Derivation of the equation solved by $\dot{v}$.}
As already observed, the result contained in \cite[Proposition~8.1]{CMM} guarantees that $\dot{v}\in H^1_U(\Omega\setminus K)$.
Given any test function $z\in H^1_U(\Omega\setminus K)$ with $\supp z\cap K=\emptyset$,
for $t$ small enough we have $\supp z \subset\Omega\setminus K_{\Phi_t}$,
and in particular $z\in H^1_U(\Omega\setminus\kft)$.
Hence by \eqref{uphi} we deduce
$$
\int_{\Omega}\nabla v_{\Phi_t}\cdot\nabla z\,dx=0,
$$
so that differentiating with respect to $t$ at $t=0$ we obtain that $\dot{v}$ is harmonic in $(\Omega\cap U)\setminus K$
and $\nabla\dot{v}\cdot\norm=0$ on $\partial\Omega\cap U$.
In addition, it is shown in Step 1 of the proof of \cite[Theorem 3.6]{CMM} that
\begin{equation*}
\partial_\nu\dot{v}^\pm = \div_K((X\cdot\nu)\nabla_Kv^\pm) \qquad\text{on }K\cap\Omega.
\end{equation*}
By this expression we have that $\partial_\nu\dot{v}^\pm\in H^{-\frac12}(K\cap\Omega)$
(see Remark~\ref{rm:hunmezzo}), and hence
the previous conditions are equivalent to \eqref{upto} by integration by parts.

\smallskip
\noindent
{\it Step 2. Computation of the first variation.}
The same computation carried out in Step 2 of the proof of \cite[Theorem 3.6]{CMM} leads to
$$
\frac{d}{dt}\int_{\Omega}|\nabla v_{\Phi_t}|^2\,dx
= \int_{\Omega} \div\bigl( |\nabla v_{\Phi_t}|^2\ncampot \bigr) \,dy\,.
$$
Hence, applying the divergence theorem we obtain
\begin{align*}
\frac{d}{dt}\int_{\Omega}|\nabla v_{\Phi_t}|^2\,dx
&= \int_{\partial\Omega} |\nabla v_{\Phi_t}|^2 (\ncampot\cdot\norm)\,d\hu
+ \int_{\kft\cap\Omega} \bigl( |\nabla v_{\Phi_t}^-|^2 - |\nabla v_{\Phi_t}^+|^2 \bigr)(\ncampot\cdot\nuft)\,d\hu\\
&= \int_{\kft\cap\Omega} \bigl( |\nabla_{\kft} v_{\Phi_t}^-|^2 - |\nabla_{\kft} v_{\Phi_t}^+|^2 \bigr)(\ncampot\cdot\nuft)\,d\hu \,
\end{align*}
where to deduce the last equality we used $X\cdot\norm=0$
and the fact that $\partial_{\nuft}v_{\Phi_t}^\pm$ vanishes on $\kft$.
Concerning the surface term,
we start from the well known formula for the first variation of the area functional
(see, for instance, \cite[Chapter~2, Section~9]{Sim})
and we use the divergence theorem on $\kft\cap\Omega$, to obtain
\begin{align*}
\frac{d}{dt}\hu(\kft\cap\Omega)
&= \int_{\kft\cap\Omega} \div_{\kft}\ncampot\,d\hu \\
&= \int_{\kft\cap\Omega} H_{\flusso}(\ncampot\cdot\nuft)\,d\hu
%+ \sum_{x\in\bordoft} \campot(x)\cdot\etaft(x)\,.
+ \int_{\bordoft}\ncampot\cdot\etaft\,d\huu\,,
\end{align*}
where we recall that $H_{\flusso}$ stands for the curvature of $\kft$.
Thus we can conclude that
\begin{equation} \label{varIt}
\frac{d}{dt} F ((\kft,v_{\Phi_t});U) =
\int_{\kft\cap\Omega} f_t (\ncampot\cdot\nuft)\,d\hu
%+ \sum_{x\in\bordoft} \campot(x)\cdot\etaft(x),
+ \int_{\bordoft}\ncampot\cdot\etaft\,d\huu\,,
\end{equation}
where $f_t:=|\nabla_{\kft} v_{\Phi_t}^-|^2 - |\nabla_{\kft} v_{\Phi_t}^+|^2+H_\flusso$.
In particular, evaluating \eqref{varIt} at $t=0$ we obtain
\begin{equation}  \label{varI}
\frac{d}{dt} F ((\kft,v_{\Phi_t});U) |_{t=0} =
\int_{K\cap\Omega} f(X\cdot\nu)\,d\hu
%+ \sum_{x\in\bordo} \campo(x)\cdot\eta(x),
+ \int_{K\cap\partial\Omega}X\cdot\eta\,d\huu\,.
\end{equation}

\smallskip
\noindent
{\it Step 3. Computation of the second variation.}
We have to differentiate again \eqref{varIt} at $t=0$.
By a change of variables we have
\begin{align} \label{contivarII0}
\frac{d^2}{dt^2} &F ((\kft,v_{\Phi_t});U) |_{t=0}
= \int_{K\cap\Omega} \frac{\partial}{\partial t}(f_t\circ\Phi_t)|_{t=0}(X\cdot\nu)\,d\hu \nonumber\\
& + \int_{K\cap\Omega} f \frac{\partial}{\partial t} \bigl( \dot{\Phi}_t\cdot(\nuft\circ\Phi_t)J_{\Phi_t} \bigr)\big|_{t=0}\,d\hu
+ \frac{d}{d t} \biggl( \int_{\bordoft}\ncampot\cdot\etaft\,d\huu \biggr) \bigg|_{t=0} \nonumber\\
&=:I_1+I_2+I_3\,.
\end{align}
The first integral $I_1$ is equal to
\begin{align} \label{contivarII1}
I_1 &= \int_{K\cap\Omega} \dot{f}(X\cdot\nu)\,d\hu
+ \int_{K\cap\Omega} (\nabla f\cdot\nu)(X\cdot\nu)^2\,d\hu
+ \int_{K\cap\Omega} (\nabla_Kf\cdot X^\|)(X\cdot\nu)\,d\hu \,,
\end{align}
while using \cite[Lemma~3.8,~(g)]{CMM} we have
\begin{align} \label{contivarII1bis}
I_2 = \int_{K\cap\Omega} f\div_K((X\cdot\nu)X)\,d\hu
+ \int_{K\cap\Omega} f \bigl(Z\cdot\nu - 2X^\|\cdot\nabla_K(X\cdot\nu) + D\nu[X^\|,X^\|] \bigr)\,d\hu \,.
\end{align}
Applying the divergence formula on $K\cap\Omega$ we obtain
\begin{align} \label{contivarII2}
\int_{K\cap\Omega} (\nabla_Kf\cdot X^\|)&(X\cdot\nu)\,d\hu
+ \int_{K\cap\Omega} f\div_K((X\cdot\nu)X)\,d\hu \nonumber\\
%&= \int_{K\cap\Omega} \div_K(f(X\cdot\nu)X)\,d\hu  \nonumber\\
&= \int_{K\cap\Omega} fH(X\cdot\nu)^2\,d\hu + \int_{\bordo}f(X\cdot\nu)(X\cdot\eta)\,d\huu \,,
\end{align}
while using \cite[formula~(3.17)]{CMM} we get
\begin{align} \label{contivarII3}
\int_{K\cap\Omega} (\nabla f\cdot\nu)(X\cdot\nu)^2\,d\hu
%& = \int_{K\cap\Omega} ( 2\B[\nabla_Ku^+,\nabla_Ku^+]-2\B[\nabla_Ku^-,\nabla_Ku^-]-|\B|^2 )(X\cdot\nu)^2\,d\hu \nonumber\\
& = \int_{K\cap\Omega} (H^2-2fH)(X\cdot\nu)^2 \,d\hu.
\end{align}
Differentiating $f_t$ with respect to $t$ we obtain
\begin{align} \label{contivarII3bis}
\int_{K\cap\Omega} \dot{f}&(X\cdot\nu)\,d\hu
= \int_{K\cap\Omega} (2\nabla_Kv^-\cdot\nabla_K\dot{v}^- - 2\nabla_Kv^+\cdot\nabla_K\dot{v}^+ + \dot{H}) (X\cdot\nu)\,d\hu,
\end{align}
and an integration by parts yields
\begin{align} \label{contivarII4}
2\int_{K\cap\Omega} &(\nabla_Kv^\pm\cdot\nabla_K\dot{v}^\pm)(X\cdot\nu)\,d\hu \nonumber\\
&= -2\int_{K\cap\Omega} \dot{v}^\pm\div_K((X\cdot\nu)\nabla_Kv^\pm)\,d\hu
+ 2\int_{\bordo}\dot{v}^\pm(X\cdot\nu)(\nabla_Kv^\pm\cdot\eta)\,d\huu \nonumber\\
&= -2\int_{K\cap\Omega} \dot{v}^\pm\partial_\nu\dot{v}^\pm\,d\hu,
\end{align}
where the last equality follows by \eqref{upto} and by observing that
$\nabla v^\pm$ vanishes on $\bordo$,
as $v$ satisfies homogeneous Neumann boundary conditions on $K$ and on $\partial\Omega$
($\nabla v$ is regular up to $K\cap\partial\Omega$ by Lemma~\ref{lemma:Neumannreg}).
Since $\partial_\nu\dot{\nu}\cdot\nu=-\dot{\nu}\cdot\partial_\nu\nu=0$,
we have $\div\dot{\nu}=\div_K\dot{\nu}$
and in turn $\dot{H}=\div_K\dot{\nu}$.
Hence, integrating by parts and using (a) of Lemma~\ref{lemma:uguaglianze2}, we deduce
\begin{align} \label{contivarII5}
\int_{K\cap\Omega} \dot{H}(X\cdot\nu)\,d\hu
&= \int_{K\cap\Omega} \div_K\dot{\nu}(X\cdot\nu)\,d\hu \nonumber\\
&= -\int_{K\cap\Omega} \dot{\nu}\cdot\nabla_K(X\cdot\nu)\,d\hu
+ \int_{\bordo} (X\cdot\nu)(\dot{\nu}\cdot\eta)\,d\huu \nonumber\\
&= \int_{K\cap\Omega} |\nabla_K(X\cdot\nu)|^2\,d\hu
+ \int_{\bordo} (X\cdot\nu)(\dot{\nu}\cdot\eta)\,d\huu.
\end{align}
We finally compute $I_3$:
\begin{align} \label{contivarII6}
I_3
&=\frac{d}{dt} \biggl( \int_{\bordoft} \ncampot\cdot\etaft \,d\huu \biggr) \bigg|_{t=0}
= \sum_{x\in\bordo} \frac{\partial}{\partial t} \Bigl( \ncampot(\Phi_t(x))\cdot \etaft(\Phi_t(x)) \Bigr) \Big|_{t=0} \nonumber\\
&= \sum_{x\in\bordo} Z(x)\cdot\eta(x) + \sum_{x\in\bordo}  X(x) \cdot \frac{\partial}{\partial t}(\etaft\circ\Phi_t(x))|_{t=0}\,.
\end{align}
Collecting \eqref{contivarII0}--\eqref{contivarII6},
and using equality (c) of Lemma~\ref{lemma:uguaglianze2}, we finally obtain \eqref{varIIgen}.
\end{proof}

\begin{remark} \label{rm:varIIt}
We observe that we can easily obtain an expression for the second variation of the functional $F$ at a generic $t$.
Indeed, by exploiting the property $\Phi_{t+s}=\Phi_t\circ\Phi_s$ of the flow, we have
$$
\frac{d^2}{dh^2} F ((K_{\Phi_h},v_{\Phi_h});U) |_{h=t}
= \frac{d^2}{ds^2} F ((\Phi_{t+s}(K),v_{\Phi_{t+s}});U) |_{s=0}
= \frac{d^2}{ds^2} F (\Phi_s(K_{\Phi_t}),(v_{\Phi_t})_{\Phi_s}) |_{s=0},
$$
and we can directly apply Theorem~\ref{teo:varII} to the regular pair $(\kft,v_{\Phi_t})$.
\end{remark}

\begin{remark} \label{rm:varI}
The formula \eqref{varI} for the first variation of $F$ motivates the definition of critical pair
(see Definition~\ref{def:critpair}).
Indeed, assuming that \eqref{varI} vanishes for each vector field $X$ which is tangent to $\partial\Omega$,
we first obtain that $f=0$ on $K\cap\Omega$ by considering arbitrary vector fields with $\supp X\subset\subset\Omega$.
Then, using this information and dropping the requirement on the support of $X$,
we deduce the orthogonality of $K$ and $\partial\Omega$.
\end{remark}

\begin{corollary} \label{cor:varII}
Assume that $(\Gamma,u)$ is a regular critical pair. Then
\begin{align} \label{varII}
\frac{d^2}{dt^2} F ((\Gamma_{\Phi_t},u_{\Phi_t});&U) |_{t=0} =
- 2\int_\Omega |\nabla\dot{u}|^2 \,dx
+ \int_{\Gamma\cap\Omega} |\nabla_\Gamma(X\cdot\nu)|^2 \,d\hu \nonumber\\
& +\int_{\Gamma\cap\Omega} H^2 (X\cdot\nu)^2 \,d\hu
- \int_{\nbordo} H_{\partial\Omega}(X\cdot\nu)^2 \,d\huu\,,
\end{align}
where $H_{\partial\Omega}:=\div\norm$ denotes the curvature of $\partial\Omega$.
\end{corollary}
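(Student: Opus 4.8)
The plan is simply to substitute the conditions of Definition~\ref{def:critpair} into the general second-variation formula \eqref{varIIgen} and to check that the extra terms either drop out or recombine into the single boundary curvature term appearing in \eqref{varII}. First, the transmission condition \eqref{EL1} is exactly the statement that $f:=|\nabla_\Gamma u^-|^2-|\nabla_\Gamma u^+|^2+H\equiv 0$ on $\Gamma\cap\Omega$; since by Remark~\ref{rm:u} and Lemma~\ref{lemma:Neumannreg} both $\nabla u^\pm$ and $H$ extend continuously up to $\Gamma\cap\partial\Omega$, we also get $f=0$ on $\bordo$. Hence the entire fourth integral in \eqref{varIIgen} disappears, while the boundary term $\int_{\bordo}(f-H)(X\cdot\nu)(X\cdot\eta)\,d\huu$ reduces to $-\int_{\bordo}H(X\cdot\nu)(X\cdot\eta)\,d\huu$. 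Rewriting the leading term of \eqref{varIIgen} via the integration by parts of Remark~\ref{rm:upto} as $-2\int_\Omega|\nabla\dot u|^2\,dx$, we are left with the first three terms of \eqref{varII} together with the two endpoint contributions $-\int_{\bordo}H(X\cdot\nu)(X\cdot\eta)\,d\huu+\int_{\bordo}Z\cdot\eta\,d\huu$.

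It then remains to use the orthogonality condition (i) to show that these two endpoint contributions add up to $-\int_{\nbordo}H_{\partial\Omega}(X\cdot\nu)^2\,d\huu$. Since $\Gamma$ meets $\partial\Omega$ orthogonally and $\eta$ points out of $\Omega$ at the two endpoints, we have $\eta=\norm$ on $\bordo$, so that $\nu$ is there tangent to $\partial\Omega$ and $\{\nu,\norm\}$ is an orthonormal frame. Because $X\cdot\norm=0$ on $\partial\Omega$ (Definition~\ref{def:flow}), at each endpoint $X\cdot\eta=X\cdot\norm=0$, whence $X=(X\cdot\nu)\nu$ there; this immediately kills the term $-\int_{\bordo}H(X\cdot\nu)(X\cdot\eta)\,d\huu$. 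For the remaining term, identity (d) of Lemma~\ref{lemma:uguaglianze2} together with $\eta=\norm$ gives $Z\cdot\eta=DX[X,\norm]=-D\norm[X,X]$ at each endpoint, and substituting $X=(X\cdot\nu)\nu$ and using that $\nu$ is tangent to $\partial\Omega$, with $D\norm[\nu,\nu]=\div\norm=H_{\partial\Omega}$ (the analogue for $\partial\Omega$ of the identity $H=D\nu[\tau,\tau]$ recalled in Section~\ref{sect:prelgeom}, valid since $\partial_\norm\norm=0$), yields $Z\cdot\eta=-(X\cdot\nu)^2H_{\partial\Omega}$. Summing over the two endpoints of $\Gamma$ produces $\int_{\bordo}Z\cdot\eta\,d\huu=-\int_{\nbordo}H_{\partial\Omega}(X\cdot\nu)^2\,d\huu$, and collecting all surviving terms gives \eqref{varII}.

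The only point calling for genuine care is the geometry at the two endpoints of $\Gamma$: in particular the fact that $\eta$ equals $+\norm$ rather than $-\norm$ (which is what pins down the sign in front of the boundary curvature term) and the clean identification of $D\norm[X,X]$ with $(X\cdot\nu)^2H_{\partial\Omega}$; everything else is a direct substitution of the criticality conditions into Theorem~\ref{teo:varII}. Note that the non-vanishing jump condition (iii) of Definition~\ref{def:critpair} plays no role in this computation.
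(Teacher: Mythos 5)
Your proof is correct and follows essentially the same route as the paper's: apply Remark~\ref{rm:upto} to rewrite the leading term, use $f=0$ on $\Gamma\cap\Omega$ and $X\cdot\eta=X\cdot\norm=0$ on $\bordo$, and evaluate $Z\cdot\eta$ via Lemma~\ref{lemma:uguaglianze2}(d). The only (harmless) redundancy is the continuity argument extending $f=0$ up to $\bordo$: since you later observe $X\cdot\eta=0$ at the endpoints, the entire integral $\int_{\bordo}(f-H)(X\cdot\nu)(X\cdot\eta)\,d\huu$ already vanishes regardless of the boundary value of $f$, which is exactly the short-cut the paper takes.
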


\begin{proof}
The first integral in \eqref{varIIgen} can be rewritten as $-2\int_\Omega|\nabla\dot{u}|^2\,dx$
thanks to \eqref{upto} (see Remark~\ref{rm:upto}).
To obtain the expression in \eqref{varII} it is now sufficient to observe that
at a critical pair we have $f=0$ on $K\cap\Omega$, $X\cdot\eta=X\cdot\norm=0$ on $\bordo$, and
$$
Z\cdot\eta = DX[X,\norm] = -D\norm[X,X] = -(X\cdot\nu)^2D\norm[\nu,\nu] = - H_{\partial\Omega}(X\cdot\nu)^2
$$
on $\bordo$ by (d) of Lemma~\ref{lemma:uguaglianze2}.
\end{proof}

\subsection{The second order condition}

In the following we assume that $(\Gamma,u)$ is a regular critical pair and $U$ is an admissible subdomain.
Notice that the expression of the second variation of $F$ at $(\Gamma,u)$ proved in Corollary~\ref{cor:varII}
motivates the definition of the quadratic form \eqref{d2f}
and the notion of strict stability that we introduced in Definition~\ref{def:varIIpos}.

Following the approach of \cite{CMM},
we start paving the way for the main result by proving two equivalent formulations of condition \eqref{hppos},
one in terms of the first eigenvalue of a suitable compact linear operator defined on $H^1(\Gamma\cap\Omega)$
and the other in terms of a dual minimum problem.
Let us start by introducing the following bilinear form on $H^1(\Gamma\cap\Omega)$:
\begin{equation} \label{prscal}
(\vphi,\psi)_\sim :=
\int_{\Gamma\cap\Omega}\nabla_\Gamma\vphi\cdot\nabla_\Gamma\psi\,d\hu
+\int_{\Gamma\cap\Omega}H^2\,\vphi\,\psi\,d\hu
-\int_{\nbordo}H_{\partial\Omega}\,\vphi\,\psi\,d\huu
\end{equation}
for every $\vphi,\psi\in H^1(\Gamma\cap\Omega)$.
The proof of the following proposition can be obtained by simply adapting \cite[Proposition~4.2]{CMM}
to our slightly different situation.

\begin{proposition} \label{prop:ps}
Assume that
\begin{equation} \label{hpps}
(\vphi,\vphi)_\sim>0 \qquad\text{for every }\vphi\in H^1(\Gamma\cap\Omega)\setmeno\{0\}.
\end{equation}
Then $(\cdot,\cdot)_\sim$ is a scalar product
which defines an equivalent norm on $H^1(\Gamma\cap\Omega)$,
denoted by $\|\cdot\|_\sim$.
\end{proposition}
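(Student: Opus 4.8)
The plan is to show that \eqref{hpps} forces the symmetric bilinear form $(\cdot,\cdot)_\sim$ to be coercive on $H^1(\Gamma\cap\Omega)$, and then coercivity together with the obvious continuity immediately gives that it is a scalar product inducing an equivalent norm. Symmetry and bilinearity are clear from \eqref{prscal}; the positivity of $(\vphi,\vphi)_\sim$ for $\vphi\ne 0$ is exactly \eqref{hpps}, and $(\vphi,\vphi)_\sim\geq 0$ with equality only at $\vphi=0$ shows it is an inner product. The continuity bound $(\vphi,\psi)_\sim \leq C\|\vphi\|_{H^1(\Gamma\cap\Omega)}\|\psi\|_{H^1(\Gamma\cap\Omega)}$ follows from Cauchy--Schwarz on the first two terms and from the continuity of the trace operator $H^1(\Gamma\cap\Omega)\to L^2(\nbordo)$ — indeed $\nbordo$ consists of the two endpoints $x_1,x_2$, so the last integral is just $H_{\partial\Omega}(x_1)\vphi(x_1)\psi(x_1)+H_{\partial\Omega}(x_2)\vphi(x_2)\psi(x_2)$, controlled by the continuous embedding $H^1$ into $C^0$ in one dimension. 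Hence $\|\vphi\|_\sim\leq C\|\vphi\|_{H^1(\Gamma\cap\Omega)}$, which is one of the two inequalities defining norm equivalence.

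The substantive step is the reverse inequality: $\|\vphi\|_{H^1(\Gamma\cap\Omega)}\leq C\|\vphi\|_\sim$ for all $\vphi$. I would argue by contradiction and compactness. Suppose no such constant exists; then there is a sequence $(\vphi_n)\subset H^1(\Gamma\cap\Omega)$ with $\|\vphi_n\|_{H^1(\Gamma\cap\Omega)}=1$ and $(\vphi_n,\vphi_n)_\sim\to 0$. By weak compactness of the unit ball in the Hilbert space $H^1(\Gamma\cap\Omega)$, up to a subsequence $\vphi_n\wto\vphi_\infty$ weakly in $H^1$, and by the compact embedding $H^1(\Gamma\cap\Omega)\hookrightarrow L^2(\Gamma\cap\Omega)$ (Rellich, since $\Gamma\cap\Omega$ is a bounded $1$-dimensional interval) we have $\vphi_n\to\vphi_\infty$ strongly in $L^2(\Gamma\cap\Omega)$, and also $\vphi_n(x_i)\to\vphi_\infty(x_i)$ at the endpoints by the compactness of the trace. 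Writing $(\vphi_n,\vphi_n)_\sim = \int_{\Gamma\cap\Omega}|\nabla_\Gamma\vphi_n|^2\,d\hu + \int_{\Gamma\cap\Omega}H^2\vphi_n^2\,d\hu - \int_{\nbordo}H_{\partial\Omega}\vphi_n^2\,d\huu$, the last two terms converge to $\int_{\Gamma\cap\Omega}H^2\vphi_\infty^2\,d\hu - \int_{\nbordo}H_{\partial\Omega}\vphi_\infty^2\,d\huu$ by strong $L^2$ (and trace) convergence, while the first term is weakly lower semicontinuous, so $(\vphi_\infty,\vphi_\infty)_\sim \leq \liminf_n (\vphi_n,\vphi_n)_\sim = 0$. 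By \eqref{hpps} this forces $\vphi_\infty = 0$.

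It remains to derive a contradiction with $\|\vphi_n\|_{H^1(\Gamma\cap\Omega)}=1$. Since $\vphi_\infty=0$ we get $\int_{\Gamma\cap\Omega}H^2\vphi_n^2\,d\hu\to 0$ and $\int_{\nbordo}H_{\partial\Omega}\vphi_n^2\,d\huu\to 0$, hence from $(\vphi_n,\vphi_n)_\sim\to 0$ we obtain $\int_{\Gamma\cap\Omega}|\nabla_\Gamma\vphi_n|^2\,d\hu\to 0$; combined with $\vphi_n\to 0$ strongly in $L^2$, this gives $\vphi_n\to 0$ strongly in $H^1(\Gamma\cap\Omega)$, contradicting $\|\vphi_n\|_{H^1(\Gamma\cap\Omega)}=1$. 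This proves the coercivity inequality and completes the proof. The main obstacle, such as it is, lies entirely in the compactness step — one must be careful that both the lower-order volume term and the endpoint (trace) term pass to the limit by strong convergence, so that only the leading gradient term is treated by lower semicontinuity; once this bookkeeping is done correctly the argument is routine. (This is precisely the adaptation of \cite[Proposition~4.2]{CMM} alluded to in the statement, the only new feature being the endpoint term $\int_{\nbordo}H_{\partial\Omega}\vphi\psi\,d\huu$ in place of whatever boundary-free expression appears there.)
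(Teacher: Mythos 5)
Your proof is correct, and it takes essentially the approach the paper alludes to by citing \cite[Proposition~4.2]{CMM}: a contradiction argument combining weak compactness of the $H^1$-unit ball, Rellich compactness of $H^1(\Gamma\cap\Omega)\hookrightarrow L^2(\Gamma\cap\Omega)$ together with compactness of the endpoint traces, and weak lower semicontinuity of the gradient term. The bookkeeping you highlight (lower-order and endpoint terms pass to the limit strongly, only the gradient term is treated by semicontinuity) is indeed the crux, and you handle it correctly, so the argument is a sound adaptation accounting for the extra endpoint term $\int_{\nbordo}H_{\partial\Omega}\vphi\psi\,d\huu$ absent from the CMM setting.
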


%\begin{proof}
%Clearly \eqref{hpps} implies that $(\cdot,\cdot)_\sim$ is a scalar product,
%and $\|\vphi\|_\sim\leq C\|\vphi\|_{H^1(\Gamma\cap\Omega)}$ for every $\vphi\in H^1(\Gamma\cap\Omega)$,
%for some positive constant $C$.
%To complete the proof we have to show the opposite inequality.
%
%Assume by contradiction the existence of a sequence $\vphi_n\in H^1(\Gamma\cap\Omega)$
%such that $\|\vphi_n\|_{H^1(\Gamma\cap\Omega)}=1$ and $\|\vphi_n\|_\sim\leq\frac1n$ for every $n$.
%By compactness, $\vphi_n$ converges weakly in $H^1(\Gamma\cap\Omega)$ to some $\vphi$,
%and uniformly on $\Gamma\cap\ombar$, hence
%\begin{equation}\label{limps}
%\begin{split}
%    \int_{\Gamma\cap\Omega} H^2\,\vphi^2\,d\hu &= \lim_{n\to\infty} \int_{\Gamma\cap\Omega} H^2\,\vphi_n^2\,d\hu, \\
%    \int_{\nbordo} H_{\partial\Omega}\,\vphi^2\,d\huu &= \lim_{n\to+\infty} \int_{\nbordo} H_{\partial\Omega}\,\vphi_n^2\,d\huu, \\
%    \int_{\Gamma\cap\Omega} |\nabla_{\Gamma}\vphi|^2\,d\hu &\leq \liminf_{n\to\infty} \int_{\Gamma\cap\Omega} |\nabla_\Gamma\vphi_n|^2\,d\hu,
%\end{split}
%\end{equation}
%and recalling that $\|\vphi_n\|_\sim\to0$, we get $\|\vphi\|_\sim=0$, that is $\vphi=0$ (thanks to \eqref{hpps}).
%Now from the first two equalities in \eqref{limps} we deduce that
%$\int_{\Gamma\cap\Omega}H^2\,\vphi_n^2\,d\hu\to0$, $\int_{\nbordo}H_{\partial\Omega}\,\vphi_n^2\,d\huu\to0$,
%and since $\|\vphi_n\|_\sim\to0$, we conclude that
%$$
%\int_{\Gamma\cap\Omega}|\nabla_\Gamma\vphi_n|^2\,d\hu\to0,
%$$
%which is in contradiction with $\|\vphi_n\|_{H^1(\Gamma\cap\Omega)}=1$.
%\end{proof}

The announced equivalent formulations of condition to \eqref{hppos} are stated in the following proposition.
Also for this proof we refer the reader to Proposition~4.3, Theorem~4.6 and Theorem~4.10 in \cite{CMM},
which can be directly adapted with the natural modifications,
taking into account Remark~\ref{rm:hunmezzo}.

\begin{proposition} \label{prop:T}
The following statements are equivalent.
\begin{enumerate}
\item[(i)] Condition \eqref{hppos} is satisfied.
\item[(ii)] Condition \eqref{hpps} holds, and the monotone, compact, self-adjoint operator $T:H^1(\Gamma\cap\Omega)\to H^1(\Gamma\cap\Omega)$ defined by duality as
\begin{equation} \label{T}
(T\vphi,\psi)_\sim =
-2 \int_{\Gamma\cap\Omega} \bigl[ \vf^+\div_\Gamma(\psi\nabla_\Gamma u^+) - \vf^-\div_\Gamma(\psi\nabla_\Gamma u^-) \bigr] \,d\hu
\end{equation}
for every $\vphi,\psi\in H^1(\Gamma\cap\Omega)$
(where $\vf$ is defined in \eqref{vf}), satisfies
\begin{equation} \label{lamda1}
\lambda_1(U):=\max_{\|\vphi\|_\sim=1} (T\vphi,\vphi)_\sim <1
\end{equation}
(where the dependence on $U$ is realized through the function $\vf$).
\item[(iii)] Condition \eqref{hpps} holds, and defining, for $v\in\dir$, $\Phi_v$ as the unique solution in $H^1(\Gamma\cap\Omega)$ to
\begin{equation*}
(\Phi_v,\psi)_\sim =
-2 \int_{\Gamma\cap\Omega} \bigl[ v^+\div_\Gamma(\psi\nabla_\Gamma u^+) - v^-\div_\Gamma(\psi\nabla_\Gamma u^-) \bigr] \,d\hu
\end{equation*}
for every $\psi\in H^1(\Gamma\cap\Omega)$, one has
\begin{equation} \label{mu1}
\mu(U):=\min\Bigl\{2\int_\Omega|\nabla v|^2\,dx : v\in\dir, \, \|\Phi_v\|_\sim=1 \Bigr\}>1.
\end{equation}
\end{enumerate}
We will omit the dependence on $U$ for $\lambda_1$ and $\mu$ where there is no risk of ambiguity.
\end{proposition}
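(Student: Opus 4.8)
\textbf{Proof strategy for Proposition~\ref{prop:T}.}
The plan is to establish the chain of equivalences $(i)\Leftrightarrow(ii)\Leftrightarrow(iii)$ essentially following the spectral-theoretic approach of \cite[Section 4]{CMM}, paying attention to the two points where our setting differs: the presence of the boundary term $-\int_{\nbordo}H_{\partial\Omega}\vphi^2\,d\huu$ in the quadratic form, and the fact that the right-hand side of \eqref{vf} must be read in the $H^{-1/2}$--$H^{1/2}$ duality (Remark~\ref{rm:hunmezzo}). First I would observe that, assuming \eqref{hpps}, Proposition~\ref{prop:ps} makes $(\cdot,\cdot)_\sim$ a genuine Hilbert-space inner product equivalent to the standard one on $H^1(\Gamma\cap\Omega)$, so that all the functional-analytic constructions below take place in a fixed Hilbert space. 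The key preliminary reduction is to note that \eqref{hppos} forces \eqref{hpps}: indeed, by linearity $\vf$ depends linearly on $\vphi$, so $\vphi\mapsto\int_\Omega|\nabla v_\vphi|^2\,dx$ is a quadratic form; testing \eqref{vf} with $z=v_\vphi$ and using the $H^{1/2}$-regularity of $\vphi\nabla_\Gamma u^\pm$ (valid because $\nabla_\Gamma u^\pm\in C^{0,\alpha}$ with $\alpha>1/2$ by Lemma~\ref{lemma:Neumannreg}) gives a bound $\int_\Omega|\nabla v_\vphi|^2\,dx\le C\|\vphi\|_{H^{1/2}(\Gamma\cap\Omega)}^2$, which is a \emph{lower}-order term relative to $(\vphi,\vphi)_\sim$; hence if $(\vphi,\vphi)_\sim$ were not positive definite one could violate \eqref{hppos} along a suitable sequence, and conversely this compactness is exactly what allows the maximum in \eqref{lamda1} to be attained.

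For $(i)\Leftrightarrow(ii)$: once \eqref{hpps} is in force, define $T$ by duality through \eqref{T}; the right-hand side of \eqref{T} is a bounded linear functional of $\psi\in H^1(\Gamma\cap\Omega)$ for fixed $\vphi$ (again by the $H^{1/2}$ regularity of $\psi\nabla_\Gamma u^\pm$ paired against $v_\vphi^\pm\in H^{1/2}(\Gamma\cap\Omega)$), so $T$ is well defined and bounded. Self-adjointness with respect to $(\cdot,\cdot)_\sim$ follows by a symmetry computation: both $(T\vphi,\psi)_\sim$ and $(\vphi,T\psi)_\sim$ are shown to equal $-2\int_\Omega\nabla v_\vphi\cdot\nabla v_\psi\,dx$ after integrating by parts and invoking the defining equation \eqref{vf} for each of $v_\vphi$ and $v_\psi$. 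Compactness of $T$ comes from the smoothing: $T\vphi$ is recovered by solving an elliptic problem whose datum involves $v_\vphi$, gaining regularity, so $T$ factors through a compact embedding. Then
$$
\partial^2F((\Gamma,u);U)[\vphi]
= (\vphi,\vphi)_\sim - 2\int_\Omega|\nabla v_\vphi|^2\,dx
= (\vphi,\vphi)_\sim - (T\vphi,\vphi)_\sim
= \bigl((I-T)\vphi,\vphi\bigr)_\sim,
$$
where the middle equality is the identity $(T\vphi,\vphi)_\sim=2\int_\Omega|\nabla v_\vphi|^2\,dx$ obtained by testing \eqref{vf} with $z=v_\vphi$ and comparing with \eqref{T}. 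Since $T$ is compact and self-adjoint, $\partial^2F((\Gamma,u);U)$ is positive definite on $H^1(\Gamma\cap\Omega)\setminus\{0\}$ if and only if the top eigenvalue $\lambda_1(U)$ of $T$ satisfies $\lambda_1(U)<1$ (positivity on the orthogonal complement of the top eigenspace is automatic; the quadratic form is a small perturbation of the identity so it is also coercive once $\lambda_1<1$). This is precisely the equivalence $(i)\Leftrightarrow(ii)$, with \eqref{lamda1} being the Rayleigh-quotient characterization of $\lambda_1(U)$.

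For $(ii)\Leftrightarrow(iii)$: this is a duality between the ``primal'' variable $\vphi\in H^1(\Gamma\cap\Omega)$ and the ``dual'' variable $v\in\dir$. The map $v\mapsto\Phi_v$ is essentially the adjoint of $\vphi\mapsto v_\vphi$: comparing the defining relations one checks that $(\Phi_{v_\vphi},\psi)_\sim=(T\vphi,\psi)_\sim$ for all $\psi$, i.e. $\Phi_{v_\vphi}=T\vphi$, and that $2\int_\Omega|\nabla v_\vphi|^2\,dx=(T\vphi,\vphi)_\sim=(\Phi_{v_\vphi},\vphi)_\sim$. Writing out the max in \eqref{lamda1} and the min in \eqref{mu1} and using this correspondence together with Cauchy--Schwarz for $(\cdot,\cdot)_\sim$, one gets $\mu(U)=1/\lambda_1(U)$, so $\lambda_1(U)<1$ if and only if $\mu(U)>1$; the only mild subtlety is checking that the infimum in \eqref{mu1} is realized over a nonempty set and is attained, which follows from the surjectivity properties of $v\mapsto\Phi_v$ on the range of $T$ and a standard direct-method argument using the coercivity of $\int_\Omega|\nabla v|^2\,dx$ on $\dir$. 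Throughout, one must quote \cite[Proposition~4.2, Proposition~4.3, Theorem~4.6, Theorem~4.10]{CMM} for the arguments that transfer verbatim. The main obstacle, and the only genuinely new point, is the careful bookkeeping of the $H^{-1/2}$--$H^{1/2}$ duality in \eqref{vf} and \eqref{T}: one has to verify that every pairing written formally as a boundary integral over $\Gamma\cap\Omega$ is in fact a legitimate duality pairing, which rests on the borderline regularity $\nabla_\Gamma u^\pm\in C^{0,\alpha}$, $\alpha>1/2$, from Lemma~\ref{lemma:Neumannreg}, and on the trace regularity of $v_\vphi^\pm$ and $\Phi_v$; the boundary term $-\int_{\nbordo}H_{\partial\Omega}\vphi^2\,d\huu$, being a finite sum of two point evaluations times the $C^2$ datum $H_{\partial\Omega}$, is a lower-order perturbation and does not affect compactness or the spectral picture.
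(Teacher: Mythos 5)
Your proposal is correct and takes essentially the same route as the paper, which for this proposition simply cites the corresponding results of \cite{CMM} and notes that they transfer with the natural modifications once the $H^{-1/2}$--$H^{1/2}$ pairings of Remark~\ref{rm:hunmezzo} and the extra boundary term in $(\cdot,\cdot)_\sim$ are accounted for; your sketch faithfully reconstructs exactly that adaptation, including the identity $\partial^2F((\Gamma,u);U)[\vphi]=((I-T)\vphi,\vphi)_\sim$ and the duality $\mu(U)=1/\lambda_1(U)$. Two cosmetic remarks: the implication (i)$\Rightarrow$\eqref{hpps} is immediate from $(\vphi,\vphi)_\sim \ge \partial^2F((\Gamma,u);U)[\vphi]>0$ for $\vphi\neq0$, with no need for the compactness/sequence argument you invoke there, and the self-adjointness identity should read $(T\vphi,\psi)_\sim = +2\int_\Omega\nabla v_\vphi\cdot\nabla v_\psi\,dx$ (cf.\ Remark~\ref{rm:T}), not $-2$.
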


\begin{remark} \label{rm:T}
Notice that if condition \eqref{hpps} is satisfied,
then by Proposition~\ref{prop:ps} and by the Riesz Theorem the operator $T$ is well defined.
By \eqref{vf} we immediately have
$$
(T\vphi,\psi)_\sim = 2\int_\Omega \nabla\vf\cdot\nabla v_{\psi}\,dx.
$$
Moreover comparing with \eqref{d2f} we see that
$$
\partial^2F((\Gamma,u);U)[\vphi] = -(T\vphi,\vphi)_\sim + \|\vphi\|^2_\sim.
$$
\end{remark}

%\begin{proof}[Proof of Proposition \ref{prop:T}]
%If condition \eqref{hpps} is satisfied, then by Proposition~\ref{prop:ps} and by the Riesz Theorem
%the operator $T$ is well defined, and one can prove that it is monotone, compact and self-adjoint
%arguing exactly as in \cite[Proposition~4.3]{CMM}.
%Hence in this case $\lambda_1$ is well defined.
%
%Assuming condition \eqref{hppos}, we have immediately
%$$
%(\vphi,\vphi)_\sim > 2\int_\Omega |\nabla\vf|^2\,dx\geq0 \qquad\text{for every }\vphi\in H^1(\Gamma\cap\Omega)\setmeno\{0\},
%$$
%Hence the equivalence of (i) and (ii) amounts to show that, under condition \eqref{hpps}, \eqref{hppos} and \eqref{lamda1} are equivalent:
%in turn, this follows immediately from Remark~\ref{rm:T}.
%
%Finally, to complete the proof it is enough to observe that, under condition \eqref{hpps},
%one has $\lambda_1=\frac{1}{\mu}$ (see \cite[Theorem~4.10]{CMM} for details).
%\end{proof}

\begin{corollary}
Assume \eqref{hppos}. Then there exists a constant $C>0$ such that
$$
\partial^2F((\Gamma,u);U)[\vphi]\geq C\|\vphi\|^2_{H^1(\Gamma\cap\Omega)} \qquad\text{for every } \vphi\in H^1(\Gamma\cap\Omega).
$$
\end{corollary}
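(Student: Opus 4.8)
The plan is to obtain the estimate directly from the spectral information already packaged in Proposition~\ref{prop:T} together with the norm equivalence in Proposition~\ref{prop:ps}, so no new computation is really needed. First I would recall the identity recorded in Remark~\ref{rm:T},
$$
\partial^2F((\Gamma,u);U)[\vphi] = \|\vphi\|_\sim^2 - (T\vphi,\vphi)_\sim \qquad\text{for every }\vphi\in H^1(\Gamma\cap\Omega),
$$
which exhibits the quadratic form as a perturbation of the squared $\|\cdot\|_\sim$-norm by the compact, self-adjoint operator $T$.

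Next I would invoke Proposition~\ref{prop:T}: since we are assuming \eqref{hppos}, statement (ii) of that proposition holds, so in particular \eqref{hpps} is satisfied (hence, by Proposition~\ref{prop:ps}, $\|\cdot\|_\sim$ is a genuine norm) and $\lambda_1=\lambda_1(U)<1$, where $\lambda_1=\max_{\|\vphi\|_\sim=1}(T\vphi,\vphi)_\sim$. Because the map $\vphi\mapsto(T\vphi,\vphi)_\sim$ is homogeneous of degree two, the variational characterization of $\lambda_1$ yields $(T\vphi,\vphi)_\sim\le\lambda_1\|\vphi\|_\sim^2$ for every $\vphi\in H^1(\Gamma\cap\Omega)$ (trivially for $\vphi=0$, and by rescaling to unit $\|\cdot\|_\sim$-norm otherwise). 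Substituting this into the identity above gives
$$
\partial^2F((\Gamma,u);U)[\vphi] \ge (1-\lambda_1)\,\|\vphi\|_\sim^2 \qquad\text{for every }\vphi\in H^1(\Gamma\cap\Omega),
$$
with $1-\lambda_1>0$.

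Finally I would use Proposition~\ref{prop:ps} once more: under \eqref{hpps} the norm $\|\cdot\|_\sim$ is equivalent to the standard $H^1(\Gamma\cap\Omega)$-norm, so there exists $c>0$ with $\|\vphi\|_\sim^2\ge c\,\|\vphi\|_{H^1(\Gamma\cap\Omega)}^2$, and the claim follows with $C:=c\,(1-\lambda_1)>0$. I do not expect any real obstacle here, since all the work was already done in Propositions~\ref{prop:ps} and \ref{prop:T}; the only point deserving a line of justification is the passage from the max-definition of $\lambda_1$ to the inequality $(T\vphi,\vphi)_\sim\le\lambda_1\|\vphi\|_\sim^2$ valid for \emph{all} $\vphi$, which is immediate by homogeneity.
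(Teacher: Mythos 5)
Your proof is correct and follows the same route as the paper: it combines the identity $\partial^2F((\Gamma,u);U)[\vphi]=\|\vphi\|_\sim^2-(T\vphi,\vphi)_\sim$ from Remark~\ref{rm:T} with $\lambda_1<1$ from Proposition~\ref{prop:T} to get $\partial^2F\geq(1-\lambda_1)\|\vphi\|_\sim^2$, and then uses the norm equivalence from Proposition~\ref{prop:ps}. You merely spell out a couple of elementary steps (the rescaling to deduce the inequality for all $\vphi$) that the paper leaves implicit.
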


\begin{proof}
By Remark~\ref{rm:T} and \eqref{lamda1}
$$
\partial^2F((\Gamma,u);U)[\vphi] = \|\vphi\|^2_\sim - (T\vphi,\vphi)_\sim \geq (1-\lambda_1)\|\vphi\|^2_\sim,
$$
hence the conclusion follows by Proposition~\ref{prop:T} and Proposition~\ref{prop:ps}.
\end{proof}

From the definition in \eqref{mu1} it is clear that $\mu$ depends monotonically on the domain $U$.
This is made explicit by the following corollary.

\begin{corollary} \label{cor:largerdomain0}
Let $U_1$, $U_2$ be admissible subdomains for $(\Gamma,u)$, with $U_1\subset U_2$.
Then $\mu(U_1)\geq\mu(U_2)$.
In particular, if condition \eqref{hppos} is satisfied in $U_2$, then it also holds in $U_1$.
\end{corollary}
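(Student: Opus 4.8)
The plan is to read everything off the dual formulation of strict stability given in part (iii) of Proposition~\ref{prop:T}, which expresses
\[
\mu(U)=\min\Bigl\{2\!\int_\Omega|\nabla v|^2\,dx : v\in H^1_U(\Omega\setminus\Gamma),\ \|\Phi_v\|_\sim=1\Bigr\},
\]
and to observe that the dependence on $U$ enters \emph{only} through the space $H^1_U(\Omega\setminus\Gamma)$ over which one optimises. The first point I would check is that the inclusion of subdomains is inherited by these spaces: if $U_1\subset U_2$, then $(\Omega\setminus U_2)\cup\partial_D\Omega\subset(\Omega\setminus U_1)\cup\partial_D\Omega$, so any $v$ vanishing on the larger set vanishes on the smaller one, which gives $H^1_{U_1}(\Omega\setminus\Gamma)\subset H^1_{U_2}(\Omega\setminus\Gamma)$.

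Next I would note that neither the constraint nor the objective really changes when we enlarge the ambient space: the auxiliary function $\Phi_v\in H^1(\Gamma\cap\Omega)$ is defined purely through the traces $v^\pm$ on $\Gamma\cap\Omega$, and since $\Gamma\subset U_1\subset U_2$ these traces (and hence $\Phi_v$) do not see which subdomain we work in; likewise the energy $2\int_\Omega|\nabla v|^2\,dx$ is intrinsic to $v$. Therefore the minimisation defining $\mu(U_1)$ is exactly the one defining $\mu(U_2)$ but carried out over the smaller admissible set $\{v\in H^1_{U_1}(\Omega\setminus\Gamma):\|\Phi_v\|_\sim=1\}\subset\{v\in H^1_{U_2}(\Omega\setminus\Gamma):\|\Phi_v\|_\sim=1\}$, so the minimum can only increase, i.e.\ $\mu(U_1)\geq\mu(U_2)$. (One should also remark that the smaller feasible set is nonempty, since $\Phi$ depends linearly on $v$ and a suitable rescaling normalises $\|\Phi_v\|_\sim$ to $1$ whenever $\Phi_v\neq 0$, and if $\Phi_v\equiv 0$ for all $v\in H^1_{U_1}$ then $\mu(U_1)=+\infty$ and the inequality is trivial.)

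Finally, for the ``in particular'' clause: condition \eqref{hpps} is a statement about the bilinear form $(\cdot,\cdot)_\sim$ on $H^1(\Gamma\cap\Omega)$ and does not involve $U$ at all, so if \eqref{hppos} holds in $U_2$ then by the equivalence (i)$\Leftrightarrow$(iii) of Proposition~\ref{prop:T} we get that \eqref{hpps} holds and $\mu(U_2)>1$; combining with the monotonicity just proved yields $\mu(U_1)\geq\mu(U_2)>1$, and applying (iii)$\Rightarrow$(i) of Proposition~\ref{prop:T} once more gives \eqref{hppos} in $U_1$. I do not expect any genuine obstacle here; the only subtlety worth stating carefully is precisely the observation that $\Phi_v$ and the Dirichlet energy are intrinsic to $v$, so that passing from $U_2$ to $U_1$ amounts to nothing more than shrinking the domain of minimisation.
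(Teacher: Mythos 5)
Your argument is correct and follows exactly the route the paper has in mind: the paper gives no explicit proof of this corollary, merely remarking that the monotonicity of $\mu$ ``is clear from the definition in \eqref{mu1},'' and what you wrote is the clean spelling-out of that remark — the inclusion $H^1_{U_1}(\Omega\setminus\Gamma)\subset H^1_{U_2}(\Omega\setminus\Gamma)$, the $U$-independence of both $\Phi_v$ and the Dirichlet energy, so that passing from $U_2$ to $U_1$ only shrinks the feasible set, plus the observation that \eqref{hpps} is $U$-independent so the equivalence (i)$\Leftrightarrow$(iii) in Proposition~\ref{prop:T} transfers strict stability from $U_2$ to $U_1$.
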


\begin{corollary} \label{cor:largerdomain}
Assume that condition \eqref{hppos} holds in $U$.
Let $U_n$ be a decreasing sequence of admissible subdomains for $(\Gamma,u)$ such that
$U$ is the interior part of $\bigcap_nU_n$.
Then \eqref{hppos} holds in $U_n$, if $n$ is sufficiently large.
\end{corollary}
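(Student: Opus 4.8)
The plan is to reduce the statement to the convergence $\mu(U_n)\to\mu(U)$ as $n\to\infty$ and then invoke characterization (iii) of Proposition~\ref{prop:T}. Two preliminary observations make this possible. First, condition \eqref{hpps} is intrinsic to $(\Gamma,u)$ and $\Omega$ and does not involve the admissible subdomain; since \eqref{hppos} holds in $U$, Proposition~\ref{prop:T} guarantees that \eqref{hpps} holds, hence it holds with $U$ replaced by any $U_n$, so $(\cdot,\cdot)_\sim$ is a scalar product on $H^1(\Gamma\cap\Omega)$ and the quantities $\mu(U_n)$ are well defined. Second, $\mu(U)>1$, again by Proposition~\ref{prop:T}. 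Consequently, by the same proposition applied in $U_n$, it suffices to show $\mu(U_n)>1$ for $n$ large; and since $U\subset U_n$, Corollary~\ref{cor:largerdomain0} already gives $\mu(U_n)\le\mu(U)$ for every $n$, so it is enough to prove the lower bound $\liminf_{n\to\infty}\mu(U_n)\ge\mu(U)$, which then forces $\mu(U_n)\to\mu(U)>1$.

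To prove this lower bound I would run the direct method on the minimizers. Let $v_n\in H^1_{U_n}(\Omega\setminus\Gamma)$ be a minimizer of the problem defining $\mu(U_n)$, so that $\|\Phi_{v_n}\|_\sim=1$ and $2\int_\Omega|\nabla v_n|^2\,dx=\mu(U_n)\le\mu(U)$. After normalizing the additive constants of $v_n$ on the connected components of $\Omega\setminus\Gamma$ not meeting $\partial_D\Omega$ (which, since $\nabla u^\pm=0$ on $\Gamma\cap\partial\Omega$, leave $\Phi_{v_n}$ unchanged), a Poincar\'e inequality bounds $(v_n)_n$ in $H^1(\Omega\setminus\Gamma)$, so $v_n\rightharpoonup v$ weakly in $H^1(\Omega\setminus\Gamma)$ along a subsequence. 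For each fixed $m$ we have $v_n=0$ a.e.\ in $\Omega\setminus U_m$ and $v_n=0$ on $\partial_D\Omega$ for all $n\ge m$; these are closed, hence weakly closed, linear conditions, so $v=0$ a.e.\ in $\Omega\setminus U_m$ for every $m$ and $v=0$ on $\partial_D\Omega$, whence $v\in H^1_U(\Omega\setminus\Gamma)$ (using that $U$ is the interior of $\bigcap_nU_n$ and that $\partial U$ is $\mathcal{L}^2$-negligible). The crucial step is then $\|\Phi_v\|_\sim=1$: mere weak continuity of $v\mapsto\Phi_v$ would only give $\|\Phi_v\|_\sim\le1$, which is insufficient, so I would prove that $v\mapsto\Phi_v$ is actually \emph{compact} from $H^1(\Omega\setminus\Gamma)$ to $H^1(\Gamma\cap\Omega)$. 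It factors through the trace map $v\mapsto(v^+,v^-)$, which is compact from $H^1(\Omega\setminus\Gamma)$ into $H^s(\Gamma\cap\Omega)^2$ for every $s<\frac12$, composed with the bounded solution operator $(v^+,v^-)\mapsto\Phi_v$; the boundedness of the latter on $H^s(\Gamma\cap\Omega)^2$ for $s$ close to $\frac12$ (say $s\in(1-\alpha,\frac12)$) rests on the regularity $\nabla_\Gamma u^\pm\in C^{0,\alpha}(\Gamma)$ with $\alpha>\frac12$ and on the duality between $H^{-\frac12}$ and $H^{\frac12}$ discussed in Remark~\ref{rm:hunmezzo} --- the same mechanism that makes $T$ compact in \cite{CMM}. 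Granting this, $\Phi_{v_n}\to\Phi_v$ strongly, hence $\|\Phi_v\|_\sim=\lim_n\|\Phi_{v_n}\|_\sim=1$ (in particular $\Phi_v\ne0$); thus $v$ is admissible in the minimum problem \eqref{mu1}, and weak lower semicontinuity of the Dirichlet energy yields $\mu(U)\le 2\int_\Omega|\nabla v|^2\,dx\le\liminf_{n\to\infty}2\int_\Omega|\nabla v_n|^2\,dx=\liminf_{n\to\infty}\mu(U_n)$, which completes the argument.

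The step I expect to be the main obstacle is exactly this compactness of $v\mapsto\Phi_v$: one must combine the merely fractional regularity of the traces $v^\pm$ with the duality of Remark~\ref{rm:hunmezzo}, and pick the auxiliary exponent $s$ so that multiplication by $\nabla_\Gamma u^\pm\in C^{0,\alpha}(\Gamma)$ still maps $H^1(\Gamma\cap\Omega)$ boundedly into $H^{1-s}(\Gamma\cap\Omega)$. All the remaining ingredients --- weak compactness of $(v_n)_n$, weak lower semicontinuity, the weak closedness of the constraints defining $H^1_U(\Omega\setminus\Gamma)$, and the monotonicity of $\mu$ recorded in Corollary~\ref{cor:largerdomain0} --- are routine.
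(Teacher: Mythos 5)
Your argument is correct and follows essentially the same route as the paper's proof of this corollary: reduce to $\mu(U_n)\to\mu(U)$ via Proposition~\ref{prop:T}\,(iii) and Corollary~\ref{cor:largerdomain0}, take minimizers $v_n$, extract a weak $H^1$-limit $v$ which lies in $H^1_U(\Omega\setminus\Gamma)$, use compactness of $v\mapsto\Phi_v$ to keep $\|\Phi_v\|_\sim=1$, and conclude by weak lower semicontinuity of the Dirichlet energy. You are merely more explicit than the paper about two points it leaves implicit --- passing from $v=0$ a.e.\ in $\Omega\setminus U_m$ for every $m$ to $v=0$ a.e.\ in $\Omega\setminus U$, and the mechanism behind the compactness of $v\mapsto\Phi_v$ (trace compactness into $H^s$, $s<\tfrac12$, combined with the multiplier regularity $\nabla_\Gamma u^\pm\in C^{0,\alpha}$, $\alpha>\tfrac12$, from Remark~\ref{rm:hunmezzo}) --- both of which are correct.
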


\begin{proof}
In view of \eqref{mu1} it is sufficient to show that $\lim_n\mu(U_n)\geq\mu(U)$.
Let $v_n\in H^1_{U_n}(\Omega\setminus\Gamma)$ be a solution to \eqref{mu1} with $U$ replaced by $U_n$.
Then $v_n\in H^1_{U_1}(\Omega\setminus\Gamma)$
and $2\int_{\Omega}|\nabla v_n|^2\,dx=\mu(U_n)\leq\mu(U)$,
where the inequality follows from Corollary~\ref{cor:largerdomain0}.
Hence, up to subsequences, $v_n\wto v\in H^1_{U_1}(\Omega\setminus\Gamma)$.
Moreover, $v=0$ a.e. in $U_1\setminus U$, so that $v\in H^1_U(\Omega\setminus\Gamma)$
and $v$ is admissible in problem \eqref{mu1} (by the compactness of the map $v\mapsto\Phi_v$): we conclude that
$$
\lim_{n\to\infty}\mu(U_n) = \lim_{n\to\infty} 2\int_{\Omega}|\nabla v_n|^2\,dx
\geq 2\int_{\Omega}|\nabla v|^2\,dx \geq \mu(U),
$$
as claimed.
\end{proof}

\end{section}

%%%%%%%%%%%%%%%%%%%%%%%%%%%%%%%%%%%%%%%%%%%%%%%%%%%%%%%%%%%%%%%%%%%%%%%%%%%%%%%%%%%%%%%%%%%%%%%%%%%%%%%%%
%%%%%%%%%%%%%%%%%%%%%%%%%%%%%%%%%%%%%%%%%%%%%%%%%%%%%%%%%%%%%%%%%%%%%%%%%%%%%%%%%%%%%%%%%%%%%%%%%%%%%%%%%
%%%%%%%%%%%%%%%%%%%%%%%%%%%%%%%%%%%%%%%%%%%%%%%%%%%%%%%%%%%%%%%%%%%%%%%%%%%%%%%%%%%%%%%%%%%%%%%%%%%%%%%%%
%%%%%%%%%%%%%%%%%%%%%%%%%%%%%%%%%%%%%%%%%%%%%%%%%%%%%%%%%%%%%%%%%%%%%%%%%%%%%%%%%%%%%%%%%%%%%%%%%%%%%%%%%
%%%%%%%%%%%%%%%%%%%%%%%%%%%%%%%%%%%%%%%%%%%%%%%%%%%%%%%%%%%%%%%%%%%%%%%%%%%%%%%%%%%%%%%%%%%%%%%%%%%%%%%%%

\begin{section}{Local $W^{2,\infty}$-minimality} \label{sect:minW}

In this section, as a first step toward the proof of Theorem~\ref{teo:minSBV},
we show how the strategy developed in \cite{CMM} can be adapted to the present setting
in order to prove that the positiveness condition \eqref{hppos}
is sufficient for a regular critical pair to be a local minimizer with respect to variations of class $W^{2,\infty}$
of the discontinuity set.
For the rest of the section $(\Gamma,u)$ will be a fixed strictly stable regular critical pair in an admissible subdomain $U$.
For $\eta>0$, we denote by
$$
\mathcal{N}_{\eta}(\Gamma) := \{x\in\R^2 : \dist(x,\Gamma)<\eta\}
$$
the $\eta$-tubular neighborhood of $\Gamma$.

In order to give a proper notion of sets which are close to $\Gamma$ in the $W^{2,\infty}$-sense,
we now introduce a suitable flow in $U$ whose trajectories intersect $\Gamma$ orthogonally.
To this aim, we start by fixing $\eta_0>0$ such that $\mathcal{N}_{\eta_0}(\Gamma)\subset\subset U\setminus\partial_D\Omega$,
and a vector field $X\in C^2(\R^2;\R^2)$ such that $\supp X\subset\subset U\setminus\partial_D\Omega$,
$X=\nu$ on $\Gamma$, $X\cdot\norm=0$ on $\partial\Omega$, and $|X|=1$ in $\mathcal{N}_{\eta_0}(\Gamma)$.
We denote by $\Psi:\R\times\ombar\to\ombar$ the flow generated by $X$:
$$
\frac{\partial}{\partial t}\Psi(t,x)=X(\Psi(t,x)),
\qquad
\Psi(0,x)=x.
$$
Observe that (by taking a smaller $\eta_0$ if necessary)
for every $y\in\mathcal{N}_{\eta_0}(\Gamma)$ are uniquely determined two points
$\pi(y)\in\Gamma$ and $\tau(y)\in\R$ such that $y=\Psi(\tau(y),\pi(y))$.
The existence of the maps $\pi$ and $\tau$,
as well as the fact that they are of class $C^2$,
is guaranteed by the Implicit Function Theorem.

We define, for $\delta>0$, the following class of functions:
\begin{align*}
\mathcal{D}_\delta :=
\bigl\{ \psi\in C^{2}(\Gamma) :  \|\psi\|_{C^{2}(\Gamma)}<\delta \}.
\end{align*}
We can extend each function $\psi\in\mathcal{D}_\delta$ to $\mathcal{N}_{\eta_0}(\Gamma)$
by setting $\psi(y):=\psi(\pi(y))$, in such a way that $\psi$ is constant along the trajectories of the flow $\Psi$.
We associate with $\psi$ the diffeomorphism $\Phi^\psi(x):=\Psi(\psi(x),x)$, and we remark that
\begin{equation} \label{eq:stimapsi}
\|\Phi^\psi-Id\|_{C^{2}(\Gamma)} \leq C \|\psi\|_{C^{2}(\Gamma)}
\end{equation}
for some constant $C$ independent of $\psi\in\mathcal{D}_\delta$.
Finally, we define the set
\begin{equation} \label{eq:gammapsi}
\Gamma_\psi := \Phi^\psi(\Gamma) = \{\Psi(\psi(x),x) : x\in\Gamma\}\,,
\end{equation}
and the function $u_\psi:= u_{\Phi^\psi}$ as the unique solution in $H^1(\Omega\setminus\Gamma_\psi)$ to
$$
\int_{\Omega\setminus\Gamma_\psi}\nabla u_\psi\cdot\nabla z = 0
\qquad\text{for every }z\in H^1_U(\Omega\setminus\Gamma_\psi)
$$
with $u_\psi=u$ in $(\Omega\setminus U)\cup\partial_D\Omega$.
We will also denote by $\nu_\psi:=\nu_{\Phi^\psi}$ and $\eta_\psi:=\eta_{\Phi^\psi}$
the vectors defined on $\Gamma_\psi$ and $\Gamma_\psi\cap\partial\Omega$ by \eqref{nuf} and \eqref{etaf} respectively,
and by $H_\psi:=\div_{\Gamma_\psi}\nu_\psi$ the curvature of $\Gamma_\psi$.

\begin{remark} \label{rm:Neumannreg}
For $\psi\in\mathcal{D}_{\delta}$, the function $u_\psi$ is a weak solution to the Neumann problem
$$
\left\{
  \begin{array}{ll}
    \Delta u_\psi=0 & \hbox{in } (\Omega\cap U) \setminus \Gamma_\psi,\\
    \partial_{\nu_\psi} u_\psi=0 & \hbox{on } \Gamma_\psi\cap\Omega,\\
    \partial_{\norm} u_\psi =0 & \hbox{on } (\partial\Omega\cap U)\setminus\partial_D\Omega,
  \end{array}
\right.
$$
and the sets $\Gamma_\psi$ are uniformly bounded in $C^2$, by \eqref{eq:stimapsi}.
Hence, by classical results and by using Lemma~\ref{lemma:Neumannreg}
to deal with the regularity in a neighborhood of the boundary $\Gamma_\psi\cap\partial\Omega$,
we obtain that the functions $u_\psi^\pm$ are of class $C^{1,\gamma}$ up to $\Gamma_\psi\cap\ombar$, for some $\gamma\in(\frac12,1)$,
with $C^{1,\gamma}$-norm uniformly bounded with respect to $\psi\in\mathcal{D}_{\delta}$.
More precisely,
$$
\sup_{\psi\in\mathcal{D}_\delta} \|\nabla_\Gamma(u_\psi^\pm\circ\Phi^\psi)\|_{C^{0,\gamma}(\Gamma\cap\ombar;\R^2)} <+\infty \,,
$$
and, as an application of Ascoli--Arzel\`{a} Theorem, we also have
$$
\sup_{\psi\in\mathcal{D}_\delta} \|\nabla_\Gamma (u_\psi^\pm\circ\Phi^\psi) - \nabla_\Gamma u^\pm\|_{C^{0,\alpha}(\Gamma\cap\ombar;\R^2)} \to0
$$
for every $\alpha\in(0,\gamma)$, as $\delta\to0$.
\end{remark}

The main result of this section is the following.

\begin{theorem} \label{teo:minW}
Let $(\Gamma,u)$ be a strictly stable regular critical pair in an admissible subdomain $U$,
according to Definition~\ref{def:varIIpos}.
Then $(\Gamma,u)$ is an isolated local $W^{2,\infty}$-minimizer for $F$ in $U$, in the sense that
there exist $\delta>0$ and $C>0$ such that
$$
F(\Gamma_\psi,v) \geq F(\Gamma,u) + C\,\|\psi\|^2_{H^{1}(\Gamma\cap\Omega)}
$$
for every $\psi\in W^{2,\infty}(\Gamma\cap\Omega)$ such that $\|\psi\|_{W^{2,\infty}(\Gamma\cap\Omega)}<\delta$,
and for every $v\in H^1(\Omega\setminus \Gamma_\psi)$ with $v=u$ in $(\Omega\setminus U)\cup\partial_D\Omega$
(where the set $\Gamma_\psi$ is defined in \eqref{eq:gammapsi}).
\end{theorem}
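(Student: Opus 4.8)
The plan is to run a contradiction/Taylor-expansion argument in the spirit of \cite{CMM}. First I would reduce the problem to the graph setting already set up above: for $\psi\in W^{2,\infty}(\Gamma\cap\Omega)$ with $\|\psi\|_{W^{2,\infty}}<\delta$, the competitor $(\Gamma_\psi,v)$ can always be improved by replacing $v$ with the harmonic extension $u_\psi$ (since $u_\psi$ solves the minimization problem defining it, $F(\Gamma_\psi,v)\geq F(\Gamma_\psi,u_\psi)$); hence it suffices to prove $F(\Gamma_\psi,u_\psi)\geq F(\Gamma,u)+C\|\psi\|^2_{H^1(\Gamma\cap\Omega)}$. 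A density/approximation argument then lets me assume $\psi\in\mathcal{D}_\delta$ (i.e. $C^2$), using that $F(\Gamma_\psi,u_\psi)$ and $\|\psi\|_{H^1}$ depend continuously on $\psi$ in the relevant topologies.

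Next I would set up the one-parameter reduction. Given $\psi\in\mathcal{D}_\delta$, consider the flow $t\mapsto\Phi^{t\psi}$ (equivalently $\Psi(t\psi(x),x)$), which interpolates between $\mathrm{Id}$ at $t=0$ and $\Phi^\psi$ at $t=1$, and write $g(t):=F((\Gamma_{t\psi},u_{t\psi});U)$. Since $(\Gamma,u)$ is critical, $g'(0)=0$ by the first-variation formula \eqref{varI} (the relevant vector field is $X=\psi\nu$ on $\Gamma$, which is tangent to $\partial\Omega$ because $\psi$ is compactly supported in $U\setminus\partial_D\Omega$). A second-order Taylor expansion gives $g(1)-g(0)=\tfrac12 g''(\xi)$ for some $\xi\in(0,1)$, and by Remark~\ref{rm:varIIt} the quantity $g''(\xi)$ is exactly the second variation of $F$ at the regular pair $(\Gamma_{\xi\psi},u_{\xi\psi})$ along the field generating the flow. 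The core estimate is then a uniform lower bound: there exist $\delta>0$ and $C>0$ such that for all $\psi\in\mathcal{D}_\delta$ and all $t\in[0,1]$,
\begin{equation*}
\frac{d^2}{ds^2}F((\Gamma_{s\psi},u_{s\psi});U)\Big|_{s=t} \geq C\,\|\psi\|^2_{H^1(\Gamma\cap\Omega)}.
\end{equation*}

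To prove this uniform coercivity I would argue by contradiction, exactly as in \cite{CMM}: suppose there are $\psi_n\in\mathcal{D}_{1/n}$, $t_n\in[0,1]$, with the second variation at $t_n$ along $\psi_n$ being $<\tfrac1n\|\psi_n\|^2_{H^1}$. Normalize $\|\psi_n\|_{H^1(\Gamma\cap\Omega)}=1$; pass to a subsequence so that $\psi_n\wto\psi_0$ weakly in $H^1(\Gamma\cap\Omega)$ and $t_n\to t_0$. Using the general second-variation formula \eqref{varIIgen} at the pair $(\Gamma_{t_n\psi_n},u_{t_n\psi_n})$, together with the uniform $C^{1,\gamma}$-bounds on $u_{t_n\psi_n}$ and their convergence to $u$ from Remark~\ref{rm:Neumannreg} (and the fact that $\Gamma_{t_n\psi_n}\to\Gamma$ in $C^2$, so curvatures, the maps $\nu,\eta$, and the boundary points converge), one shows that in the limit the second variation converges to $\partial^2F((\Gamma,u);U)[\psi_0]$ in the sense that the "good" terms $\int_{\Gamma\cap\Omega}|\nabla_\Gamma(X\cdot\nu)|^2$ are weakly lower semicontinuous while the lower-order curvature/boundary terms pass to the limit by strong convergence. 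This forces $\partial^2F((\Gamma,u);U)[\psi_0]\leq 0$, and by strict stability \eqref{hppos} we get $\psi_0=0$. But then $\psi_n\wto 0$, so the lower-order terms (which involve only $L^2$-norms of $\psi_n$ and $\nabla_\Gamma\psi_n$-free quantities, i.e. compact terms) vanish in the limit, while the Dirichlet-type term $-2\int_\Omega|\nabla\dot u_n|^2\,dx$ also goes to $0$ (by the continuity of the map $\psi\mapsto\dot u$ and compactness), leaving $\liminf_n \int_{\Gamma\cap\Omega}|\nabla_\Gamma\psi_n|^2\,d\hu\geq 1-o(1)$ from the normalization — contradicting that the whole second variation is $<\tfrac1n\to 0$.

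The main obstacle I anticipate is the uniform-in-$t$ control in this contradiction argument: one must handle the full non-critical second-variation formula \eqref{varIIgen} at the \emph{perturbed} pairs $(\Gamma_{t_n\psi_n},u_{t_n\psi_n})$ (which are not critical), checking that the terms involving $f_{t_n\psi_n}=|\nabla_{\Gamma}u^-|^2-|\nabla_{\Gamma}u^+|^2+H$ — which vanish only at $(\Gamma,u)$ — are $o(1)$ uniformly, and that the nonlocal term $-2\int_\Omega|\nabla\dot u_n|^2$ behaves correctly under weak convergence. This requires the uniform elliptic estimates of Remark~\ref{rm:Neumannreg} and a careful use of the operator $T$ / the dual formulation in Proposition~\ref{prop:T}; essentially it is the adaptation of \cite[Section~5]{CMM} to allow boundary motion of $\Gamma$, where the genuinely new point is tracking the boundary integrals $\int_{\nbordo}H_{\partial\Omega}\psi^2$ and the identity $Z\cdot\eta=-H_{\partial\Omega}(X\cdot\nu)^2$ from Lemma~\ref{lemma:uguaglianze2}(d) uniformly along the flow.
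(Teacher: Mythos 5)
Your proposal is correct and follows essentially the same path as the paper's proof: reduce to the harmonic extension $u_\psi$, reduce to $\psi\in\mathcal{D}_\delta$ by approximation, interpolate via the flow $t\mapsto\Phi^{t\psi}$, expand $g_\psi(t)=F(\Gamma_{t\psi},u_{t\psi})$ to second order using $g_\psi'(0)=0$, and prove a uniform coercivity bound for $g_\psi''(t)$. The only organizational difference is that the paper first isolates an upper-semicontinuity property of the eigenvalue, $\limsup_{\|\psi\|_{C^2}\to 0}\lambda_{1,\psi}\leq\lambda_1$ (Proposition~\ref{prop:autovalori}, itself proved by the weak-compactness/contradiction argument in \cite[Lemma~5.4]{CMM}), and then estimates $g_\psi''(t)$ directly term by term via \eqref{stima1}--\eqref{stima6}; you propose instead to run the contradiction argument in one pass directly on $g_\psi''$. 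These are two deployments of the same weak-compactness mechanism, and the term-by-term control you would need in your Step (the smallness of $f_{t\psi}$ by criticality, the control on $X_\psi\cdot\eta_t$ and the boundary integrals, the compactness of $\psi\mapsto\dot u_\psi$) is exactly what the paper's estimates and Lemma~\ref{lemma:stimecampo}, Remark~\ref{rm:Neumannreg} provide; the paper's decomposition through the operator $T_\psi$ simply makes the bookkeeping cleaner.
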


The remaining part of this section is entirely devoted to the proof of Theorem~\ref{teo:minW}.
We start by fixing $\delta_0>0$ such that $\Gamma_\psi\subset\mathcal{N}_{\eta_0}(\Gamma)$ for every $\psi\in\mathcal{D}_{\delta_0}$,
where $\mathcal{N}_{\eta_0}(\Gamma)$ is the tubular neighborhood of $\Gamma$ fixed at the beginning of this section.
Our first task is to associate, with every $\psi\in\mathcal{D}_{\delta_0}$,
an admissible flow $(\Phi_t)_t$ connecting $\Gamma$ to $\Gamma_\psi$:
this can be easily done by setting
\begin{equation} \label{eq:flusso}
\Phi_t(x):=\Psi(t\psi(x),x).
\end{equation}
The flow $\Phi_t$ is admissible in $U$ (according to Definition~\ref{def:flow}),
as it is generated by the vector field
\begin{equation} \label{eq:fieldX}
\nfield := \psi X,
\end{equation}
where $X$ is defined at the beginning of this section.
Moreover it satisfies $\Phi_1(\Gamma)=\Gamma_\psi$, and
\begin{equation} \label{stimaflusso}
\|\Phi_t-Id\|_{C^{2}(\Gamma)} \leq C \, \|\psi\|_{C^{2}(\Gamma)}
\end{equation}
for every $t\in[0,1]$, where $C$ is a positive constant independent of $\psi\in\mathcal{D}_{\delta_0}$.
We also introduce the vector field
\begin{equation} \label{eq:fieldZ}
Z_\psi := D\nfield[\nfield] =\psi^2 DX[X]
\end{equation}
(the last equality follows by a direct computation, by observing that $\nabla\psi\cdot X=0$ since $\psi$ is constant along the trajectories of the flow generated by $X$).
Notice that by \eqref{eq:fieldX} and \eqref{eq:fieldZ} we immediately have the estimates
\begin{equation} \label{stima0ter}
|\nfield| \leq |\psi|,
\qquad
|Z_\psi| \leq C \, |\psi|^2
\qquad\text{in }\mathcal{N}_{\eta_0}(\Gamma),
\end{equation}
where $C$ is a positive constant independent of $\psi$.
In the following lemma we collect some technical estimates
concerning the above construction
that will be used in the proof of the main result of this section.

\begin{lemma} \label{lemma:stimecampo}
Given $\e>0$, there exists $\delta(\e)>0$ such that for every $\psi\in\mathcal{D}_{\delta(\e)}$
the following estimates hold:
\begin{enumerate}
\item[(a)] $\frac12\|\psi\|^2_{H^1(\Gamma\cap\Omega)} \leq \|\nfield\cdot\nu_\psi\|^2_{H^1(\Gamma_\psi\cap\Omega)} \leq 2\|\psi\|^2_{H^1(\Gamma\cap\Omega)}$;
\medskip
\item[(b)] $|\nfield\cdot\eta_\psi| \leq \e \, |\psi|$ on $\Gamma_\psi\cap\partial\Omega$.
\medskip
\item[(c)] $\frac12 \, \|\psi\|^2_{H^1(\Gamma\cap\Omega)} \leq \|\psi\|^2_{H^1(\Gamma_{\psi}\cap\Omega)} \leq 2\, \|\psi\|^2_{H^1(\Gamma\cap\Omega)}$.
\end{enumerate}
\end{lemma}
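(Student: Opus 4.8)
The plan is to prove the three estimates in Lemma~\ref{lemma:stimecampo} by a perturbation argument, exploiting the fact that as $\delta\to0$ the diffeomorphism $\Phi^\psi$ converges to the identity in $C^2$ (uniformly over $\psi\in\mathcal{D}_\delta$ with $\|\psi\|_{C^2}<1$, say, after normalizing), together with the explicit change-of-variables formulas \eqref{nuf}, \eqref{etaf}, \eqref{farea} and the identities in Lemma~\ref{lemma:uguaglianze2}. The central observation is that all three quantities on the left are, in the limit $\delta\to0$, comparable to $\|\psi\|_{H^1(\Gamma\cap\Omega)}^2$, and that the error terms are controlled by $\|\psi\|_{C^2}$ times $\|\psi\|_{H^1}^2$, hence negligible when $\delta$ is small.

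For part (c), I would use the area formula \eqref{farea} with $\Phi=\Phi^\psi$: for any scalar $g$ on $\Gamma_\psi$ one has $\int_{\Gamma_\psi\cap\Omega} g\,d\hu = \int_{\Gamma\cap\Omega}(g\circ\Phi^\psi)J_{\Phi^\psi}\,d\hu$, and since $J_{\Phi^\psi}\to1$ uniformly as $\delta\to0$, the $L^2$-norms transfer with constants close to $1$. For the gradient part one must also compare $\nabla_{\Gamma_\psi}$ with $\nabla_\Gamma$: writing $\psi$ (and any test function) as constant along the flow trajectories, the tangential gradient on $\Gamma_\psi$ is obtained from that on $\Gamma$ by composition with $D\Phi^\psi$ restricted to the tangent spaces, which is $\mathrm{Id}+O(\|\psi\|_{C^2})$; combining with \eqref{eq:stimapsi} gives the two-sided bound. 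Part (a) is handled analogously: one first notes that on $\Gamma$ one has $X=\nu$, so $\nfield=\psi X$ satisfies $\nfield\cdot\nu=\psi$ on $\Gamma$; by \eqref{nuf} and the smoothness of the flow, $\nu_\psi\circ\Phi^\psi = \nu + O(\|\psi\|_{C^2})$ and $\nfield\circ\Phi^\psi = \psi X\circ\Phi^\psi = \psi\nu + O(\|\psi\|_{C^2}|\psi|)$ in $\mathcal{N}_{\eta_0}(\Gamma)$, so $(\nfield\cdot\nu_\psi)\circ\Phi^\psi = \psi + O(\|\psi\|_{C^2}|\psi|)$ pointwise, and similarly for its tangential derivative; then part (c) (or a direct area-formula estimate) converts the $H^1(\Gamma_\psi\cap\Omega)$-norm into the $H^1(\Gamma\cap\Omega)$-norm, and the claimed factors $\tfrac12$ and $2$ follow for $\delta=\delta(\e)$ small enough.

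For part (b), the point is that $\eta_\psi$ is the unit vector tangent to $\Gamma_\psi$ pointing out of $\Gamma_\psi$ at the boundary points, while on $\Gamma$ the field $X=\nu$ is, by the orthogonality of $\Gamma$ and $\partial\Omega$ (which holds since $(\Gamma,u)$ is critical, though here only regularity of the construction is used), tangent to $\partial\Omega$ at $\bordo$ and hence orthogonal to $\eta$. Thus on $\Gamma$ one has $X\cdot\eta=0$, so $\nfield\cdot\eta=\psi(X\cdot\eta)=0$ exactly; by the $C^1$-convergence $\Phi^\psi\to\mathrm{Id}$ and the formula \eqref{etaf} for $\eta_\psi$, the value $(\nfield\cdot\eta_\psi)\circ\Phi^\psi$ equals $\psi\cdot O(\|\psi\|_{C^2})$ at the two boundary points, which is bounded by $\e|\psi|$ once $\delta(\e)$ is chosen small enough (note that on $\Gamma_\psi\cap\partial\Omega$ the relevant point is the image under $\Phi^\psi$ of a point of $\bordo$, and $\psi$ is constant along trajectories so its value there is the value on $\Gamma$).

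The main obstacle I expect is the careful bookkeeping of the tangential-gradient comparison on the moving curves $\Gamma_\psi$: one must verify, uniformly in $\psi\in\mathcal{D}_\delta$, that pulling back the Gagliardo/$H^1$ structure along $\Phi^\psi$ introduces only multiplicative errors of size $1+O(\|\psi\|_{C^2})$ and additive errors of size $O(\|\psi\|_{C^2})\|\psi\|_{H^1}$, rather than something worse near the endpoints $\bordo$ where the boundary term $\huu$ enters. This requires using the fact, recorded at the start of the section, that the maps $\pi,\tau$ (and hence the extension of $\psi$ to $\mathcal{N}_{\eta_0}(\Gamma)$) are $C^2$, together with the uniform $C^2$-bound \eqref{eq:stimapsi} on $\Phi^\psi-\mathrm{Id}$; once this is set up, each of (a), (b), (c) follows by choosing $\delta(\e)$ small enough to absorb the error terms, and it suffices to take $\e=\tfrac12$ in (a) and (c) and $\e$ arbitrary in (b).
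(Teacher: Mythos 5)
Your proposal is correct and takes essentially the same perturbation approach as the paper: exploit that $\Phi^\psi\to\mathrm{Id}$ in $C^2$ uniformly on $\mathcal{D}_\delta$, use that $X\cdot\nu=1$ and $X\cdot\eta=0$ on $\Gamma$, apply the area formula for (c), and absorb the error terms by shrinking $\delta$. One small remark on (b): the simplest reason $X\cdot\eta=0$ on $\bordo$ is just that $\eta$ is tangent to $\Gamma$ while $X=\nu$ is normal to $\Gamma$; no appeal to the orthogonality of $\Gamma$ and $\partial\Omega$ is needed at that point (though it is of course available).
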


\begin{proof}
To prove (a), we first note that given $\sigma>0$ we can find $\delta(\sigma)\in(0,\delta_0)$ such that
for every $\psi\in\mathcal{D}_{\delta(\sigma)}$ we have on $\Gamma_\psi$
\begin{equation} \label{conti1}
\nu_{\psi} = \nu\circ\Phi_1^{-1} + \tilde{\nu}
\quad\text{with}
\quad \|\tilde{\nu}\|_{C^{1}(\Gamma_\psi)} \leq \sigma
\end{equation}
and
\begin{equation} \label{conti2}
\|X-X\circ\Phi_1^{-1}\|_{C^{1}(\Gamma_\psi)} \leq \sigma
\end{equation}
(where $\Phi_1=\Phi^\psi$, by \eqref{eq:flusso}).
Hence on $\Gamma_\psi$
\begin{align*}
\nfield\cdot\nu_\psi = \psi X\cdot\nu_\psi
= \psi \bigl( (X\cdot\nu)\circ\Phi_1^{-1} + (X-X\circ\Phi_1^{-1})\cdot\nu\circ\Phi_1^{-1} + X\cdot\tilde{\nu} \bigr)
= : \psi(1+R_1)
\end{align*}
(where we used the fact that $(X\cdot\nu)\circ\Phi_1^{-1}=1$), and
\begin{align*}
\nabla_{\Gamma_\psi} (\nfield\cdot\nu_\psi)
& = (\nabla_{\Gamma_\psi}\psi)X\cdot\nu_\psi + \psi \nabla_{\Gamma_\psi}(X\cdot\nu_\psi) \\
& = (\nabla_{\Gamma_\psi}\psi)(1+R_1) + \psi\nabla_{\Gamma_\psi}(1+(X-X\circ\Phi_1^{-1})\cdot\nu\circ\Phi_1^{-1} + X\cdot\tilde{\nu}) \\
& = : (\nabla_{\Gamma_\psi}\psi)(1+R_1) + \psi R_2.
\end{align*}
Recalling \eqref{conti1} and \eqref{conti2},
the $L^\infty$-norm of $R_1$ and $R_2$
can be made as small as we want by taking $\sigma$ small enough,
and in turn from the previous identities we obtain (a).

To prove (b), we first observe that, by reducing $\delta(\sigma)$ if necessary, we can guarantee that
for every $\psi\in\mathcal{D}_{\delta(\sigma)}$
\begin{equation} \label{conti3}
\eta_{\psi} = \eta \circ \Phi_1^{-1} + \tilde{\eta}
\quad\text{with}\quad
|\tilde{\eta}|\leq\sigma
\quad\text{on }\Gamma_\psi\cap\partial\Omega.
\end{equation}
We deduce that on $\Gamma_\psi\cap\partial\Omega$
\begin{align*}
| \nfield\cdot\eta_{\psi} |
= | \psi X\cdot\eta_\psi |
= \big| \psi \bigl( (X\cdot\eta)\circ\Phi_1^{-1} + (X-X\circ\Phi_1^{-1})\cdot\eta\circ\Phi_1^{-1} + X\cdot\tilde{\eta} \bigr) \big|
\leq \e\,|\psi|
\end{align*}
where the last inequality follows by observing that $(X\cdot\eta)\circ\Phi_1^{-1}=0$,
and by \eqref{conti2} and \eqref{conti3} (choosing $\sigma$ small enough, depending on $\e$).
This proves (b).

Finally, by a change of variables (using the area formula \eqref{farea}) we have
\begin{align*}
\|\psi\|^2_{H^1(\Gamma_\psi\cap\Omega)}
= \int_{\Gamma\cap\Omega} \Bigl( |\psi\circ\Phi^\psi|^2 + \frac{|\nabla_\Gamma(\psi\circ\Phi^\psi)|^2}{|D\Phi^\psi[\tau]|^2} \Bigr) J_{\Phi^\psi}\,d\hu\,,
\end{align*}
and (c) follows by \eqref{eq:stimapsi} and recalling that $\psi\circ\Phi^\psi=\psi$ on $\Gamma$.
\end{proof}

Given $\psi\in\mathcal{D}_{\delta_0}$,
we can define a bilinear form on $H^1(\Gamma_\psi\cap\Omega)$ as in \eqref{prscal}, by setting
$$
(\vphi,\vt)_{\sim,\psi} :=
\int_{\Omega\cap\Gamma_\psi} \nabla_{\Gamma_\psi}\vphi\cdot\nabla_{\Gamma_\psi}\vt \,d\hu
 + \int_{\Omega\cap\Gamma_\psi} H_\psi^2\,\vphi\,\vt \,d\hu
 - \int_{\Gamma_\psi\cap\partial\Omega}D\nu_{\partial\Omega}[\nu_\psi,\nu_\psi]\,\vphi\,\vt\,d\huu.
$$
The positivity assumption \eqref{hppos} guarantees that,
if $\delta$ is sufficiently small,
it is possible to control the $H^1$-norm on $\Gamma_\psi$
in terms of the norm $\|\cdot\|_{\sim,\psi}$ associated with $(\cdot,\cdot)_{\sim,\psi}$,
uniformly with respect to $\psi\in\mathcal{D}_\delta$.
This is the content of the following proposition,
whose proof is analogous to \cite[Lemma~5.3]{CMM}
(the only difference lies in the presence of a boundary term, which can be treated similarly to the others).

\begin{proposition} \label{prop:ps2}
In the hypotheses of Theorem~\ref{teo:minW},
there exist $C_1>0$ and $\delta_1\in(0,\delta_0)$ such that for every $\psi\in \mathcal{D}_{\delta_1}$
$$
\|\vphi\|_{H^1(\Gamma_\psi\cap\Omega)} \leq C_1\|\vphi\|_{\sim,\psi} \qquad \text{for every }\vphi\in H^1(\Gamma_\psi\cap\Omega).
$$
\end{proposition}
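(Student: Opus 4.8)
The goal is to transfer the coercivity estimate for the bilinear form $(\cdot,\cdot)_\sim$ on $H^1(\Gamma\cap\Omega)$, which is guaranteed by Proposition~\ref{prop:ps} under hypothesis \eqref{hppos}, to the nearby forms $(\cdot,\cdot)_{\sim,\psi}$ on $H^1(\Gamma_\psi\cap\Omega)$, uniformly for $\psi\in\mathcal{D}_{\delta_1}$. The plan is to pull everything back to the fixed curve $\Gamma$ via the diffeomorphism $\Phi^\psi$ and to show that, in these pulled-back coordinates, the form $(\cdot,\cdot)_{\sim,\psi}$ is a small perturbation of $(\cdot,\cdot)_\sim$.

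First I would argue by contradiction: suppose there exist $\psi_n\in\mathcal{D}_{1/n}$ and $\vphi_n\in H^1(\Gamma_{\psi_n}\cap\Omega)$ with $\|\vphi_n\|_{H^1(\Gamma_{\psi_n}\cap\Omega)}=1$ and $\|\vphi_n\|_{\sim,\psi_n}\to0$. Setting $\tilde{\vphi}_n:=\vphi_n\circ\Phi^{\psi_n}\in H^1(\Gamma\cap\Omega)$ and using the area formula \eqref{farea} together with \eqref{eq:stimapsi} — exactly as in the proof of part (c) of Lemma~\ref{lemma:stimecampo} — one has $\|\tilde{\vphi}_n\|_{H^1(\Gamma\cap\Omega)}$ bounded above and below by constants close to $1$. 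Next I would write each term of $(\vphi_n,\vphi_n)_{\sim,\psi_n}$ in terms of $\tilde{\vphi}_n$ via the change of variables, obtaining a quadratic form in $\tilde{\vphi}_n$ whose coefficients are: the pulled-back tangential metric $|D\Phi^{\psi_n}[\tau]|^{-2}J_{\Phi^{\psi_n}}$ for the gradient term, the pulled-back squared curvature $(H_{\psi_n}\circ\Phi^{\psi_n})^2 J_{\Phi^{\psi_n}}$ for the zeroth-order term, and the pulled-back boundary coefficient $D\nu_{\partial\Omega}[\nu_{\psi_n},\nu_{\psi_n}]\circ\Phi^{\psi_n}$ for the $\hu^0$-term. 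Since $\|\psi_n\|_{C^2(\Gamma)}\to0$, all of these coefficients converge uniformly on $\Gamma$ (resp. on $\nbordo$) to $1$, $H^2$, and $H_{\partial\Omega}$ respectively — here one uses that $\nu_\psi$, $H_\psi$ depend continuously on $\Phi^\psi$ in $C^2$, via \eqref{nuf} and the formula $H_\psi=\div_{\Gamma_\psi}\nu_\psi$. Hence $(\vphi_n,\vphi_n)_{\sim,\psi_n}=(\tilde{\vphi}_n,\tilde{\vphi}_n)_\sim + o(1)$ as $n\to\infty$, the error being controlled by $\|\psi_n\|_{C^2(\Gamma)}\|\tilde{\vphi}_n\|^2_{H^1(\Gamma\cap\Omega)}$.

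Combining these two facts gives $(\tilde{\vphi}_n,\tilde{\vphi}_n)_\sim\to0$ while $\|\tilde{\vphi}_n\|_{H^1(\Gamma\cap\Omega)}$ stays bounded away from $0$, contradicting the equivalence of $\|\cdot\|_\sim$ and $\|\cdot\|_{H^1(\Gamma\cap\Omega)}$ established in Proposition~\ref{prop:ps}. This yields the claimed uniform constant $C_1$ and $\delta_1$. Alternatively, one can avoid the contradiction argument and give the estimate directly: the change-of-variables identities show $\bigl|(\vphi,\vphi)_{\sim,\psi}-(\tilde{\vphi},\tilde{\vphi})_\sim\bigr|\le C\|\psi\|_{C^2(\Gamma)}\|\tilde{\vphi}\|^2_{H^1(\Gamma\cap\Omega)}$ (the boundary term being handled by a trace inequality on $H^1(\Gamma\cap\Omega)$, which is where a little care is needed since the $\hu^0$-term is not absolutely continuous with respect to the bulk measure), and then choosing $\|\psi\|_{C^2(\Gamma)}$ small enough compared to the coercivity constant of $(\cdot,\cdot)_\sim$ absorbs the perturbation. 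The main obstacle is purely bookkeeping: one must verify that $\nu_\psi\circ\Phi^\psi$, $H_\psi\circ\Phi^\psi$, and the Jacobian factors all converge in $C^0$ (uniformly, with the right rate in $\|\psi\|_{C^2}$) to their counterparts on $\Gamma$, which follows from the explicit formulas \eqref{nuf}, \eqref{etaf}, \eqref{farea} and the estimate \eqref{eq:stimapsi}. Since this is entirely parallel to \cite[Lemma~5.3]{CMM}, with the only genuinely new ingredient being the boundary term $\int_{\Gamma_\psi\cap\partial\Omega}D\nu_{\partial\Omega}[\nu_\psi,\nu_\psi]\vphi^2\,d\hu^0$ — which is a finite sum over two points and hence trivially controlled once $\nu_\psi$ is close to $\nu$ — I expect no essential difficulty beyond this routine perturbation analysis.
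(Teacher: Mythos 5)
Your proof is correct and follows essentially the same route the paper intends: the paper simply defers to \cite[Lemma~5.3]{CMM}, which is a perturbation argument of exactly this type (pull back to $\Gamma$ via $\Phi^\psi$, show the coefficients of the pulled-back form are $C^0$-close to those of $(\cdot,\cdot)_\sim$, then absorb the error using the coercivity from Proposition~\ref{prop:ps}). Both your contradiction argument and your direct estimate are sound, the change-of-variables identities for the gradient and curvature terms are right, and you correctly identify that the only genuinely new term is the two-point boundary sum, which is harmless once you invoke the one-dimensional embedding $H^1(\Gamma\cap\Omega)\hookrightarrow C^0(\overline{\Gamma\cap\Omega})$ to control $\tilde{\vphi}(x_i)$ and the $C^1$-convergence $\Phi^\psi\to Id$ to get $\nu_\psi\circ\Phi^\psi\to\nu$ and hence $D\nu_{\partial\Omega}[\nu_\psi,\nu_\psi]\circ\Phi^\psi\to H_{\partial\Omega}$ at the endpoints — precisely what the paper means by the boundary term being ``treated similarly to the others.''
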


The previous result allows us to introduce, for $\psi\in\mathcal{D}_{\delta_1}$,
a compact operator $T_\psi:H^1(\Gamma_\psi\cap\Omega)\to H^1(\Gamma_\psi\cap\Omega)$ defined by
\begin{equation} \label{Tf}
(T_\psi\vphi,\vt)_{\sim,\psi} =
-2 \int_{\Gamma_\psi\cap\Omega} \bigl[ v_{\vphi,\psi}^+\div_{\Gamma_\psi}(\vt\nabla_{\Gamma_\psi}u_\psi^+)
- v_{\vphi,\psi}^-\div_{\Gamma_\psi}(\vt\nabla_{\Gamma_\psi}u_\psi^-) \bigr] \,d\hu
\end{equation}
for every $\vphi,\vt\in H^1(\Gamma_\psi\cap\Omega)$,
where $v_{\vphi,\psi}\in H^1_U(\Omega\setminus\Gamma_\psi)$ is the solution to
\begin{equation*}
\int_{\Omega} \nabla v_{\vphi,\psi} \cdot \nabla z\,dx
+\int_{\Gamma_\psi\cap\Omega} \bigl[ z^+\div_{\Gamma_\psi}(\vphi\nabla_{\Gamma_\psi}u_\psi^+) - z^-\div_{\Gamma_\psi}(\vphi\nabla_{\Gamma_\psi}u_\psi^-) \bigr] \,d\hu =0
\end{equation*}
for every $z\in H^1_U(\Omega\setminus\Gamma_\psi)$.
We define also $\lambda_{1,\psi}$ similarly to \eqref{lamda1}.
The following semicontinuity property of the eigenvalues $\lambda_{1,\psi}$
will be crucial in the proof of Theorem~\ref{teo:minW}.
We omit the proof of this result, since it is the same as in \cite[Lemma~5.4]{CMM}:
we only observe that Remark~\ref{rm:Neumannreg} guarantees
that we have the same convergence as in \cite[formula (5.14)]{CMM},
so that we can reproduce word by word the same argument.

\begin{proposition} \label{prop:autovalori}
In the hypotheses of Theorem~\ref{teo:minW},
$$
\limsup_{\|\psi\|_{C^{2}(\Gamma)}\to0} \lambda_{1,\psi}\leq\lambda_1.
$$
\end{proposition}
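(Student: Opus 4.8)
The plan is to argue by contradiction, transplanting everything onto the fixed curve $\Gamma$ via the diffeomorphisms $\Phi^\psi$ (which converge to the identity in $C^2$ by \eqref{eq:stimapsi}), extracting a limit, and passing to the limit separately in the constraint and in the Rayleigh quotient. Suppose the conclusion fails: then there are $\ell>\lambda_1$ and a sequence $\psi_n\in\mathcal{D}_{\delta_1}$ with $\|\psi_n\|_{C^2(\Gamma)}\to0$ and $\lambda_{1,\psi_n}\to\ell$. For each $n$ pick a maximizer $\vphi_n\in H^1(\Gamma_{\psi_n}\cap\Omega)$ for the problem defining $\lambda_{1,\psi_n}$, so that $\|\vphi_n\|_{\sim,\psi_n}=1$ and $(T_{\psi_n}\vphi_n,\vphi_n)_{\sim,\psi_n}=\lambda_{1,\psi_n}$.

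First I would set $\tilde\vphi_n:=\vphi_n\circ\Phi^{\psi_n}\in H^1(\Gamma\cap\Omega)$. Using the area formula \eqref{farea} exactly as in the proof of Lemma~\ref{lemma:stimecampo}(c), together with Proposition~\ref{prop:ps2}, the sequence $(\tilde\vphi_n)$ is bounded in $H^1(\Gamma\cap\Omega)$; hence, up to a subsequence, $\tilde\vphi_n\wto\vphi$ weakly in $H^1(\Gamma\cap\Omega)$, strongly in $L^2(\Gamma\cap\Omega)$ and — since $\Gamma\cap\Omega$ is one-dimensional — uniformly on $\overline{\Gamma\cap\Omega}$, so in particular at the two endpoints of $\Gamma$. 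Now transplant $(\cdot,\cdot)_{\sim,\psi_n}$ onto $\Gamma$ via \eqref{farea}: the first order term becomes a Dirichlet integral with coefficients converging uniformly to those of the flat metric (because $|D\Phi^{\psi_n}[\tau]|\to1$ and $J_{\Phi^{\psi_n}}\to1$ uniformly), the zeroth order term has coefficient $H_{\psi_n}^2\to H^2$ uniformly, and the boundary term involves $D\nu_{\partial\Omega}[\nu_{\psi_n},\nu_{\psi_n}]$ with $\nu_{\psi_n}\to\nu$ at the endpoints. By weak lower semicontinuity of the weighted Dirichlet integral and by the strong $L^2$/uniform convergence of $\tilde\vphi_n$ one obtains
$$
\|\vphi\|_\sim^2 \leq \liminf_{n\to\infty} \|\vphi_n\|^2_{\sim,\psi_n} = 1 .
$$

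Next I would pass to the limit in the numerator. By Remark~\ref{rm:T} we have $(T_{\psi_n}\vphi_n,\vphi_n)_{\sim,\psi_n}=2\int_\Omega|\nabla v_{\vphi_n,\psi_n}|^2\,dx$, so it suffices to show that, after transplantation by $\Phi^{\psi_n}$ (a diffeomorphism of $\overline\Omega$ by Remark~\ref{rm:flow}), the functions $v_{\vphi_n,\psi_n}\circ\Phi^{\psi_n}$ converge strongly in $H^1_U(\Omega\setminus\Gamma)$ to $\vf$. For this one rewrites the equation solved by $v_{\vphi_n,\psi_n}$ on the fixed domain $\Omega\setminus\Gamma$: the elliptic part has uniformly elliptic coefficients converging to the identity, while the right-hand side is the duality pairing against $\tilde\vphi_n\,\nabla_\Gamma(u_{\psi_n}^\pm\circ\Phi^{\psi_n})$, up to lower order terms. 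This is where Remark~\ref{rm:Neumannreg} enters: $\nabla_\Gamma(u_{\psi_n}^\pm\circ\Phi^{\psi_n})\to\nabla_\Gamma u^\pm$ in $C^{0,\alpha}(\Gamma)$ for some $\alpha>\tfrac12$, and since $\tilde\vphi_n\to\vphi$ strongly in $H^{1/2}(\Gamma\cap\Omega)$ (by compactness of $H^1\hookrightarrow H^{1/2}$), the borderline product estimate recalled in Remark~\ref{rm:hunmezzo} gives $\tilde\vphi_n\,\nabla_\Gamma(u_{\psi_n}^\pm\circ\Phi^{\psi_n})\to\vphi\,\nabla_\Gamma u^\pm$ in $H^{1/2}(\Gamma\cap\Omega)$, hence the data of the transplanted equations converge in $H^{-1/2}(\Gamma\cap\Omega)$. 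Continuous dependence of the solution of the mixed elliptic problem on the data and on the coefficients then yields the claimed convergence, and therefore $(T_{\psi_n}\vphi_n,\vphi_n)_{\sim,\psi_n}\to 2\int_\Omega|\nabla\vf|^2\,dx=(T\vphi,\vphi)_\sim$.

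Combining the two limits with the variational characterization \eqref{lamda1} of $\lambda_1$, the $2$-homogeneity of $(T\cdot,\cdot)_\sim$ and $\|\cdot\|_\sim^2$, and the fact that $\lambda_1\geq0$ (since $T$ is monotone), we get
$$
\ell = \lim_{n\to\infty}\lambda_{1,\psi_n} = (T\vphi,\vphi)_\sim \leq \lambda_1\,\|\vphi\|_\sim^2 \leq \lambda_1 ,
$$
contradicting $\ell>\lambda_1$. I expect the main obstacle to be the convergence of the auxiliary functions $v_{\vphi_n,\psi_n}$: it is precisely here that one must invoke the uniform $C^{1,\gamma}$ bounds and the $C^{0,\alpha}$-convergence of the Neumann solutions $u_{\psi_n}$ from Remark~\ref{rm:Neumannreg} — with $\gamma>\tfrac12$ so that the $H^{1/2}$ product estimate of Remark~\ref{rm:hunmezzo} applies — together with the stability of the mixed elliptic problem under $C^2$-perturbations of the interface. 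Everything else is a routine weak-compactness/lower-semicontinuity argument, which is why one can indeed reproduce word by word the proof of \cite[Lemma~5.4]{CMM}.
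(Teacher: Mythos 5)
Your reconstruction is correct and follows exactly the same route as the paper intends: the paper omits the proof, observing only that the argument of \cite[Lemma~5.4]{CMM} carries over verbatim, with the key input being that Remark~\ref{rm:Neumannreg} guarantees the $C^{0,\alpha}$-convergence $\nabla_\Gamma(u_{\psi}^\pm\circ\Phi^{\psi})\to\nabla_\Gamma u^\pm$ (the analogue of \cite[formula~(5.14)]{CMM}). You have correctly identified that this convergence, combined with the borderline product estimate of Remark~\ref{rm:hunmezzo} and the compact embedding $H^1\hookrightarrow H^{1/2}$ on the one-dimensional curve, is precisely what drives the strong $H^1$-convergence of the transplanted auxiliary functions $v_{\vphi_n,\psi_n}\circ\Phi^{\psi_n}$ to $v_\vphi$; the rest (weak compactness via Proposition~\ref{prop:ps2}, lower semicontinuity of the transplanted $\|\cdot\|_{\sim,\psi_n}$, and the Rayleigh-quotient conclusion using $\lambda_1\geq0$) is exactly the routine part of the CMM argument.
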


We are finally ready to prove the local minimality result
stated at the beginning of this section.

\begin{proof}[Proof of Theorem~\ref{teo:minW}]
We divide the proof into two steps.

\smallskip
\noindent
{\it Step 1.}
We first show that
there exist $\delta\in(0,\delta_1)$ and $c>0$ such that
for every $\psi\in\mathcal{D}_\delta$
\begin{equation} \label{claimminW0}
F(\Gamma_\psi,u_\psi)\geq F(\Gamma,u) + c\,\|\psi\|^2_{H^1(\Gamma\cap\Omega)}.
\end{equation}
Given $\psi\in\mathcal{D}_\delta$, with $\delta\in(0,\delta_1)$ to be chosen,
consider the admissible flow $(\Phi_t)_t$ associated with $\psi$,
according to \eqref{eq:flusso},
and its tangent vector field $\nfield$.
Setting $g_\psi(t):=F(\Gamma_{\Phi_t},u_{\Phi_t})$,
we claim that there exist $c>0$ and $\delta>0$ such that
\begin{equation} \label{claimminW}
g_\psi ''(t) \geq 2c \, \|\psi\|^2_{H^1(\Gamma\cap\Omega)}
\qquad\text{for every } t\in[0,1] \text{ and } \psi\in\mathcal{D}_\delta.
\end{equation}
Once this is proved, claim \eqref{claimminW0}
will follow immediately: indeed, as $g_\psi '(0)=0$ since $(\Gamma,u)$ is a critical pair,
and recalling that $\Gamma_{\Phi_1}=\Gamma_\psi$, we deduce
\begin{align*}
F(\Gamma,u) &= g_\psi(0) = g_\psi(1) - \int_0^1(1-t)g_\psi ''(t)\,dt \\
%&\leq g_\psi(1) -2c \, \|\psi\|^2_{H^1(\Gamma_{\Psi}\cap\Omega)}\int_0^1(1-t)\,dt \\
&\leq F(\Gamma_{\psi},u_\psi) - c \, \|\psi\|^2_{H^1(\Gamma\cap\Omega)},
\end{align*}
which is \eqref{claimminW0}.

We now come to the proof of \eqref{claimminW}.
In order to simplify the notation,
we set $\nu_t:=\nu_{\Phi_t}$, $\eta_t:=\eta_{\Phi_t}$, $\Gamma_t:=\Gamma_{\Phi_t}$, and $H_t:=H_{\Phi_t}$.
By Remark~\ref{rm:varIIt}, recalling the definition of $T_{t\psi}$ (see \eqref{Tf}),
we deduce that
\begin{align*}
g_\psi ''(t) &=
 - (T_{t\psi}(\nfield\cdot\nu_t),\nfield\cdot\nu_t)_{\sim,t\psi}
 + \int_{\Gamma_t\cap\Omega} \Bigl( H^2_{t}(\nfield\cdot\nu_t)^2 + |\nabla_{\Gamma_t}(\nfield\cdot\nu_t)|^2 \Bigr)\,d\hu \\
&+\int_{\Gamma_t\cap\Omega} f_t
\bigl( Z_\psi\cdot\nu_t - 2\nfield^\|\cdot\nabla_{\Gamma_t}(\nfield\cdot\nu_t) + D\nu_{t}[\nfield^\|,\nfield^\|] - H_{t}(\nfield\cdot\nu_t)^2 \bigr) \,d\hu \\
&+\int_{\nbordot} (f_t-H_t)(\nfield\cdot\nu_t)(\nfield\cdot\eta_t) \,d\huu
 +\int_{\nbordot} {Z}_{\psi}\cdot\eta_t \,d\huu,
%&-\int_{\nbordot} (\nfield\cdot\nu_t)D_{\Gamma_t}\nu_t[\nfield,\eta_t]\,d\huu,
\end{align*}
where $f_t=|\nabla_{\Gamma_t}u_{\Phi_t}^-|^2-|\nabla_{\Gamma_t}u_{\Phi_t}^+|^2+H_{t}$.
Since
\begin{align*}
0 &= {Z}_{\psi}\cdot\norm + D\norm[\nfield,\nfield] \\
  &= {Z}_{\psi}\cdot\norm + D\norm[\nu_t,\nu_t](\nfield\cdot\nu_t)^2
   + \bigl((\nfield\cdot\eta_t)^2\eta_t + 2(\nfield\cdot\nu_t)(\nfield\cdot\eta_t)\nu_t\bigr) \cdot D\norm[\eta_t]
\end{align*}
on $\nbordot$ by Lemma~\ref{lemma:uguaglianze2} (d),
we can rewrite $g_\psi ''(t)$ as
\begin{align}
g_\psi ''(t) &=
 -(T_{t\psi}(\nfield\cdot\nu_t),\nfield\cdot\nu_t)_{\sim,t\psi}
 + \|\nfield\cdot\nu_t\|^2_{\sim,t\psi} \nonumber\\
& + \int_{\Gamma_t\cap\Omega} f_t \bigl( {Z}_\psi\cdot\nu_t - 2\nfield^\|\cdot\nabla_{\Gamma_t}(\nfield\cdot\nu_t) + D\nu_t[\nfield^\|,\nfield^\|] - H_{t}(\nfield\cdot\nu_t)^2 \bigr)\,d\hu \nonumber\\
& + \int_{\nbordot} (f_t-H_t)(\nfield\cdot\nu_t)(\nfield\cdot\eta_t) \,d\huu
 + \int_{\nbordot} {Z}_{\psi}\cdot(\eta_t-\norm) \,d\huu \nonumber\\
%& - \int_{\nbordot} (\nfield\cdot\nu_t) D_{\Gamma_t}\nu_t[\nfield,\eta_t] \,d\huu \nonumber\\
& - \int_{\nbordot} \bigl((\nfield\cdot\eta_t)^2\eta_t + 2(\nfield\cdot\nu_t)(\nfield\cdot\eta_t)\nu_t\bigr) \cdot D\norm[\eta_t] \,d\huu\,. \label{varIIg}
\end{align}
We now carefully estimate each term in the previous expression.
In the following, $C$ will denote a generic positive constant, independent of $\psi\in\mathcal{D}_{\delta_1}$,
which may change from line to line.

As $(\Gamma,u)$ satisfies condition \eqref{hppos},
Proposition~\ref{prop:T} implies that $\lambda_1<1$,
so that by Proposition~\ref{prop:autovalori} we can find $\delta_2\in(0,\delta_1)$
such that for every $\psi\in\mathcal{D}_{\delta_2}$
\begin{equation} \label{stima-2}
\lambda_{1,\psi} < \frac12 (\lambda_1+1) < 1.
\end{equation}
Fix $\e>0$ to be chosen later, and let $\delta(\e)>0$ be given by Lemma~\ref{lemma:stimecampo}
(assume without loss of generality that $\delta(\e)<\delta_2$).
We remark that, if $\psi\in\mathcal{D}_{\delta(\e)}$, then $t\psi\in\mathcal{D}_{\delta(\e)}$ for every $t\in[0,1]$,
and $X_{t\psi}=tX_\psi$: hence we can apply (a) and (b) of Lemma~\ref{lemma:stimecampo} to $t\psi$, and we easily obtain that
\begin{equation} \label{stima0}
\frac12\|\psi\|^2_{H^1(\Gamma\cap\Omega)}
\leq \|\nfield\cdot\nu_t\|^2_{H^1(\Gamma_t\cap\Omega)}
\leq 2\|\psi\|^2_{H^1(\Gamma\cap\Omega)},
\end{equation}
\begin{equation} \label{stima0bis}
|\nfield\cdot\eta_t| \leq \e\, |\psi|
\qquad\text{on }\Gamma_t\cap\partial\Omega,
\end{equation}
for every $\psi\in\mathcal{D}_{\delta(\e)}$ and for every $t\in[0,1]$.

Fix now any $\psi\in\mathcal{D}_{\delta(\e)}$.
From the definition of $\lambda_{1,\psi}$ and \eqref{stima-2} we have
\begin{align} \label{stima1}
-(T_{t\psi} &(\nfield\cdot\nu_t),\nfield\cdot\nu_t)_{\sim,t\psi} + \|\nfield\cdot\nu_t\|^2_{\sim,t\psi}
\geq (1-\lambda_{1,t\psi}) \|\nfield\cdot\nu_t\|^2_{\sim,t\psi} \nonumber\\
&\geq \frac{1-\lambda_1}{2} \|\nfield\cdot\nu_t\|^2_{\sim,t\psi}
\geq \frac{1-\lambda_1}{2C_1^2} \|\nfield\cdot\nu_t\|^2_{H^1(\Gamma_t\cap\Omega)}
%\geq \frac{1-\lambda_1}{4C_1^2} \|\nfield\cdot\nu_\Psi\|^2_{H^1(\Gamma_\Psi\cap\Omega)},
\geq \frac{1-\lambda_1}{4C_1^2} \|\psi\|^2_{H^1(\Gamma\cap\Omega)},
\end{align}
where the last two inequalities follow from Proposition~\ref{prop:ps2} and from \eqref{stima0}.

By Remark~\ref{rm:Neumannreg} the map
$$
\psi\in\mathcal{D}_{\delta(\e)}\mapsto \| |\nabla_{\Gamma_\psi}u_{\psi}^-|^2-|\nabla_{\Gamma_\psi}u_{\psi}^+|^2+H_{\psi} \|_{L^\infty(\Gamma_\psi\cap\Omega)}
$$
is continuous with respect to the $C^2$-topology;
hence, as it vanishes for $\psi=0$ by \eqref{EL1},
possibly reducing $\delta(\e)$ we have that for every $\psi\in\mathcal{D}_{\delta(\e)}$
$$
\| |\nabla_{\Gamma_\psi}u_{\psi}^-|^2-|\nabla_{\Gamma_\psi}u_{\psi}^+|^2+H_{\psi} \|_{L^\infty(\Gamma_\psi\cap\Omega)} <\e.
$$
We deduce that
\begin{align} \label{stima2}
\int_{\Gamma_t\cap\Omega} &f_t
    \bigl( {Z}_\psi\cdot\nu_t - 2\nfield^\|\cdot\nabla_{\Gamma_t}(\nfield\cdot\nu_t) + D\nu_t[\nfield^\|,\nfield^\|] - H_{t}(\nfield\cdot\nu_t)^2 \bigr) \,d\hu \nonumber\\
& \geq - \e \,
    \| {Z}_\psi\cdot\nu_t - 2\nfield^\|\cdot\nabla_{\Gamma_t}(\nfield\cdot\nu_t) + D\nu_t[\nfield^\|,\nfield^\|] + H_{t}(\nfield\cdot\nu_t)^2 \|_{L^{1}(\Gamma_t\cap\Omega)} \nonumber \\
& \geq - \e \, C \,
    \Bigl(
    \|\psi\|_{L^{2}(\Gamma_t\cap\Omega)}^2 +
    \|\nabla_{\Gamma_t}(\nfield\cdot\nu_t)\|_{L^2(\Gamma_t\cap\Omega)} \, \|\psi\|_{L^{2}(\Gamma_t\cap\Omega)} \Bigr) \nonumber\\
& \geq - \e \, C \, \| \psi \|_{H^1(\Gamma\cap\Omega)}^2,
\end{align}
where we used also \eqref{stima0ter}, \eqref{stima0}, and (c) of Lemma~\ref{lemma:stimecampo}.

By \eqref{stima0ter}, \eqref{stima0bis} and (c) of Lemma~\ref{lemma:stimecampo} we have
\begin{align} \label{stima3}
\int_{\nbordot} (f_t-H_t)(\nfield\cdot\nu_t)(\nfield\cdot\eta_t) \,d\huu
\geq - \e \, C \, \int_{\Gamma_t\cap\partial\Omega}\psi^2\,d\huu
\geq - \e \, C \, \|\psi\|^2_{H^1(\Gamma\cap\Omega)} \,.
\end{align}

By reducing $\delta(\e)$ if necessary we can assume
\begin{equation*}
\max_{x\in\Gamma_\psi\cap\partial\Omega} |\eta_\psi(x)-\norm(x)|<\e \qquad\text{for every }\psi\in\mathcal{D}_{\delta(\e)},
\end{equation*}
so that using again \eqref{stima0ter} and (c) of Lemma~\ref{lemma:stimecampo} we obtain
\begin{align} \label{stima4}
\int_{\nbordot} {Z}_{\psi}\cdot(\eta_t-\norm)\,d\huu
\geq - \e \, C \, \int_{\Gamma_t\cap\partial\Omega}\psi^2\,d\huu
\geq - \e \, C \, \|\psi\|^2_{H^1(\Gamma\cap\Omega)} \,.
\end{align}

Finally, we proceed in a similar way to estimate the last integral in \eqref{varIIg}:
by \eqref{stima0ter} and \eqref{stima0bis}
\begin{align}\label{stima6}
- \int_{\nbordot} &\bigl( (\nfield\cdot\eta_t)^2\eta_t + 2(\nfield\cdot\nu_t)(\nfield\cdot\eta_t)\nu_t \bigr) \cdot D\norm[\eta_t] \,d\huu
\geq - \e \, C \, \|\psi\|^2_{H^1(\Gamma\cap\Omega)}.
\end{align}

Collecting \eqref{stima1}--\eqref{stima6},
by \eqref{varIIg} we conclude that for every $\psi\in\mathcal{D}_{\delta(\e)}$ and for every $t\in[0,1]$
$$
g_\psi ''(t) \geq \biggl( \frac{1-\lambda_1}{4C_1^2} - \e\,C \biggr) \|\psi\|^2_{H^1(\Gamma\cap\Omega)}
$$
for some positive constant $C$ (independent of $\psi$),
so that by choosing $\e$ sufficiently small we obtain the claim \eqref{claimminW}
and, in turn, \eqref{claimminW0}.

\smallskip
\noindent
{\it Step 2.}
The conclusion of the theorem follows now by approximation:
given any $\psi\in W^{2,\infty}(\Gamma\cap\Omega)$ with $\|\psi\|_{W^{2,\infty}(\Gamma\cap\Omega)}<\delta$,
we can find a sequence $\psi_n\in\mathcal{D}_\delta$ converging to $\psi$ in $W^{1,\infty}(\Gamma\cap\Omega)$
for which the conclusion obtained in the previous step holds:
$$
F(\Gamma_{\psi_n},u_{\psi_n}) \geq F(\Gamma,u) + c\,\|\psi_n\|^2_{H^1(\Gamma\cap\Omega)}.
$$
By passing to the limit in the previous inequality,
and noting that $F(\Gamma_{\psi_n},u_{\psi_n})\to F(\Gamma_{\psi},u_{\psi})$
as a consequence of the $W^{1,\infty}$-convergence of $\psi_n$ to $\psi$,
we conclude that the same estimate holds for $\psi$.
Hence the conclusion of the theorem follows by recalling that $F(\Gamma_\psi,v)\geq F(\Gamma_\psi,u_\psi)$
for every $v\in H^1(\Omega\setminus\Gamma_\psi)$ with $v=u$ in $(\Omega\setminus U)\cup\partial_D\Omega$.
\end{proof}

\end{section}

%%%%%%%%%%%%%%%%%%%%%%%%%%%%%%%%%%%%%%%%%%%%%%%%%%%%%%%%%%%%%%%%%%%%%%%%%%%%%%%%%%%%%%%%%%%%%%%%%%%%%%%%%
%%%%%%%%%%%%%%%%%%%%%%%%%%%%%%%%%%%%%%%%%%%%%%%%%%%%%%%%%%%%%%%%%%%%%%%%%%%%%%%%%%%%%%%%%%%%%%%%%%%%%%%%%
%%%%%%%%%%%%%%%%%%%%%%%%%%%%%%%%%%%%%%%%%%%%%%%%%%%%%%%%%%%%%%%%%%%%%%%%%%%%%%%%%%%%%%%%%%%%%%%%%%%%%%%%%
%%%%%%%%%%%%%%%%%%%%%%%%%%%%%%%%%%%%%%%%%%%%%%%%%%%%%%%%%%%%%%%%%%%%%%%%%%%%%%%%%%%%%%%%%%%%%%%%%%%%%%%%%
%%%%%%%%%%%%%%%%%%%%%%%%%%%%%%%%%%%%%%%%%%%%%%%%%%%%%%%%%%%%%%%%%%%%%%%%%%%%%%%%%%%%%%%%%%%%%%%%%%%%%%%%%

\begin{section}{Local $C^{1,\alpha}$-minimality} \label{sect:minW1inf}

In this section we show that the $W^{2,\infty}$-minimality property proved in the previous section
implies that $(\Gamma,u)$ is also a minimizer with respect to small $C^{1,\alpha}$-perturbations of the discontinuity set.
We start by a preliminary construction that will be needed in the proof.

\begin{remark} \label{rm:intorni}
Let $X$ be the vector field defined at the beginning of Section~\ref{sect:minW},
which, we recall, coincides with $\nu$ on $\Gamma$ and is tangent to $\partial\Omega$,
and let $\Psi$ be the flow generated by $X$.
We want to define a one-parameter family of smooth curves $(\Gamma_\delta)_\delta$, for $\delta\in(-\delta_0,\delta_0)$, with $\Gamma_0=\Gamma$, such that $X$ is normal to each curve of the family,
and whose union is a tubular neighborhood of $\Gamma$.
In order to do this, let $x_0\in\Gamma\cap\partial\Omega$ and let $x_\delta:=\Psi(\delta,x_0)$.
We then define $\Gamma_\delta$ as the trajectory of the flow generated by $X^\bot$ starting from $x_\delta$,
where the vector field $X^\bot$ is obtained by a rotation of $X$ by $\frac{\pi}{2}$.
This construction provides a family of curves with the desired properties.

We can then define a family of tubular neighborhoods of $\Gamma$ in $\Omega$
whose boundaries meet $\partial\Omega$ orthogonally, by setting for $\delta\in(-\delta_0,\delta_0)$
$$
\mathcal{I}_\delta(\Gamma) := \bigcup_{|s|<\delta}\Gamma_s\,.
$$
\end{remark}

\begin{proposition}\label{prop:minw1}
Let $(\Gamma,u)$ be a strictly stable regular critical pair in an admissible subdomain $U$,
and let $\alpha\in(0,1)$.
There exists $\delta>0$ such that
$$
F(\Gamma,u) < F(\Phi(\Gamma),v)
$$
for every diffeomorphism $\Phi\in C^{1,\alpha}(\ombar;\ombar)$
with $0<\|\Phi-Id\|_{C^{1,\alpha}(\ombar)}\leq\delta$
and $\supp(\Phi-Id)\subset\subset U\setminus\partial_D\Omega$,
and for every $v\in H^1(\Omega\setminus\Phi(\Gamma))$
with $v=u$ in $(\Omega\setminus U)\cup\partial_D\Omega$.
\end{proposition}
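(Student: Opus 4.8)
The plan is to argue by contradiction, reducing the assertion to the $W^{2,\infty}$-minimality of Theorem~\ref{teo:minW} through a comparison with a suitably constrained minimum problem, to which the regularity theory of Section~\ref{sect:prelparreg} applies. Assume the conclusion fails: then there are diffeomorphisms $\Phi_n\in C^{1,\alpha}(\ombar;\ombar)$ with $\supp(\Phi_n-Id)\subset\subset U\setminus\partial_D\Omega$ and $0<\|\Phi_n-Id\|_{C^{1,\alpha}}\to0$, and maps $v_n\in H^1(\Omega\setminus K_n)$ with $K_n:=\Phi_n(\Gamma)\ne\Gamma$, $v_n=u$ on $(\Omega\setminus U)\cup\partial_D\Omega$, and $F(K_n,v_n)\le F(\Gamma,u)$. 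Replacing $v_n$ by the harmonic function $u_{K_n}$ of \eqref{uphi} only lowers the energy, and $\|u_{K_n}\|_\infty\le\|u\|_\infty$ by the maximum principle; since $K_n\to\Gamma$ in $C^{1,\alpha}$, elliptic estimates for the Neumann problem (the $C^{1,\alpha}$-analogue of Remark~\ref{rm:Neumannreg}, using Lemma~\ref{lemma:Neumannreg} near $\Gamma\cap\partial\Omega$) give $\|v_n-u\|_{W^{1,\infty}(\Omega\setminus K_n)}\to0$, so that $\|v_n-u\|_{W^{1,\infty}}\le1$ for $n$ large and $K_n$ separates $\Omega$ as $\Gamma$ does.

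Next I would introduce, for each large $n$, the class $\mathcal{B}_n$ of admissible pairs $(K,v)$ with $v=u$ on $(\Omega\setminus U)\cup\partial_D\Omega$, $\|v-u\|_{W^{1,\infty}(\Omega\setminus K)}\le1$, and $K$ separating the tubular neighborhood $\mathcal{I}_{\sigma_n}(\Gamma)$ of Remark~\ref{rm:intorni}, where $\sigma_n\to0$ is chosen so that $K_n\subset\subset\mathcal{I}_{\sigma_n}(\Gamma)$, and consider the minimization of the $\sbv$-relaxation of $F$ over $\mathcal{B}_n$. Both $(\Gamma,u)$ and $(K_n,v_n)$ lie in $\mathcal{B}_n$, and by the direct method (compactness and lower semicontinuity in $\sbv$, the $W^{1,\infty}$- and separation-constraints being stable under the relevant convergence, and the density estimate of Theorem~\ref{teo:dlb} ruling out loss of the jump set) the problem has a minimizer $(\hat K_n,\hat v_n)$, with $F(\hat K_n,\hat v_n)\le F(K_n,v_n)\le F(\Gamma,u)$. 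The role of the $W^{1,\infty}$-constraint is that along admissible competitors $|\nabla v|\le\|\nabla u\|_\infty+1$, hence $\int_{B_\rho(x)}|\nabla v|^2\le C\rho^2$: thus, as a functional of $K$, $F$ is a volume perturbation of the length, and one checks that $(\hat K_n,\hat v_n)$ is a quasi-minimizer of the Mumford-Shah functional in the sense of \eqref{quaminMS}, with a constant $\omega$ independent of $n$, at least in balls of radius below a uniform threshold (the local fillings and chord-replacements of the jump set used in the regularity theory respect the constraints defining $\mathcal{B}_n$, up to mollifying corners at a cost $O(\rho^2)$ in length).

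Since $\overline{S}_{\hat v_n}\cap\Omega=\hat K_n\subset\mathcal{I}_{\sigma_n}(\Gamma)$ with $\sigma_n\to0$, the jump set is $O(\sigma_n)$-close to $\Gamma$ in the Hausdorff distance; combined with the smoothness of $\Gamma$ this yields $E_{\hat v_n}(x,r)<\e_0 r$ at the scale $r=\sqrt{\sigma_n}$ at every $x\in\overline{S}_{\hat v_n}$, so Theorem~\ref{teo:parreg} in the interior and Theorem~\ref{teo:parreg2} near $\partial\Omega$ (whose boundary is met orthogonally by $\partial\mathcal{I}_{\sigma_n}(\Gamma)$) give that $\hat K_n$ is locally a $C^{1,\frac14}$-graph with $C^{1,\frac14}$-norm bounded uniformly in $n$, meeting $\partial\Omega$ orthogonally. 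Together with the Hausdorff convergence this forces $\hat K_n\to\Gamma$ in $C^1$, so for $n$ large the constraints in $\mathcal{B}_n$ are inactive and $(\hat K_n,\hat v_n)$ is an (unconstrained) regular critical pair for $F$; by Remark~\ref{rm:u} it is then analytic. Bootstrapping from the uniform $C^{1,\frac14}$-bounds — Schauder estimates give uniform $C^{0,\frac14}$-bounds on $\nabla\hat v_n^\pm$ up to $\hat K_n$, whence the transmission condition \eqref{EL1} yields uniform $C^{2,\frac14}$-bounds on $\hat K_n$ — and interpolating with the $C^1$-smallness, one writes $\hat K_n=\Gamma_{\hat\psi_n}$ with $\|\hat\psi_n\|_{W^{2,\infty}(\Gamma\cap\Omega)}\to0$.

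Now Theorem~\ref{teo:minW} applies to each $\hat\psi_n$ for $n$ large and gives $F(\hat K_n,\hat v_n)\ge F(\Gamma,u)+C\|\hat\psi_n\|_{H^1(\Gamma\cap\Omega)}^2$; since $F(\hat K_n,\hat v_n)\le F(\Gamma,u)$, necessarily $\hat\psi_n\equiv0$, i.e. every minimizer of the constrained problem coincides with $(\Gamma,u)$, and in particular $\min_{\mathcal{B}_n}F=F(\Gamma,u)$. But then $(K_n,v_n)$, satisfying $F(K_n,v_n)\le F(\Gamma,u)=\min_{\mathcal{B}_n}F$, is itself a minimizer; repeating the previous analysis (the constraints being inactive at $(K_n,v_n)$ since $K_n\subset\subset\mathcal{I}_{\sigma_n}(\Gamma)$ and $\|v_n-u\|_{W^{1,\infty}}\to0$) it is a regular critical pair of the form $\Gamma_{\psi_n}$ with $\|\psi_n\|_{W^{2,\infty}}\to0$, and, since $K_n\ne\Gamma$, $\psi_n\not\equiv0$; so Theorem~\ref{teo:minW} yields $F(K_n,v_n)\ge F(\Gamma,u)+C\|\psi_n\|_{H^1}^2>F(\Gamma,u)$, a contradiction. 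The main obstacle is the second and third steps: designing the constrained problem so that the comparison competitors entering the regularity theory are admissible — so that the uniform quasi-minimality of $(\hat K_n,\hat v_n)$ really holds — and, above all, showing that the constraints are eventually inactive at the constrained minimizer, which is precisely what allows one to regard it as a genuine regular critical pair and to bootstrap its regularity to uniform $C^{2,\frac14}$-estimates (and hence to $W^{2,\infty}$-smallness of the associated graph function).
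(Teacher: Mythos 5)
Your proposal follows the same overall architecture as the paper's proof: argue by contradiction, set up an auxiliary constrained minimum problem in which the Dirichlet part is controlled by a $W^{1,\infty}$-constraint so that $F$ behaves like a volume perturbation of length, invoke regularity theory for quasi-minimizers to upgrade the constrained minimizers to graphs close to $\Gamma$, bootstrap to $W^{2,\infty}$-smallness, and conclude via Theorem~\ref{teo:minW}. Where you differ: the paper works with sets of finite perimeter via the obstacle problem \eqref{ostacolo} (with two fixed extensions $v^\pm$ of the traces, so the jump set is $\partial^*E$ by construction and the ``separation'' constraint is simply $\Omega^+\setminus\mathcal{I}_{\delta_n}(\Gamma)\subset E\subset\Omega^+\cup\mathcal{I}_{\delta_n}(\Gamma)$); it then applies the regularity theory for quasi-minimizers of the \emph{area} functional. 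You instead work directly with an $SBV$ relaxation with a topological ``separation'' constraint and appeal to the Mumford--Shah regularity theory (Theorems~\ref{teo:parreg}, \ref{teo:parreg2}). Your final reduction to $W^{2,\infty}$ also goes through a different route: you declare the constrained minimizer a regular critical pair once the constraints are inactive and bootstrap via Schauder and the transmission condition, whereas the paper writes the Euler--Lagrange equation of the obstacle problem (which holds directly, contact set included) to get uniform $L^\infty$ control of the curvature of $\hat K_n$ and then invokes Lemma~\ref{lemma:grafici}.

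The genuine gap is the step you yourself flag as the ``main obstacle'': establishing that $(\hat K_n,\hat v_n)$ satisfies the quasi-minimality inequality \eqref{quaminMS} with respect to \emph{all} competitors, not only those respecting the constraints of $\mathcal{B}_n$. Theorems~\ref{teo:parreg}, \ref{teo:parreg2} and \ref{teo:dlb} all take full quasi-minimality as a hypothesis; a constrained minimizer satisfies a priori only the inequality against constrained competitors, and in particular one may not freely remove jump set in a small ball without breaking the separation constraint. Your parenthetical remark that ``the local fillings and chord-replacements used in the regularity theory respect the constraints'' does not address this: the blackbox statements require the inequality for arbitrary $v\in SBV$ with $\{v\neq\hat v_n\}\subset\subset B_\rho(x)$, so one cannot restrict a priori to the particular competitors appearing inside the proof of the regularity theorem. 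The paper resolves exactly this point in Step~1 of its proof, by showing (via the vector field $X$ and a divergence-theorem estimate) that the obstacle constraint can be traded for a Lipschitz volume penalty $\gamma|E\triangle T_n(E)|$ at the cost of a constant $\gamma$ independent of $n$; after that, the minimizer is an unconstrained quasi-minimizer and the regularity theory applies verbatim. Without such an unconstraining lemma — which is the real technical content of the step — your argument does not close. A related, secondary issue: you invoke Theorem~\ref{teo:dlb} to rule out loss of jump set when proving existence of the constrained minimizer, but that theorem is again stated for quasi-minimizers, so using it before quasi-minimality is established is circular; the paper's set-of-finite-perimeter formulation sidesteps this because the jump set of every competitor is already $\partial^*E$ and no density estimate is needed for existence.
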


\begin{proof}
Assume by contradiction that there exist sequences $\sigma_n\to0$ and $\Phi_n\in C^{1,\alpha}(\ombar;\ombar)$, with
$$
\supp(\Phi_n-Id)\subset\subset U\setminus\partial_D\Omega, \quad 0<\|\Phi_n-Id\|_{C^{1,\alpha}(\ombar)}\leq\sigma_n,
$$
such that $F(\Phi_n(\Gamma),u_n) \leq F(\Gamma,u)$,
where $u_n:=u_{\Phi_n}$ is defined as in \eqref{uphi}.
Notice that, arguing as in Remark~\ref{rm:Neumannreg},
we have that $u_n^\pm$ are of class $C^{1,\alpha}$ up to $\Phi_n(\Gamma)$, and
$$
\| \nabla_\Gamma(u_n^\pm\circ\Phi_n)-\nabla_\Gamma u^\pm \|_{L^\infty(\Omega^\pm)}\to0.
$$

We first extend $u^+$ and $u^-$ to $C^{1,\alpha}$-functions in $\Omega^-$ and $\Omega^+$, respectively,
by using \cite[Theorem~6.2.5]{Gri}.
We similarly extend $u_n^\pm\circ\Phi_n$ to $C^{1,\alpha}$-functions $\tilde{u}_n^\pm$ in $\Omega^\mp$,
and we set $v_n^\pm := \tilde{u}_n^\pm\circ\Phi_n^{-1}$:
since the extension operator constructed in \cite[Theorem~6.2.5]{Gri}
is continuous, we have that
$$
\|\nabla v_n^\pm - \nabla u^\pm\|_{L^\infty(\Omega)}\leq\delta_n
$$
for some $\delta_n\to0$.
Finally, as $\|\Phi_n-Id\|_{C^{1,\alpha}}\to0$, we can also assume that $\Phi_n(\Gamma)\subset\mathcal{I}_{\delta_n}(\Gamma)$.

Consider the following obstacle problems
\begin{align} \label{ostacolo}
\min \Bigl\{ J(E,v^+,v^-) : E&\subset\Omega, \;
    \Omega^+\setminus\mathcal{I}_{\delta_n}(\Gamma) \subset E \subset \Omega^+\cup\mathcal{I}_{\delta_n}(\Gamma), \;
    v^\pm-u^\pm\in W^{1,\infty}(\Omega), \nonumber\\
    &v^+\chi_E+v^-\chi_{E^c}=u \,\text{ in }\, (\Omega\setminus U)\cup\partial_D\Omega, \;
    \|\nabla v^\pm-\nabla u^\pm\|_{L^\infty(\Omega)}\leq 1 \Bigr\}\,,
\end{align}
where
$$
J(E,v^+,v^-):= \int_E |\nabla v^+|^2 + \int_{\Omega\setminus E} |\nabla v^-|^2 + P(E,\Omega),
$$
and let $(F_n,w_n^+,w_n^-)$ be a solution to \eqref{ostacolo} (which exists by the direct method of the Calculus of Variations).
Since $(\Phi_n(\Omega^+),v_n^+,v_n^-)$ is an admissible competitor, we deduce that
\begin{equation} \label{contracalpha}
J(F_n,w_n^+,w_n^-)\leq J(\Phi_n(\Omega^+),v_n^+,v_n^-)=F(\Phi_n(\Gamma),u_n)\leq F(\Gamma,u).
\end{equation}
We now divide the proof into three steps.

\smallskip
\noindent
{\it Step 1.}
We claim that, if $\gamma>0$ is sufficiently large (independently of $n$), then $(F_n,w_n^+,w_n^-)$ is also a solution to
\begin{align} \label{ostacolo2}
\min \Bigl\{ \widetilde{J}(E,v^+,v^-) \, : \;
    E\subset\Omega, \;
    v^\pm-u^\pm \in W^{1,\infty}(\Omega), \;
    &\|\nabla v^\pm-\nabla u^\pm\|_{L^\infty(\Omega)}\leq 1, \nonumber\\
    &v^\pm=u \,\text{ in }\, (\Omega^\pm\setminus U)\cup(\partial_D\Omega\cap\overline{\Omega}^\pm) \Bigr\},
\end{align}
where
$$
\widetilde{J}(E,v^+,v^-):= \int_E |\nabla v^+|^2 + \int_{\Omega\setminus E} |\nabla v^-|^2 + P(E,\Omega) + \gamma|E\bigtriangleup T_n(E)|
$$
and $T_n(E):= E\cup(\Omega^+\setminus\mathcal{I}_{\delta_n}(\Gamma))\cap(\Omega^+\cup\mathcal{I}_{\delta_n}(\Gamma))$.

In order to prove this, fix any competitor $(F,w^+,w^-)$ for problem \eqref{ostacolo2}.
We denote by $\nu_E$ the generalized inner normal to a finite perimeter set $E$.
Since $\nu_{T_n(F)}=X$ almost everywhere on $\partial^*T_n(F)\cap \Gamma_{\delta_n}$, and $|X|\leq1$,
we can estimate the difference of the perimeters of $F$ and $T_n(F)$ in $\Omega^+$ as follows:
\begin{align*}
P(F,\Omega^+)&-P(T_n(F),\Omega^+)
= \int_{(\partial^*F\setminus\partial^*T_n(F))\cap\Omega^+}\,d\hu
   - \int_{(\partial^*T_n(F)\setminus\partial^*F)\cap\Omega^+}\,d\hu \\
&\geq \int_{(\partial^*F\setminus\partial^*T_n(F))\cap\Omega^+} X\cdot\nu_F\,d\hu
   - \int_{(\partial^*T_n(F)\setminus\partial^*F)\cap\Omega^+} X\cdot\nu_{T_n(F)}\,d\hu \\
&= \int_{(F\bigtriangleup T_n(F))\cap\Omega^+}\div X
 \geq - \|\div X\|_\infty|(F\bigtriangleup T_n(F))\cap\Omega^+|,
\end{align*}
where we used the divergence theorem taking into account that $X\cdot\norm=0$ on $\partial\Omega$.
A similar estimate holds in $\Omega^-$, and we conclude that
$$
P(F,\Omega)-P(T_n(F),\Omega) \geq - \|\div X\|_\infty|F\bigtriangleup T_n(F)|.
$$
Since $\nabla w^\pm$ are uniformly bounded in $L^\infty$
by a constant $\Lambda$ depending only on $\|\nabla u\|_\infty$,
we have for the volume terms
$$
\bigg|\int_{F}|\nabla w^+|^2 - \int_{T_n(F)}|\nabla w^+|^2\bigg| \leq \Lambda^2|F\bigtriangleup T_n(F)|,
$$
and a similar estimate holds for $w^-$ in the complements of the sets $F$ and $T_n(F)$.
Hence we deduce by minimality of $(F_n,w_n^+,w_n^-)$
\begin{align*}
\widetilde{J}(F,w^+,w^-) - \widetilde{J}(F_n,w^+_n,w_n^-)
&\geq J(F,w^+,w^-)-J(T_n(F),w^+,w^-)+\gamma|F\bigtriangleup T_n(F)| \\
&\geq \bigl( \gamma-2\Lambda^2-\|\div X\|_\infty \bigr)|F\bigtriangleup T_n(F)|\geq0
\end{align*}
if $\gamma>2\Lambda^2+\|\div X\|_\infty$.
This shows that $(F_n,w_n^+,w_n^-)$ is also a solution to \eqref{ostacolo2}.

\smallskip
\noindent
{\it Step 2.}
Each set $F_n$ is a quasi-minimizer of the area functional in $\Omega$, that is there exists a constant $\omega>0$ (independent of $n$) such that for every set $F\subset\Omega$ of finite perimeter with $F\bigtriangleup F_n\subset\subset B_r(x)$ ($x\in\R^2$, $r>0$) we have
\begin{equation} \label{perquamin}
P(F_n,\Omega) \leq P(F,\Omega) + \omega |F\bigtriangleup F_n|.
\end{equation}
This can be directly deduced from the fact that $(F_n,w_n^+,w_n^-)$ solves \eqref{ostacolo2},
using in particular the $L^\infty$ bound on $\nabla w_n^\pm$ to estimate the Dirichlet integrals by $|F\bigtriangleup F_n|$.

Combining this information with the Hausdorff convergence of $F_n$ to $\Omega^+$
(whose boundary inside $\Omega$ is regular),
we deduce that each $F_n$ has $C^{1,\frac12}$ boundary inside $\Omega$ (for $n$ sufficiently large)
which converges to $\Gamma$ in the $C^{1,\alpha}$- sense for all $\alpha\in(0,\frac12)$.
This is a well-known consequence of the classical regularity theory of quasi-minimizers of the area functional,
which is stated in our precise setting in \cite[Theorem~3.5]{JP} (see also the references contained therein).
In particular, the regularity up to the boundary $\partial\Omega$ follows from a work by Gr\"{u}ter \cite{Gru},
which guarantees in addition that the intersection of $\overline{\partial F_n\cap\Omega}$ with $\partial\Omega$ is orthogonal.

Hence there exist diffeomorphisms $\Psi_n:\ombar\to\ombar$ of class $C^{1,\alpha}$
such that $F_n=\Psi_n(\Omega^+)$, $\overline{\partial F_n\cap\Omega}=\Psi_n(\Gamma)$ and $\|\Psi_n-Id\|_{C^{1,\alpha}(\Gamma)}\to0$.
In turn, by Lemma~\ref{lemma:grafici} we conclude that $\overline{\partial F_n\cap\Omega}=\Gamma_{\psi_n}$
for some functions $\psi_n\in C^{1,\alpha}(\Gamma)$ such that $\psi_n\to0$ in $C^{1,\alpha}(\Gamma)$.

We also remark that $\nabla w_n^\pm$ are H\"{o}lder continuous up to $\Gamma_{\psi_n}$,
and they converge uniformly to $\nabla u^\pm$.
Indeed, by considering the Dirichlet minimizer $u_{\Psi_n}$ in $\Omega\setminus\Psi_n(\Gamma)$
under the usual boundary conditions, we have by elliptic regularity
(as in Remark~\ref{rm:Neumannreg}) that
$\nabla_\Gamma(u_{\Psi_n}^\pm\circ\Psi_n)$ is H\"{o}lder continuous and converges uniformly to $\nabla_\Gamma u^\pm$.
Hence, for $n$ large enough, and also taking into account the continuity of the extension operator,
$u_{\Psi_n}$ satisfies the constraint $\|\nabla u_{\Psi_n}^\pm- \nabla u^\pm\|_\infty \leq 1$
so that we conclude that $w_n^\pm=u_{\Psi_n}^\pm$.

\smallskip
\noindent
{\it Step 3.}
By the quasi-minimality property \eqref{perquamin} and the $C^{1,\alpha}$-regularity of $\Gamma_{\psi_n}$,
we deduce by a standard argument (see, \emph{e.g.}, \cite[Proposition~3.2]{JP})
that the curvatures $H_{\psi_n}$ of the sets $\Gamma_{\psi_n}$ are uniformly bounded by the constant $\omega$.
In turn, this provides the $W^{2,\infty}$-regularity of $\Gamma_{\psi_n}$.

If we now write the Euler-Lagrange equations for problem \eqref{ostacolo}, we get
$$
H_{\psi_n} =
\left\{
  \begin{array}{ll}
    |\nabla w_n^+|^2 - |\nabla w_n^-|^2 & \hbox{on }\Gamma_{\psi_n}\cap\mathcal{I}_{\delta_n}(\Gamma), \\
    H_{\Gamma_{\pm\delta_n}} & \hbox{on }\Gamma_{\psi_n}\cap\Gamma_{\pm\delta_n},
  \end{array}
\right.
$$
where $H_{\Gamma_{\pm\delta_n}}$ denotes the curvature of the curve $\Gamma_{\pm\delta_n}$.
Moreover, as $(\Gamma,u)$ is a critical pair, by \eqref{EL1} we have
\begin{equation*}
H_\Gamma
= |\nabla u^+|^2 - |\nabla u^-|^2
\quad\text{on }\Gamma.
\end{equation*}
Hence, by the uniform convergence of $\nabla w_n^\pm$ to $\nabla u^\pm$
and observing that the curvature $H_{\Gamma_{\pm\delta_n}}$ is uniformly close to $H_\Gamma$
by the regularity of the flow generating the family of curves $(\Gamma_\delta)_\delta$,
we deduce that
$$
\|H_{\psi_n}\circ\Psi_n - H_\Gamma\|_{L^\infty(\Gamma)} \to0
\qquad\text{as }n\to\infty,
$$
which implies, by Lemma~\ref{lemma:grafici}, that $\|\psi_n\|_{W^{2,\infty}(\Gamma)}\to0$.

We can conclude that, setting $w_n:=w_n^+\chi_{F_n}+w_n^-\chi_{F_n^c}$, by \eqref{contracalpha}
$$
F(\Gamma_{\psi_n},w_n) = J(F_n,w_n^+,w_n^-) \leq F(\Gamma,u),
$$
which implies, by the isolated local $W^{2,\infty}$-minimality of $(\Gamma,u)$ proved in Theorem~\ref{teo:minW},
that for all $n$ large enough $\psi_n=0$ and $w_n=u$.
As a consequence, $(\Phi_n(\Omega^+),v_n^+,v_n^-)$ is itself a solution to \eqref{ostacolo}:
by repeating all the previous argument for this sequence instead of $(F_n,w_n^+,w_n^-)$,
we finally reach a contradiction to the isolated local $W^{2,\infty}$-minimality of $(\Gamma,u)$.
\end{proof}

\end{section}

%%%%%%%%%%%%%%%%%%%%%%%%%%%%%%%%%%%%%%%%%%%%%%%%%%%%%%%%%%%%%%%%%%%%%%%%%%%%%%%%%%%%%%%%%%%%%%%%%%%%%%%%%
%%%%%%%%%%%%%%%%%%%%%%%%%%%%%%%%%%%%%%%%%%%%%%%%%%%%%%%%%%%%%%%%%%%%%%%%%%%%%%%%%%%%%%%%%%%%%%%%%%%%%%%%%
%%%%%%%%%%%%%%%%%%%%%%%%%%%%%%%%%%%%%%%%%%%%%%%%%%%%%%%%%%%%%%%%%%%%%%%%%%%%%%%%%%%%%%%%%%%%%%%%%%%%%%%%%
%%%%%%%%%%%%%%%%%%%%%%%%%%%%%%%%%%%%%%%%%%%%%%%%%%%%%%%%%%%%%%%%%%%%%%%%%%%%%%%%%%%%%%%%%%%%%%%%%%%%%%%%%
%%%%%%%%%%%%%%%%%%%%%%%%%%%%%%%%%%%%%%%%%%%%%%%%%%%%%%%%%%%%%%%%%%%%%%%%%%%%%%%%%%%%%%%%%%%%%%%%%%%%%%%%%

\begin{section}{Proof of Theorem~\ref{teo:minSBV}} \label{sect:minSBV}

In this section we complete the proof of Theorem~\ref{teo:minSBV}.
It will be useful to introduce the relaxed functional
$$
\F(u):= \int_{\Omega} |\nabla u|^2\,dx + \hu(S_u) \qquad\text{for }u\in \sbv(\Omega)
$$
and, for $B\subset\Omega$ Borel set, its local version
\begin{align*}
\F(u;B):= \int_{\Omega\cap B} |\nabla u|^2\,dx + \hu(S_u\cap B).
\end{align*}

\begin{remark} \label{rm:sbv}
We recall here that if $(K,v)\in\spazio$ is an admissible pair with $\mathcal{H}^{N-1}(K)<+\infty$ and $v\in L^\infty(\Omega)$,
then the function $v$ is in $\sbv(\Omega)$ and satisfies $\hu(S_v\setminus K)=0$ (see \cite[Proposition~4.4]{AFP});
in particular, we have $\F(v)\leq F(K,v)$.
On the other hand, if $(\Gamma,u)$ is a regular critical pair, then $u\in\sbv(\Omega)$, $\overline{S}_u=\Gamma$ and $\F(u)=F(\Gamma,u)$.
\end{remark}

\begin{remark} \label{rm:ortogonali}
We observe that, in proving Theorem~\ref{teo:minSBV},
we can assume without loss of generality that $U$ is an open set of class $C^\infty$
and that $\partial U$ and $\partial \Omega$ are orthogonal where they intersect.
Indeed, assume to have proved the theorem under these additional assumptions.
If $U$ is any admissible subdomain for $(\Gamma,u)$,
we can find a decreasing sequence of admissible subdomains $U_n$ of class $C^\infty$,
with boundaries meeting $\partial\Omega$ orthogonally, such that $U$ is the interior part of $\bigcap_nU_n$.
It follows from Corollary~\ref{cor:largerdomain} that the second variation is strictly positive in $U_n$ for $n$ large enough,
and hence $(\Gamma,u)$ is an isolated local minimizer in $U_n$.
This immediately yields the conclusion also in the initial domain $U$.
\end{remark}

We can now start the proof of Theorem~\ref{teo:minSBV}.
By Remark~\ref{rm:ortogonali} we are allowed to perform the proof under the additional assumption
that $U$ has boundary of class $C^\infty$ intersecting $\partial\Omega$ orthogonally.
Moreover, from Remark~\ref{rm:sbv} it follows that in order obtain the result
it is sufficient to show that there exists $\delta>0$ such that
$\F(u)<\F(v)$ for every $v\in\sbv(\Omega)$ with $v=u$ in $(\Omega\setminus U)\cup\partial_D\Omega$ and $0<\|v-u\|_{L^1(\Omega)}<\delta$.

Hence we assume by contradiction that there exists a sequence $v_n\in\sbv(\Omega)$,
with $v_n=u$ in $(\Omega\setminus U)\cup\partial_D\Omega$, such that $0<\|v_n-u\|_{L^1(\Omega)}\to0$ and
\begin{equation} \label{contraminloc}
\F(v_n) \leq \F(u).
\end{equation}
By a truncation argument, we can assume that
$\|v_n\|_{L^\infty(\Omega)}\leq\|u\|_{L^\infty(\Omega)}=:M<+\infty$.

We introduce a bounded open set $\Omega'$ such that $\Omega\subset\Omega'$ and $\Omega'\cap\partial\Omega=\partial_D\Omega$,
in order to enforce the boundary condition on $\partial_D\Omega$.
We can extend $u$ in $\Omega'\setminus\Omega$ to a function $u\in\sbv(\Omega')$ such that $\hu(S_u\cap\partial_D\Omega)=0$
and $\|u\|_{L^\infty(\Omega')}\leq M$. Moreover, we can also assume that $v_n\in\sbv(\Omega')$ and $v_n=u$ in $\Omega'\setminus(U\cap\Omega)$.
In particular, $\hu(S_{v_n}\cap\partial_D\Omega)=0$ and hence $\F(v_n;\Omega')\leq\F(u;\Omega')$.

We set $\e_n:=\|v_n-u\|_{L^2(\Omega)}^2\to0$,
$$
h_n(t):=
\left\{
  \begin{array}{ll}
    \sqrt{(t-\e_n)^2+\e_n^2}-\e_n, & \hbox{if }t>\e_n, \\
    0, & \hbox{if }0\leq t \leq \e_n,
  \end{array}
\right.
$$
and we consider, for $\beta>0$ to be chosen later, a solution $w_n$ to the following penalized minimum problem:
\begin{equation} \label{penalizzaton}
\min \Bigl\{ \F(w;\Omega') + \beta h_n \bigl(\|w-u\|_{L^2(\Omega)}^2 \bigr) \, : \, w\in\sbv(\Omega'), \, w=u \text{ in } \Omega'\setminus(U\cap\Omega) \Bigr\} \,.
\end{equation}
The existence of a solution to \eqref{penalizzaton} is guaranteed by the lower semi-continuity and compactness theorems in $\sbv$
(see \cite[Theorem~4.7 and Theorem~4.8]{AFP}),
and we can also assume $\|w_n\|_{L^\infty(\Omega')}\leq M$.
Observe in addition that, by \eqref{contraminloc} and by minimality of $w_n$, we have
\begin{equation} \label{contraminloc2}
\F(w_n;\Omega') \leq \F(w_n;\Omega') + \beta h_n \bigl( \|w_n-u\|_{L^2(\Omega)}^2 \bigr)
\leq \F(v_n;\Omega') \leq \F(u;\Omega') \,,
\end{equation}
and thus the energies $\F(w_n;\Omega')$ are equibounded.
In turn, again by the compactness and lower semi-continuity theorems in $\sbv$ we deduce that, up to subsequences,
$w_n$ converges in $\sbv(\Omega')$ (see Definition~\ref{def:convsbv}) and in $L^p(\Omega')$ for every $p\in[1,+\infty)$
to a function $z\in\sbv(\Omega')$ which solves the minimum problem
\begin{equation} \label{penalizzato}
\min \Bigl\{ \F(w;\Omega') + \beta \int_{\Omega}|w-u|^2\,dx \, : \, w\in\sbv(\Omega'), \, w=u \text{ in } \Omega'\setminus(U\cap\Omega) \Bigr\} \,.
\end{equation}
Indeed, if $w\in\sbv(\Omega')$ is an admissible function for problem \eqref{penalizzato},
then by semi-continuity and by minimality of $w_n$ we immediately deduce that
\begin{align*}
\F(z;\Omega') + \beta \int_{\Omega}|z-u|^2\,dx
& \leq \liminf_{n\to\infty} \biggl( \F(w_n;\Omega') + \beta h_n \bigl( \|w_n-u\|_{L^2(\Omega)}^2 \bigr) \biggr) \\
& \leq \liminf_{n\to\infty} \biggl( \F(w;\Omega') + \beta h_n \bigl( \|w-u\|_{L^2(\Omega)}^2 \bigr) \biggr)\,.
\end{align*}

By the result in \cite{Mor}, based on the construction of a suitable calibration,
we have the following result.

\begin{proposition} \label{prop:calibrazioni}
If $\beta$ is sufficiently large, then the unique solution to \eqref{penalizzato} is $u$ itself.
\end{proposition}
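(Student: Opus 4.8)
The plan is to produce a \emph{calibration} for the functional in \eqref{penalizzato} at $u$, following the construction of \cite{Mor}; the existence of a calibration yields at once that $u$ is a minimizer and that it is the \emph{unique} one. Since $\F$ is local and every competitor $w$ admissible in \eqref{penalizzato} coincides with $u$ on $\Omega'\setminus(U\cap\Omega)$, it suffices to work on $V:=U\cap\Omega$ and to exhibit a bounded vector field $\phi=(\phi^x,\phi^t)\in L^\infty(V\times\R;\R^2\times\R)$, in the spirit of Alberti--Bouchitt\'e--Dal Maso, with the following properties: $\div\phi=0$ in $\mathcal{D}'(V\times\R)$; the bulk inequality $|\phi^x(x,t)|^2\le 4\phi^t(x,t)+4\beta\,|t-u(x)|^2$ for a.e.\ $(x,t)\in V\times\R$; the jump inequality $\bigl|\int_{t_1}^{t_2}\phi^x(x,t)\,dt\bigr|\le 1$ for a.e.\ $x\in V$ and every $t_1<t_2$; and the tightness relations $\phi^x(x,u(x))=2\nabla u(x)$, $\phi^t(x,u(x))=|\nabla u(x)|^2$ along the graph of $u$, together with $\int_{u^-(x)}^{u^+(x)}\phi^x(x,t)\cdot\nu(x)\,dt=1$ along $\Gamma$. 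Given such a $\phi$, the classical argument goes through: for any admissible $w$ one writes $\F(w;V)+\beta\int_V|w-u|^2\,dx$ as a flux of $\phi$ across the reduced boundary of the subgraph $\{(x,t):t<w(x)\}$; the bulk and jump inequalities bound the energy from below by this flux, while $\div\phi=0$ and the equality $w=u$ outside $V$ (with the standard reflection across the Neumann part of the boundary) force the flux to coincide with the one computed for $u$, which equals $\F(u;V)$ by the tightness relations. Moreover equality throughout forces $w=u$ a.e., which gives uniqueness.

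Thus everything reduces to constructing $\phi$ when $\beta$ is large, and this is exactly what is carried out in \cite{Mor}. I would first invoke Remark~\ref{rm:ortogonali} to reduce to $U$ of class $C^\infty$ meeting $\partial\Omega$ orthogonally, so that $V$ is a regular domain and the geometry near $\partial\Omega$ is under control. Near $\Gamma$ the field is built on a thin tubular neighbourhood $\mathcal{N}_\eta(\Gamma)$ adapted to the foliation by curves normal to $\Gamma$ used in Section~\ref{sect:minW}; here the non-vanishing jump condition $|u^+-u^-|\ge c>0$ of Definition~\ref{def:critpair}(iii) is indispensable, as it provides the vertical room in the $t$-variable needed to interpolate between the regimes $t<u^-$ and $t>u^+$ while keeping the jump inequality saturated on $\Gamma$; the transmission condition \eqref{EL1} and the orthogonality $\Gamma\perp\partial\Omega$ are what make this consistent up to $\partial\Omega$. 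Away from $\Gamma$, where $u$ is harmonic, $\phi$ is a suitable modification of the field $\phi^x\equiv 2\nabla u$, $\phi^t\equiv|\nabla u|^2$ --- altered in the $t$-direction so as to respect the jump inequality and to remain divergence free --- and it is precisely here that taking $\beta$ large enters: a large weight $\beta|t-u(x)|^2$ relaxes the bulk inequality for $t$ far from $u(x)$ and thereby allows the $t$-direction transition, and the matching with the tubular-neighbourhood piece, to be performed globally on $V\times\R$.

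The main obstacle is the verification that the calibration of \cite{Mor}, originally phrased in a pure Neumann framework, adapts to the present mixed boundary conditions and to the localization in $U$. Concretely one must check that the normal trace of $\phi$ on $\partial V\times\R$ is compatible with competitors equal to $u$ on $\partial V$ --- which is automatic once $\phi$ is tight along the graph of $u$ over $\partial V$ --- that the portion $\partial_N\Omega\cap\overline U$ of the Neumann boundary is handled by the usual reflection while $\partial_D\Omega$ plays no role since competitors are pinned there, and that the thresholds (the size of $\eta$ and the lower bound on $\beta$) depend only on $c$, on $\|u\|_{C^2}$ in a neighbourhood of $\Gamma$, on the curvature of $\partial\Omega$ and on $U$, so that a single $\beta$ works for all of \eqref{penalizzato}. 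Granting these adaptations, which are routine given \cite{Mor}, the calibration $\phi$ exists for $\beta$ large, and the proposition follows.
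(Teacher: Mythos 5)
Your proposal is correct and takes essentially the same route as the paper: both rest entirely on the calibration constructed in \cite{Mor} for the pure Neumann case and on the observation that the same construction carries over verbatim to the present mixed Dirichlet--Neumann, localized setting (the paper cites \cite[Remark~4.3.5]{Mor2} for this). You spell out the Alberti--Bouchitt\'e--Dal Maso calibration conditions and the role of the non-vanishing jump, the transmission condition, and the large parameter $\beta$, whereas the paper leaves all of this implicit in the reference to \cite{Mor}; the substance is identical.
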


Notice that in \cite{Mor} only pure Neumann boundary conditions are considered
(\emph{i.e.}, $\partial_D\Omega=\emptyset$).
Nevertheless, exactly the same construction applies to our setting without any change
(see also \cite[Remark~4.3.5]{Mor2}).

Hence, by choosing $\beta>0$ sufficiently large, we have that $w_n\to u$ in $SBV(\Omega')$.
In addition, by lower semi-continuity of $\F$ and by \eqref{contraminloc2} we deduce that
$\F(w_n;\Omega') \to \F(u;\Omega')$ as $n\to\infty$,
which combined with the lower semi-continuity of the two terms in the functional
(which holds separately, by \cite[Theorem~4.7]{AFP}) yields
\begin{equation} \label{convergenze}
\lim_{n\to+\infty} \int_{\Omega'}|\nabla w_n|^2\,dx = \int_{\Omega'} |\nabla u|^2\,dx, \qquad
\lim_{n\to+\infty} \hu(S_{w_n}) = \hu(S_u).
\end{equation}
In the following lemma we localize the previous convergence in open sets
and we prove a continuity property that will be used subsequently.

\begin{lemma} \label{lemma:convergenzeloc}
For every open set $A\subset\R^2$ such that $|\partial A|=0$ and $\hu(S_u\cap\partial A)=0$ we have
$$
\int_{\Omega'\cap A}|\nabla w_n|^2\,dx \to \int_{\Omega'\cap A} |\nabla u|^2\,dx,
\quad
\hu(S_{w_n}\cap A) \to \hu(S_u\cap A)
$$
as $n\to+\infty$.
Moreover, for every bounded continuous function $f\in C^0(\Omega')$ we have
$$
\int_{S_{w_n}\cap A} f\,d\hu \to \int_{S_u\cap A}f\,d\hu.
$$
\end{lemma}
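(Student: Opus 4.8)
The plan is to prove the three assertions in turn, each time combining a \emph{localized} lower semicontinuity inequality on an arbitrary open set with the \emph{global} convergences already established in \eqref{convergenze}, through the following elementary remark: if $\alpha_n+\beta_n\le\gamma_n$ with $\gamma_n\to\gamma$, $\liminf_n\alpha_n\ge\alpha$, $\liminf_n\beta_n\ge\beta$ and $\alpha+\beta=\gamma$, then $\alpha_n\to\alpha$, $\beta_n\to\beta$ and $\gamma_n-\alpha_n-\beta_n\to0$. Given the open set $A$, I split $\R^2=A\cup\partial A\cup(\R^2\setminus\overline A)$ into disjoint pieces and always intersect with $\Omega'$; note that $\Omega'\cap\partial A$ has zero Lebesgue measure since $|\partial A|=0$.

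For the Dirichlet term I would use that $\nabla w_n\wto\nabla u$ weakly in $L^2(\Omega')$ (part of the convergence $w_n\to u$ in $SBV(\Omega')$, see Definition~\ref{def:convsbv}), so that for every open set $B$ the weak lower semicontinuity of the $L^2$-norm applied to $\chi_B\nabla w_n$ gives $\int_{\Omega'\cap B}|\nabla u|^2\le\liminf_n\int_{\Omega'\cap B}|\nabla w_n|^2$. Applying this with $B=A$ and with $B=\R^2\setminus\overline A$, and using that (for $u$ and for each $w_n$) the integrals over $\Omega'\cap A$ and over $\Omega'\setminus\overline A$ add up to the integral over $\Omega'$, the remark above together with the first limit in \eqref{convergenze} yields $\int_{\Omega'\cap A}|\nabla w_n|^2\to\int_{\Omega'\cap A}|\nabla u|^2$. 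For the surface term the argument is identical, with the weak lower semicontinuity of the $L^2$-norm replaced by the localized lower semicontinuity of the jump energy: since $w_n\to u$ in the sense of Definition~\ref{def:convsbv} and $\sup_n\|\nabla w_n\|_{L^2(\Omega')}<\infty$, one has $\hu(S_u\cap B)\le\liminf_n\hu(S_{w_n}\cap B)$ for every open set $B$ (the localized form of \cite[Theorem~4.7]{AFP}). Taking $B=A$ and $B=\R^2\setminus\overline A$, using the hypothesis $\hu(S_u\cap\partial A)=0$ and the second limit in \eqref{convergenze}, the remark gives $\hu(S_{w_n}\cap A)\to\hu(S_u\cap A)$ and, as a byproduct, $\hu(S_{w_n}\cap\partial A)\to0$ for every open $A$ with $\hu(S_u\cap\partial A)=0$.

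For the last assertion, writing $f=f^+-f^-$ it is enough to treat $f\ge0$; by the layer-cake formula $\int_{S_{w_n}\cap A}f\,d\hu=\int_0^{\|f\|_\infty}\hu\big(S_{w_n}\cap(A\cap\{f>t\})\big)\,dt$. Since $\partial\{f>t\}\subset\{f=t\}$ and only countably many values of $t$ can have $|\{f=t\}|>0$ or $\hu(S_u\cap\{f=t\})>0$, for a.e.\ $t$ the open set $A\cap\{f>t\}$ again satisfies $|\partial(A\cap\{f>t\})|=0$ and $\hu(S_u\cap\partial(A\cap\{f>t\}))=0$; hence, by the case already proven, $\hu(S_{w_n}\cap(A\cap\{f>t\}))\to\hu(S_u\cap(A\cap\{f>t\}))$ for a.e.\ $t$. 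As these integrands are bounded on $[0,\|f\|_\infty]$ uniformly in $n$ by $\sup_n\hu(S_{w_n})<\infty$, dominated convergence yields the conclusion. I do not expect a genuine obstacle here: the lemma is a soft consequence of the global convergence \eqref{convergenze}; the only point needing (minor) care is to apply lower semicontinuity of the two energies \emph{separately} and on \emph{arbitrary} open sets, for which the uniform $L^2$-bound on $\nabla w_n$ is what matters.
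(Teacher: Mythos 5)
Your proof is correct and takes essentially the same route as the paper's: for the first part you apply localized lower semicontinuity of each term on the complementary open sets $A$ and $\R^2\setminus\overline A$ and combine with the global convergences in \eqref{convergenze} via a $\liminf$/$\limsup$ squeeze, and for the second part you use the layer-cake formula and dominated convergence, reducing to the first part for a.e.\ level set. The only difference is that you spell out explicitly the elementary squeeze argument which the paper's ``follows easily'' phrasing leaves implicit.
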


\begin{proof}
The first part of the statement follows easily from the lower semi-continuity of both terms in the functional,
which holds in every open set, combined with \eqref{convergenze}.
To prove the second part,
fix any continuous and bounded function $f:\Omega'\to\R$.
Assuming without loss of generality that $f\geq0$ (for the general case, one can split $f$ into positive and negative parts),
we have to show that
$$
\int_0^{\max f} \hu(S_{w_n} \cap A \cap \{f>t\})\,dt \to \int_0^{\max f} \hu(S_{u} \cap A \cap \{f>t\})\,dt.
$$
The sets $A_t=\{f>t\}$ are open and they satisfy $|\partial A_t|=0$, $\hu(S_u\cap\partial A_t)=0$ for all except at most for countable many $t$. Then, by the assumptions on $A$, the same is true for the sets $A\cap A_t$,
and hence by the first part of the lemma we have
$$
\hu(S_{w_n}\cap A \cap\{f>t\})\to\hu(S_u\cap A\cap\{f>t\}) \qquad\text{for a.e. }t\in(0,\max f)\,,
$$
and by the Dominated Convergence Theorem we obtain the conclusion.
\end{proof}

In the following proposition we show that $w_n$ satisfies a quasi-minimality property.
This is an essential step in our strategy to prove Theorem~\ref{teo:minSBV}:
indeed, as a consequence of the regularity theory for quasi-minimizers of the Mumford-Shah functional
we obtain firstly that a uniform lower bound on the 1-dimensional density of $S_{w_n}$ holds,
and moreover we will be able to deduce the $C^{1,\alpha}$-convergence of $S_{w_n}$ to $S_u$ (see Proposition~\ref{prop:david}).

\begin{proposition} \label{prop:quasimin}
Each function $w_n$ is a quasi-minimizer of the Mumford-Shah functional, that is,
there exists a positive constant $\omega$ (independent of $n$)
such that if $x\in\ombar'$ and $\rho>0$ then
\begin{equation} \label{quasimin}
\F(w_n;B_\rho(x)\cap\Omega') \leq \F(v;B_\rho(x)\cap\Omega') + \omega\rho^2
\end{equation}
for every $v\in\sbv(\Omega')$ with $v=u$ in $\Omega'\setminus(U\cap\Omega)$ and $\{v\neq w_n\}\subset\subset B_\rho(x)$.
\end{proposition}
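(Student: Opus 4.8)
The plan is to exploit directly the minimality of $w_n$ for the penalized problem \eqref{penalizzaton}, using that the penalization $t\mapsto h_n(t)$ is non-decreasing and $1$-Lipschitz — which is immediate from its explicit form, since $\bigl|\frac{t-\e_n}{\sqrt{(t-\e_n)^2+\e_n^2}}\bigr|\leq1$ for $t>\e_n$ and $h_n'\equiv0$ for $t<\e_n$ — and that the competitors may be assumed to be equibounded in $L^\infty$. Note that the Lipschitz constant is $\leq1$ independently of $\e_n$, which is what will make the final constant $\omega$ uniform in $n$.

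First I would reduce to competitors $v$ with $\|v\|_{L^\infty(\Omega')}\leq M$, where $M:=\|u\|_{L^\infty(\Omega')}$. Given any admissible $v$ as in the statement, its truncation $\tilde v:=(v\wedge M)\vee(-M)$ still satisfies $\tilde v=u$ in $\Omega'\setminus(U\cap\Omega)$ (because $|u|\leq M$ there) and $\{\tilde v\neq w_n\}\subset\subset B_\rho(x)$ (because $|w_n|\leq M$); moreover truncation does not increase the Dirichlet energy, does not enlarge the jump set, and — since $|u|\leq M$ a.e.\ in $\Omega$ — does not increase $\|\,\cdot\,-u\|_{L^2(\Omega)}$, so by monotonicity of $h_n$ it is at least as good a competitor in \eqref{quasimin}. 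Hence it suffices to prove \eqref{quasimin} for $v$ with $\|v\|_{L^\infty(\Omega')}\leq M$.

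For such a $v$, since $\{v\neq w_n\}\subset\subset B_\rho(x)$ we have $v=w_n$ on a neighbourhood of $\Omega'\setminus B_\rho(x)$, hence $\F(w_n;\Omega'\setminus B_\rho(x))=\F(v;\Omega'\setminus B_\rho(x))$; testing the minimality of $w_n$ in \eqref{penalizzaton} against $v$ and cancelling these contributions we obtain
$$
\F(w_n;B_\rho(x)\cap\Omega')-\F(v;B_\rho(x)\cap\Omega')\leq\beta\Bigl(h_n\bigl(\|v-u\|_{L^2(\Omega)}^2\bigr)-h_n\bigl(\|w_n-u\|_{L^2(\Omega)}^2\bigr)\Bigr).
$$
By the $1$-Lipschitz bound the right-hand side is at most $\beta\bigl|\|v-u\|_{L^2(\Omega)}^2-\|w_n-u\|_{L^2(\Omega)}^2\bigr|$, and since $v=w_n$ outside $B_\rho(x)$ this difference equals $\int_{B_\rho(x)\cap\Omega}\bigl(|v-u|^2-|w_n-u|^2\bigr)\,dx$, whose modulus is bounded by $8M^2\,|B_\rho(x)\cap\Omega|\leq8\pi M^2\rho^2$ (using $|v-u|,|w_n-u|\leq2M$). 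Thus \eqref{quasimin} holds with $\omega:=8\pi\beta M^2$, a constant depending only on $\beta$ and $M$, hence independent of $n$.

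I do not expect a substantial obstacle in this argument. The two points deserving some care are the $L^\infty$-truncation reduction of the first step and the exact cancellation of the energy contributions outside $B_\rho(x)$, which is the precise reason for the strict-inclusion hypothesis $\{v\neq w_n\}\subset\subset B_\rho(x)$ rather than $\{v\neq w_n\}\subset B_\rho(x)$. For $\rho$ large the statement is in any case trivial, the energies $\F(w_n;\Omega')$ being equibounded by \eqref{contraminloc2}.
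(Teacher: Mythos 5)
Your proof is correct and follows essentially the same route as the paper: truncate the competitor at $\pm M$ to reduce to $L^\infty$-bounded $v$, test the minimality of $w_n$ in \eqref{penalizzaton}, invoke the $1$-Lipschitz property of $h_n$, and bound the difference of $L^2$-penalties by $8\pi M^2\rho^2$, arriving at the same constant $\omega=8\pi\beta M^2$. The only cosmetic difference is that the paper applies minimality directly to the truncated competitor $v^M$ and then replaces $\F(v^M;\cdot)$ by $\F(v;\cdot)$ at the end, whereas you package the truncation as a preliminary reduction step.
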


\begin{proof}
Let $v$ be as in the statement, and set $v^M:=(-M)\vee(v\wedge M)$ (where $M=\|u\|_\infty$).
Then, since $v^M\in\sbv(\Omega')$ is an admissible competitor in problem \eqref{penalizzaton},
$\{v^M\neq w_n\}\subset\{v\neq w_n\}$ (as $\|w_n\|_{\infty}\leq M$) and $\F(v^M)\leq\F(v)$,
we have by minimality of $w_n$
\begin{align*}
\F(w_n;B_\rho(x)\cap\Omega')
&\leq \F(v^M;B_\rho(x)\cap\Omega')
 + \beta h_n \biggl(\int_{\Omega} |v^M-u|^2\,dy\biggr) - \beta h_n \biggl(\int_\Omega|w_n-u|^2\,dy\biggr)\\
&\leq \F(v;B_\rho(x)\cap\Omega')
 + \beta \bigg|\int_{B_\rho(x)\cap\Omega} |v^M-u|^2\,dy - \int_{B_\rho(x)\cap\Omega} |w_n-u|^2\,dy \bigg|\\
&\leq \F(v;B_\rho(x)\cap\Omega') + 8M^2\beta\pi\rho^2 \,,
\end{align*}
where we used the fact that $h_n$ is 1-Lipschitz in the second inequality.
Hence \eqref{quasimin} follows by choosing $\omega := 8M^2\beta\pi$.
\end{proof}

\begin{corollary} \label{cor:kuratowsky}
Each set $S_{w_n}$ is essentially closed: $\hu(\overline{S}_{w_n}\setminus S_{w_n})=0$.
Moreover, the sets $\overline{S}_{w_n}$ converge to $\overline{S}_u$ in $\ombar'$ in the sense of Kuratowski:
\begin{itemize}
  \item [(i)] for every $x_n\in \overline{S}_{w_n}$ such that $x_n\to x$, then $x\in \overline{S}_u$;
  \item [(ii)] for every $x\in \overline{S}_u$ there exist $x_n\in \overline{S}_{w_n}$ such that $x_n\to x$.
\end{itemize}
\end{corollary}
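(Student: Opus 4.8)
The plan is to deduce both assertions from the uniform quasi-minimality of the functions $w_n$ established in Proposition~\ref{prop:quasimin}, the density lower bound it entails, and the localized convergence of the surface energies proved in Lemma~\ref{lemma:convergenzeloc}. First I would record that, since each $w_n$ satisfies the quasi-minimality inequality \eqref{quasimin} with a constant $\omega$ independent of $n$ and since $w_n=u$ in $\Omega'\setminus(U\cap\Omega)$ with $u$ bounded, smooth up to $\Gamma$, and $\overline{S}_u$ disjoint from $\overline{\partial_D\Omega}$, Theorem~\ref{teo:dlb} provides constants $\rho_0>0$ and $\theta_0>0$, independent of $n$, such that
\[
\hu(S_{w_n}\cap B_\rho(x))\ge\theta_0\rho \qquad\text{for every }x\in\overline{S}_{w_n}\text{ and every }\rho\le\rho_0.
\]
The essential closedness of $S_{w_n}$ is then standard: writing $\mu_n:=\hu\niv S_{w_n}$, the inequality shows that the upper one-dimensional density of $\mu_n$ is at least $\theta_0/2$ at every point of $E_n:=\overline{S}_{w_n}\setminus S_{w_n}$, whereas $\mu_n(E_n)=0$; a density comparison argument then forces $\hu(E_n)=0$.

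For part (i) I would argue by contradiction: assume $x_n\in\overline{S}_{w_n}$, $x_n\to x$, but $x\notin\overline{S}_u$. Since $\overline{S}_u$ is closed, fix $r\in(0,\rho_0)$ with $\overline{B}_{2r}(x)\cap\overline{S}_u=\emptyset$, so that $\hu(S_u\cap B_{2r}(x))=\hu(S_u\cap\partial B_{2r}(x))=0$. For $n$ large we have $x_n\in B_r(x)$, hence $B_r(x_n)\subset B_{2r}(x)$, and the density lower bound at $x_n$ yields $\hu(S_{w_n}\cap B_{2r}(x))\ge\theta_0 r$; on the other hand, $A:=B_{2r}(x)$ satisfies the hypotheses of Lemma~\ref{lemma:convergenzeloc}, so $\hu(S_{w_n}\cap B_{2r}(x))\to\hu(S_u\cap B_{2r}(x))=0$, a contradiction.

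For part (ii) I would again argue by contradiction: let $x\in\overline{S}_u$ and assume that no sequence of points of $\overline{S}_{w_n}$ converges to $x$. Then, along some subsequence (not relabeled) and for some $r>0$, we have $\overline{S}_{w_n}\cap B_r(x)=\emptyset$, hence $\hu(S_{w_n}\cap B_r(x))=0$; choosing $\rho\in(0,r)$ with $\hu(S_u\cap\partial B_\rho(x))=0$ (which holds for all but countably many $\rho$) and applying Lemma~\ref{lemma:convergenzeloc} along the subsequence gives $\hu(S_u\cap B_\rho(x))=0$. But $(\Gamma,u)$ is a regular critical pair, so in a neighborhood of $x$ the set $\overline{S}_u$ coincides with the smooth curve $\Gamma$ (meeting $\partial\Omega$ orthogonally when $x\in\partial\Omega$), whence $\hu(S_u\cap B_\rho(x))>0$ for every $\rho>0$, a contradiction.

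The step I expect to be the main obstacle is the uniform density lower bound up to the boundary of $\Omega$: one needs it to hold at points of $\overline{S}_{w_n}$ lying on, or converging to, both the Dirichlet and the Neumann portions of $\partial\Omega$, and this mixed situation is precisely what the somewhat non-standard Theorem~\ref{teo:dlb} is designed to cover. Once it is granted, the rest is a routine combination of the closedness of $\overline{S}_u$, the density estimate, and the localized convergence of Lemma~\ref{lemma:convergenzeloc}.
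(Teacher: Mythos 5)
Your proof is correct and takes essentially the same route as the paper: the paper's proof simply invokes Theorem~\ref{teo:dlb} for the uniform density lower bound \eqref{lowerbound} and then declares the three assertions to be "standard consequences"; you have spelled out those standard consequences (the density comparison for essential closedness, and the contradiction arguments combining the lower bound with Lemma~\ref{lemma:convergenzeloc} for the two Kuratowski inclusions), which is exactly what the reference to \cite{AFP} compresses.
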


\begin{proof}
Thanks to the quasi-minimality property proved in the previous proposition
and to the fact that $\partial U$ and $\partial\Omega$ meet orthogonally,
we can apply Theorem~\ref{teo:dlb} to deduce that there exist constants $\vartheta_0>0$, $\rho_0>0$ (independent of $n$) such that
for every $x\in \overline{S}_{w_n}\cap\ombar'$ and for every $\rho\leq\rho_0$
\begin{equation} \label{lowerbound}
\hu(S_{w_n}\cap B_\rho(x)) \geq \vartheta_0\rho.
\end{equation}
The properties in the statement are standard consequences of \eqref{lowerbound} (see \cite{AFP}).
\end{proof}

Corollary~\ref{cor:kuratowsky} provides the Hausdorff convergence of $\overline{S}_{w_n}$ to $\overline{S}_u$ in $\ombar'$,
which allows us to assume, from now on, that $\overline{S}_{w_n}$ is contained in a tubular neighborhood of $\overline{S}_u$ contained in $U$.
We now come to the main consequence of the regularity theory for quasi-minimizers.
We follow here the notation introduced in Section~\ref{sect:prelparreg}.

We first observe that, using the good description of $\overline{S}_{w_n}$ near $\partial\Omega$ given by Theorem~\ref{teo:parreg2},
we can find $\tau>0$ such that $\overline{S}_{w_n}\cap\Omega(\tau)$ is a $C^{1,\alpha}$-curve for some $\alpha\in(0,1)$,
with $C^{1,\alpha}$-norms uniformly bounded with respect to $n$ and meeting $\partial\Omega$ orthogonally.
Combining this information with the Hausdorff convergence to $\overline{S}_u$,
we deduce that the sets $\overline{S}_{w_n}$ converge to $\overline{S}_u$ in $\Omega(\tau)$
in the $C^{1,\beta}$-sense, for every $\beta<\alpha$.
In the following proposition we obtain the same convergence in the interior of $\Omega$.

\begin{proposition} \label{prop:david}
There exists a finite covering of $\Gamma\cap(\Omega\setminus\Omega(\tau))$ of the form
$\bigcup_{i=1}^{N_0} (x_i + C_{\nu_i,\rho_i})$
where $x_i\in\Gamma$, $\nu_i=\nu_\Gamma(x_i)$,
and functions $f_i^{(n)}:(-\rho_i,\rho_i)\to(-\rho_i,\rho_i)$ of class $C^{1,\alpha}$ (for some $\alpha\in(0,1)$)
such that
$$
(\overline{S}_{w_n}-x_i)\cap C_{\nu_i,\rho_i} = \gr_{\nu_i}(f^{(n)}_i)
$$
for $n$ sufficiently large and $i=1,\ldots,N_0$.
Moreover, the sequence $f_i^{(n)}$ converges to $f_i$ in $C^{1,\beta}$ as $n\to+\infty$ for every $\beta<\alpha$,
where $f_i:(-\rho_i,\rho_i)\to(-\rho_i,\rho_i)$ is such that
$$
(\Gamma-x_i)\cap C_{\nu_i,\rho_i} = \gr_{\nu_i}(f_i).
$$
\end{proposition}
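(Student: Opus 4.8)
The plan is to apply the $\e$-regularity Theorem~\ref{teo:parreg} to each $w_n$ at carefully chosen base points, after checking that the normalized energy $E_{w_n}$ is small there once $n$ is large, and then to deduce the $C^{1,\beta}$-convergence by compactness from the uniform $C^{1,1/4}$-bound provided by that theorem. So fix $x_0\in\Gamma\cap(\Omega\setminus\Omega(\tau))$. Since $\Gamma=\overline{S}_u$ is smooth and $\nabla u$ is bounded near $x_0$, one has $D_u(x_0,r)=O(r^2)$ and, choosing for $T$ the tangent line to $\Gamma$ at $x_0$, $A_u(x_0,r)=O(r^5)$; hence $E_u(x_0,r)=o(r)$, and we may fix a radius $r>0$ with $2r<R_0\wedge\dist(x_0,\partial\Omega)$, with $\hu(\Gamma\cap\partial B_{2r}(x_0))=0$, and so small that $E_u(x_0,2r)<\e_0 r/8$. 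By Corollary~\ref{cor:kuratowsky} there are points $x_0^{(n)}\in\overline{S}_{w_n}$ with $x_0^{(n)}\to x_0$, so that $B_r(x_0^{(n)})\subset B_{2r}(x_0)$ for $n$ large. Applying Lemma~\ref{lemma:convergenzeloc} on the ball $B_{2r}(x_0)$ gives $D_{w_n}(x_0,2r)\to D_u(x_0,2r)$, while testing its last assertion with $f=\dist^2(\cdot,T_0)$ (which is bounded and continuous on the bounded set $\Omega'$), where $T_0$ realizes $A_u(x_0,2r)$, and recalling that $\overline{S}_{w_n}$ is essentially closed (Corollary~\ref{cor:kuratowsky}), one obtains $\limsup_{n} A_{w_n}(x_0^{(n)},r)\le\int_{\overline{S}_u\cap B_{2r}(x_0)}\dist^2(y,T_0)\,d\hu=A_u(x_0,2r)$. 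Therefore $\limsup_{n}E_{w_n}(x_0^{(n)},r)\le 4\,E_u(x_0,2r)<\e_0 r/2$, so $E_{w_n}(x_0^{(n)},r)<\e_0 r$ for all $n$ large.

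Now Theorem~\ref{teo:parreg} applied to $w_n$ at $x_0^{(n)}$ (legitimate, since $B_r(x_0^{(n)})\subset\subset U\cap\Omega$, where $w_n$ is a Mumford--Shah quasi-minimizer by Proposition~\ref{prop:quasimin}) yields a radius $r'\in(0,r)$ \emph{independent of $n$} (the distances $\dist(x_0^{(n)},\partial\Omega)$ being bounded below uniformly in $n$) and functions $g^{(n)}\in C^{1,1/4}(-r',r')$ with $g^{(n)}(0)=(g^{(n)})'(0)=0$ and $\|g^{(n)}\|_{C^{1,1/4}}\le C(\omega)$, such that $(\overline{S}_{w_n}-x_0^{(n)})\cap C_{\mu_n,r'}=\gr_{\mu_n}(g^{(n)})$ with $\mu_n=\nu_{w_n}(x_0^{(n)})$. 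The uniform $C^1$-bound together with the Hausdorff convergence $\overline{S}_{w_n}\to\Gamma$ (which follows from Corollary~\ref{cor:kuratowsky}) forces $\mu_n\to\nu_\Gamma(x_0)=:\nu_{x_0}$: any subsequential limit $\mu$ of $(\mu_n)$, paired with the corresponding $C^1$-limit $g$ of $(g^{(n)})$, would give $\gr_\mu(g)+x_0=\Gamma\cap(C_{\mu,r''}+x_0)$ for some $r''>0$, so that $\mu^\bot$ is tangent to $\Gamma$ at $x_0$ and hence $\mu=\nu_{x_0}$. Consequently, for $n$ large the cylinder $C_{\mu_n,r'}+x_0^{(n)}$ contains a fixed cylinder $C_{\nu_{x_0},\rho_{x_0}}+x_0$, and, since $\mu_n\to\nu_{x_0}$ and the slopes $|(g^{(n)})'|$ are uniformly small, inside this fixed cylinder $\overline{S}_{w_n}$ is the graph over the \emph{fixed} direction $\nu_{x_0}$ of a function $f^{(n)}\in C^{1,\alpha}(-\rho_{x_0},\rho_{x_0})$ for some $\alpha\in(0,1)$ (say $\alpha=\tfrac14$), still with $C^{1,\alpha}$-norm bounded uniformly in $n$; the passage to the fixed direction is the elementary change of graph underlying Lemma~\ref{lemma:grafici}.

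By the uniform bound $\|f^{(n)}\|_{C^{1,\alpha}}\le C$ and the Ascoli--Arzel\`a theorem, every subsequence of $(f^{(n)})$ has a further subsequence converging in $C^{1,\beta}(-\rho_{x_0},\rho_{x_0})$, for each $\beta<\alpha$, to some limit whose graph, by the Hausdorff convergence of $\overline{S}_{w_n}$ to $\Gamma$, must coincide with $(\Gamma-x_0)\cap C_{\nu_{x_0},\rho_{x_0}}=\gr_{\nu_{x_0}}(f_{x_0})$; by uniqueness of the limit the whole sequence $f^{(n)}$ converges to $f_{x_0}$ in $C^{1,\beta}$. Finally, the compact set $\Gamma\cap(\Omega\setminus\Omega(\tau))$ is covered by finitely many cylinders $C_{\nu_{x_0},\rho_{x_0}}+x_0$ of this type; relabelling the associated data as $x_i$, $\nu_i=\nu_\Gamma(x_i)$, $\rho_i$, $f_i^{(n)}$, $f_i$ for $i=1,\dots,N_0$ gives the statement.

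The delicate point is the first two steps: proving the smallness of $E_{w_n}$ at the moving base points $x_0^{(n)}$ — in particular passing to the limit in the infimum over affine lines that defines the flatness $A_{w_n}$, which is exactly where Lemma~\ref{lemma:convergenzeloc} and the essential closedness of $S_{w_n}$ are used — and then controlling the cylinders and the normals $\mu_n$ of the approximating graphs accurately enough to re-express everything over the fixed directions $\nu_\Gamma(x_i)$ with uniform $C^{1,\alpha}$-estimates.
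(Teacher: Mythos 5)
Your proof is correct and follows essentially the same path as the paper's: you establish smallness of the excess $E_{w_n}$ at nearby centers via Lemma~\ref{lemma:convergenzeloc} (with the same factor-of-$4$ trick when shifting the center and halving the radius), invoke Theorem~\ref{teo:parreg} to get uniform $C^{1,1/4}$ graphs, re-express these over the fixed direction $\nu_\Gamma(x_0)$, and conclude by Ascoli--Arzel\`a, Kuratowski convergence, and a finite covering. The only difference is a slightly more explicit justification of why the limit normal equals $\nu_\Gamma(x_0)$ and of the full-sequence (rather than subsequential) convergence, which is a minor refinement of the same argument.
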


\begin{proof}
Fix any point $x_0\in \Gamma\cap(\Omega\setminus\Omega(\tau))$. By the regularity of $u$ and $\Gamma=\overline{S}_u$,
we can find $r_0>0$ such that $B_{r_0}(x_0)\subset\Omega\cap U$, $\hu(S_u\cap\partial B_{r_0}(x_0))=0$ and
$$
E_u(x_0,r_0) < \e_0\frac{r_0}{8}\,,
$$
where $\e_0$ is given by Theorem~\ref{teo:parreg}.
Lemma~\ref{lemma:convergenzeloc} immediately implies that
$D_{w_n}(x_0,r_0) \to D_u(x_0,r_0)$
and that for every affine plane $T$
$$
\int_{\overline{S}_{w_n}\cap B_{r_0}(x_0)} \dist^2(y,T)\,d\hu(y) \to \int_{\overline{S}_{u}\cap B_{r_0}(x_0)} \dist^2(y,T)\,d\hu(y) \,.
$$
From the previous convergence it follows also that $\limsup_{n\to\infty}A_{w_n}(x_0,r_0)\leq A_u(x_0,r_0)$,
since if the minimum value defining $A_u(x_0,r_0)$ is attained at an affine plane $T_0$, then
$$
A_{w_n}(x_0,r_0)\leq\int_{\overline{S}_{w_n}\cap B_{r_0}(x_0)}\dist^2(y,T_0)\,d\hu(y)
\to \int_{\overline{S}_u\cap B_{r_0}(x_0)}\dist^2(y,T_0)\,d\hu(y) = A_u(x_0,r_0)\,.
$$
Hence $\limsup_{n\to\infty}E_{w_n}(x_0,r_0) \leq E_u(x_0,r_0)$, so that for $n$ sufficiently large we have
$$
E_{w_n}(x_0,r_0) < \e_0\frac{r_0}{8}\,.
$$
By Corollary~\ref{cor:kuratowsky} we can find a sequence $x_n\in \overline{S}_{w_n}$ converging to $x_0$,
so that $B_{r_0/2}(x_n)\subset B_{r_0}(x_0)$ for $n$ large enough and thus
$$
E_{w_n}(x_n,r_0/2) = D_{w_n}(x_n,r_0/2) + \frac{4}{r_0^2} A_{w_n}(x_n,r_0/2) \leq 4 E_{w_n}(x_0,r_0) <\e_0\frac{r_0}{2}.
$$
We can then apply Theorem~\ref{teo:parreg}:
we find a radius $r_1\in(0,r_0)$ and functions $g_n:(-r_1,r_1)\to\R$ uniformly bounded in $C^{1,\frac14}$,
with $g_n(0)=g_n'(0)=0$,
such that $(\overline{S}_{w_n}-x_n)\cap C_{\nu_n,r_1}=\gr_{\nu_n}(g_n)$, where $\nu_n$ is the normal to $\overline{S}_{w_n}$ at $x_n$.

By compactness, $\nu_n\to\bar{\nu}$ (up to subsequences).
For $n$ large enough $C_{\bar{\nu},r_1/2}\subset C_{\nu_n,r_1}+x_n-x_0$,
and there exist functions $f_n$ uniformly bounded in $C^{1,\frac14}$ such that
$\gr_{\bar{\nu}}(f_n)\cap C_{\bar{\nu},r_1/2} = (\gr_{\nu_n}(g_n)+x_n-x_0)\cap C_{\bar{\nu},r_1/2}$.
Hence
$$
(\overline{S}_{w_n}-x_0)\cap C_{\bar{\nu},r_1/2} = \gr_{\bar{\nu}}(f_n),
$$
and by Ascoli-Arzel\`{a} Theorem $f_n$ converges to some function $f$ in $C^{1,\beta}$ for every $\beta<\frac14$,
with $f(0)=f'(0)=0$.
Using the Kuratowski convergence of $\overline{S}_{w_n}$ to $\Gamma$, we deduce that $(\Gamma-x_0)\cap C_{\bar{\nu},r_1/2}=\gr_{\bar{\nu}}(f)$,
and since $f'(0)=0$ it must be $\bar{\nu}=\nu_\Gamma(x_0)$.
\end{proof}

From what we have proved it follows that for every $n\in\N$ there exists a diffeomorphism $\Phi_n:\ombar\to\ombar$,
with $\supp(\Phi_n-Id)\subset\subset (U\setminus\partial_D\Omega)$, such that $\overline{S}_{w_n}=\Phi_n(\Gamma)$ and $\|\Phi_n-Id\|_{C^{1,\alpha}(\Gamma)}\to0$.
%Moreover, since $w_n$ solves \eqref{penalizzaton}, we obtain that $w_n$ is a weak solution to
%\begin{equation} \label{elw1}
%\left\{
%  \begin{array}{ll}
%    \Delta w_n = \alpha_n\beta(w_n-u) & \hbox{in }(\Omega\cap U)\setminus\overline{S}_{w_n}, \\
%    \partial_\nu w_n^{\pm}=0 & \hbox{on }\overline{S}_{w_n}\cup(\partial_N\Omega\cap U), \\
%    w_n=u & \hbox{on }(\partial U\cap\Omega)\cup\partial_D\Omega,
%  \end{array}
%\right.
%\end{equation}
%where $\alpha_n=h_n'\bigl(\|w_n-u\|^2_{L^2(\Omega)}\bigr)$.
%Thus, since the functions $\alpha_n\beta(w_n-u)$ are uniformly bounded in $L^\infty$,
%by the regularity results contained in Lemma~\ref{lemma:Dirichletreg}
%we deduce that $\nabla w_n$ has a $C^{0,\alpha}$ extension
%up to the boundary of $(\Omega\cap U)\setminus\overline{S}_{w_n}$
%with $C^{0,\alpha}$ norm uniformly bounded with respect to $n$,
%and, up to subsequences, $\|\nabla(w_n\circ\Phi_n)-\nabla u\|_\infty\to0$.

With this information, we can finally conclude the proof of the isolated local minimality of $u$.
Indeed, since $\hu(\overline{S}_{w_n}\setminus S_{w_n})=0$ by Corollary~\ref{cor:kuratowsky},
we have that $(\Phi_n(\Gamma),w_n)\in\spazio$ and $F(\Phi_n(\Gamma),w_n)=\F(w_n)$.
Hence for $n$ large enough, using \eqref{contraminloc2},
\begin{align*}
F(\Phi_n(\Gamma),w_n)
= \F(w_n)
\leq \F(u)
= F(\Gamma,u) \,,
\end{align*}
which implies that $\Phi_n=Id$ and $w_n=u$ for all (large) $n$
by Proposition~\ref{prop:minw1}.
Hence $u$ itself is a solution to $\eqref{penalizzaton}$,
and as a consequence of \eqref{contraminloc} also $v_n$ solves the same minimum problem.
We can then repeat all the previous argument for the sequence $v_n$ instead of $w_n$,
which leads, as before, to $v_n=u$ for $n$ sufficiently large.
This is the desired contradiction, since we are assuming $v_n\neq u$ for every $n$.

\end{section}

%%%%%%%%%%%%%%%%%%%%%%%%%%%%%%%%%%%%%%%%%%%%%%%%%%%%%%%%%%%%%%%%%%%%%%%%%%%%%%%%%%%%%%%%%%%%%%%%%%%%%%%%%
%%%%%%%%%%%%%%%%%%%%%%%%%%%%%%%%%%%%%%%%%%%%%%%%%%%%%%%%%%%%%%%%%%%%%%%%%%%%%%%%%%%%%%%%%%%%%%%%%%%%%%%%%
%%%%%%%%%%%%%%%%%%%%%%%%%%%%%%%%%%%%%%%%%%%%%%%%%%%%%%%%%%%%%%%%%%%%%%%%%%%%%%%%%%%%%%%%%%%%%%%%%%%%%%%%%
%%%%%%%%%%%%%%%%%%%%%%%%%%%%%%%%%%%%%%%%%%%%%%%%%%%%%%%%%%%%%%%%%%%%%%%%%%%%%%%%%%%%%%%%%%%%%%%%%%%%%%%%%
%%%%%%%%%%%%%%%%%%%%%%%%%%%%%%%%%%%%%%%%%%%%%%%%%%%%%%%%%%%%%%%%%%%%%%%%%%%%%%%%%%%%%%%%%%%%%%%%%%%%%%%%%

\begin{section}{Applications and examples} \label{sect:example}

We start this section by showing that any regular critical pair $(\Gamma,u)$ satisfying \eqref{hpps} is strictly stable
in a sufficiently small tubular neighborhood $\mathcal{N}_\e(\Gamma)$ of the discontinuity set.
As a consequence of our main result, we deduce the local minimality of $(\Gamma,u)$ in $\mathcal{N}_\e(\Gamma)$,
and also that $(\Gamma,u)$ is in fact a global minimizer in a smaller neighborhood.
This is in analogy with the result proved in \cite{MM},
where it is shown, by means of a calibration method,
that a critical point is a Dirichlet minimizer in small domains.

\begin{proposition}[local and global minimality in small tubular neighborhoods] \label{prop:smalldom}
Let $(\Gamma,u)$ be a regular critical pair satisfying condition \eqref{hpps}.
Then there exists $\e_0>0$ such that the tubular neighborhood $\mathcal{N}_\e(\Gamma)$ of $\Gamma$
is an admissible subdomain and $(\Gamma,u)$ is strictly stable in $\mathcal{N}_\e(\Gamma)$ for every $\e<\e_0$.
In particular, there exists $\delta>0$ such that $F(\Gamma,u)<F(K,v)$
for every $(K,v)\in\mathcal{A}(\Omega)$ with $0<\|u-v\|_{L^1(\Omega)}<\delta$
and $v=u$ in $\Omega\setminus\mathcal{N}_{\e}(\Gamma)$.

Moreover, there exists $\e_1\in(0,\e_0)$ such that $(\Gamma,u)$ is a global minimizer in $\mathcal{N}_\e(\Gamma)$ for every $\e<\e_1$,
in the sense that $F(\Gamma,u)\leq F(K,v)$ for every $(K,v)\in\spazio$ with $v=u$ in $\Omega\setminus\mathcal{N}_\e(\Gamma)$.
\end{proposition}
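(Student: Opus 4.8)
Here is a plan for proving the final statement (Proposition~\ref{prop:smalldom}).

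\emph{Overview and reduction.} The plan is to show that, for $\e$ small, $\mathcal{N}_\e(\Gamma)$ is an admissible subdomain in which $(\Gamma,u)$ is strictly stable, and then to read off the local statement from Theorem~\ref{teo:minSBV} and the global one from an elementary observation on competitors agreeing with $u$ outside a thin tube. Since $(\Gamma,u)\in\spazioreg$, the Dirichlet part $\partial_D\Omega$ is compactly contained in $\partial\Omega\setminus\Gamma$ (Definition~\ref{def:spazioreg}), so $\Gamma$ has positive distance from $\overline{\partial_D\Omega}$, hence from the relative boundary $\mathcal S$ of $\partial_D\Omega$; moreover $\Gamma$ meets $\partial\Omega$ orthogonally, so for $\e$ below some threshold $\bar\e$ the set $\mathcal{N}_\e(\Gamma)$ is an open set with Lipschitz boundary, it contains $\Gamma$, and $\overline{\mathcal{N}_\e(\Gamma)}\cap\mathcal S=\emptyset$: it is an admissible subdomain (Definition~\ref{def:subdomain}). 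Since condition \eqref{hpps} is assumed and does not depend on the subdomain, Proposition~\ref{prop:ps} makes $\|\cdot\|_\sim$ a norm equivalent to $\|\cdot\|_{H^1(\Gamma\cap\Omega)}$, and Proposition~\ref{prop:T} shows that $(\Gamma,u)$ is strictly stable in $\mathcal{N}_\e(\Gamma)$ if and only if $\lambda_1(\mathcal{N}_\e(\Gamma))<1$. So everything reduces to proving $\lambda_1(\mathcal{N}_\e(\Gamma))\to0$ as $\e\to0^+$.

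\emph{The eigenvalue estimate.} By \eqref{lamda1} and Remark~\ref{rm:T}, $\lambda_1(\mathcal{N}_\e(\Gamma))=\max\{2\int_\Omega|\nabla\vf|^2\,dx:\|\vphi\|_\sim=1\}$, where $\vf\in H^1_{\mathcal{N}_\e(\Gamma)}(\Omega\setminus\Gamma)$ solves \eqref{vf}. Equation \eqref{vf} is the Euler--Lagrange equation of the strictly convex functional
$$
J_\vphi(v):=\tfrac12\int_\Omega|\nabla v|^2\,dx+\int_{\Gamma\cap\Omega}\bigl[v^+\div_\Gamma(\vphi\nabla_\Gamma u^+)-v^-\div_\Gamma(\vphi\nabla_\Gamma u^-)\bigr]\,d\hu
$$
on $H^1_{\mathcal{N}_\e(\Gamma)}(\Omega\setminus\Gamma)$, whence $\tfrac12\int_\Omega|\nabla\vf|^2\,dx=\max\{-J_\vphi(v):v\in H^1_{\mathcal{N}_\e(\Gamma)}(\Omega\setminus\Gamma)\}$. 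Every such $v$ vanishes on $\Omega\setminus\mathcal{N}_\e(\Gamma)$; writing the tube in Fermi coordinates along $\Gamma$ (valid for $\e$ small, also near the two orthogonal intersections with $\partial\Omega$) and integrating $v$ inward from the outer boundary, a one-dimensional Poincaré inequality in the normal variable gives $\|v\|_{L^2(\Omega)}\le C\e\|\nabla v\|_{L^2(\Omega)}$ and the trace bound $\|v^\pm\|_{L^2(\Gamma\cap\Omega)}\le C\e^{1/2}\|\nabla v\|_{L^2(\Omega)}$; combined with $\|v^\pm\|_{H^{1/2}(\Gamma\cap\Omega)}\le C\|\nabla v\|_{L^2(\Omega)}$ (trace theorem together with the first Poincaré inequality) and interpolation, this yields, for every $\delta\in(0,\tfrac12)$,
$$
\|v^\pm\|_{H^{1/2-\delta}(\Gamma\cap\Omega)}\le C\e^{\delta}\|\nabla v\|_{L^2(\Omega)}.
$$
On the other hand, a refinement of the Gagliardo-seminorm estimate recalled in Remark~\ref{rm:hunmezzo}, using both $\vphi\in H^1(\Gamma\cap\Omega)$ and the Hölder continuity of $\nabla_\Gamma u^\pm$ with exponent strictly above $\tfrac12$, shows $\div_\Gamma(\vphi\nabla_\Gamma u^\pm)\in H^{-1/2+\delta}(\Gamma\cap\Omega)$ with norm $\le C\|\vphi\|_{H^1(\Gamma\cap\Omega)}$, for some fixed $\delta>0$. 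Pairing the two estimates, $|J_\vphi(v)-\tfrac12\|\nabla v\|_{L^2(\Omega)}^2|\le C\|\vphi\|_{H^1(\Gamma\cap\Omega)}\e^{\delta}\|\nabla v\|_{L^2(\Omega)}$, and maximising $-\tfrac12 t^2+C\|\vphi\|_{H^1}\e^{\delta}t$ over $t\ge0$ gives $\int_\Omega|\nabla\vf|^2\,dx\le C^2\e^{2\delta}\|\vphi\|_{H^1(\Gamma\cap\Omega)}^2$; by the norm equivalence, $\lambda_1(\mathcal{N}_\e(\Gamma))\le C\e^{2\delta}\to0$. Hence there is $\e_0>0$ with $(\Gamma,u)$ strictly stable in $\mathcal{N}_\e(\Gamma)$ for every $\e<\e_0$, and the ``in particular'' part of the statement is just Theorem~\ref{teo:minSBV} applied in $\mathcal{N}_\e(\Gamma)$.

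\emph{Global minimality in smaller tubes.} Fix $\e^*\in(0,\e_0)$ and let $\delta^*>0$ be the threshold produced by Theorem~\ref{teo:minSBV} in $U=\mathcal{N}_{\e^*}(\Gamma)$. For $\e\le\e^*$, any pair coinciding with $u$ outside $\mathcal{N}_\e(\Gamma)$ also coincides with $u$ outside $\mathcal{N}_{\e^*}(\Gamma)$, so $(\Gamma,u)$ is an isolated $L^1$-local minimiser in $\mathcal{N}_\e(\Gamma)$ with the same $\delta^*$. Choose $\e_1\in(0,\e^*)$ so small that $2M\,|\mathcal{N}_{\e_1}(\Gamma)\cap\Omega|<\delta^*$, where $M:=\|u\|_{L^\infty(\Omega)}$ (possible since $|\mathcal{N}_\e(\Gamma)|\to0$). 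Let $\e<\e_1$ and $(K,v)\in\spazio$ with $v=u$ in $\Omega\setminus\mathcal{N}_\e(\Gamma)$: we may assume $\hu(K\cap\Omega)<+\infty$ and, truncating $v$ at level $\pm M$ (which does not increase $F$ and leaves $v$ unchanged where it equals $u$), $\|v\|_{L^\infty(\Omega)}\le M$; then $v-u$ is supported in $\mathcal{N}_\e(\Gamma)\cap\Omega$, so $\|v-u\|_{L^1(\Omega)}\le 2M\,|\mathcal{N}_\e(\Gamma)\cap\Omega|<\delta^*$. If $\|v-u\|_{L^1(\Omega)}>0$, local minimality gives $F(\Gamma,u)<F(K,v)$; if $v=u$ a.e., then $S_v=\Gamma\cap\Omega$ up to $\hu$-null sets (by the non-vanishing jump of $u$ and Remark~\ref{rm:sbv}), $K\cap\Omega\supseteq S_v$ up to $\hu$-null sets, and hence $F(K,v)=\int_\Omega|\nabla u|^2\,dx+\hu(K\cap\Omega)\ge F(\Gamma,u)$. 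In all cases $F(\Gamma,u)\le F(K,v)$, which is the asserted global minimality for every $\e<\e_1$.

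\emph{Main obstacle.} The delicate point is the eigenvalue estimate of the second step: the two-sided control of $J_\vphi$ closes only because the $\e^{1/2}$-smallness of the $L^2$-trace of competitors can be upgraded, by interpolation, to $H^{1/2-\delta}$-smallness, and this borderline negative-order norm can be paired against $\div_\Gamma(\vphi\nabla_\Gamma u^\pm)$ only thanks to the extra Hölder regularity of $\nabla_\Gamma u^\pm$ beyond $C^{0,1/2}$—precisely the regularity already exploited in Remark~\ref{rm:hunmezzo}, here pushed a little further by also using $\vphi\in H^1$. (Alternatively, one can reach the same conclusion by a contradiction/compactness argument, using weak $H^{1/2}$-convergence of the traces and Rellich's theorem in place of the explicit interpolation bound.)
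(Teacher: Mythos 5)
Your proof is correct, and both halves take a genuinely different route from the paper. For the strict stability in $\mathcal{N}_\e(\Gamma)$, the paper works with the dual quantity $\mu(U)$ from Proposition~\ref{prop:T}(iii) and a soft contradiction/compactness argument: taking quasi-minimizers $v_n\in H^1_{\mathcal{N}_{\e_n}(\Gamma)}(\Omega\setminus\Gamma)$ with $\|\Phi_{v_n}\|_\sim=1$ and $2\int|\nabla v_n|^2$ bounded, the vanishing measure of the tubes forces $v_n\wto 0$, and the compactness of $v\mapsto\Phi_v$ then gives $\Phi_{v_n}\to0$ strongly, contradicting the normalisation; hence $\mu(\mathcal{N}_\e(\Gamma))\to+\infty$. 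You instead estimate $\lambda_1(\mathcal{N}_\e(\Gamma))$ directly through the variational characterisation of $\vf$, using the thin-tube Poincar\'e and trace bounds together with an interpolation/duality argument to get the explicit rate $\lambda_1\le C\e^{2\delta}$. Your version is more quantitative and does not require any compactness, but it needs the sharper observation that $\div_\Gamma(\vphi\nabla_\Gamma u^\pm)$ lies in $H^{-1/2+\delta}$ rather than merely $H^{-1/2}$ — a refinement of Remark~\ref{rm:hunmezzo} that is indeed available, since the same boundary regularity used there (Lemma~\ref{lemma:Neumannreg}) gives $\nabla_\Gamma u^\pm\in C^{0,\alpha}$ for some $\alpha>1/2$, and in 1D $\vphi\in H^1\hookrightarrow L^\infty$, so the Gagliardo seminorm estimate closes at the $H^{1/2+\delta}$ level. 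Your alternative remark (compactness in place of interpolation) is essentially what the paper does. For the global minimality, the paper introduces the $SBV$-minimiser $u_\e$ of $\F$ subject to $v=u$ outside $\mathcal{N}_\e(\Gamma)$, uses the regularity theory of Mumford--Shah minimisers to upgrade $u_\e$ to an admissible pair and to pass from $\F$-minimality to $F$-minimality (via Remark~\ref{rm:sbv}), and concludes $u_\e=u$ from the already-established isolated $L^1$-local minimality because $u_\e\to u$ in $L^1$ as $|\mathcal{N}_\e(\Gamma)|\to0$. You bypass the $SBV$-minimiser entirely: after fixing $\e^*<\e_0$ and the threshold $\delta^*$ for $U=\mathcal{N}_{\e^*}(\Gamma)$, you observe that if $\e_1$ is so small that $2\|u\|_\infty|\mathcal{N}_{\e_1}(\Gamma)\cap\Omega|<\delta^*$, then \emph{every} (truncated) competitor is automatically within $L^1$-distance $\delta^*$ of $u$, so Theorem~\ref{teo:minSBV} already applies, and the degenerate case $v=u$ a.e.\ is handled by the elementary observation $\hu(K\cap\Omega)\ge\hu(S_u)$. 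This is cleaner and avoids the appeal to Mumford--Shah regularity for the constrained minimiser. Both approaches exploit the same geometric fact $|\mathcal{N}_\e(\Gamma)|\to0$; yours simply applies it to all competitors at once rather than to the extremal one.
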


\begin{proof}
Clearly $\mathcal{N}_\e(\Gamma)$ is an admissible subdomain for $\e$ small enough, and in view of Proposition~\ref{prop:T}
we shall prove that
$$
\lim_{\e\to0}\mu(\mathcal{N}_\e(\Gamma))=+\infty
$$
in order to obtain the first part of the statement.
Assume by contradiction that there exist $\e_n\to0^+$, $C>0$ and $v_n\in H^1_{U_n}(\Omega\setminus\Gamma)$
such that $\|\Phi_{v_n}\|_{\sim}=1$ and
$$
2\int_\Omega|\nabla v_n|^2\leq C
$$
for every $n$, where we set $U_n:=\mathcal{N}_{\e_n}(\Gamma)$.
Then $v_n$ is a bounded sequence in $H^1_{U_1}(\Omega\setminus\Gamma)$,
which converges weakly to 0 since the measure of $U_n$ goes to 0.
By compactness of the map $v\mapsto\Phi_v$, we have that $\Phi_{v_n}$ converge to 0 strongly in $H^1(\Gamma\cap\Omega)$,
which is in contradiction to the fact that $\|\Phi_{v_n}\|_\sim=1$ for every $n$.

To prove the second part of the statement,
let $u_\e$ be a solution to the minimum problem
\begin{equation} \label{eq:minglob}
\min\bigl\{ \F(v) : v\in\sbv(\Omega), \; v=u\text{ in }\Omega\setminus \mathcal{N}_\e(\Gamma) \bigr\},
\end{equation}
where $\F$ is the relaxed functional introduced at the beginning of Section~\ref{sect:minSBV}.
We remark that, by classical regularity results for minimizers of the Mumford-Shah functional,
$\hu(\overline{S}_{u_\e}\setminus S_{u_\e})=0$ and thus $\F(u_\e)=F(\overline{S}_{u_\e},u_\e)$.
Hence, since $u_\e\to u$ in $L^1(\Omega)$ as $\e\to0$ because the measure of $\mathcal{N}_\e(\Gamma)$ goes to 0,
we conclude that $u_\e=u$ for $\e$ small enough, as a consequence of the isolated local minimality of $(\Gamma,u)$.
Then $u$ is a solution to \eqref{eq:minglob}, and the conclusion follows by Remark~\ref{rm:sbv}.
\end{proof}

\begin{remark}
Let $(\Gamma,u)$ be a regular critical pair,
and assume that
\begin{align*}
-2\int_\Omega |\nabla \vf|^2\,dx
+\int_{\Gamma\cap\Omega} |\nabla_\Gamma\vphi|^2\,d\hu
+\int_{\Gamma\cap\Omega} H^2\vphi^2\,d\hu
-\int_{\Gamma\cap\partial\Omega} H_{\partial\Omega}\vphi^2\,d\huu>0
\end{align*}
for every $\vphi\in H^1(\Gamma\cap\Omega)\setminus\{0\}$,
where $\vf\in H^1(\Omega\setminus\Gamma)$, $\vf=0$ on $\partial_D\Omega$, solves
\begin{equation*}
\int_{\Omega} \nabla\vf \cdot \nabla z\,dx
+\int_{\Gamma\cap\Omega} \bigl[ z^+\div_\Gamma(\vphi\nabla_\Gamma u^+) - z^-\div_\Gamma(\vphi\nabla_\Gamma u^-) \bigr] \,d\hu =0
\end{equation*}
for every $z\in H^1(\Omega\setminus\Gamma)$ with $z=0$ on $\partial_D\Omega$.
Then $(\Gamma,u)$ is strictly stable in every admissible subdomain $U$.
Hence, under the previous assumptions we can conclude that for every neighborhood $\mathcal{N}_\eta(\mathcal{S})$,
where $\mathcal{S}$ is the relative boundary of $\partial_D\Omega$ in $\partial\Omega$,
there exists $\delta(\eta)>0$ such that $F(\Gamma,u)<F(K,v)$
for every $(K,v)\in\spazio$ with $\|v-u\|_{L^1(\Omega)}<\delta$ and $v=u$ in $\mathcal{N}_\eta(\mathcal{S})$.
\end{remark}

We now provide some explicit examples of critical point to which Theorem~\ref{teo:minSBV} can be applied.
In particular, in Example~\ref{ex:1} we discuss how the stability of constant critical pairs
depends on the geometry of the domain $\Omega$,
while in Remark~\ref{ex:2} we discuss how to construct families of (non-constant) critical pairs
by a perturbing the Dirichlet data.

\begin{example} \label{ex:1}
Let $\Gamma$ be a straight line contained in $\Omega$ connecting two points $x_1, x_2\in\partial\Omega$ of minimal distance,
and let $u$ be equal to two different constants in the two connected components of $\Omega\setminus \Gamma$.
Assume that $\Omega$ is strictly concave at $x_1$ and $x_2$
(that is, the curvature $H_{\partial\Omega}$ with respect to the exterior normal is strictly negative at $x_1$ and $x_2$).
Then $(\Gamma,u)$ is a regular critical pair such that for every admissible subdomain $U$
$$
\partial^2F((\Gamma,u);U)[\vphi] = \int_\Gamma|\nabla_\Gamma\vphi|^2 - H_{\partial\Omega}(x_1)\vphi^2(x_1) - H_{\partial\Omega}(x_2)\vphi^2(x_2)>0
$$
for every $\vphi\in H^1(\Gamma)\setminus\{0\}$.
Hence it follows by Theorem~\ref{teo:minSBV} that $(\Gamma,u)$ is an isolated local minimizer for $F$
in every admissible subdomain $U$.

If the domain $\Omega$ is strictly convex, then a straight line connecting two points on $\partial\Omega$ of minimal distance
is never a local minimizer: indeed, if $U$ is any admissible subdomain,
by evaluating the quadratic form $\partial^2F((\Gamma,u);U)$ at the constant function $\vphi=1$ we get
$$
\partial^2F((\Gamma,u);U)[1] = -2 \int_\Omega|\nabla v_\vphi|^2 - H_{\partial\Omega}(x_1) - H_{\partial\Omega}(x_2)<0.
$$
We remark that this is not in contradiction to the result of Proposition~\ref{prop:smalldom},
since in the present situation condition \eqref{hpps} is not satisfied.
\end{example}

\begin{remark}[families of stable critical pairs by perturbation of the Dirichlet data] \label{ex:2}
%Here we show how to construct families of critical pairs by perturbing the Dirichlet data
%of stable critical pairs.
Let $(\Gamma,u)$ be a strictly stable regular critical pair in an admissible subdomain $U$,
and assume in addition that $u^+$ and $u^-$ are of class $C^{1,\alpha}$ in a neighborhood of $\Gamma$.

We fix a function $\psi_0\in C^\infty_c(\partial_D\Omega)$
and we consider the perturbed Dirichlet datum $u_\e:=u+\e\psi_0$ for $\e>0$.
As an application of the Implicit Function Theorem, one can show that for every $\e$ sufficiently small
there exists a strictly stable regular critical pair $(\Gamma_\e,v_\e)$
with $v_\e=u_\e$ in $(\Omega\setminus U)\cup\partial_D\Omega$.

The idea of the proof is to associate, with every $\psi\in C^{2,\alpha}(\Gamma)$, the curve
$\Gamma_\psi$ defined as in \eqref{eq:gammapsi}
and the function $u_{\e,\psi}$ which minimizes the Dirichlet integral in $H^1(\Omega\setminus\Gamma_\psi)$
and attains the boundary condition $u_{\e,\psi}=u_\e$ in $(\Omega\setminus U)\cup\partial_D\Omega$.
Then one can prove that the map
$$
G:\R\times C^{2,\alpha}(\Gamma) \to C^{0,\alpha}(\Gamma),
\qquad
G(\e,\psi) := H_\psi - |\nabla_{\Gamma_\psi}u_{\e,\psi}^+|^2 + |\nabla_{\Gamma_\psi}u_{\e,\psi}^-|^2
$$
(where $H_\psi$ denotes the curvature of $\Gamma_\psi$)
is of class $C^1$ in a neighborhood of $(0,0)$,
satisfies $G(0,0)=0$ (as $(\Gamma,u)$ is a critical pair),
and the partial derivative $\partial_\psi G(0,0)$ is an invertible bounded linear operator,
thanks to the strict positivity of the second variation at $(\Gamma,u)$.
Hence it is possible to apply the Implicit Function Theorem and to obtain the desired family of critical pairs.
\end{remark}

We conclude this section by observing, in the following remark,
that our analysis can be extended to the periodic case:
more precisely, we assume that the domain is a rectangle,
$\Gamma$ is a curve joining two opposite points on the boundary,
and the Neumann boundary conditions are replaced by periodicity conditions on the sides connected by $\Gamma$.
The remaining pair of sides represents the Dirichlet part of the boundary.
We also discuss an explicit example in this different setting.
In the remaining part of this section, with a slight abuse of notation
we denote the generic point of $\R^2$ by $(x,y)$.

\begin{remark} \label{rm:periodic}
Let $R:=[0,b)\times(-a,a)$, where $a,b>0$ are positive real numbers.
We define the infinite strip $\widetilde{R}:=\R\times(-a,a)$,
the Dirichlet boundary $\partial_DR:=[0,b]\times\{-a,a\}$,
and the class of admissible pairs
\begin{align*}
\mathcal{A}(R):=\bigl\{(K,v): K\subset\R^2 \text{ closed, }\, &K+(b,0)=K,\;
v\in H^1_{\rm loc}(\widetilde{R}\setminus K)\cap H^1(R\setminus K), \\
%v\in H^1((\widetilde{R}\setminus K)\cap B_r) \text{ for all }r>0, \\
& v_x(x+b,y)=v_x(x,y) \text{ for every }(x,y)\in\widetilde{R}\setminus K \bigr\}\,.
\end{align*}
We denote by $H^1_{\rm per}(R\setminus K)$ the class of functions
$z\in H^1_{\rm loc}(\widetilde{R}\setminus K)\cap H^1(R\setminus K)$
such that the map $x\mapsto z(x,y)$ is $b$-periodic for every $y\in(-a,a)$.
Finally we consider the functional
$$
F(K,v) := \int_{R\setminus K} |\nabla v|^2 + \hu(K\cap R)
\qquad\text{for }(K,v)\in\mathcal{A}(R).
$$
Similarly to what we did in Section~\ref{sect:setting}, we say that $(\Gamma,u)\in\mathcal{A}(R)$
is a regular critical pair if $\Gamma\subset\widetilde{R}$ is a curve of class $C^\infty$
such that $\Gamma\cap R$ connects two opposite points on the $\partial R$, $u$ satisfies
$$
\int_{R\setminus\Gamma} \nabla u\cdot\nabla z =0
\qquad\text{for every }z\in H^1_{\rm per}(R\setminus\Gamma)\text{ with }z=0\text{ on }\partial_DR,
$$
and moreover the transmission condition and the non-vanishing jump condition (see Definition~\ref{def:critpair}) hold on $\Gamma$.
Setting $H^1_{\rm per}(\Gamma):=\{\vphi\in H^1_{\rm loc}(\Gamma) : \vphi(x+b,y)=\vphi(x,y) \text{ for every }(x,y)\in\Gamma\}$,
we say that a regular critical pair $(\Gamma,u)$ is strictly stable if
$$
\partial^2F(\Gamma,u)[\vphi] := -2\int_R |\nabla v_{\vphi}|^2 + \int_{\Gamma\cap R}|\nabla_\Gamma\vphi|^2\,d\hu + \int_{\Gamma\cap R}H^2\vphi^2\,d\hu >0
$$
for every $\vphi\in H^1_{\rm per}(\Gamma)\setminus\{0\}$,
where $v_\vphi\in H^1_{\rm per}(R\setminus\Gamma)$, $v_{\vphi}=0$ on $\partial_DR$, is the solution to
\begin{equation}\label{esempio2}
\int_R \nabla v_{\vphi}\cdot\nabla z + \int_{\Gamma\cap R} \Bigl[ z^+\div_\Gamma\bigl(\vphi\nabla_\Gamma u^+\bigr) - z^-\div_\Gamma\bigl(\vphi\nabla_\Gamma u^-\bigr) \Bigr]\,d\hu =0
\end{equation}
for every $z\in H^1_{\rm per}(R\setminus\Gamma)$, $z=0$ on $\partial_DR$.

Then one can prove that every strictly stable regular critical pair $(\Gamma,u)$ is a local minimizer,
in the sense that there exists $\delta>0$ such that
$F(\Gamma,u)<F(K,v)$ for every $(K,v)\in\mathcal{A}(R)$ with $v=u$ on $\partial_DR$ and $0<\|u-v\|_{L^1(R)}<\delta$.
We omit the proof of this result, since it can be obtained by repeating all the arguments which lead to the proof of Theorem~\ref{teo:minSBV} with the natural modifications (notice that the proof in the present setting is in fact simpler, since by periodicity we can work in the whole strip $\widetilde{R}$ avoiding the technical difficulties related to the presence of Neumann boundary conditions).
\end{remark}

\begin{example} \label{ex:3}
Here we adapt to the periodic setting described in Remark~\ref{rm:periodic}
the example discussed in \cite[Section~7]{CMM}. Setting $R=[0,b)\times(-a,a)$,
we consider the regular critical pair $(\Gamma,u)\in\mathcal{A}(R)$ where $\Gamma=\R\times\{0\}$
and $u:\R^2\to\R$ is the function
$$
u(x,y):=
\left\{
  \begin{array}{ll}
    x+1 & \hbox{for }y\geq0, \\
    -x & \hbox{for }y<0.
  \end{array}
\right.
$$
Notice that the energy of $(\Gamma,u)$ is invariant along vertical translations of the discontinuity set.
Nevertheless, we shall prove in fact that if
\begin{equation} \label{esempio0}
{\textstyle \frac{2b}{\pi}} \tanh\bigl({\textstyle \frac{2\pi a}{b}}\bigr) <1\,,
\end{equation}
then $(\Gamma,u)$ is an isolated local minimizer up to vertical translations:
precisely, there exists $\delta>0$ such that $F(\Gamma,u)<F(K,v)$ for every $(K,v)\in\mathcal{A}(R)$
with $v=u$ on $\partial_DR$ and $\|u-v\|_{L^1(R)}<\delta$,
unless $K$ coincides with a vertical translation of $\Gamma$.
Moreover, \eqref{esempio0} is sharp in the sense that if
${\textstyle \frac{2b}{\pi}} \tanh\bigl({\textstyle \frac{2\pi a}{b}}\bigr) > 1$
then $(\Gamma,u)$ is unstable.

To this aim, we will test the strict positivity of second variation at $(\Gamma,u)$
on the subspace ${H}^1_0(0,b)$ of $H^1_{\rm per}(\Gamma)$ of the functions vanishing at the endpoints,
showing that
\begin{equation}\label{esempio1}
\partial^2F(\Gamma,u)[\vphi]\geq C_0\|\vphi\|_{H^1(0,b)}
\text{ for every }\vphi\in{H}^1_0(0,b)\setminus\{0\}
\quad\text{iff}\quad
{\textstyle \frac{2b}{\pi}} \tanh\bigl({\textstyle \frac{2\pi a}{b}}\bigr) <1\,.
\end{equation}
In turn, setting $\Gamma_\e:=\R\times\{\e\}$ and
$$
u_\e(x,y):=
\left\{
  \begin{array}{ll}
    x+1 & \hbox{for }y\geq\e, \\
    -x & \hbox{for }y<\e,
  \end{array}
\right.
$$
we have that $(\Gamma_\e,u_\e)$ is still a critical pair with the same energy of $(\Gamma,u)$,
and, assuming \eqref{esempio0} and \eqref{esempio1},
there exists $\e_0>0$ such that for every $\e\in(-\e_0,\e_0)$ we have
\begin{equation}\label{esempio3}
\partial^2F(\Gamma_\e,u_\e)[\vphi]\geq \frac{C_0}{2}\|\vphi\|_{H^1(0,b)}^2
\qquad\text{for every }\vphi\in{H}^1_0(0,b)\setminus\{0\}.
\end{equation}
This can be deduced by comparing the explicit expressions of the second variation at $(\Gamma,u)$ and at $(\Gamma_\e,u_\e)$ and observing that
$$
\sup_{\|\vphi\|_{H^1(0,b)}=1} \bigg| \int_R|\nabla v_\vphi^\e|^2 - \int_R |\nabla v_\vphi|^2 \bigg| \to 0
\qquad\text{as }\e\to0
$$
(where $v_\vphi$ and $v_\vphi^\e$ are the solutions to \eqref{esempio2}
corresponding to $(\Gamma,u)$ and $(\Gamma_\e,u_\e)$ respectively);
this last estimate is obtained by subtracting the equations satisfied by $v_\vphi$ and $v_\vphi^\e$.
From \eqref{esempio3} it follows that
any configuration which is close in $W^{2,\infty}$ and coincides with $\Gamma_\e$ at the endpoints
has strictly larger energy than $(\Gamma_\e,u_\e)$:
more precisely, there exists $\delta_0>0$ such that for every $|\e|<\e_0$,
for every $b$-periodic function $h\in W_{\rm loc}^{2,\infty}(\R)$ with $0<\|h-\e\|_{W^{2,\infty}(0,b)}<\delta_0$, $h(0)=h(b)=\e$,
and for every $v$ such that $(\Gamma_h,v)\in\mathcal{A}(R)$ and $v=u$ on $\partial_DR$,
we have $F(\Gamma_h,v)>F(\Gamma_\e,u_\e)=F(\Gamma,u)$,
where we denoted by $\Gamma_h$ the graph of $h$.
This can be deduced by repeating the arguments for the proof of Theorem~\ref{teo:minW},
paying attention to the fact that the local minimality neighborhood can be chosen uniform with respect to $n$.

In turn, from this property easily follows the isolated local $W^{2,\infty}$-minimality of $(\Gamma,u)$,
since it implies the existence of a positive $\delta$ such that for every $(\Gamma_h,v)\in\mathcal{A}(R)$
with $0<\|h\|_{W^{2,\infty}(0,b)}<\delta$ and $v=u$ on $\partial_DR$ we have $F(\Gamma_h,v)>F(\Gamma,u)$,
unless $\Gamma_h=\Gamma_\e$ for some $\e>0$ and $v=u_\e$.
Finally, this property implies also the local $L^1$-minimality (up to translations), by the same argument developed in Sections~\ref{sect:minW1inf} and \ref{sect:minSBV}.

\smallskip
We are left with the proof of \eqref{esempio1}.
Condition \eqref{hpps} is automatically satisfied on the subspace ${H}^1_0(0,b)$,
and we can discuss the sign of $\partial^2F(\Gamma,u)$ in terms of the eigenvalue $\lambda_1$ introduced in \eqref{lamda1}.
We will prove that
\begin{equation}\label{eq:example}
\lambda_1 (R)= \frac{2b}{\pi}\tanh\frac{2\pi a}{b}\:.
\end{equation}
We remark that $\lambda_1$ coincides with the greatest $\lambda$
such that there exists a nontrivial solution $(v,\vphi)\in H^1_{\rm per}(R\setminus\Gamma)\times \widetilde{H}^1_{0}(0,b)$,
$v=0$ in $\partial_DR$,
to the equations
$$
\lambda\int_R\nabla v\cdot\nabla z + \int_0^b\bigl(\vphi'z^++\vphi'z^-\bigr)\,dx=0,
\qquad
\int_0^b\bigl( \vphi'\psi' + 2 \psi'v^+ + 2 \psi' v^- \bigr)\,dx=0
$$
for every $z\in H^1_{\rm per}(R\setminus\Gamma)$ with $z=0$ on $\partial_DR$,
and for every $\psi\in \widetilde{H}^1_{0}(0,b)$.
By symmetry, $v(x,y)=v(x,-y)$, so that by setting $R^+:=(0,b)\times(0,a)$, we look for a solution to
$$
\left\{
  \begin{array}{ll}
    \Delta v=0 & \hbox{in }R^+, \\
    v=0 & \hbox{on }\partial_DR, \\
    \lambda\partial_y v=\vphi' & \hbox{on }\Gamma, \\
    \vphi''=-4\partial_xv & \hbox{on }\Gamma.
  \end{array}
\right.
$$
The last two conditions say that
$$
\lambda\partial_yv(x,0)=-4\bigl(v(x,0)-c\bigr), \qquad c:=\frac{1}{b}\int_0^bv(x,0)\,dx\,.
$$
Hence we are left with the determination of the greatest $\lambda$ such that
there exists a nontrivial periodic solution $v$ to the system
$$
\left\{
  \begin{array}{ll}
    \Delta v=0 & \hbox{in }R^+, \\
    v=0 & \hbox{on }\partial_DR, \\
    \lambda\partial_y v=-4\bigl(v-c\bigr) & \hbox{on }\Gamma.
  \end{array}
\right.
$$
We expand $v(\cdot,y)$ in series of cosines:
$$
v(x,y) = \sum_{n=0}^{+\infty} c_n(y)\cos\bigl({\textstyle \frac{n\pi}{b}}x\bigr),
$$
and by the first two condition of the system we have that $c_n(y)=c_n\sinh\bigl({\textstyle \frac{n\pi}{b}}(a-y)\bigr)$, with $c_n\in\R$.
Hence
$$
v(x,y) = \sum_{n=0}^{+\infty} c_n\cos\bigl({\textstyle \frac{n\pi}{b}}x\bigr)\sinh\bigl({\textstyle \frac{n\pi}{b}}(a-y)\bigr)
$$
and by imposing the last condition of the system we have
\begin{align*}
\lambda \sum_{n=0}^{+\infty} c_n {\textstyle \frac{n\pi}{b}} \cos\bigl({\textstyle \frac{n\pi}{b}}x\bigr)\cosh\bigl({\textstyle \frac{n\pi}{b}}a\bigr)
= 4 \sum_{n=0}^{+\infty} c_n \cos\bigl({\textstyle \frac{n\pi}{b}}x\bigr)\sinh\bigl({\textstyle \frac{n\pi}{b}}a\bigr) -4c\,.
\end{align*}
By expanding also $c$ in series of cosines, we deduce from the previous inequality that $c=0$, and also
$$
\lambda c_n{\textstyle \frac{n\pi}{b}} \cosh\bigl({\textstyle \frac{n\pi a}{b}}\bigr)
= 4 c_n \sinh\bigl({\textstyle \frac{n\pi a}{b}}\bigr)
$$
for all $n\geq 1$. Hence, since we are looking for a positive $\lambda$,
it follows that $\lambda = {\textstyle \frac{4b}{n\pi}} \tanh\bigl({\textstyle \frac{n\pi a}{b}}\bigr)$
whenever $c_n\neq0$.
Thus only one of the coefficients $c_n$ can be different from 0,
and by periodicity it must correspond to an even index
(here we used also the fact that the function
$t\mapsto\frac{4b}{t\pi} \tanh\bigl({\textstyle \frac{t\pi a}{b}}\bigr)$
is monotone decreasing).
Hence there exists $\bar{n}\geq2$ even such that $c_{\bar{n}}\neq0$ and
$$
\lambda = {\textstyle \frac{4b}{\bar{n}\pi}} \tanh\bigl({\textstyle \frac{\bar{n}\pi a}{b}}\bigr)\,,
$$
and clearly the largest value of $\lambda$ corresponds to $\bar{n}=2$.
This completes the proof of \eqref{eq:example} and, in turn, of \eqref{esempio1}.
\end{example}

\end{section}

%%%%%%%%%%%%%%%%%%%%%%%%%%%%%%%%%%%%%%%%%%%%%%%%%%%%%%%%%%%%%%%%%%%%%%%%%%%%%%%%%%%%%%%%%%%%%%%%%%%%%%%%%
%%%%%%%%%%%%%%%%%%%%%%%%%%%%%%%%%%%%%%%%%%%%%%%%%%%%%%%%%%%%%%%%%%%%%%%%%%%%%%%%%%%%%%%%%%%%%%%%%%%%%%%%%
%%%%%%%%%%%%%%%%%%%%%%%%%%%%%%%%%%%%%%%%%%%%%%%%%%%%%%%%%%%%%%%%%%%%%%%%%%%%%%%%%%%%%%%%%%%%%%%%%%%%%%%%%
%%%%%%%%%%%%%%%%%%%%%%%%%%%%%%%%%%%%%%%%%%%%%%%%%%%%%%%%%%%%%%%%%%%%%%%%%%%%%%%%%%%%%%%%%%%%%%%%%%%%%%%%%
%%%%%%%%%%%%%%%%%%%%%%%%%%%%%%%%%%%%%%%%%%%%%%%%%%%%%%%%%%%%%%%%%%%%%%%%%%%%%%%%%%%%%%%%%%%%%%%%%%%%%%%%%

\begin{section}{Appendix} \label{sect:appendix}

We collect here some technical results which have been used in the paper.
In the following lemma we assume to be in the same setting as described at the beginning of Section~\ref{sect:minW}.

\begin{lemma} \label{lemma:grafici}
Let $(\Gamma_n)_n$ be a sequence of curves of class $C^{1,\alpha}$, for some $\alpha\in(0,1)$,
converging to $\Gamma$ in $C^{1,\alpha}$,
in the sense that there exist diffeomorphisms $\Phi_n:\ombar\to\ombar$ of class $C^{1,\alpha}$ such that
$\Gamma_n=\Phi_n(\Gamma)$ and $\|\Phi_n-Id\|_{C^{1,\alpha}(\Gamma)}\to0$.

Then there exist $\psi_n\in C^{1,\alpha}(\Gamma)$, with $\psi_n\to0$ in $C^{1,\alpha}(\Gamma)$,
such that $\Gamma_n=\Gamma_{\psi_n}$,
where $\Gamma_{\psi_n}$ is the set defined according to \eqref{eq:gammapsi}.

Moreover, denoting by $H_{\Gamma_n}$ and $H$ the curvatures of $\Gamma_n$ and of $\Gamma$ respectively, if
\begin{equation} \label{convcurv}
\| H_{\Gamma_n}\circ\Phi_n - H \|_{L^\infty(\Gamma)}\to0
\end{equation}
then $\psi_n$ is of class $W^{2,\infty}$ and $\psi_n\to0$ in $W^{2,\infty}(\Gamma)$.
\end{lemma}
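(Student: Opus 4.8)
The plan is to use the tubular neighborhood coordinates $(\pi,\tau)$ introduced at the beginning of Section~\ref{sect:minW}, which are of class $C^2$, to re-parametrize each curve $\Gamma_n$ as a graph over $\Gamma$ along the trajectories of the flow $\Psi$. More precisely, for $n$ large the curve $\Gamma_n=\Phi_n(\Gamma)$ is contained in the tubular neighborhood $\mathcal{N}_{\eta_0}(\Gamma)$ (since $\|\Phi_n-Id\|_{C^{1,\alpha}(\Gamma)}\to0$), so the composition $\tau\circ\Phi_n:\Gamma\to\R$ is well defined and of class $C^{1,\alpha}$. The natural candidate is $\psi_n:=(\tau\circ\Phi_n)\circ(\pi\circ\Phi_n)^{-1}$: one checks that $\pi\circ\Phi_n$ is a $C^{1,\alpha}$-diffeomorphism of $\Gamma$ onto itself for $n$ large (it is a small $C^{1,\alpha}$-perturbation of the identity, being $\pi\circ\Phi_n=\pi\circ(\Phi_n-Id)+\pi\circ Id$ and $\pi|_\Gamma=Id$), hence invertible with inverse converging to the identity in $C^{1,\alpha}$. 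With this definition $\Gamma_{\psi_n}=\{\Psi(\psi_n(x),x):x\in\Gamma\}$ coincides with $\Gamma_n$ by construction, and $\psi_n\to0$ in $C^{1,\alpha}(\Gamma)$ follows from the chain rule together with the $C^2$-regularity of $\pi$ and $\tau$ and the convergence $\Phi_n\to Id$.

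For the second part, the key point is that the curvature is a second-order quantity in the parametrization, so controlling $\psi_n$ in $W^{2,\infty}$ requires extracting the second derivative from the curvature equation rather than from $\Phi_n$ directly (we only have $C^{1,\alpha}$-control of $\Phi_n$). The plan is to write the curvature $H_{\Gamma_{\psi}}$ of the graph $\Gamma_\psi$ as a function of $\psi$, $\psi'$, $\psi''$ through an explicit (nonlinear, but smooth) formula of the schematic form
$$
H_{\Gamma_\psi}\circ\Phi^\psi = a(\cdot,\psi,\psi')\,\psi'' + b(\cdot,\psi,\psi'),
$$
where $a$ and $b$ are continuous functions built out of the geometry of $\Gamma$ and the flow $\Psi$ (in particular $a$ is bounded away from zero for $\psi$ small, since for $\psi\equiv0$ one recovers $H$ with a nondegenerate coefficient). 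Solving for $\psi_n''$ gives
$$
\psi_n'' = \frac{1}{a(\cdot,\psi_n,\psi_n')}\Bigl( H_{\Gamma_n}\circ\Phi_n - b(\cdot,\psi_n,\psi_n') \Bigr)\quad\text{a.e. on }\Gamma,
$$
where on the right-hand side we have already used $H_{\Gamma_{\psi_n}}\circ\Phi^{\psi_n}=H_{\Gamma_n}\circ\Phi_n$ (both equal the curvature of $\Gamma_n$ read back on $\Gamma$, since $\Phi^{\psi_n}$ and $\Phi_n$ parametrize the same curve, up to the reparametrization of $\Gamma$ which does not affect a scalar function composed appropriately — one should be slightly careful here and compose with $(\pi\circ\Phi_n)^{-1}$ as in the first part). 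Since $\psi_n\to0$ in $C^{1,\alpha}$, the coefficients $a(\cdot,\psi_n,\psi_n')$ and $b(\cdot,\psi_n,\psi_n')$ converge uniformly to $a(\cdot,0,0)$ and $b(\cdot,0,0)$, and $H_{\Gamma_n}\circ\Phi_n\to H$ uniformly by hypothesis \eqref{convcurv}; hence the right-hand side converges uniformly to $\frac{1}{a(\cdot,0,0)}(H-b(\cdot,0,0))$, which is exactly $\psi''$ for $\psi\equiv0$, namely $0$. This gives $\psi_n\in W^{2,\infty}(\Gamma)$ with $\psi_n''\to0$ in $L^\infty$, i.e. $\psi_n\to0$ in $W^{2,\infty}(\Gamma)$, as claimed.

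The main obstacle I expect is bookkeeping in the derivation of the curvature formula: one must express $H_{\Gamma_\psi}$ in the non-orthonormal, curvilinear frame adapted to $\Gamma$ and the flow $\Psi$ (rather than in a flat graph chart), verify that the coefficient of $\psi''$ is continuous and nonvanishing near $\psi=0$, and match the reparametrizations of $\Gamma$ induced by $\Phi_n$ versus $\Phi^{\psi_n}$ so that the identity $H_{\Gamma_{\psi_n}}\circ\Phi^{\psi_n}=H_{\Gamma_n}\circ\Phi_n$ holds literally. All of this is routine differential geometry once the right chart is fixed, and the quoted $C^2$-regularity of $\pi$, $\tau$ (hence of $\Psi$ and $\Phi^\psi$ as functions of $\psi$) provides exactly the smoothness needed for the coefficients $a,b$ to depend continuously on their arguments. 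No compactness or elliptic regularity is needed here — the statement is purely about the algebra of reparametrizing graphs and the continuity of the curvature operator in these coordinates.
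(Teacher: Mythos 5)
Your approach is correct and largely coincides with the paper's in spirit: both define $\psi_n$ via the tubular-neighborhood coordinates $(\pi,\tau)$ associated with the flow $\Psi$, and both obtain the $W^{2,\infty}$-control by inverting the curvature operator on the second derivative. Your formula $\psi_n=(\tau\circ\Phi_n)\circ(\pi\circ\Phi_n)^{-1}$ is, after unwinding definitions, exactly the paper's $\phi_n(x)=\tau\bigl(\tilde\pi_n^{-1}(x),f_n(\tilde\pi_n^{-1}(x))\bigr)$ written without passing to local graph coordinates. Where you part ways is in the second step: the paper first extends the curves as straight lines past $\partial\Omega$ and localizes in small squares so that each $\Gamma_n$ is a flat graph $\{(x,f_n(x))\}$, then uses the elementary identity $H=f_n''/(1+(f_n')^2)^{3/2}$ to get $f_n\to f$ in $W^{2,\infty}$, and finally transfers this to $\psi_n$ via the chain rule and the $C^2$-regularity of $\pi,\tau$; you instead propose to express the curvature directly in the curvilinear $\Psi$-coordinates as $H_{\Gamma_\psi}\circ\Phi^\psi = a(\cdot,\psi,\psi')\psi''+b(\cdot,\psi,\psi')$ and solve for $\psi_n''$. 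Both work and amount to the same computation; your route saves the localization/extension bookkeeping (provided one checks, as you partly do, that $\pi\circ\Phi_n$ fixes the endpoints of $\Gamma$ because $\Phi_n$ and the flow both preserve $\partial\Omega$, and that the reparametrization $(\pi\circ\Phi_n)^{-1}$ can be absorbed into the uniform convergence of $H_{\Gamma_n}\circ\Phi_n$), while the paper's route trades that for a more explicit and elementary curvature formula. One thing worth making explicit in your version: the coefficient $a(\cdot,0,0)=(\gamma_0'\times D_t\Psi)/|\gamma_0'|^3=\tau\times\nu=\pm1$ and the coefficients $a,b$ are continuous in $(\psi,\psi')$ precisely because $X$ (hence $\Psi$) is $C^2$ — this is the same regularity the paper invokes for $\pi,\tau$, so neither route gains anything in hypotheses.
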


\begin{proof}
We first extend each curve $\Gamma_n$ (and $\Gamma$ itself) outside $\ombar$ as a straight line
so that the resulting curves are of class $C^{1,\alpha}$ and still converge to $\Gamma$ in the $C^{1,\alpha}$ sense.
We can then localize in a small square $R=(-\rho,\rho)\times(-\rho,\rho)$
(which we assume for simplicity centered at the origin)
in which we can express $\Gamma$ and $\Gamma_n$ as graphs of $C^{1,\alpha}$ functions:
$$
\Gamma_n \cap R = \{(x,f_n(x)):x\in(-\rho,\rho)\},
\qquad
\Gamma \cap R = \{(x,f(x)):x\in(-\rho,\rho)\}
$$
with $f_n\to f$ in $C^{1,\alpha}$.
By a covering argument it is sufficient to prove the result in $R$
(notice that, by our extension of the curves outside $\ombar$,
in this way we can cover also a neighborhood of the intersection of $\Gamma$ with $\partial\Omega$).

We recall that in a sufficiently small tubular neighborhood $\mathcal{N}_{\eta_0}(\Gamma)$ of $\Gamma$
are well defined two maps $\pi:\mathcal{N}_{\eta_0}(\Gamma)\to\Gamma$, $\tau:\mathcal{N}_{\eta_0}(\Gamma)\to\R$
of class $C^2$ (thank to the $C^2$ regularity of the vector field $X$ generating the flow $\Psi$)
such that $y=\Psi(\tau(y),\pi(y))$ for every $y$.

Taking $\rho'<\rho$, for $n$ sufficiently large we can define a map $\tilde{\pi}_n:(-\rho',\rho')\to(-\rho,\rho)$
by setting $\tilde{\pi}_n(x):=\pi_1\circ\pi(x,f_n(x))$, where $\pi_1(x,y):=x$.
Notice that $\tilde{\pi}_n$ tends to the identity in $C^{1,\alpha}$,
hence it is invertible and also its inverse converges to the identity in $C^{1,\alpha}$.
Defining
$$
\phi_n(x) := \tau \bigl(\tilde{\pi}_n^{-1}(x),f_n(\tilde{\pi}_n^{-1}(x)) \bigr)
$$
for $x\in(-\rho',\rho')$,
since $\tau$ is regular and vanishes on $\Gamma$ we deduce that $\phi_n\to0$ in $C^{1,\alpha}(-\rho',\rho')$.

Hence the map $\psi_n(x,f(x)):=\phi_n(x)$, for $|x|<\rho'$, is of class $C^{1,\alpha}$ on $\Gamma\cap((-\rho',\rho')\times(-\rho,\rho))$, converges to 0 in $C^{1,\alpha}$ and satisfies $\Gamma_{\psi_n}=\Gamma_n$.
This proves the first part of the statement.

The second part follows similarly:
indeed, since the sets $\Gamma_n$ are locally one-dimensional graphs,
the boundedness in $L^\infty$ of the curvatures of $\Gamma_n$ yields the $W^{2,\infty}$-regularity of the functions $f_n$,
and the convergence \eqref{convcurv} implies in addition that $f_n\to f$ in $W^{2,\infty}$.
Hence the conclusion follows from the explicit expression of $\psi_n$ obtained above.
\end{proof}

We conclude with two regularity results for the Neumann problem and for the mixed Dirichlet-Neumann problem
in planar domains with angles.

\begin{lemma} \label{lemma:Neumannreg}
Let $A$ be an open subset of the unit ball $B_1$
such that $\partial A\cap B_1 = \Gamma_1 \cup \Gamma_2$,
where $\Gamma_1$ and $\Gamma_2$ are two curves of class $C^{1,\beta}$
meeting at the origin with an internal angle $\alpha\in(0,\pi)$.
Let $u\in H^1(A)$ be a weak solution to
$$
\left\{
  \begin{array}{ll}
    \Delta u=0 & \hbox{in }A, \\
    \partial_\nu u=0 & \hbox{on }\Gamma_1\cup\Gamma_2.
  \end{array}
\right.
$$
Then $\nabla u$ has a $C^{0,\gamma}$ extension up to $\Gamma_1\cup\Gamma_2$, for $\gamma=\min\{\beta,\frac{\pi}{\alpha}-1\}$,
with $C^{0,\gamma}$-norm bounded by a constant depending only on the $C^{1,\beta}$-norm of $\Gamma_1$ and $\Gamma_2$.
\end{lemma}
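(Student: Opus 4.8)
The plan is to localize the estimate near the corner, flatten it by a change of variables, and then run a Campanato-type freezing argument in which the model decay rate at the vertex is governed by an explicit one-dimensional eigenvalue; the exponent $\pi/\alpha-1$ will come precisely from this model. First I would note that, since $u$ is harmonic, it is smooth inside $A$, and classical boundary Schauder estimates for the conormal (Neumann) problem on $C^{1,\beta}$ domains give $\nabla u\in C^{0,\beta}$ up to $\Gamma_1\cup\Gamma_2$ away from the origin, with a bound $\|\nabla u\|_{C^{0,\beta}}\le C\|\nabla u\|_{L^2(A)}$, $C$ depending only on $\alpha$ and the $C^{1,\beta}$-norms of the two curves. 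So the whole issue is a Hölder bound for $\nabla u$ in a fixed half-ball $\overline A\cap B_{\rho_0}$ about the vertex. There I would rotate so that $\Gamma_1,\Gamma_2$ are tangent at $0$ to $\{\theta=0\}$ and $\{\theta=\alpha\}$, and flatten the two $C^{1,\beta}$ arcs by a $C^{1,\beta}$ diffeomorphism $T$ with $T(0)=0$ and $DT(0)=\mathrm{Id}$ (only the higher-order deviation of the arcs from their tangents has to be corrected). Then $\tilde u:=u\circ T^{-1}$ solves $\div(B\nabla\tilde u)=0$ in the straight circular sector $S_\alpha=\{0<\theta<\alpha,\ |x|<\rho_0\}$, with the conormal condition $B\nabla\tilde u\cdot n=0$ on the two bounding radii, where $B\in C^{0,\beta}$ is uniformly elliptic and $B(0)=\mathrm{Id}$.

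Next I would set up the freezing scheme on balls $B_r$ centered at the vertex: compare $\tilde u$ with the function $\bar u$ harmonic in $S_\alpha\cap B_r$, equal to $\tilde u$ on the circular arc, and satisfying $\partial_\nu\bar u=0$ on the two radii. Separation of variables gives $\bar u=c_0+\sum_{k\ge1}c_k\,|x|^{k\pi/\alpha}\cos(k\pi\theta/\alpha)$; since $\alpha<\pi$ we have $k\pi/\alpha\ge\pi/\alpha>1$ for every $k\ge1$, hence $\nabla\bar u(0)=0$ and
\[
\int_{B_{\theta r}\cap S_\alpha}|\nabla\bar u|^2\ \le\ \theta^{\,2\pi/\alpha}\int_{B_r\cap S_\alpha}|\nabla\bar u|^2 .
\]
The difference $w=\tilde u-\bar u$ vanishes on the arc and solves $\div(B\nabla w)=-\div((B-\mathrm{Id})\nabla\bar u)$ with the conormal condition matching $\bar u$ on the radii; the energy estimate together with $\|B-\mathrm{Id}\|_{L^\infty(B_r)}\le[B]_{C^{0,\beta}}r^\beta$ (using $B(0)=\mathrm{Id}$) and $\int_{B_r}|\nabla\bar u|^2\le\int_{B_r}|\nabla\tilde u|^2$ yields $\|\nabla w\|_{L^2(B_r)}\le Cr^\beta\|\nabla\tilde u\|_{L^2(B_r)}$. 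Combining the two bounds and iterating in $r$ gives the Morrey-type decay $\int_{B_r\cap S_\alpha}|\nabla\tilde u|^2\le Cr^{2\pi/\alpha}$, in particular $(\nabla\tilde u)_{B_r}\to0$: this is the Campanato estimate for $\nabla\tilde u$ at the vertex with exponent $\pi/\alpha-1$. Patching it with the $C^{0,\beta}$ Campanato estimates on balls centered away from the vertex (a standard scaling argument, using that $[B]_{C^{0,\beta}}$ on a ball of radius $r$ around $x_0$ scales like $r^\beta$) produces $\nabla\tilde u\in C^{0,\gamma}(\overline{S_\alpha}\cap B_{\rho_0/2})$ with $\gamma=\min\{\beta,\pi/\alpha-1\}$ and the quantitative bound. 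Undoing $T$ (a $C^{1,\beta}$ map, $\beta\ge\gamma$) and gluing with the Schauder estimate away from the vertex finishes the proof.

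The only genuinely non-routine point is the vertex analysis. Pinning the model decay to the exponent $\pi/\alpha$ rests on the spectrum of $-\partial_\theta^2$ on $(0,\alpha)$ with Neumann endpoints (eigenvalues $(k\pi/\alpha)^2$) and, crucially, on $\alpha\in(0,\pi)$, which forbids a mode decaying like $|x|^1$ and so forces $\nabla\bar u(0)=0$; this is exactly where the hypothesis $\alpha<\pi$ enters. One must also be careful that the flattening can indeed be taken with $DT(0)=\mathrm{Id}$, so that the frozen operator is honestly the Laplacian and the clean value $\pi/\alpha-1$ results, and that every constant — the flattening, hence the ellipticity and the Hölder seminorm of $B$ — depends only on $\alpha$ and the $C^{1,\beta}$-norms of $\Gamma_1,\Gamma_2$, as needed for the uniform application in Remark~\ref{rm:Neumannreg}. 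An equivalent route would be to pass first to a harmonic conjugate of $u$: via the Cauchy--Riemann equations the Neumann condition $\partial_\nu u=0$ becomes the statement that the conjugate is constant on $\Gamma_1\cup\Gamma_2$, i.e.\ solves a homogeneous Dirichlet corner problem, whose regularity theory is equally classical; alternatively one may invoke boundary regularity of conformal maps (Kellogg--Warschawski) on $C^{1,\beta}$ Jordan domains with a corner, which yields the $z^{\pi/\alpha}$-type behaviour directly. All three repackage the same computation.
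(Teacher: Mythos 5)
Your main argument is a genuine alternative to the paper's proof, and it is essentially sound. The paper reduces the corner to a straight boundary by the single explicit conformal map $\Phi(z)=z^{\pi/\alpha}$ (which is of class $C^{1,\pi/\alpha-1}$ and preserves harmonicity and the homogeneous Neumann condition away from the vertex), observes that $\Phi(\Gamma_1\cup\Gamma_2)$ is a $C^{1,\gamma}$ arc with $\gamma=\min\{\beta,\pi/\alpha-1\}$, and then cites classical Schauder theory for the flattened Neumann problem; pulling back through $\Phi$ gives the result in two lines. You instead flatten the two $C^{1,\beta}$ arcs by a real $C^{1,\beta}$ diffeomorphism with identity derivative at the vertex, obtaining a divergence-form equation $\div(B\nabla\tilde u)=0$ on a straight sector, and then run a Campanato freezing iteration in which the frozen problem is solved by separation of variables; the spectral gap of the Neumann Laplacian on $(0,\alpha)$ gives the exponent $\pi/\alpha$, and $\alpha<\pi$ is exactly what makes $\pi/\alpha>1$ and hence $\nabla\bar u(0)=0$. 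The mechanics you sketch (mode orthogonality in the Dirichlet inner product, the comparison $\int|\nabla\bar u|^2\le\int|\nabla\tilde u|^2$ by minimality, the $r^\beta$ smallness of $B-\mathrm{Id}$ from $B(0)=\mathrm{Id}$, the standard iteration to Morrey/Campanato decay and the matching with interior estimates on balls away from the vertex) are all correct, though naturally more work than the one-step conformal reduction.

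The trade-off is clear. The paper's route is shorter and exploits complex-analytic structure maximally; your route is longer but more robust, since nothing in the freezing argument relies on two dimensions or on the equation being the Laplacian, and it transparently shows where the exponent comes from (the first nonzero Neumann eigenvalue of the cross-section). You do correctly flag at the end that a conformal-map/Kellogg–Warschawski argument, or the passage to a harmonic conjugate reducing Neumann to Dirichlet, are equivalent repackagings; the paper's proof is precisely the first of these, made completely explicit with the power map $z^{\pi/\alpha}$ rather than abstract boundary regularity of conformal maps. One small point worth making explicit in your write-up: the uniformity of the constant (needed in Remark~\ref{rm:Neumannreg}) depends on the flattening diffeomorphism, hence on $\alpha$ and the $C^{1,\beta}$-norms of the two arcs alone; you mention this, and it is indeed the only place where care is required.
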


\begin{proof}
We consider $A$ as a subset of the complex plane $\C$
(we can assume without loss of generality that the positive real axis coincides with the tangent to $\Gamma_1$ at the origin,
and that the tangent to $\Gamma_2$ at the origin is the line $\{z=\rho e^{i\theta} : \rho>0, \theta=\alpha\}$).
Consider the map $\Phi:\overline{A}\to\Phi(\overline{A})$
given by $\Phi(z):=z^\frac{\pi}{\alpha}=\rho^{\frac{\pi}{\alpha}}e^{i\frac{\pi}{\alpha}\theta}$, where $z=\rho e^{i\theta}$.
The map $\Phi$ is of class $C^{1,\frac{\pi}{\alpha}-1}(\overline{A})$, and since it is conformal out of the origin,
the function $v:=u\circ\Phi^{-1}$ is harmonic in $\Phi(A)$
and satisfies a homogenous Neumann condition on $\Phi(\Gamma_1\cup\Gamma_2)$.
Moreover $\Phi(\Gamma_1\cup\Gamma_2)$ is a curve of class $C^{1,\gamma}$, hence by classical regularity results
(see, \emph{e.g.}, \cite[Theorem~7.49]{AFP}) $\nabla v$ has a $C^{0,\gamma}$ extension up to $\Phi(\Gamma_1\cup\Gamma_2)$,
with $C^{0,\gamma}$-norm bounded by a constant depending only on the $C^{1,\gamma}$-norm of $\Phi(\Gamma_2)$.
The conclusion follows since $u=v\circ\Phi$, using the regularity of $\Phi$.
\end{proof}

\begin{lemma} \label{lemma:Dirichletreg}
Let $A$ be an open subset of the unit ball $B_1$
such that $\partial A\cap B_1 = \Gamma_1 \cup \Gamma_2$,
where $\Gamma_1$ and $\Gamma_2$ are two curves of class $C^{1,\beta}$
meeting at the origin with an internal angle equal to $\frac{\pi}{2}$.
Let $u\in H^1(A)$ be a weak solution to
$$
\left\{
  \begin{array}{ll}
    \Delta u=f & \hbox{in }A \\
    \partial_\nu u=0 & \hbox{on }\Gamma_1 \\
    u=u_0 & \hbox{on }\Gamma_2
  \end{array}
\right.
\qquad\text{ or to }\qquad
\left\{
  \begin{array}{ll}
    \Delta u=f & \hbox{in }A \\
%    \partial_\nu u=0 & \hbox{on }\Gamma_1 \\
    \partial_\nu u=0 & \hbox{on }\Gamma_1\cup\Gamma_2
  \end{array}
\right.
$$
where $f\in L^\infty(A)$, and $u_0\in C^2(\overline{A})$ is such that $\partial_\nu u_0=0$ on $\Gamma_1$.
Then $\nabla u$ has a $C^{0,\beta}$ extension up to $\Gamma_1\cup\Gamma_2$,
with $C^{0,\beta}$-norm bounded by a constant depending only on $\|f\|_\infty$,
on the $C^{1,\beta}$-norm of $\Gamma_1$ and $\Gamma_2$,
and on $\|u_0\|_{C^2(\overline{A})}$ in the first case.
\end{lemma}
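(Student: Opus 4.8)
The plan is to reduce both boundary value problems to a standard elliptic estimate at a $C^{1,\beta}$ boundary \emph{without corner}, taking advantage of the fact that the opening angle is exactly $\frac\pi2$: an even reflection across the Neumann curve $\Gamma_1$ turns the corner at $0$ into a flat (more precisely, $C^{1,\beta}$) boundary point. First I would homogenize the Dirichlet datum in the first problem by setting $w:=u-u_0$: then, in the weak sense, $\Delta w=f-\Delta u_0=:g\in L^\infty(A)$, $w=0$ on $\Gamma_2$, and $\partial_\nu w=-\partial_\nu u_0=0$ on $\Gamma_1$ — here the hypothesis $\partial_\nu u_0=0$ on $\Gamma_1$ is exactly what makes the homogeneous Neumann condition survive the subtraction. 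In the second (pure Neumann) problem one simply takes $w:=u$, $g:=f$. In either case $w\in H^1(A)$ weakly solves $\Delta w=g\in L^\infty(A)$ with homogeneous Neumann condition on $\Gamma_1$ and either homogeneous Dirichlet (first case) or homogeneous Neumann (second case) condition on $\Gamma_2$; it suffices to prove $\nabla w\in C^{0,\beta}$ up to $\Gamma_1\cup\Gamma_2$ with the appropriate quantitative bound, since then $\nabla u=\nabla w+\nabla u_0$ inherits the same regularity, with the additional dependence on $\|u_0\|_{C^2(\overline A)}$ in the first case.

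Next I would localize. Near an interior point of $\Gamma_1$ or of $\Gamma_2$ the statement is the classical $C^{1,\beta}$ boundary regularity for the Neumann (resp.\ Dirichlet) problem with $L^\infty$ right-hand side at a $C^{1,\beta}$ boundary, with norm controlled by $\|g\|_\infty$ and the $C^{1,\beta}$ norm of $\Gamma_i$ (see, \emph{e.g.}, the regularity results quoted in the proof of Lemma~\ref{lemma:Neumannreg}). The only delicate point is near the corner $0$. There I would pass to Fermi (normal) coordinates relative to $\Gamma_1$, namely the $C^{1,\beta}$ diffeomorphism $\Theta$ associating to a point the pair (foot-point on $\Gamma_1$, signed distance to $\Gamma_1$); it maps $\Gamma_1$ onto a segment of $\{y=0\}$ and $A$ into $\{y>0\}$, and since $\Gamma_2$ meets $\Gamma_1$ orthogonally, its image $\Theta(\Gamma_2)$ is a $C^{1,\beta}$ curve with unit tangent $e_2$ at $0$, hence orthogonal to $\{y=0\}$. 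In these coordinates the Euclidean Laplacian pulls back to a divergence-form operator $\div(B\nabla\,\cdot\,)$ with $B$ \emph{diagonal}, $B\in C^{0,\beta}$, uniformly elliptic and $B(0)=Id$; correspondingly $v:=w\circ\Theta^{-1}$ solves $\div(B\nabla v)=\widetilde g$ with $\widetilde g\in L^\infty$, while the Neumann condition on $\Gamma_1$ becomes $\partial_y v=0$ on $\{y=0\}$.

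Now comes the key step: the even reflection across $\{y=0\}$. Setting $V(x,y):=v(x,|y|)$, $\widetilde B(x,y):=B(x,|y|)$, $\widetilde F(x,y):=\widetilde g(x,|y|)$, one checks that $V$ is a \emph{global} weak solution of $\div(\widetilde B\nabla V)=\widetilde F$ in a full two-sided neighbourhood of $0$: the coefficients $\widetilde B$ are still diagonal and $C^{0,\beta}$ (an even extension of a $C^{0,\beta}$ function is $C^{0,\beta}$), the right-hand side is still $L^\infty$, and no distributional term appears along $\{y=0\}$ because the conormal derivative (a positive multiple of $\partial_y V$ there) vanishes from both sides. The boundary of the doubled domain near $0$ is the single curve $\Gamma^\ast:=\Theta(\Gamma_2)\cup(\text{its reflection})$, which is $C^{1,\beta}$ \emph{across} $0$ precisely because $\Theta(\Gamma_2)$ is orthogonal to $\{y=0\}$ (incoming and outgoing tangents at $0$ both equal $e_2$, and the Hölder bound on the tangent passes through the gluing point); in particular the opening angle $\frac\pi2$ has been doubled to $\pi$, so there is no corner left. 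On $\Gamma^\ast$ the function $V$ satisfies a homogeneous Dirichlet or a homogeneous Neumann condition, and the classical $C^{1,\beta}$ boundary regularity for divergence-form equations with $C^{0,\beta}$ coefficients and $L^\infty$ right-hand side at a $C^{1,\beta}$ boundary gives $V\in C^{1,\beta}$ near $0$, with $\|V\|_{C^{1,\beta}}$ controlled by $\|\widetilde F\|_\infty$ and the $C^{1,\beta}$ data of $\Gamma^\ast$. Undoing $\Theta$ and adding back $u_0$ yields $\nabla u\in C^{0,\beta}$ up to $\Gamma_1\cup\Gamma_2$ with the asserted bound, since $\|\widetilde F\|_\infty\le C(\|f\|_\infty+\|u_0\|_{C^2(\overline A)})$ and the $C^{1,\beta}$ data of $\Gamma^\ast$ are controlled by the $C^{1,\beta}$ norms of $\Gamma_1$ and $\Gamma_2$. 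The main obstacle, and the only place where orthogonality enters decisively, is this reflection step: one must verify that reflecting across the Neumann curve produces a genuine weak solution with coefficients remaining in $C^{0,\beta}$ — which is transparent in Fermi coordinates, where the pulled-back metric is diagonal — and that the doubled boundary $\Gamma^\ast$ is $C^{1,\beta}$, not merely Lipschitz, at the former corner. Everything else is routine localization together with interior and boundary Schauder / Calderón–Zygmund estimates.
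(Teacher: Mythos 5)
Your overall strategy is the same as the paper's: homogenize the Dirichlet datum via $w=u-u_0$ (using $\partial_\nu u_0=0$ on $\Gamma_1$ to preserve the homogeneous Neumann condition), straighten the Neumann curve $\Gamma_1$, reflect evenly across the straightened curve, and note that because the opening angle is $\frac\pi2$ the reflected Dirichlet curve is $C^{1,\beta}$ across the former corner, so classical boundary regularity applies. The paper does exactly this, with the one non-routine ingredient being the flattening map.

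The flattening map is where your argument has a real gap. You take $\Theta$ to be the Fermi coordinate map (foot point on $\Gamma_1$, signed distance to $\Gamma_1$) and assert that it is a $C^{1,\beta}$ diffeomorphism with the pulled-back metric diagonal and of class $C^{0,\beta}$. Neither assertion holds under the hypothesis that $\Gamma_1$ is only $C^{1,\beta}$. Writing $\Theta^{-1}(s,t)=\gamma(s)+t\nu(s)$ with $\gamma\in C^{1,\beta}$ and $\nu=(\gamma')^\perp\in C^{0,\beta}$, one has $\partial_s\Theta^{-1}=\gamma'(s)+t\nu'(s)$; for $t\neq 0$ this requires $\nu'$, i.e.\ the curvature of $\Gamma_1$, which does not exist for a curve that is merely $C^{1,\beta}$. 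Concretely, the Fermi coordinate map is Lipschitz but not $C^1$ off $\Gamma_1$, so the transformed operator has coefficients in $L^\infty$ only; De Giorgi--Nash would then give $C^{0,\alpha}$ regularity of the solution for some small $\alpha$, not the $C^{1,\beta}$ regularity you need. Equivalently, the diagonal metric you write down involves $1-\kappa t$ with $\kappa$ the curvature, and $\kappa$ is not a function here. The construction as stated works for $C^{2}$ (or $C^{2,\beta}$) curves, not $C^{1,\beta}$ ones.

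The paper sidesteps this entirely by using a \emph{conformal} map $\Phi$ that straightens $\Gamma_1$. By Kellogg's theorem, a Riemann map onto a domain with $C^{1,\beta}$ boundary extends to a $C^{1,\beta}$ diffeomorphism of the closures, so $\Phi$ has exactly the regularity of $\Gamma_1$. Moreover, conformality means the pulled-back operator is again the Laplacian (with the right-hand side multiplied by $|(\Phi^{-1})'|^2\in C^{0,\beta}$, still $L^\infty$), the Neumann condition is preserved, and the right angle between $\Phi(\Gamma_1)$ and $\Phi(\Gamma_2)$ is preserved. After that, your reflection argument goes through verbatim. So the fix to your proposal is to replace the Fermi coordinate change by a boundary-conformal one; everything else is sound.
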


\begin{proof}
Let $u$ solve the first problem, and let $\tilde{u}:=u-u_0$.
Then $\tilde{u}$ is a solution to
$$
\left\{
  \begin{array}{ll}
    \Delta \tilde{u}=\tilde{f} & \hbox{in }A, \\
    \partial_\nu \tilde{u}=0 & \hbox{on }\Gamma_1, \\
    \tilde{u}=0 & \hbox{on }\Gamma_2,
  \end{array}
\right.
$$
where $\tilde{f}:=f-\Delta u_0$.
We can find a radius $\rho>0$ and a $C^{1,\beta}$ conformal mapping $\Phi$
in $\overline{A}\cap\overline{B}_\rho$ such that $\Phi(\Gamma_1)$ is a straight line meeting $\Phi(\Gamma_2)$ orthogonally.
Then the function $v:=\tilde{u}\circ\Phi^{-1}$ solves
$$
\left\{
  \begin{array}{ll}
    \Delta v=g & \hbox{in }\Phi(A), \\
    \partial_\nu v=0 & \hbox{on }\Phi(\Gamma_1), \\
    v=0 & \hbox{on }\Phi(\Gamma_2),
  \end{array}
\right.
$$
where $g:=(\tilde{f}\circ\Phi^{-1})|\det\nabla\Phi^{-1}|$.
By even reflection across $\Phi(\Gamma_1)$ and by applying classical regularity results,
we can conclude that  $\nabla v$ has a $C^{0,\beta}$ extension up to $\Phi(\Gamma_1\cup\Gamma_2)$,
with $C^{0,\beta}$-norm bounded by a constant depending only on $\|g\|_\infty$ and on the $C^{1,\beta}$-norm of $\Phi(\Gamma_1\cup\Gamma_2)$. Now the conclusion follows by using the regularity of the map $\Phi$.

The regularity for the solution to the second problem can be obtained by a similar (and, in fact, simpler) argument.
\end{proof}

%%%%%%%%%%%%%%%%%%%%%%%%%%%%%%%%%%%%%%%%%%%%%%%%%%%%%%%%%%%%%%%%%%%%%%%%%%%%%%%%%%%%%%%%%%%%%%%%%%%%%%%%%
%%%%%%%%%%%%%%%%%%%%%%%%%%%%%%%%%%%%%%%%%%%%%%%%%%%%%%%%%%%%%%%%%%%%%%%%%%%%%%%%%%%%%%%%%%%%%%%%%%%%%%%%%
%%%%%%%%%%%%%%%%%%%%%%%%%%%%%%%%%%%%%%%%%%%%%%%%%%%%%%%%%%%%%%%%%%%%%%%%%%%%%%%%%%%%%%%%%%%%%%%%%%%%%%%%%
%%%%%%%%%%%%%%%%%%%%%%%%%%%%%%%%%%%%%%%%%%%%%%%%%%%%%%%%%%%%%%%%%%%%%%%%%%%%%%%%%%%%%%%%%%%%%%%%%%%%%%%%%
%%%%%%%%%%%%%%%%%%%%%%%%%%%%%%%%%%%%%%%%%%%%%%%%%%%%%%%%%%%%%%%%%%%%%%%%%%%%%%%%%%%%%%%%%%%%%%%%%%%%%%%%%

\subsection{Proof of the density lower bound} \label{appendix:lowerbound}

This concluding subsection is entirely devoted to the proof of Theorem~\ref{teo:dlb}.
Most of the proofs are classical and very similar to those contained in \cite{BG}
(which in turn follow the approach of \cite{DCL}),
and for this reason we will just sketch them by describing only the main changes needed,
referring the interested reader to \cite{Bon} for details.

We start by observing that, if $w$ satisfies the hypotheses of the theorem,
the following energy upper bound holds in every ball $B_\rho(x)$ with $\rho\leq R_0$
(it can be easily deduced by comparing the energies of $w$ and of $w\chi_{\Omega'\setminus(B_\rho(x)\cap\Omega)}$):
\begin{equation}\label{eq:dlb0}
\F(w;B_\rho(x)\cap\Omega') \leq c_0\rho,
\end{equation}
where $c_0$ depends only on $R_0$, $\omega$, $u$ and $\Omega$.
In the following, $C$ will always denote a positive constant depending only on the previous quantities.
We now show that we can replace the Dirichlet condition in $\Omega'\setminus\Omega$ by a homogeneous boundary condition.

\begin{lemma} \label{lemma:dlb1}
Set $\tilde{w}:=w-u$.
Then $\tilde{w}\in\sbv(\Omega')$, $\tilde{w}=0$ in $\Omega'\setminus\Omega$,
and there exist $\eta>0$, $\tilde{\omega}>0$ (depending only on $\Omega$, $\omega$ and $u$)
such that for every $x\in\ombar\cap\mathcal{N}_{\eta}(\partial_D\Omega)$ and for every $\rho<\eta$
$$
\F(\tilde{w};B_\rho(x)\cap\Omega') \leq \F(v;B_\rho(x)\cap\Omega') + \tilde{\omega}\rho^\frac32
$$
whenever $v\in\sbv(\Omega')$ is such that $v=0$ in $\Omega'\setminus\Omega$ and $\{v\neq \tilde{w}\}\subset\subset B_\rho(x)$.
\end{lemma}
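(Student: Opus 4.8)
The plan is to show that replacing $w$ by $\tilde w := w-u$ only perturbs the quasi-minimality inequality by a controlled amount near $\partial_D\Omega$. First I would record the elementary facts: $u\in\sbv(\Omega')$ with $u\in W^{1,\infty}(\Omega'\setminus\overline S_u)$ and $\overline S_u\cap\overline{\partial_D\Omega}=\emptyset$ by hypothesis, so there is a neighborhood $\mathcal N_\eta(\partial_D\Omega)$ disjoint from $\overline S_u$; on such a neighborhood $u$ is a genuine $W^{1,\infty}$ function, hence $S_{\tilde w}\cap \mathcal N_\eta(\partial_D\Omega)=S_w\cap\mathcal N_\eta(\partial_D\Omega)$ and $\nabla\tilde w=\nabla w-\nabla u$ there. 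Also $\tilde w=0$ in $\Omega'\setminus\Omega$ since $w=u$ there. The key point is that on $B_\rho(x)$ with $x$ near $\partial_D\Omega$ we can freely pass between competitors for $w$ and competitors for $\tilde w$ by adding/subtracting $u$, which is smooth on that ball.

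The core estimate is to compare Dirichlet energies. Given a competitor $v$ for $\tilde w$ with $\{v\ne\tilde w\}\subset\subset B_\rho(x)$ and $v=0$ off $\Omega$, set $\bar v:=v+u$; then $\bar v$ is admissible for the quasi-minimality of $w$ (it equals $u$ off $\Omega$, and $\{\bar v\ne w\}=\{v\ne\tilde w\}\subset\subset B_\rho(x)$). Apply the hypothesis of the theorem to get
$$
\F(w;B_\rho(x)\cap\Omega')\le \F(\bar v;B_\rho(x)\cap\Omega')+\omega\rho^2 .
$$
Now expand both sides. Using $|\nabla w|^2=|\nabla\tilde w+\nabla u|^2=|\nabla\tilde w|^2+2\nabla\tilde w\cdot\nabla u+|\nabla u|^2$ on the left and the analogous identity with $v$ on the right, and using that the jump parts coincide ($S_w=S_{\tilde w}$, $S_{\bar v}=S_v$ on the relevant region since $\overline S_u$ is far away), everything cancels except the cross terms and the $|\nabla u|^2$ terms, which both drop out because $u$ is the same on both sides. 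What remains after rearranging is
$$
\F(\tilde w;B_\rho(x)\cap\Omega')\le \F(v;B_\rho(x)\cap\Omega')+\omega\rho^2 + 2\int_{B_\rho(x)\cap\Omega'}\nabla(\tilde w-v)\cdot\nabla u\,dx .
$$
The cross term is estimated by $2\|\nabla u\|_{L^\infty(\mathcal N_\eta(\partial_D\Omega))}\int_{B_\rho(x)}|\nabla(\tilde w-v)|\,dx$, and since $\tilde w-v$ is supported in $B_\rho(x)$, Cauchy--Schwarz gives a bound by $C\rho\big(\int_{B_\rho(x)}|\nabla(\tilde w-v)|^2\big)^{1/2}$. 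The gradient of $v$ in the ball is controlled by the gradient of $\tilde w$ (because $v$ is a competitor that beats $\tilde w$ up to lower order, so its energy is comparable) together with the energy upper bound \eqref{eq:dlb0} rewritten for $\tilde w$; this yields $\int_{B_\rho(x)}|\nabla(\tilde w-v)|^2\le C\rho$, and hence the cross term is bounded by $C\rho\cdot(C\rho)^{1/2}=C\rho^{3/2}$. Choosing $\tilde\omega$ to absorb $\omega\rho^2$ (which for $\rho<\eta\le1$ is $\le\rho^{3/2}$) into the same scale gives the claimed inequality.

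The main obstacle I anticipate is the bookkeeping needed to bound $\int_{B_\rho(x)}|\nabla v|^2$: one cannot assume $v$ is energy-minimal, only that it is an arbitrary competitor, so a priori $\int|\nabla v|^2$ could be large. The fix is the standard one: it suffices to prove the inequality for competitors with $\F(v;B_\rho(x)\cap\Omega')\le\F(\tilde w;B_\rho(x)\cap\Omega')$ (otherwise the inequality is trivial once $\tilde\omega\ge0$), and for such $v$ the energy upper bound \eqref{eq:dlb0} applied to $\tilde w$ (which holds because $\tilde w$ inherits a quasi-minimality property from $w$ on this neighborhood, or more simply because $\F(\tilde w;B_\rho\cap\Omega')\le\F(w;B_\rho\cap\Omega')+C\rho\le C\rho$ using $\|\nabla u\|_\infty<\infty$) forces $\int_{B_\rho(x)}|\nabla v|^2\le C\rho$. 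A secondary subtlety is the exact shape of the remainder exponent: the $\rho^{3/2}$ comes precisely from the product of a length factor $\rho$ with an energy factor $\rho^{1/2}$, and one must make sure the constants depend only on $\Omega$, $\omega$, $u$ and not on $w$ — which holds since all the bounds used ($\|\nabla u\|_\infty$ near $\partial_D\Omega$, the upper bound \eqref{eq:dlb0}) are uniform.
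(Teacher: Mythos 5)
Your proposal is correct and rests on the same core computation as the paper: compare $w$ with $v+u$ via the quasi-minimality of $w$, expand, cancel the $\int|\nabla u|^2$ and jump terms (using that $\overline{S}_u$ avoids the ball), and bound the remaining cross terms. The one place you diverge is in how the term $\int\nabla v\cdot\nabla u$ is controlled without a priori information on $\nabla v$. The paper applies Young's inequality $2\int\nabla v\cdot\nabla u\le\e^2\int|\nabla v|^2+\e^{-2}\int|\nabla u|^2$, absorbs the $\e^2\int|\nabla v|^2$ into $\F(v)$, passes to the deviation from minimality, and finally optimizes $\e=\rho^{1/4}$. You instead split into cases: if $\F(v;B_\rho(x)\cap\Omega')>\F(\tilde w;B_\rho(x)\cap\Omega')$ the inequality is trivial, and otherwise $\F(v)\le\F(\tilde w)\le C\rho$ (via \eqref{eq:dlb0} and $\|\nabla u\|_\infty<\infty$), so Cauchy--Schwarz alone gives the $\rho^{3/2}$. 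Your route is arguably cleaner since it avoids introducing $\dev$ and optimizing a parameter, while the paper's version is structured so the intermediate deviation estimate can be quoted elsewhere. One small slip: after expanding, the remaining cross term is $2\int\nabla(v-\tilde w)\cdot\nabla u$, not $2\int\nabla(\tilde w-v)\cdot\nabla u$; this is harmless since you bound it in absolute value, but the sign as written is wrong.
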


\begin{proof}
By choosing $\eta$ sufficiently small, we can guarantee that $\overline{S}_u\cap B_\rho(x)=\emptyset$
for each ball $B_\rho(x)$ as in the statement, hence $S_{\tilde{w}}\cap B_\rho(x)= S_w\cap B_\rho(x)$.
By comparing the energies of $w$ and $v+u$ we obtain
\begin{align*}
\F(\tilde{w};B_\rho(x)\cap\Omega')
\leq
\F(v;B_\rho(x)\cap\Omega')
+ 2\int_{B_\rho(x)\cap\Omega'} \nabla u \cdot (\nabla v - \nabla w)
+ 2\int_{B_\rho(x)\cap\Omega'}|\nabla u|^2 + \omega\rho^2.
\end{align*}
Now, using the fact that $\nabla u\in L^\infty$ and the upper bound \eqref{eq:dlb0}, we have
$$
2\int_{B_\rho(x)\cap\Omega'} |\nabla u|^2 \leq C\rho^2,
\qquad
-2\int_{B_\rho(x)\cap\Omega'} \nabla w\cdot\nabla u \leq C\rho^{\frac32},
$$
while for every $\e>0$ we have
$$
2\int_{B_\rho(x)\cap\Omega'} \nabla v\cdot\nabla u \leq
\e^2\|\nabla v\|^2_{L^2} + \frac{1}{\e^2}\|\nabla u\|^2_{L^2}
\leq \e^2\F(v;B_\rho(x)\cap\Omega') +\frac{C}{\e^2}\rho^2.
$$
It follows that
$$
\F(\tilde{w};B_\rho(x)\cap\Omega') \leq (1+\e^2)\F(v;B_\rho(x)\cap\Omega')
+ C \Bigl(1+\frac{1}{\e^2}\Bigr)\rho^2 + C\rho^{\frac32}.
$$
Defining the deviation from minimality of $\tilde{w}$ in a Borel set $B$ as
\begin{equation}\label{eq:dlb1}
\dev(\tilde{w};B) :=
\F(\tilde{w};B\cap\Omega') - \inf\bigl\{\F(v;B\cap\Omega'): v\in\sbv(\Omega'), \,
v=0\text{ in }\Omega'\setminus\Omega, \,
\{v\neq \tilde{w}\}\subset\subset B\bigr\},
\end{equation}
from the previous inequality we obtain,
by taking the infimum over all $v$,
\begin{align*}
\dev(\tilde{w};B_\rho(x))
&\leq \e^2 \F(\tilde{w};B_\rho(x)\cap\Omega') + C\Bigl(1+\frac{1}{\e^2}\Bigr)\rho^2 + C\rho^{\frac32} \\
&\leq c_0\e^2\rho + C\Bigl(1+\frac{1}{\e^2}\Bigr)\rho^2 + C\rho^{\frac32}
\leq \tilde{\omega}\rho^{\frac32},
\end{align*}
where we used \eqref{eq:dlb0} in the second inequality
and we choose $\e=\rho^{\frac14}$ in the last inequality.
\end{proof}

In the proof of the main decay property in Lemma~\ref{lemma:dlb5}
we will need to consider the blow-up in a sequence of balls whose centers converge to a point in $\overline{\partial_D\Omega}\cap\overline{\partial_N\Omega}$.
This situation is examined in the following lemma.

\begin{lemma} \label{lemma:dlb2}
Let $x_n\in\overline{\Omega}$, $x_n\to x_0\in\overline{\partial_D\Omega}\cap\overline{\partial_N\Omega}$, and $r_n\to0^+$.
Setting
\begin{equation} \label{eq:dlb2}
\Omega_n:=\frac{\Omega' - x_n}{r_n}\cap B_1, \qquad
D_n:=\frac{(\Omega'\setminus\Omega) - x_n}{r_n}\cap B_1,
\end{equation}
there exist $\delta_1,\delta_2\in[0,1]$ and a coordinate system such that (up to subsequences)
$$
\Omega_n\to\Omega_0 :=\{(\xi,\zeta)\in B_1: \xi<\delta_1\}, \quad
D_n\to D_0 :=\{(\xi,\zeta)\in B_1: \xi<\delta_1, \, \zeta>\delta_2\}
$$
in $L^1$. Moreover, the relative isoperimetric inequality holds in $\Omega_n$
with a constant which can be chosen independently of $n$
(and which will be denoted by $\gamma$).
Finally, assuming $\delta_2<1$, given $v\in W^{1,2}(\Omega_0)$ with $v=0$ in $D_0$
there exists a sequence $v_n\in W^{1,2}(B_1)$ such that $v_n\to v$ in $W^{1,2}(\Omega_0)$ and $v_n=0$ in $D_n$.
\end{lemma}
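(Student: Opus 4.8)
The plan is to run a blow-up analysis at the corner $x_0$, using in an essential way the $C^1$-regularity of $\partial\Omega'$ and of $\partial_D\Omega$ and the orthogonality of $\partial_D\Omega$ and $\partial_N\Omega$. First I would fix a ball $B_\rho(x_0)$ so small that inside it $\partial\Omega'$ is a $C^1$-graph with $\Omega'$ on one side, and $\partial_D\Omega$ is a $C^1$-graph with $\Omega'\setminus\Omega$ on one side. Let $p_n'$, $p_n^D$ be the nearest points of $\partial\Omega'$ and of $\partial_D\Omega$ to $x_n$, and $d_n':=|x_n-p_n'|$, $d_n^D:=|x_n-p_n^D|$. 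Up to a subsequence, either $d_n'/r_n\to\delta_1\in[0,1]$, or $d_n'/r_n\to+\infty$ and one sets $\delta_1:=1$ (so $\Omega_n=B_1$ for $n$ large); likewise for $d_n^D/r_n$ and $\delta_2$. The outer normal $\nu'$ to $\partial\Omega'$ and the normal $\nu_D$ to $\partial_D\Omega$ (pointing into $\Omega'\setminus\Omega$) are continuous, and $\nu'(x_0)\perp\nu_D(x_0)$ because $\partial_N\Omega\subset\partial\Omega'$ near $x_0$ while $\partial_N\Omega$ and $\partial_D\Omega$ are orthogonal there; after a further subsequence $\nu'(p_n')\to\nu'(x_0)$ and $\nu_D(p_n^D)\to\nu_D(x_0)$, and I take $(\xi,\zeta)$ to be the orthonormal coordinates centred at $x_n$ with $\xi$-axis along $\nu'(x_0)$ and $\zeta$-axis along $\nu_D(x_0)$. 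Since dilation by $r_n\to0$ flattens a $C^1$-graph, $\tfrac1{r_n}(\partial\Omega'-x_n)\cap B_1$ and $\tfrac1{r_n}(\partial_D\Omega-x_n)\cap B_1$ are graphs of $C^1$ functions converging uniformly to the constants $\delta_1$ and $\delta_2$, which gives $|\Omega_n\bigtriangleup\Omega_0|\to0$ and $|D_n\bigtriangleup D_0|\to0$ with $\Omega_0,D_0$ as in the statement.

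For the uniform relative isoperimetric inequality I would observe that, for $n$ large, $\Omega_n$ is either all of $B_1$ (when $\delta_1=1$), in which case the inequality is the classical one, or the intersection of $B_1$ with the subgraph of a $C^1$ function whose gradient has arbitrarily small $C^0$-norm uniformly in $n$; in the second case $\Omega_n$ is the image of the half-ball $\{\xi<\delta_1\}\cap B_1$ under a bi-Lipschitz homeomorphism of $B_1$ whose bi-Lipschitz constant tends to $1$ (straighten the graph). Since a bi-Lipschitz change of variables distorts perimeters and volumes only by controlled powers of its bi-Lipschitz constant, the relative isoperimetric inequality in $\Omega_n$ follows, with a constant $\gamma$ independent of $n$, from the universal one for the half-ball.

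For the last assertion assume $\delta_2<1$ and let $v\in W^{1,2}(\Omega_0)$ with $v=0$ on $D_0$. If $\delta_1<1$, I would extend $v$ across the chord $\{\xi=\delta_1\}$ by the reflection $\tilde v(\xi,\zeta):=v(2\delta_1-\xi,\zeta)$ for $\xi>\delta_1$ and $\tilde v:=v$ for $\xi\le\delta_1$: this is well defined and gives $\tilde v\in W^{1,2}(B_1)$ with $\tilde v|_{\Omega_0}=v$, and since the reflection of $D_0$ is $\{\xi>\delta_1,\ \zeta>\delta_2\}\cap B_1$ and $v$ vanishes on $D_0$, one checks that $\tilde v$ vanishes on the whole cap $\{\zeta>\delta_2\}\cap B_1$ (if $\delta_1=1$ then $\Omega_0=B_1$ and $\tilde v:=v$). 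Because $D_n\to D_0$ with boundaries converging in Hausdorff distance and $\delta_2<1$, for $n$ large there is $\e_n\to0^+$ with $D_n\subset\{\zeta>\delta_2-\e_n\}\cap B_1$, and one can choose diffeomorphisms $\Phi_n$ of $B_1$ with $\Phi_n\to Id$ in $C^1(\overline{B_1})$ and $\Phi_n(D_n)\subset\{\zeta>\delta_2\}\cap B_1$ (a small shear in the $\zeta$-direction, cut off near $\partial B_1$). Then $v_n:=\tilde v\circ\Phi_n$ lies in $W^{1,2}(B_1)$, vanishes on $D_n$, and converges to $\tilde v$ in $W^{1,2}(B_1)$, hence to $v$ in $W^{1,2}(\Omega_0)$, by continuity of composition with $C^1$-converging diffeomorphisms. (Alternatively one may first replace $v$ by a $W^{1,2}$-close function vanishing on a full neighbourhood of $\overline{D_0}$ — possible since $v=0$ on $D_0$, via a cut-off and a Poincar\'e estimate — extend that by reflection, and note that for $n$ large it already vanishes on $D_n$.)

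The graph-flattening, the uniform Lipschitz bounds and the continuity statements are routine. I expect the two genuinely delicate points to be: (i) selecting one coordinate system that linearises both blow-ups simultaneously, which is exactly where the orthogonality of $\partial_D\Omega$ and $\partial_N\Omega$ and the subsequential extraction of the limiting normals enter; and (ii) building $v_n$ so that $v_n=0$ on $D_n$ near the corner $\{\xi=\delta_1,\ \zeta=\delta_2\}$ and near $\partial B_1$ without destroying $W^{1,2}$-convergence — this is precisely what the hypothesis $\delta_2<1$ buys, since it keeps the corner region away from the part of $\partial B_1$ that the shear would otherwise have to move.
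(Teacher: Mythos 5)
Your arguments for the $L^1$-convergence and the uniform isoperimetric inequality are in the same spirit as what the paper invokes (it defers to the analogous \cite[Lemma~6.4]{BG}), and they are sound. The last assertion is where you diverge, and that is also where your main route has a genuine gap. The paper extends $v$ by zero to the \emph{unbounded} set $\widetilde\Omega=\Omega_0\cup\{\zeta>\delta_2\}$, applies the exterior-cone extension theorem to get $\tilde v\in W^{1,2}(\R^2)$ with $\tilde v\equiv 0$ on the full half-plane $\{\zeta>\delta_2\}$, and then sets $v_n(\xi,\zeta):=\tilde v(\xi,\zeta+a_n)$ with $a_n:=\sup_{\partial D_n\cap\Omega_n}|\zeta-\delta_2|$; since $D_n\subset\{\zeta>\delta_2-a_n\}$ this automatically gives $v_n=0$ on $D_n$, and the convergence is just continuity of translations in $W^{1,2}$. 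No self-map of $B_1$ is ever needed.

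Your main construction instead reflects $v$ across $\{\xi=\delta_1\}$ (which is fine, and does give $\tilde v\in W^{1,2}(B_1)$ vanishing on $\{\zeta>\delta_2\}\cap B_1$) and then composes with diffeomorphisms $\Phi_n$ of $B_1$ meant to satisfy $\Phi_n(D_n)\subset\{\zeta>\delta_2\}\cap B_1$. The specific suggestion, ``a small shear in the $\zeta$-direction, cut off near $\partial B_1$'', cannot work as stated: such a $\Phi_n$ is the identity in a neighbourhood of $\partial B_1$, while $D_n$ reaches $\partial B_1$ and, near the boundary points $(\pm\sqrt{1-\delta_2^2},\delta_2)$, contains points of $\{\delta_2-a_n<\zeta<\delta_2\}$ that are left fixed, so $\Phi_n(D_n)\not\subset\{\zeta>\delta_2\}$. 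To make a diffeomorphism-based argument work one needs a vector field that is genuinely tangent to $\partial B_1$ (sliding points along the circle, which is where $\delta_2<1$ gives the needed transversality of $\partial B_1$ with $\{\zeta=\delta_2\}$), which is a rather different object than a cut-off shear. Your parenthetical alternative (cut off $v$ near $\{\zeta=\delta_2\}$ using a Hardy/Poincar\'e estimate so that it vanishes on $\{\zeta>\delta_2-\e_k\}\cap\Omega_0$, reflect, and diagonalise against $n$) is correct and closes the gap in an elementary way; but note that it is your fallback, not your headline construction, that actually works. The paper's translation-plus-extension route is cleaner precisely because it decouples the approximation from the geometry of $\partial B_1$.
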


\begin{proof}
The first part of the lemma states an intuitive geometric fact
that can be proved rigourously arguing as in \cite[Lemma~6.4]{BG}.
%In a suitable coordinate system and for $r$ sufficiently small we have
%$$
%\Omega'\cap B_r(x_0) = \{(\xi,\zeta)\in B_r(x_0) : \xi < f(\zeta) \}
%$$
%for some function $f$ of class $C^1$, with $f(\zeta_0)=\xi_0$, $f'(\zeta_0)=0$, $x_0=(\xi_0,\zeta_0)$.
%We then have, for $n$ sufficiently large,
%$$
%\Omega_n = \{ (\xi,\zeta)\in B_1 : \xi<f_n(\zeta)\}, \qquad f_n(\zeta):=\frac{f(\zeta_n + r_n\zeta)-\xi_n}{r_n},
%$$
%where $x_n=(\xi_n,\zeta_n)$.
%If $\partial\Omega'\cap B_{r_n}(x_n)=\emptyset$ for infinitely many $n$, then $\Omega_0=B_1$;
%otherwise, taken any point $z_n=(\xi'_n,\zeta'_n)\in \partial\Omega'\cap B_{r_n}(x_n)$, we have
%$$
%f_n(\zeta) = \frac{f(\zeta_n+r_n\zeta) - f(\zeta_n) + f(\zeta_n) - f(\zeta_n') + \xi_n'-\xi_n}{r_n},
%$$
%and since
%$|f(\zeta_n)-f(\zeta_n')|\leq Cr_n$, $|\xi_n'-\xi_n|\leq r_n$ and
%$(f(\zeta_n+r_n\zeta) - f(\zeta_n))/r_n$ converges to 0 uniformly,
%we deduce that $f_n\to\delta_1$ uniformly, for some $\delta_1\in\R$.
%Note that $\delta_1\geq0$ since $f_n(0)\geq0$ for every $n$, and $\delta_1\leq1$.
%
%We can prove similarly the convergence of the sets $D_n$, by writing
%(using the orthogonality of $\overline{\partial_D\Omega}$ and $\overline{\partial_N\Omega}$)
%$$
%(\Omega'\setminus\Omega)\cap B_r(x_0) = \{(\xi,\zeta)\in B_r(x_0) : \xi < f(\zeta), \, \zeta > g(\xi) \}
%$$
%with $g$ of class $C^1$, $g(\xi_0)=\zeta_0$, $g'(\xi_0)=0$, and
%$$
%D_n = \{ (\xi,\zeta)\in B_1 : \xi<f_n(\zeta), \, \zeta > g_n(\xi) \}, \qquad g_n(\xi):=\frac{g(\xi_n + r_n\xi)-\zeta_n}{r_n},
%$$
%and arguing as before we prove that $g_n\to\delta_2$ uniformly for some $\delta_2\in[0,1]$.
The fact that the constant in the relative isoperimetric inequality
can be chosen uniformly for all the sets $\Omega_n$
follows from the fact that the boundaries of the sets $\Omega_n$ are close to the boundary of $\Omega_0$ in the $C^1$ sense.

Finally, we prove the last part of the statement, under the assumption $\delta_2<1$.
We extend $v$ to the set $\widetilde{\Omega}=\Omega_0\cup\{\zeta>\delta_2\}$
by setting $v=0$ outside $\Omega_0$,
and since $\widetilde{\Omega}$ satisfies the exterior cone condition we can find $\tilde{v}\in W^{1,2}(\R^2)$
such that $\tilde{v}_{|\widetilde{\Omega}}=v$.
Setting, for $(\xi,\zeta)\in B_1$,
$$
v_n(\xi,\zeta):= \tilde{v}(\xi,\zeta+a_n), \quad
a_n:=\sup_{(\xi,\zeta)\in\partial D_n\cap\Omega_n}|\zeta-\delta_2|\to0,
$$
we obtain a sequence with the desired properties.
\end{proof}

In the following compactness property, which is a consequence of the Poincar\'{e} inequality,
we adapt \cite[Proposition~7.5]{AFP} to our context.

\begin{lemma} \label{lemma:dlb3}
Let $x_n$ and $r_n$ be as in Lemma~\ref{lemma:dlb2}, and assume that $|D_n|\geq d_0>0$ for every $n$.
Let $u_n\in\sbv(\Omega_n)$, with $u_n=0$ a.e. in $D_n$, be such that
$$
\sup_n\int_{\Omega_n}|\nabla u_n|^2\,dx <\infty, \qquad
\lim_{n\to\infty}\hu(S_{u_n})=0.
$$
Setting $\bar{u}_n:=(u_n\wedge\tau_n^+)\vee\tau_n^-$, where
\begin{align*}
\tau_n^+ &:= \inf\{t\in[-\infty,+\infty]: |\{u_n<t\}|\geq |\Omega_n|- (2\gamma\hu(S_{u_n}))^2 \} , \\
\tau_n^- &:= \inf\{t\in[-\infty,+\infty]: |\{u_n<t\}|\geq (2\gamma\hu(S_{u_n}))^2 \} ,
\end{align*}
(here $\gamma$ is the constant introduced in Lemma~\ref{lemma:dlb2}),
one has that $\bar{u}_n=0$ in $D_n$ for $n$ large, and (up to subsequences)
$\bar{u}_n\to v\in W^{1,2}(\Omega_0)$ in $L^2_{\rm loc}(\Omega_0)$,
$u_n\to v$ a.e. in $\Omega_0$, and for every $\rho\leq1$
\begin{equation}\label{eq:dlb3}
\int_{\Omega_0\cap B_\rho}|\nabla v|^2\,dx \leq \liminf_{n\to\infty} \int_{\Omega_n\cap B_\rho}|\nabla\bar{u}_n|^2\,dx.
\end{equation}
\end{lemma}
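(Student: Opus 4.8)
The plan is to adapt the standard truncation-and-blow-up compactness argument for $SBV$ functions (as in \cite[Proposition~7.5]{AFP}) to the present setting, where the extra ingredient is the vanishing boundary condition on the moving "obstacle" sets $D_n$. First I would check that the truncation levels $\tau_n^\pm$ are well-defined and that $\bar u_n = (u_n \wedge \tau_n^+) \vee \tau_n^-$ satisfies the key structural facts: since $\hu(S_{u_n})\to 0$, the relative isoperimetric inequality in $\Omega_n$ (with the uniform constant $\gamma$ from Lemma~\ref{lemma:dlb2}) forces one of the two superlevel sets $\{u_n > \tau_n^+\}$ or $\{u_n < \tau_n^+\}$ to have measure at most $(2\gamma\hu(S_{u_n}))^2$, and similarly at $\tau_n^-$; in particular the truncation only removes a small-measure piece, so $\|u_n - \bar u_n\|_{L^1(\Omega_n)}\to 0$ and $u_n = \bar u_n$ a.e.\ outside a set of vanishing measure, which gives $u_n \to v$ a.e.\ once $\bar u_n\to v$. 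The point of truncating is that $\bar u_n \in SBV(\Omega_n)$ has the same approximate gradient as $u_n$ where they agree, the same (smaller) jump set, but is now bounded in $L^\infty$ on each piece, so one can run a Poincaré-type estimate.

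Next I would verify $\bar u_n = 0$ in $D_n$ for large $n$: since $|D_n|\geq d_0 > 0$ and $u_n = 0$ on $D_n$, the value $0$ lies strictly between $\tau_n^-$ and $\tau_n^+$ once $(2\gamma\hu(S_{u_n}))^2 < d_0$ (true for large $n$), so truncation does not alter the zero values on $D_n$. Then the core estimate: by the Poincaré inequality in $SBV$ — using that $\bar u_n$ vanishes on $D_n$ which has measure bounded below, so no additive-constant issue arises — together with the uniform bound $\sup_n\int_{\Omega_n}|\nabla u_n|^2 < \infty$ and $\hu(S_{\bar u_n}) \leq \hu(S_{u_n}) \to 0$, the sequence $\bar u_n$ is bounded in, say, $BV_{\mathrm{loc}}\cap L^2_{\mathrm{loc}}$; a diagonal extraction gives $\bar u_n \to v$ in $L^2_{\mathrm{loc}}(\Omega_0)$. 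Lower semicontinuity of the Dirichlet energy under this convergence (keeping in mind $\nabla \bar u_n \wto \nabla v$ weakly in $L^2_{\mathrm{loc}}$, while the jump parts disappear in the limit since their total variation tends to $0$) shows $v\in W^{1,2}(\Omega_0)$ — i.e.\ the limit has no jump — and yields \eqref{eq:dlb3} localized on every $B_\rho$. Finally, since $\bar u_n = 0$ on $D_n$ and $D_n \to D_0$ in $L^1$, passing to the limit gives $v = 0$ on $D_0$ (when $\delta_2 < 1$, so that $D_0$ has positive measure; when $\delta_2 = 1$ this is vacuous).

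The main obstacle I anticipate is establishing the $SBV$ Poincaré inequality with the zero boundary condition on $D_n$ \emph{uniformly in $n$}, because the domains $\Omega_n$ and the obstacle sets $D_n$ are moving. The key is that Lemma~\ref{lemma:dlb2} already provides (i) the uniform relative isoperimetric constant $\gamma$ for the $\Omega_n$, and (ii) $C^1$-closeness of $\partial\Omega_n$ to $\partial\Omega_0$; combining these with $|D_n|\geq d_0$ gives a uniform Poincaré–Sobolev-type control of $\|\bar u_n - m_n\|$ for a suitable median $m_n$, and then the constraint $\bar u_n = 0$ on the fixed-measure set $D_n$ pins down $m_n\to 0$. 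This is exactly the mechanism of \cite[Proposition~7.5]{AFP}, and the adaptation is routine once Lemma~\ref{lemma:dlb2} is in hand; the verification of the truncation bounds via the relative isoperimetric inequality is the only genuinely computational piece, and it is standard. I would therefore present the proof by quoting \cite[Proposition~7.5]{AFP} for the truncation-and-compactness machinery, and spelling out only the points where the obstacle condition and the moving domains enter, namely the facts that $\bar u_n$ still vanishes on $D_n$ and that this zero condition is preserved in the limit.
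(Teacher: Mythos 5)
Your proposal is correct and follows essentially the same route as the paper, which simply invokes \cite[Lemma~6.1]{BG} (itself an adaptation of \cite[Proposition~7.5]{AFP} to an obstacle setting) and observes that the one new issue — the domains $\Omega_n$ varying with $n$ — is handled by the uniform relative isoperimetric constant $\gamma$ from Lemma~\ref{lemma:dlb2}. You spell out the truncation, the median pinning via the lower bound $|D_n|\ge d_0$, and the lower semicontinuity in somewhat more detail than the paper, but the mechanism and the key observation are identical.
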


\begin{proof}
The proof can be obtained by repeating word by word the proof of \cite[Lemma~6.1]{BG}.
We have only to be careful about the fact that in our context also the domain $\Omega_n$ depends on $n$
and is not fixed along the sequence.
The essential remark here is that the isoperimetric inequality holds in the sets $\Omega_n$
with a constant which can be chosen independent of $n$, as observed in Lemma~\ref{lemma:dlb2}.
\end{proof}

The following lemma is a variant of \cite[Theorem~7.7]{AFP}.
For $B\subset\R^2$ Borel set and $c>0$ we set
$$
\F(v,c;B):= \int_{B}|\nabla v|^2\,dx + c\hu(S_v\cap B).
$$

\begin{lemma} \label{lemma:dlb4}
Let $x_n$ and $r_n$ be as in Lemma~\ref{lemma:dlb2}, and assume that $|D_n|\geq d_0>0$ for every $n$.
Let $c_n>0$, $u_n\in\sbv(\Omega_n)$, with $u_n=0$ in $D_n$, be such that
$$
\sup_n\F(u_n,c_n;\Omega_n)<+\infty, \qquad \lim_{n\to+\infty} \dev_{D_n}(u_n,c_n;B_1)=0,
$$
$$
\lim_{n\to+\infty}\hu(S_{u_n})=0, \qquad u_n\to v\in W^{1,2}(\Omega_0) \text{ a.e. in }\Omega_0,
$$
where
\begin{align*}
\dev_{D_n}(v,c;B):=
\F&(v,c;B\cap\Omega_n) \\
&- \inf\bigl\{\F(w,c;B\cap\Omega_n): w\in\sbv(\Omega_n), \, w=0\text{ in }D_n, \, \{w\neq v\}\subset\subset B\bigr\}.
\end{align*}
Then
$$
\int_{\Omega_0}|\nabla v|^2\,dx \leq \int_{\Omega_0}|\nabla w|^2\,dx
$$
for every $w\in W^{1,2}(\Omega_0)$ such that $w=0$ in $D$ and $\{v\neq w\}\subset\subset B_1$,
and
$$
\lim_{n\to+\infty} \F(u_n,c_n;\Omega_n\cap B_\rho) = \int_{\Omega_0\cap B_\rho} |\nabla v|^2\,dx
\qquad\text{for every }\rho\in(0,1).
$$
\end{lemma}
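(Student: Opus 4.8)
The plan is to argue by comparison, exactly in the spirit of the proof of \cite[Theorem~7.7]{AFP}, but carrying along the $n$-dependence of the domains $\Omega_n$ by exploiting the uniformity already established in Lemmas~\ref{lemma:dlb2} and \ref{lemma:dlb3}. First I would prove the limit minimality of $v$. Fix a competitor $w\in W^{1,2}(\Omega_0)$ with $w=0$ in $D$ and $\{v\neq w\}\subset\subset B_1$, say $\{v\neq w\}\subset\subset B_\rho$ for some $\rho<1$. By Lemma~\ref{lemma:dlb2} (last part, using $\delta_2<1$, which holds since $|D_n|\geq d_0>0$ forces $\delta_2<1$) there is a sequence $w_n\in W^{1,2}(B_1)$ with $w_n=0$ in $D_n$ and $w_n\to w$ in $W^{1,2}(\Omega_0)$. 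One then builds an admissible competitor for $u_n$ on the ball $B_1$ by interpolating between $u_n$ and $w_n$ on a thin annulus $B_\rho\setminus B_{\rho-\sigma}$, as in the standard construction: the jump set of the interpolant is contained in $S_{u_n}\cap(B_1\setminus B_{\rho-\sigma})$ plus possibly $\partial B_\rho$, whose $\hu$-measure is controlled, and its Dirichlet energy on the annulus is estimated using the Cauchy--Schwarz inequality together with the $L^2$-convergence $u_n\to v$ (from Lemma~\ref{lemma:dlb3}) and a mean-value choice of the interpolation radius. Plugging this competitor into the definition of $\dev_{D_n}(u_n,c_n;B_1)$, using $\dev_{D_n}(u_n,c_n;B_1)\to0$ and $\hu(S_{u_n})\to0$, letting $n\to\infty$ and then $\sigma\to0$, and finally invoking the lower semicontinuity inequality \eqref{eq:dlb3} for the left-hand side, one obtains $\int_{\Omega_0}|\nabla v|^2\,dx\leq\int_{\Omega_0}|\nabla w|^2\,dx$.

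The second assertion, the convergence of the full energies on balls, follows from the first by the usual upper/lower semicontinuity pinching. For the lower bound, inequality \eqref{eq:dlb3} applied on $B_\rho$ gives
$$
\int_{\Omega_0\cap B_\rho}|\nabla v|^2\,dx \leq \liminf_{n\to\infty}\int_{\Omega_n\cap B_\rho}|\nabla\bar u_n|^2\,dx \leq \liminf_{n\to\infty}\F(u_n,c_n;\Omega_n\cap B_\rho),
$$
since truncation does not increase the Dirichlet energy and the surface term is nonnegative. For the upper bound, one uses the deviation hypothesis on the ball $B_\rho$ together with the comparison function $w=v$ on $B_1\setminus B_\rho$ and the truncated sequence on $B_\rho$: a construction entirely parallel to the one above, now with $w=v$ as the target, shows that $\limsup_{n\to\infty}\F(u_n,c_n;\Omega_n\cap B_\rho)\leq\int_{\Omega_0\cap B_\rho}|\nabla v|^2\,dx$. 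One has to be slightly careful that $\dev_{D_n}$ is defined with competitors supported in a ball, so I would first run the argument on a slightly larger ball $B_{\rho'}$ with $\rho<\rho'<1$ and then let $\rho'\downarrow\rho$, using that $\rho\mapsto\int_{\Omega_0\cap B_\rho}|\nabla v|^2\,dx$ is continuous (as $|\nabla v|^2\in L^1$) and that for a.e.\ $\rho$ the boundary $\partial B_\rho$ carries no mass of the relevant quantities.

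The main obstacle, as usual in this type of lemma, is the cut-and-paste construction of the recovery/competitor sequences in a way that is compatible with both the moving constraint set $D_n$ and the moving domain $\Omega_n$: one must make sure the interpolated functions still vanish on $D_n$ (this is where the $W^{1,2}$-approximation from Lemma~\ref{lemma:dlb2} is essential, rather than a naive truncation) and that the extra interface created on $\partial B_\rho$ is killed by multiplying $c_n$ by a quantity that stays bounded — here the hypothesis $\sup_n\F(u_n,c_n;\Omega_n)<\infty$ combined with $\hu(S_{u_n})\to0$ does the job, since it forces the surface contribution $c_n\hu(S_{u_n}\cap(B_1\setminus B_{\rho-\sigma}))$ to vanish in the limit only after the radius is chosen by a mean-value argument (Chebyshev) so that $\hu(S_{u_n}\cap\partial B_\rho)$ is summable in a suitable averaged sense. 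All of this is routine once the uniform isoperimetric constant $\gamma$ and the uniform Poincar\'e-type bound from Lemma~\ref{lemma:dlb3} are in hand, so the proof reduces to reproducing the argument of \cite[Theorem~7.7]{AFP} with these uniform ingredients; I would write it in that telegraphic style, referring to \cite{BG,Bon} for the details of the annular interpolation.
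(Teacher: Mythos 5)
Your proposal follows essentially the same route the paper takes: the paper's own proof is a short paragraph pointing to \cite[Proposition~6.2]{BG} (which is itself modelled on \cite[Theorem~7.7]{AFP}) and singling out exactly the two adaptations you identify — Lemma~\ref{lemma:dlb3} for compactness/lower semicontinuity in the moving domains $\Omega_n$, and the last part of Lemma~\ref{lemma:dlb2} to approximate a limit competitor $w$ vanishing on $D$ by $w_n$ vanishing on $D_n$ so that the quasi-minimality of $u_n$ can be invoked.

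One small but real slip: in the Lipschitz-cutoff interpolation $v_n=\phi\,w_n+(1-\phi)u_n$ on the annulus, no new interface is created on $\partial B_\rho$ — since $w_n\in W^{1,2}$, one has $S_{v_n}\subset S_{u_n}$, and the surface term of the competitor is dominated by (a portion of) $c_n\hu(S_{u_n})$, so it \emph{cancels} against the corresponding term of $u_n$ in the deviation inequality; it is not ``killed'' by $\hu(S_{u_n})\to0$, which by itself does not force $c_n\hu(S_{u_n})\to0$ (indeed in the blow-up normalization $c_n$ may diverge). Your remark about multiplying $c_n$ by a vanishing quantity would not close the argument; the correct mechanism is this cancellation, combined with the Cauchy--Schwarz / mean-value estimate of the extra \emph{Dirichlet} energy on the annulus that you already describe.
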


\begin{proof}
Also in this case the proof is analogous to the one of \cite[Proposition~6.2]{BG},
and uses the auxiliary result contained in Lemma~\ref{lemma:dlb3}
in order to overcome the technical difficulties due to the fact of working in a variable domain.
We remark that in the proof it is essential to use the last part of Lemma~\ref{lemma:dlb2},
which allows us to approximate any competitor $w$ for $v$, vanishing in the limit domain $D$,
with functions $w_n$ vanishing in $D_n$, for which the quasi-minimality of $v_n$ can be exploited.
Taking into account these observations, it is straightforward to check that the proof of \cite[Proposition~6.2]{BG}
yields the conclusion also in our case.
\end{proof}

The following lemma contains the main decay property used to prove Theorem~\ref{teo:dlb}.

\begin{lemma} \label{lemma:dlb5}
There exists a positive constant $C$ such that
for every $\tau\in(0,1)$ there exist $\e(\tau)>0$, $\theta(\tau)>0$ and $r(\tau)>0$ with the property that
for every $x\in\ombar$ and $\rho\leq r(\tau)$,
whenever $v\in\sbv(\Omega'\cap B_{\rho}(x))$ is such that $v=0$ in $(\Omega'\setminus\Omega)\cap B_\rho(x)$,
$$
\hu(S_v\cap B_{\rho}(x)\cap\Omega')<\e(\tau)\rho, \quad
\dev(v;B_\rho(x)) < \theta(\tau)\F(v;B_\rho(x)\cap\Omega')
$$
(the deviation from minimality is defined as in \eqref{eq:dlb1}) then
$$
\F(v;B_{\tau\rho}(x)\cap\Omega') \leq C\tau^2\F(v;B_\rho(x)\cap\Omega').
$$
\end{lemma}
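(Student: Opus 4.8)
The plan is to argue by contradiction via a blow-up, in the style of De Giorgi--Carriero--Leaci (as in \cite{DCL,BG}). The universal constant is dictated by the decay of harmonic functions: there is $c_0\ge1$ such that
\[
\int_{\Omega_0\cap B_{\tau}}|\nabla h|^2\,dx\le c_0\,\tau^2\int_{\Omega_0\cap B_1}|\nabla h|^2\,dx\qquad(0<\tau<1)
\]
whenever $\Omega_0\cap B_1$ is a ball, a half‑ball, or a quarter‑ball (right‑angle wedge), and $h$ is harmonic in $\Omega_0$ with homogeneous Dirichlet conditions on the flat faces on which it is prescribed to vanish and homogeneous Neumann conditions on the rest. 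The estimate follows from the $C^{1}$‑regularity of $h$ up to the corner (which is exactly Lemma~\ref{lemma:Neumannreg} for a Neumann--Neumann corner and Lemma~\ref{lemma:Dirichletreg} with $f=0$ for a Neumann--Dirichlet corner, the right angle being essential, or more elementarily by even/odd reflection across the flat faces), together with Caccioppoli's inequality and, for $\tau$ close to $1$, the trivial monotonicity bound. I then set $C:=2c_0$ and assume the conclusion fails for this $C$ and some $\tau\in(0,1)$. Taking $\e=\theta=r=\tfrac1n$, I obtain $x_n\in\ombar$, $\rho_n\le\tfrac1n$ and $v_n\in\sbv(\Omega'\cap B_{\rho_n}(x_n))$ with $v_n=0$ in $(\Omega'\setminus\Omega)\cap B_{\rho_n}(x_n)$ such that, writing $e_n:=\F(v_n;B_{\rho_n}(x_n)\cap\Omega')>0$, one has $\rho_n^{-1}\hu(S_{v_n}\cap B_{\rho_n}(x_n)\cap\Omega')\to0$, $\dev(v_n;B_{\rho_n}(x_n))<\tfrac1n e_n$, and $\F(v_n;B_{\tau\rho_n}(x_n)\cap\Omega')>C\tau^2 e_n$.

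Next I rescale, following the notation of Lemma~\ref{lemma:dlb2}: $\Omega_n:=\rho_n^{-1}(\Omega'-x_n)\cap B_1$, $D_n:=\rho_n^{-1}((\Omega'\setminus\Omega)-x_n)\cap B_1$, and, setting $c_n:=\rho_n/e_n$, $w_n(y):=e_n^{-1/2}v_n(x_n+\rho_n y)$ on $\Omega_n$. A change of variables gives, with the weighted functional of Lemma~\ref{lemma:dlb4},
\[
\F(w_n,c_n;\Omega_n\cap B_\rho)=e_n^{-1}\F(v_n;B_{\rho\rho_n}(x_n)\cap\Omega')\qquad(0<\rho\le1),
\]
the choice of $c_n$ being exactly what makes the one‑ and two‑dimensional scalings compatible; likewise $\dev_{D_n}(w_n,c_n;B_1)=e_n^{-1}\dev(v_n;B_{\rho_n}(x_n))$. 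Hence $\F(w_n,c_n;\Omega_n\cap B_1)=1$, $\int_{\Omega_n\cap B_1}|\nabla w_n|^2\le1$, $\hu(S_{w_n}\cap B_1)\to0$, $\dev_{D_n}(w_n,c_n;B_1)\to0$, while the standing assumption reads $\F(w_n,c_n;\Omega_n\cap B_\tau)>C\tau^2$. Passing to a subsequence, either $\inf_n|D_n|>0$ or $|D_n|\to0$.

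In the first (``Dirichlet'') case I apply Lemma~\ref{lemma:dlb2} to get $\Omega_n\to\Omega_0$ and $D_n\to D_0$, with the interfaces meeting orthogonally; then I truncate $w_n$ as in Lemma~\ref{lemma:dlb3} (the truncation levels straddle $0$, since $w_n=0$ on $D_n$ and $|D_n|$ is bounded below, so the truncated functions still vanish on $D_n$), obtaining $w_n\to v$ a.e.\ in $\Omega_0$ with $v\in W^{1,2}(\Omega_0)$ and $\int_{\Omega_0\cap B_1}|\nabla v|^2\le1$ by lower semicontinuity. Lemma~\ref{lemma:dlb4} then yields that $v$ is harmonic in $\Omega_0$, equals $0$ on $D_0$, and satisfies homogeneous Neumann conditions on the remainder of $\partial\Omega_0\cap B_1$ (the last part of Lemma~\ref{lemma:dlb2} being used to approximate limiting competitors vanishing on $D_0$ by competitors vanishing on $D_n$), and — crucially — $\F(w_n,c_n;\Omega_n\cap B_\rho)\to\int_{\Omega_0\cap B_\rho}|\nabla v|^2$ for every $\rho\in(0,1)$. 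In the second (``interior/Neumann'') case the same conclusions hold, subtracting a mean instead of truncating, from the classical compactness and decay theory for quasi‑minimizers of the Mumford--Shah functional (\cite{AFP}) and its analogue near a Neumann portion of $\partial\Omega'$ (\cite{Dav}); the limit $\Omega_0$ is then a ball or a half‑ball. In either case, evaluating at radius $\tau$ and using the decay estimate for the harmonic limit,
\[
C\tau^2\le\lim_{n\to\infty}\F(w_n,c_n;\Omega_n\cap B_\tau)=\int_{\Omega_0\cap B_\tau}|\nabla v|^2\le c_0\tau^2\int_{\Omega_0\cap B_1}|\nabla v|^2\le c_0\tau^2,
\]
which contradicts $C=2c_0$; equivalently $\F(v_n;B_{\tau\rho_n}(x_n)\cap\Omega')=e_n\F(w_n,c_n;\Omega_n\cap B_\tau)\le2c_0\tau^2 e_n$ for $n$ large, against the choice of $v_n$. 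This closes the argument.

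The main obstacle is the blow‑up at the corner $\overline{\partial_D\Omega}\cap\overline{\partial_N\Omega}$: there the rescaled domains $\Omega_n$ genuinely vary with $n$, so one needs a relative isoperimetric inequality and a Poincar\'e‑type truncation uniform in $n$, and a way to approximate limiting competitors by admissible competitors on $\Omega_n$ — this is precisely the content of Lemmas~\ref{lemma:dlb2}--\ref{lemma:dlb4} — as well as the decay estimate for the mixed homogeneous boundary value problem on a right‑angle wedge, where the orthogonality hypothesis is indispensable (both to identify the flat limit configuration and to make the reflection/regularity argument close up). The remaining difficulty is the routine but delicate bookkeeping of how the \emph{inhomogeneous} Mumford--Shah functional and its deviation from minimality transform under the amplitude‑normalizing/spatial dilation rescaling, which is exactly what forces the introduction of the weights $c_n=\rho_n/e_n$ and of the weighted functional appearing in Lemma~\ref{lemma:dlb4}.
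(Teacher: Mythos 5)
Your proposal is essentially the same blow-up argument the paper uses: contradiction, amplitude/space rescaling with the weight $c_n=\rho_n/e_n$, extraction of a harmonic limit via Lemmas~\ref{lemma:dlb2}--\ref{lemma:dlb4}, and the decay estimate for the harmonic limit obtained by even/odd reflection across the flat Neumann/Dirichlet faces. The one organizational difference is that you split into two cases according to whether $|D_n|$ is bounded away from zero or tends to zero, whereas the paper splits four ways by the location of $x_0$ (interior, $\partial_D\Omega$, $\partial_N\Omega$, corner) and, within the corner case, by whether $\delta_2>\tfrac12$; this lets the paper simply cite \cite[Lemma~7.14]{AFP} and \cite[Lemma~6.6]{BG} for the interior and pure-Dirichlet blow-ups and only work by hand at the Neumann and mixed boundaries. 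Your coarser dichotomy is fine, but note two small imprecisions it creates: (i) your Case~1 invokes Lemma~\ref{lemma:dlb2} also when $x_0$ is in the relative interior of $\partial_D\Omega$, where that lemma as stated does not apply and the limit pair $(\Omega_0,D_0)$ is a ball with a Dirichlet half-ball rather than a wedge — the argument goes through but you should either adapt the lemma or fall back on \cite{BG} there; (ii) your uniform decay constant $c_0$ is asserted only for balls, half-balls, and quarter-balls ``centered at the origin,'' while the actual limit pair has flat faces at offsets $\xi=\delta_1$, $\zeta=\delta_2\in[0,1)$, so the constant must really come from the reflection argument (or from Lemmas~\ref{lemma:Neumannreg}--\ref{lemma:Dirichletreg}), as the paper does, rather than from a fixed catalogue of model domains. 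Neither of these is a gap in the underlying idea.
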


\begin{proof}
By choosing $C$ large enough, we can assume without loss of generality that $\tau<\frac14$.
The proof is by a contradiction argument: let $\e_n\to0$, $\theta_n\to0$, $r_n\to0$, $x_n\in\ombar$,
$v_n\in\sbv(B_{r_n}(x_n)\cap\Omega')$, $v_n=0$ in $(\Omega'\setminus\Omega)\cap B_{r_n}(x_n)$, be such that
$$
\hu(S_{v_n}\cap B_{r_n}(x_n)\cap\Omega')=\e_n r_n, \quad
\dev(v_n;B_{r_n}(x_n)) = \theta_n \F(v_n;B_{r_n}(x_n)\cap\Omega'),
$$
and
$$
\F(v_n;B_{\tau r_n}(x_n)\cap\Omega') > C\tau^2\F(v_n;B_{r_n}(x_n)\cap\Omega'),
$$
where $C$ will be chosen later. By a change of variables, we set
$$
w_n(y):=r_n^{-\frac12} \, {c_n}^{\frac12} \, v_n(x_n+r_ny),
\qquad
c_n:=\frac{r_n}{\F(v_n;B_{r_n}(x_n)\cap\Omega')}.
$$
We obtain a sequence $w_n\in\sbv(\Omega_n)$ such that
$\F(w_n,c_n;\Omega_n)=1$,
$\dev_{D_n}(w_n,c_n;B_1)=\theta_n$,
$\hu(S_{w_n}\cap\Omega_n)=\e_n$, and
$$
\F(w_n,c_n;B_{\tau}\cap\Omega_n) > C\tau^2
$$
(here $\Omega_n$ and $D_n$ are defined as in \eqref{eq:dlb2}).
Up to subsequences, $x_n\to x_0$, and we are in one of the following cases:
\begin{itemize}
  \item $x_0\in\Omega$: in this case the balls $B_{r_n}(x_n)$ are contained in $\Omega$ for $n$ large, hence the boundary does not play any role and the contradiction follows from \cite[Lemma~7.14]{AFP};
  \item $x_0\in\partial_D\Omega$: the balls $B_{r_n}(x_n)$ intersect only the Dirichlet part of the boundary for $n$ large, and the contradiction follows from \cite[Lemma~6.6]{BG};
  \item $x_0\in\partial_N\Omega$: we have that $\Omega_n\to\Omega_0=\{(\xi,\zeta)\in B_1:\xi<\delta_1\}$ for some $\delta_1\in[0,1]$ (in a suitable coordinate system) and $D_n=\emptyset$ for $n$ large enough. Adapting Lemma~\ref{lemma:dlb3} and Lemma~\ref{lemma:dlb4} to this situation (in which the Dirichlet condition does not play any role) we have that, up to further subsequences, $w_n-m_n\to w$ almost everywhere in $\Omega_0$, where $m_n$ are medians of $w_n$ in $\Omega_n$ and $w\in W^{1,2}(\Omega_0)$, with$$\int_{\Omega_0}|\nabla w|^2\leq\liminf_n\int_{\Omega_n}|\nabla w_n|^2\leq1.$$In addition, $w$ is harmonic in $\Omega_0$ and satisfies a homogeneous Neumann condition on $\{(\xi,\zeta):\xi=\delta_1\}$, and hence (by the decay properties of harmonic functions)
\begin{align*}
C\tau^2
\leq\lim_{n\to+\infty} \F(w_n,c_n;B_\tau\cap\Omega_n)
= \int_{B_\tau\cap\Omega_0}|\nabla w|^2
\leq 8\tau^2\int_{B_{\frac12}\cap\Omega_0} |\nabla w|^2 \leq 8\tau^2
\end{align*}
      which is a contradiction if we take $C>8$.
  \item $x_0\in\overline{\partial_D\Omega}\cap\overline{\partial_N\Omega}$: in this case we are under the assumptions of Lemma~\ref{lemma:dlb2}. If $\delta_2\in(\frac12,1]$, then $B_{\frac12}\cap D_n=\emptyset$ for $n$ large enough, and we can argue exactly as in the previous case, in the ball $B_{\frac12}$. It remains only to deal with the case $\delta_2\in[0,\frac12]$.
\end{itemize}
To get a contradiction also in the case $\delta_2\in[0,\frac12]$, observe first that $|D_n|\geq d_0>0$.
We can apply Lemma~\ref{lemma:dlb3} and Lemma~\ref{lemma:dlb4} to deduce that, up to subsequences,
$w_n\to w_\infty\in W^{1,2}(\Omega_0)$ a.e. in $\Omega_0$, with $w_\infty=0$ in $D$,
$$
\int_{\Omega_0}|\nabla w_\infty|^2\leq\liminf_{n\to\infty}\int_{\Omega_n}|\nabla w_n|^2\leq1.
$$
Moreover for every $w\in W^{1,2}(\Omega_0)$ such that $w=0$ in $D$ and $\{w\neq w_\infty\}\subset\subset B_1$
$$
\int_{\Omega_0}|\nabla w_\infty|^2 \leq \int_{\Omega_0} |\nabla w|^2,
$$
and
$$
\F(w_n,c_n;B_r\cap\Omega_n)\to\int_{B_r\cap\Omega_0}|\nabla w_\infty|^2
\qquad\text{for every }r\in(0,1).
$$
If $\tilde{w}_\infty$ is the harmonic function in $B_1$
obtained by applying firstly an even reflection of $w_\infty$ across $\{(\xi,\zeta):\xi=\delta_1\}$,
and then an odd reflection across $\{(\xi,\zeta):\zeta=\delta_2\}$,
we conclude, by using the decay properties of harmonic functions, that
\begin{align*}
C \tau^2
& \leq \lim_{n\to\infty} \F(w_n,c_n;B_\tau\cap\Omega_n)
= \int_{B_\tau\cap\Omega_0} |\nabla w_\infty|^2
\leq \int_{B_\tau} |\nabla\tilde{w}_\infty|^2 \\
& \leq (2\tau)^2\int_{B_{\frac12}} |\nabla\tilde{w}_\infty|^2
\leq 4(2\tau)^2\int_{B_{\frac12}\cap\Omega_0} |\nabla w_\infty|^2
\leq 16\tau^2,
\end{align*}
and this is a contradiction if we chose $C>16$.
\end{proof}

We have now all the ingredients to conclude the proof of Theorem~\ref{teo:dlb}.

\begin{proof}[Proof of Theorem~\ref{teo:dlb}]
Let $\eta$ be given by Lemma~\ref{lemma:dlb1}.
We first observe that the density lower bound holds in any ball $B_\rho(x)$
with $x\in\ombar\setminus\mathcal{N}_\eta(\partial_D\Omega)$ and $\rho\leq\rho_0$
(for some $\rho_0<\eta$ depending only on $\omega$, $u$ and $\Omega$):
indeed, in this case the Dirichlet boundary condition does not play any role, and the result is classical.
It is then sufficient to prove the lower bound for the function $\tilde{w}$ defined in Lemma~\ref{lemma:dlb1}
in balls $B_\rho(x)$ centered at points $x\in\overline{S}_{\tilde{w}}\cap\mathcal{N}_\eta(\partial_D\Omega)$,
since in such balls $S_w\cap B_\rho(x)=S_{\tilde{w}}\cap B_\rho(x)$ if $\rho<\eta$.

In turn, in this case the conclusion follows by repeating exactly the proof of \cite[Theorem~3.4]{BG}
(with the particular values of the parameters $N=2$, $s=\frac12$ and $\beta=1$),
using the decay property proved in our context in Lemma~\ref{lemma:dlb5}.
\end{proof}

\end{section}

%%%%%%%%%%%%%%%%%%%%%%%%%%%%%%%%%%%%%%%%%%%%%%%%%%%%%%%%%%%%%%%%%%%%%%%%%%%%%%%%%%%%%%%%%%%%%%%%%%%%%%%%%%%%%%%%%%%%%%%%%%%%%%%%%%%%%%%%
%%%%%%%%%%%%%%%%%%%%%%%%%%%%%%%%%%%%%%%%%%%%%%%%%%%%%%%%%%%%%%%%%%%%%%%%%%%%%%%%%%%%%%%%%%%%%%%%%%%%%%%%%%%%%%%%%%%%%%%%%%%%%%%%%%%%%%%%
%%%%%%%%%%%%%%%%%%%%%%%%%%%%%%%%%%%%%%%%%%%%%%%%%%%%%%%%%%%%%%%%%%%%%%%%%%%%%%%%%%%%%%%%%%%%%%%%%%%%%%%%%%%%%%%%%%%%%%%%%%%%%%%%%%%%%%%%
%%%%%%%%%%%%%%%%%%%%%%%%%%%%%%%%%%%%%%%%%%%%%%%%%%%%%%%%%%%%%%%%%%%%%%%%%%%%%%%%%%%%%%%%%%%%%%%%%%%%%%%%%%%%%%%%%%%%%%%%%%%%%%%%%%%%%%%%
%%%%%%%%%%%%%%%%%%%%%%%%%%%%%%%%%%%%%%%%%%%%%%%%%%%%%%%%%%%%%%%%%%%%%%%%%%%%%%%%%%%%%%%%%%%%%%%%%%%%%%%%%%%%%%%%%%%%%%%%%%%%%%%%%%%%%%%%

\bigskip
{\frenchspacing

}

\end{document}